\newcounter{num}[section]
\newenvironment{theorem}
{\refstepcounter{num}%
\bigskip\noindent\nopagebreak[4]{\bf Theorem~\arabic{section}.\arabic{num}. }\it}
\newenvironment{proposition}
{\refstepcounter{num}%
\bigskip\noindent\nopagebreak[4]{\bf Proposition~\arabic{section}.\arabic{num}. }\it}
\newenvironment{corollary}
{\refstepcounter{num}%
\bigskip\noindent\nopagebreak[4]{\bf Corollary~\arabic{section}.\arabic{num}. }\it}
\newenvironment{remark}
{\refstepcounter{num}%
\bigskip\noindent\nopagebreak[4]{\bf Remark~\arabic{section}.\arabic{num}. }}
\newcommand{\N}{{\mathbb{N}}}
\newcommand{\Zbb}{{\mathbb{Z}}}
\newcommand{\LL}{{\mathcal{L}}}
\newcommand{\Hom}{{\mathrm{Hom}}}
\newcommand{\ucl}{{\mathbf{ucl}}}
\newcommand{\qvar}{{\mathbf{qvar}}}
\newcommand{\Rad}{{\mathrm{Rad}}}
\newcommand{\Ss}{{\mathbf{S}}}
\newcommand{\V}{{\mathrm{V}}}
\newcommand{\lb}{{\langle}}
\newcommand{\rb}{{\rangle}}
\newcommand{\om}{{\omega}}
\newcommand{\pr}{{\prime}}
\newcommand{\al}{{\alpha}}
\newcommand{\Acal}{{\mathcal{A}}}
\newcommand{\Bcal}{{\mathcal{B}}}
\newcommand{\Ccal}{{\mathcal{C}}}
\newcommand{\Fcal}{{\mathcal{F}}}
\newcommand{\Tcal}{{\mathcal{T}}}
\newcommand{\Cbf}{{\mathbf{C}}}
\newcommand{\Nbf}{{\mathbf{N}}}
\newcommand{\Qbf}{{\mathbf{Q}}}
\newcommand{\Ubf}{{\mathbf{U}}}
\newcommand{\qq}{{\mathbf{q}_\omega}}
\newcommand{\uu}{{\mathbf{u}_\omega}}
\newcommand{\1}{{^{-1}}}
\newcommand{\Wiki}{{${}^{\mathrm{Wiki}}$}}
\begin{document}
\title{Lectures notes in universal algebraic geometry}
\author{Artem N. Shevlyakov}

\maketitle

\tableofcontents


\section{Introduction}

{\it Universal algebraic geometry} studies the general properties of equations over arbitrary algebraic structures. Such general principles really exist, and there are at least two attempts to explain them. 
\begin{enumerate}
\item In the papers by B.I. Plotkin (with co-authors)~\cite{Plot1}--\cite{Plot9} such principles were explained in the language of Lindebaum algebras.  
\item In the papers~\cite{uni_Th_I,uni_Th_II,uni_Th_III,uni_Th_IV} written by E. Daniyarova, A. Miasnikov, V. Remeslennikov (DMR) the universal algebraic geometry is studied by the methods of model theory and universal algebras.
\end{enumerate}

The main achievement of DMR is the pair of so-called Unifying theorems (see Theorems~\ref{th:unify_approx},~\ref{th:unify_discr} of our paper). The Unifying theorems give the connections between equations and other six (!) notions in algebra. Thus, the universal algebras geometry becomes a set of six equivalent approaches for solving equations over arbitrary algebraic structures. Therefore, any researcher may choose the most useful approach for him to solve equations over the given algebraic structure.

Moreover, the statements of Unifying theorems give the connection between equations and elementary theory of an algebraic structure $\Acal$. Therefore, the study of equations over $\Acal$ is the starting point in the study of the elementary theory $\mathrm{Th}(\Acal)$. This strategy was applied by O. Kharlampovich and A. Miasnikov in~\cite{KM1,KM2,KM3}, where the description of the solution sets of equations over free non-abelian groups allowed to solve positively the famous Tarski problem in~\cite{KM4}.

Unfortunately, there are difficulties in the reading of DMRs` papers~\cite{uni_Th_I}--~\cite{uni_Th_V}, since the authors use the abstract language of model theory and rarely give examples. We hope our survey elementary explains the ideas of DMR and gives numerous examples for universal algebraic geometry.  

In the series of the papers by DMR we extract the main results, prove them by elementary methods and illustrate by numerous examples. Thus, {\it each section of our survey consists of two parts. In the first part we give definitions and theorems of universal algebraic geometry, and in the second one we explain them with examples from the following classes of algebraic structures:}
\begin{itemize}
\item abelian groups (here we follow the paper~\cite{MR2});
\item free monoids and semigroups;
\item the additive monoid of natural numbers (here we follow the papers~\cite{shevl_over_N_irred,shevl_over_N_qvar});
\item semilattices;
\item rectangular bands;  
\item lest zero semigroups;
\item unars (unar is a set equipped with a unary function $f(x)$).
\end{itemize} 

In the first sections we consider all algebraic structures above in the ``standard'' languages (for example, any group is an algebraic structure of the language $\LL_g=\{\cdot,^{-1},1\}$). It follows that we study equations with no constants (parameters). The general case (equations with constants) is studied in Section~\ref{sec:appearing_of_const}.

The survey is concluded by Section~\ref{sec:researches}, where we refer to papers devoted to equations in various classes of algebraic structures. 

We tried to write an elementary survey. The reader needs only the simple facts in linear algebra and group theory. Some notions in our text are marked by the symbol\Wiki. For example, the expression ``cyclic group\Wiki ''means that the survey does not contain the definition of a cyclic group, but we recommend the reader to find it in Wikipedia.

\section{Algebraic structures}

A set of functional symbols $\LL=\{f_1^{(n_1)},f_2^{(n_2)},\ldots\}$, where $f_i^{n_i}$ denotes a $n_i$-ary function, is a {\it functional language} (language, for shortness). A functional $0$-ary symbol  $f_i^{(0)}$ is called a~\textit{constant symbol}, and in the sequel we shall denote constant symbols with no upper index. An {\it algebraic structure} $\Acal=\lb A\mid \LL\rb$ of a language $\LL$ ($\LL$-\textit{algebra} for shortness) is an nonempty set $A$, where each functional symbol $f_i^{(n_i)}\in\LL$ corresponds to a function $f_i^\Acal\colon A^{n_i}\to A$ (clearly, each constant symbol $f_i^{(0)}$ corresponds to some element of $A$). The function $f_i^\Acal$ is called the~\textit{interpretation} of a functional symbol $f_i^{(n_i)}\in\LL$. The set $A$ of an $\LL$-algebra $\Acal$ is the  \textit{universe} of $\Acal$. If $|A|=1$ an algebraic structure $\Acal$ is called \textit{trivial}. 

We shall use the following denotations: an expression $a\in\Acal$ ($(a_1,a_2,\ldots,a_n)\in\Acal^n$) means $a\in A$ (respectively, $(a_1,a_2,\ldots,a_n)\in A^n$). 

Let us define main classes of algebraic structures which are used for our examples.
 
\begin{enumerate}
\item 
An algebra $\Acal$ of the language $\LL_s=\{\cdot^{(2)}\}$ is a {\it semigroup}, if $\Acal$ satisfies the axiom of the associativity
\begin{equation}
\forall x\forall y\forall z \; (xy)z=x(yz).
\label{eq:multiplication_associativity_axiom}
\end{equation}
The binary function $\cdot$ is called a \textit{multiplication}.

\item 
An algebra $\Acal$ of the language $\LL_m=\{\cdot^{(2)},1\}$ is a  {\it monoid}, if $\Acal$ satisfies~(\ref{eq:multiplication_associativity_axiom}) and
\begin{equation}
\forall x (x\cdot 1=x),\; \forall x (x\cdot 1=x).
\label{eq:identity_axiom}
\end{equation}
The element $1$ is the {\it identity} of a monoid $\Acal$. 

\item 
An algebra $\Acal$ of the language $\LL_g=\{\cdot^{(2)}, {{}^{-1}}^{(1)},1\}$ is a {\it group}, if $\Acal$ satisfies~(\ref{eq:multiplication_associativity_axiom},~\ref{eq:identity_axiom}) and
\begin{equation}
\forall x (xx^{-1}=1),\; \forall x (x^{-1}x=1)
\label{eq:inversion_axiom}
\end{equation}
The unary function ${}^{-1}$ is called an \textit{inversion}.

\item A group is \textit{commutative}, if it satisfies
\[
\forall x\forall y \; (xy=yx).
\]
Commutative groups (abelian groups) are considered as algebraic structures of the additive language $\LL_{+g}=\{+,-,0\}$, where the binary operation $+$ is called an {\it addition} and $0$ is the {\it zero element}. Similarly, one can define commutative semigroups (monoids) as algebraic structures of the language  $\LL_{+s}=\{+^{(2)}\}$ (respectively, $\LL_{+m}=\{+^{(2)},0\}$).

\item Any algebraic structure of the language  $\LL_{u}=\{f^{(1)}\}$ is said to be \textit{unar}.
\end{enumerate}

\subsection{Free objects}

Let us give the definition of a 
{\it term of a language $\LL$ ($\LL$-term for shortness) in variables $X=\{x_1,x_2,\ldots,x_n\}$}  as follows: 
\begin{enumerate}
\item any variable $x_i\in X$ is an $\LL$-term;
\item if $f^{(m)}$ is a functional symbol of a language $\LL$ and $t_1(X),t_2(X),\ldots,t_m(X)$ are $\LL$-terms, then the expression $f^{(m)}(t_1(X),t_2(X),\ldots,t_m(X))$ is an $\LL$-term.
\end{enumerate}

Since any constant symbol of a language $\LL$ is a $0$-ary functional symbol, all constant symbols are $\LL$-terms. 

\begin{remark}
Roughly speaking, an $\LL$-term is a composition of variables $X$, constant and functional symbols of $\LL$.
\end{remark}

According to the definition, $\LL$-terms may be very complicated expressions. For example, the following expressions are terms of the group language $\LL_g$:
\[
x_1,\; x_1(x_1)\1,\; ((x_1x_2)(x_3)\1),\; (((x_1)\1)\1)\1,\; ((1\1)(1\1))\1.
\]
In Section~\ref{sec:simplification_of_equations} we discuss the ways to simplify $\LL$-terms in some classes of $\LL$-algebras. 

A formula\Wiki of the language $\LL$ of the form
\[
\forall x_1\forall x_2\ldots\forall x_n \; t(x_1,x_2,\ldots,x_n)=s(x_1,x_2,\ldots,x_n), 
\]
where $t,s$ are terms of $\LL$, is called an~\textit{identity}. The class of $\LL$-algebras defined by a set of identities is called a~\textit{variety}. Obviously, the classes of all semigroups and monoids are defined by the identities of the languages $\LL_s$, $\LL_m$ respectively, therefore such classes form varieties. By Birkgof`s theorem\Wiki any variety contains a free\Wiki object. Thus, there exist~\textit{free semigroups $FS_n$ and free monoids $FM_n$} of rank\Wiki $n$. Below we give constrictive definitions for $FS_n$ and $FM_n$.   

The class of all algebraic structures of the language $\LL_u$ forms the variety of unars. Let us denote the free unar of rank $n$ by $FU_n$. It is easy to prove that $FU_n$ is isomorphic\Wiki (we give the definition of an isomorphism in Section~\ref{sec:subalgebras_direct_powers_hom}) to the disjoint union of $n$ copies of natural numbers $\N_1,\N_2,\ldots,\N_n$, $\N_i=\{0_i,1_i,2_i,\ldots,k_i,\ldots\}$, and the function $f$ is defined as follows $f(k_i)=(k+1)_i$.

\subsection{Abelian groups}

We shall use the following denotations.
\begin{enumerate}
\item $\Zbb$ is the abelian group of integers $\{0,\pm 1,\pm 2,\ldots\}$ relative to the operation of the addition. 
\item $\Zbb_{n}$ is the cyclic\Wiki group of order $n$. Elements of $\Zbb_n$ are denoted by natural numbers $\{0,1,2,\ldots,n-1\}$ relative to the addition modulo $n$.  
\end{enumerate}

\subsection{Semigroups}

In the variety of semigroups we deal only with the following classes of semigroups.
\begin{enumerate}
\item A {\it semilattice} $S$ is a semigroup, where all elements are idempotents ($\forall x\; xx=x$) and commute ($\forall x\forall y\; xy=yx$). Any semilattice admits a partial order 
\[
x\leq y\Leftrightarrow xy=x.
\]

If a partial order over $S$ is linear\Wiki, the semigroup is said to be {\it linearly ordered}. A linearly ordered semilattice with exactly $n$ elements  is denoted by $L_n$.

\item A \textit{left zero semigroup} is a semigroup which satisfies the identity
\[
\forall x\forall y\; xy=x.
\]
It is easy to prove that there exists a unique (up to isomorphism) left zero semigroup with $n$ elements. Let us denote such semigroup by $LZ_n$.  

\item A \textit{rectangular band} is a semigroup which satisfies the identity
\[
\forall x\; xx=x,\; \forall x\forall y\forall z\; xyz=xz.
\]
By semigroup theory, any rectangular band is isomorphic to a set of pairs $RB(n,m)=\{(i,j)\mid 1\leq i\leq n,1\leq j\leq m\}$ under multiplication $(i,j)(i^\pr,j^\pr)=(i,j^\pr)$. Obviously, $RB(n,m)$ is a left zero semigroup if $m=1$.

\item The \textit{free semigroup} $FS_n$ of rank $n$ is the set of all words in an alphabet $A_n=\{a_1,a_2,\ldots,a_n\}$ under the operation of a word concatenation.

\item The \textit{free monoid} $FM_n$ of rank $n$ is obtained from $FS_n$ by the adjunction of the empty word $1$ which satisfies the identity axiom~(\ref{eq:identity_axiom}).
 
\item For $n=1$ the free monoid $FM_n$ is isomorphic to the monoid of natural numbers $\N=\{0,1,2,\ldots\}$ relative to the addition. In the sequel we consider the monoid $\N$ in the language $\LL_{+m}$.

\item A semigroup $S$ has {\it cancellations} if each equality $xy=xz$, $yx=zx$ implies $y=z$. It is directly checked that the free monoids $FM_n$ has cancellations.

\end{enumerate}

By the definition, the class of all semilattices (respectively, rectangular bands, left zero semigroups) forms a variety.

\section{Subalgebras, direct products, homomorphisms}
\label{sec:subalgebras_direct_powers_hom}

Let $\Acal=\lb A\mid \LL\rb$ be an algebraic structure of a language $\LL$. An $\LL$-algebra $\Bcal=\lb B\mid\LL\rb$ is called a~\textit{subalgebra} of $\Acal$ if 
\begin{enumerate}
\item $B\subseteq A$;
\item all functions over $B$ are restrictions of corresponding functions over $A$
\[
f^\Bcal(b_1,b_2,\ldots,b_n)=f^\Acal(b_1,b_2,\ldots,b_n),
\]
\item $B$ is closed under all functions $f^\Bcal$.
\end{enumerate}

By the definition, any subalgebra of an $\LL$-algebra $\Acal$ should contain all constants from the set $A$, since constants are unary functions over $A$. Obviously, in the variety of groups (semigroups) subalgebras of a group (respectively, semigroup) $\Acal$ are exactly subgroups\Wiki (respectively, subsemigroups\Wiki) of $\Acal$. 

An $\LL$-algebra $\Acal=\lb A\mid \LL\rb$ is~\textit{finitely generated} if there exists a finite set of elements $S=a_1,a_2,\ldots,a_n$ such that any non-constant element of $\Acal$ is a value of a composition of functions over $\Acal$ at a point $P=(p_1,p_2,\ldots,p_m)$ where for each $i$ either $p_i\in S$ or $p_i$ is a constant of an $\LL$-algebra $\Acal$.

Remark that an $\LL$-algebra $\Acal$ is always finitely generated if {\it each} element of $\Acal$ is defined as constant. 

One calls a variety $V$ of $\LL$-algebras {\it locally finite} if every finitely generated algebra from $V$ has finite cardinality. For example, the varieties of semilattices, left zero semigroups and rectangular bands are locally finite.   

Let $\Acal=\lb A\mid \LL\rb$, $\Bcal=\lb B\mid \LL\rb$ be $\LL$-algebras. A map $\phi\colon A\to B$ is called a~\textit{homomorphism} if for any functional symbol $f^{(n)}\in\LL$ it holds 
\[
\phi(f^\Acal(a_1,a_2,\ldots,a_n))=f^\Bcal(\phi(a_1),\phi(a_2),\ldots,\phi(a_n)). 
\] 
It follows that for any constant symbol $c\in\LL$ we have the equality $\phi(c^\Acal)=c^\Bcal$. In other words, any homomorphism should map constants into constants. The set of all homomorphisms between $\LL$-algebras $\Acal,\Bcal$ is denoted by $\Hom(\Acal,\Bcal)$.

Let us apply the general definition of a homomorphism to groups, semigroups and unars.  
\begin{enumerate}
\item A map $\phi\colon \Acal\to\Bcal$ is a homomorphism between two semigroups $\Acal,\Bcal$ of the language $\LL_s$ if for all $a_1,a_2\in\Acal$ it holds
\begin{equation}
\label{eq:hom_for_semigroups}
\phi(a_1\cdot a_2)=a_1\cdot a_2
\end{equation}
\item A map $\phi\colon \Acal\to\Bcal$ is a homomorphism between two monoids $\Acal,\Bcal$ of the language $\LL_m$ if for all $a_1,a_2\in\Acal$ it holds~(\ref{eq:hom_for_semigroups}) and  
\begin{equation}
\label{eq:hom_for_monoids}
\phi(1)=1
\end{equation}
\item A map $\phi\colon \Acal\to\Bcal$ is a homomorphism between two groups $\Acal,\Bcal$ of the language $\LL_g$ if for all $a_1,a_2\in\Acal$ it holds~(\ref{eq:hom_for_semigroups},~\ref{eq:hom_for_monoids}) and  
\begin{equation}
\label{eq:hom_for_groups}
\phi(a_1^{-1})=\phi(a_1)^{-1}.
\end{equation}
\item A map $\phi\colon \Acal\to\Bcal$ is a homomorphism between two unars $\Acal,\Bcal$ of a language $\LL_u$ if for any $a\in\Acal$ we have $\phi(f(a))=f(\phi(a))$.
\end{enumerate}

A homomorphism $\phi\in\Hom(\Acal,\Bcal)$ is an {\it embedding} if $\phi$ is injective, i.e. for any pair of distinct elements $a_1,a_2\in\Acal$ it holds $\phi(a_1)\neq \phi(a_2)$.

Let us describe homomorphism sets and embeddings  for some algebras $\Acal,\Bcal$  (in all examples below $p,q$ are distinct primes).

\begin{enumerate}
\item The abelian group $\Zbb_{p^n}$ is embedded into $\Zbb_{p^m}$ iff $n\leq m$. For example, $\Zbb_4=\{0,1,2,3\}$ is embedded into $\Zbb_8=\{0,1,\ldots,7\}$ by $\phi(x)=2\cdot x$.
\item Any homomorphism $\phi\in\Hom(\Zbb_{p^n},\Zbb_{p^m})$ ($n>m$) maps an element $p^m\in \Zbb_{p_n}$ into $0\in \Zbb_{p^m}$.
\item The unique homomorphism of the set $\Hom(\Zbb_{p^n},\Zbb_{q^m})$ ($p\neq q$) is the trivial homomorphism $\phi(x)=0$. 
\item The unique homomorphism of the set $\Hom(\Zbb_{p^n},\Zbb)$ is the trivial homomorphism $\phi(x)=0$.

\item Any homomorphism between $\Zbb$ and $\Zbb_{p^n}$ should map the element $p^n\in\Zbb$ into $0\in\Zbb_{p^n}$.
 
\item The left zero semigroup $LZ_n$ is embedded into $LZ_m$ iff $n\leq m$.

\item Clearly, a rectangular band $RB(n,m)$ is embedded into $RB(n^\pr,m^\pr)$ iff $n\leq n^\pr$, $m\leq m^\pr$. 

\end{enumerate}

A homomorphism $\phi\in\Hom(\Acal,\Bcal)$ of two $\LL$-algebras $\Acal,\Bcal$ is an {\it isomorphism} if $\phi$ is bijective. A pair of isomorphic $\LL$-algebras $\Acal,\Bcal$ is denoted by $\Acal\cong\Bcal$.

Let $\{\Acal_i\mid i\in I\}$ be a family of $\LL$-algebras. The \textit{direct product} $\Acal=\prod_{i\in I}\Acal_i$  is an $\LL$-algebra of all tuples
\[
(a_1,a_2,\ldots,a_n,\ldots),\; a_i\in\Acal_i,
\]
and the value of any function $f$ over $\Acal$ is the tuple of values of $f$ in algebras $\Acal_i$. For example, a binary function $f(x,y)$ over $\Acal$ is computed as follows
\[
f^\Acal((a_1,a_2,\ldots,a_n,\ldots),(a_1^\pr,a_2^\pr,\ldots,a_n^\pr,\ldots))=
(f^{\Acal_1}(a_1,a_1^\pr),f^{\Acal_2}(a_2,a_2^\pr),\ldots,f^{\Acal_n}(a_n,a_n^\pr),\ldots).
\]
By the definition of a direct product, we have the following interpretation of constant symbols. Suppose $a_i\in\Acal_i$ is an interpretation of a constant symbol $c\in\LL$ in $\Acal_i$, then the element $(a_1,a_2,\ldots,a_i,\ldots)$ is the interpretation of $c$ in the algebra $\Acal$.

If all algebras $\Acal_i$ are isomorphic to an algebra $\Bcal$, the direct product $\prod_i \Acal_i$ is called a~\textit{direct power} of an algebra $\Bcal$.

Let $\Acal=\prod_{i\in I}\Acal_i$. By $\pi_i\colon\Acal\to\Acal_i$ we shall denote the \textit{projection} of $\Acal$ onto the subalgebra $\Acal_i$. In other words,
\[
\pi_i((a_1,a_2,\ldots,a_i,\ldots))=a_i.
\]
It is directly checked that any projection $\pi_i$ is a homomorphism between $\Acal$ and $\Acal_i$.
 
Let us give examples of direct products in various classes of algebras.

\begin{enumerate}
\item In the category of abelian groups direct products are called~\textit{direct sums} and denoted by  $\oplus$.For example, one can prove that the cyclic group $\Zbb_6$ is isomorphic to the direct sum $\Zbb_2\oplus \Zbb_3$.

By the theory of abelian groups, any finitely generated abelian group $A$ is isomorphic to a direct sum
\begin{equation}
\label{eq:abelian_group_presentation}
\Zbb\oplus\Zbb\oplus\ldots\oplus\Zbb\oplus\Zbb_{p_1^{n_1}}\oplus\Zbb_{p_2^{n_2}}\oplus\ldots\oplus\Zbb_{p_l^{n_l}},
\end{equation}
where $p_i$ are primes. The subgroup $\Zbb_{p_1^{n_1}}\oplus\Zbb_{p_2^{n_2}}\oplus\ldots\oplus\Zbb_{p_l^{n_l}}$ is the \textit{torsion} of a group $A$ and denoted by $T(A)$.
\item The proof of the following two facts is refer to the reader. The direct product of two left zero semigroups $LZ_n,LZ_m$ is isomorphic to $LZ_{n\cdot m}$. The direct product of two rectangular bands $RB(n,m), RB(n^\pr,m^\pr)$ is isomorphic to $RB(nn^\pr,mm^\pr)$. 
\item Remark that nontrivial direct products of free semigroups $FS_n$ and free monoids $FM_n$ are not free.
\end{enumerate}

\section{Simplifications of terms}
\label{sec:simplification_of_terms_in_var}

As we mentioned above, $\LL$-terms may be very complicated expressions. 
Fortunately, for many varieties of algebras one can simplify term expressions.

Let $V$ be a variety of $\LL$-algebras. $\LL$-terms $t(X),s(X)$ are called {\it equivalent over $V$} if the identity $\forall x_1\forall  x_2\ldots \forall x_n \; t(X)=s(X)$ holds in each algebra of $V$. 
Let us give examples which show the simplification of terms in many varieties of algebras (in all examples below terms depend on variables $X=\{x_1,x_2,\ldots,x_n\}$).

\begin{enumerate}
\item $V=${\bf the variety of all semigroups}. Since the multiplication is associative in every semigroup,  any $\LL_s$-term is equivalent over $V$ to a term with no brackets:
\begin{equation}
x_{i_1}^{k_1}x_{i_2}^{k_2}\ldots x_{i_m}^{k_m} \; (k_j\in\N).
\label{eq:semigroup_term}
\end{equation}

\item $V=${\bf the variety of all groups}. Using the axioms of group theory, one can obtain that every $\LL_{g}$-term is equivalent over $V$ to a product of variables in integer powers
\begin{equation}
x_{i_1}^{k_1}x_{i_2}^{k_2}\ldots x_{i_m}^{k_m} \; (k_j\in\Zbb).
\label{eq:group_term}
\end{equation}

\item  $V=${\bf the variety of all idempotent semigroups}. Recall that a semigroup is~\textit{idempotent} if it satisfies the identity $\forall x\; xx=x$. Thus, any $\LL_s$-term is equivalent over $V$ to an expression of the form
\begin{equation}
x_{i_1}x_{i_2}\ldots x_{i_k}.
\label{eq:band_term}
\end{equation}

\item $V=${\bf the variety of all semilattices}.  Since a semilattice is a commutative idempotent semigroup, one can sort variables in~(\ref{eq:band_term}) and obtain
\begin{equation}
x_{i_1}x_{i_2}\ldots x_{i_k}\; (i_1<i_2<\ldots<i_k).
\label{eq:semilattice_term}
\end{equation}

\item  $V=${\bf the variety of all rectangular band}. Recall that a rectangular band is an idempotent semigroup satisfying the identity $\forall x\forall y\forall z \;(xyz=xz)$. Therefore the expression~(\ref{eq:band_term}) is equivalent over $V$ to $x_{i_1}x_{i_k}$. In other words, any $\LL_s$-term is equivalent over $V$ to a term in at most two letters.

\item  $V=${\bf the variety of all left zero semigroups}. The identity $\forall x\forall y\; (xy=x)$ holds in $V$. Thus, any $\LL_s$-term is equivalent over $V$ to a term in one letter.

\item $V=${\bf the variety of all abelian groups}. It is directly checked that every $\LL_{+g}$-term is equivalent over $V$ to an expression of the form
\begin{equation}
\label{eq:abelian_group_term}
k_1x_1+k_2x_2+\ldots+ k_nx_n,
\end{equation}
where $k_i\in\Zbb$, and expressions $k_ix_i$ are defined as follows
\[
k_ix_i=\begin{cases}
\underbrace{x_i+x_i+\ldots+x_i}_{\mbox{$k_i$ times}}\mbox{ if $k_i> 0$}\\
-(\underbrace{x_i+x_i+\ldots+x_i}_{\mbox{$k_i$ times}})\mbox{ if $k_i< 0$}\\
0\mbox{ if $k_i=0$}
\end{cases}
\]

\item $V=${\bf the variety of all unars}. The language $\LL_u$ has a very simple set of terms, therefore we write only a compact form of  $\LL_u$-terms:
\[
f^n(x)=\begin{cases}
\underbrace{f(f(f(\ldots(x))\ldots))}_{\mbox{$n$-times}} \mbox{ if } n>0\\
x \mbox{ if} n=0
\end{cases}
\]
\end{enumerate}

\section{Equations and solutions}

An {\it equation over a language $\LL$ ($\LL$-equation)} is an equality of two $\LL$-terms.
\[
t(X)=s(X).
\]

\begin{remark}
According to model theory\Wiki, $\LL$-equation is actually an atomic\Wiki formula of $\LL$.
\end{remark}

A system of $\LL$-equations ($\LL$-system for shortness) is an arbitrary set of $\LL$-equations. {\it Notice that we shall consider only systems which depend on a finite set of variables $X$}. An $\LL$-system in variables $X$ is denoted by $\Ss(X)$. 

Let $\Acal$ be an $\LL$-algebra and $\Ss$ be an $\LL$-system in variables $X=\{x_1,x_2,\ldots,x_n\}$. A point $P=(p_1,p_2,\ldots,p_n)\in \Acal^n$ is called a {\it solution} of $\Ss$ if the substitution $x_i=p_i$ reduces each equation of $\Ss$ to a true equality of $\Acal$. Let us denote the set of all solutions of $\Ss$ in $\Acal$ by $\V_\Acal(\Ss(X))\subseteq \Acal^n$. 

If $\V_\Acal(\Ss)=\emptyset$ an $\LL$-system $\Ss$ is called \textit{inconsistent} over $\Acal$.

\begin{proposition}
For an $\LL$-algebra $\Acal$ and $\LL$-systems $\Ss_i$ it holds: 

\begin{enumerate}
\item $\Ss_1\supseteq\Ss_2\Rightarrow \V_\Acal(\Ss_1)\subseteq\V_\Acal(\Ss_2)$;
\item 
\[
\V_\Acal(\bigcup_{i\in I}\Ss_i)=\bigcap_{i\in I}\V_\Acal(\Ss_i)
\]
\end{enumerate}
\end{proposition}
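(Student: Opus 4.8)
The plan is to prove both parts directly from the definition of the solution set $\V_\Acal(\Ss)$ as the set of points satisfying every equation in $\Ss$.

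For part (1), suppose $\Ss_1 \supseteq \Ss_2$. I would take an arbitrary point $P \in \V_\Acal(\Ss_1)$ and show $P \in \V_\Acal(\Ss_2)$. By definition, $P$ satisfies every equation in $\Ss_1$. Since $\Ss_2 \subseteq \Ss_1$, every equation of $\Ss_2$ is also an equation of $\Ss_1$, so $P$ satisfies each equation of $\Ss_2$ as well. Hence $P \in \V_\Acal(\Ss_2)$, which gives $\V_\Acal(\Ss_1) \subseteq \V_\Acal(\Ss_2)$.

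For part (2), I would prove the set equality by mutual inclusion, though it amounts to unwinding a single logical equivalence. A point $P$ lies in $\V_\Acal\bigl(\bigcup_{i\in I}\Ss_i\bigr)$ precisely when $P$ satisfies every equation belonging to the union $\bigcup_{i\in I}\Ss_i$; an equation lies in this union exactly when it lies in some $\Ss_i$, so the condition is that $P$ satisfies every equation in every $\Ss_i$. This is equivalent to saying that for each $i \in I$ the point $P$ satisfies all equations of $\Ss_i$, i.e.\ $P \in \V_\Acal(\Ss_i)$ for all $i$, which is exactly the statement $P \in \bigcap_{i\in I}\V_\Acal(\Ss_i)$. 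Chaining these equivalences yields the desired equality of sets.

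There is no real obstacle here: both statements are immediate consequences of the definition of a solution as a point satisfying all equations of a system, together with the elementary logic of universal quantification distributing over unions (a universal condition over a union of index sets is the conjunction of the universal conditions over each set). The only care needed is to handle the index set $I$ with full generality, including the empty case if one wishes to be scrupulous, but the argument is uniform regardless. I would present part (1) as a one-line inclusion and part (2) as a short chain of ``if and only if'' equivalences rather than two separate inclusions, since the symmetry of the biconditional makes both directions transparent.
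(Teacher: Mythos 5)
Your proof is correct and follows the same route the paper intends: both parts are immediate from the definition of a solution set, and the paper itself only gestures at this via the informal principle that ``a big system has a small solution set.'' Your version simply spells out the quantifier manipulation that the paper leaves implicit.
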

\begin{proof}
Both statements immediately follows from a principle: ``a big system has a small solution set''. 
\end{proof}

An $\LL$-system $\Ss_1$  is {\it equivalent over} an $\LL$-algebra $\Acal$  to an $\LL$-system $\Ss_2$ if 
\[
\V_\Acal(\Ss_1)=\V_\Acal(\Ss_2),
\]
and it is denoted by $\Ss_1\sim_\Acal\Ss_2$.

\subsection{Simplifications of equations}
\label{sec:simplification_of_equations}

One can simplify equations in many classes of $\LL$-algebras. 

\begin{enumerate}
\item Any $\LL_g$-equation $t(X)=s(X)$ is equivalent over any group $G$ to
\begin{equation}
\label{eq:group_equation}
w(X)=1,
\end{equation}
where $w(X)=t(X)s^{-1}(X)$.

Respectively,  any $\LL_{+g}$-equation is equivalent over an abelian group $A$ to 
\begin{equation}
\label{eq:abelian_group_equation}
k_1x_1+k_2x_2+\ldots+ k_nx_n=0.
\end{equation}

\item By the definition, an equation over a semigroup $S$ is an equality 
\begin{equation}
\label{eq:semigroup_equation}
t(X)=s(X),
\end{equation}
where  $t(X),s(X)$ are semigroup terms~(\ref{eq:semigroup_term}). However, the equation~(\ref{eq:semigroup_equation}) does not always allow further simplifications. Nevertheless, for a commutative monoids with cancellations any $\LL_{+m}$-equation can be reduced to 
\begin{equation}
\label{eq:cancellation_semigroup_equation}
\sum_{i\in I}k_ix_i=\sum_{j\in J}k_jx_j,
\end{equation}
where $I,J\subseteq \{1,2,\ldots,n\}$, and $I\cap J=\emptyset$, i.e. there is not a variable occurring in both parts of equation.

\item Any $\LL_{u}$-equation over an unar $U$ is written as 
\begin{equation}
\label{eq:unar_equation}
f^n(x_i)=f^m(x_j),
\end{equation} 
где $x_i,x_j\in X$.

If a function $f$ is injective the equation~(\ref{eq:unar_equation}) can be reduced to an expression of the form
\begin{equation}
\label{eq:unar_equation_for_bijective_f}
f^k(x_i)=x_j.
\end{equation} 
\end{enumerate}

Let us give examples of equations and their solutions over some algebraic structures.

\begin{enumerate}
\item Let us consider an equation $xy=yx$ in the free semigroup $FS_n$ ($n>1$). By the properties of $FS_n$, elements $x,y\in FS_n$ commute iff $x,y$ are powers of the same element $w\in FS_n$. Thus,
\[
\V_{FS_n}(xy=yx)=\{(w^k,w^l)\mid w\in FS_r, k,l>0\}
\]
The solution set of  $xy=yx$ in the free monoid $FM_n$ ($n>1$) is
\[
\V_{FM_n}(xy=yx)=\{(w^k,w^l)\mid w\in FS_r, k,l\geq 0\},
\]
where the expression $w^0$ equals $1$. 

\item Let $S=RB(n,m)$ be a rectangular band. The solution set of an equation $xy=y$ in $S$ is the following set of pairs $\{([t,t_1],[t,t_2])\mid 1\leq t\leq n, 1\leq t_1,t_2\leq m\}$.

\end{enumerate}

\section{Algebraic sets and radicals}
\label{sec:algebraic_sets_and_radicals}

A set $Y\subseteq\Acal^n$ is called {\it algebraic} over an $\LL$-algebra $\Acal$ if there exists an $\LL$-system $\Ss$ in variables  $X=\{x_1,x_2,\ldots,x_n\}$ such that $\V_\Acal(\Ss)=Y$.

Let us give examples and properties of algebraic sets.

\begin{enumerate}
\item An {\it intersection of an arbitrary number of algebraic sets 
\[
Y=\bigcap_{i\in I}Y_i,\; Y_i\subseteq \Acal^n
\]
is algebraic over every $\LL$-algebra $\Acal$.} Indeed, if each $Y_i$ is the solution set of an $\LL$-system $\Ss_i$, then the system $\Ss=\bigcup_{i\in I}\Ss_i$ has the solution set $Y$.

\item {\it A set $Y=\Acal^n$ is algebraic over any $\LL$-algebra $\Acal$ and every natural $n>0$.} Indeed, the $\LL$-system $\Ss=\{x_i=x_i|1\leq i\leq n\}$ has the solution set $Y$.

\item Let $G$ be a group, and $Y_n=\{g\mid \mbox{the order\Wiki of $g$ divides $n$}\}\subseteq G$. The set $Y_n$ is algebraic, since $Y_n=\V_G(x^n=1)$. 

\item {\it Every algebraic set $Y\subseteq G^n$ over a group $G$ contains the point $e_n=(1,1,\ldots,1)$}, since any equation $w(X)=1$ satisfies $e_n$ over the group $G$. Thus, {\it the empty set is not algebraic over any group $G$.} 

\item The set of idempotents $E$ in an arbitrary semigroup $S$ is algebraic, since $E=\V_S(x^2=x)$.

\item Let $S$ be an idempotent semigroup (recall that left zero semigroups, semilattices and rectangular bands are idempotent). Let us prove that every algebraic set $Y\subseteq S^n$ over a semigroup $S$ contains the set of points $\{(a,a,\ldots,a)\mid a\in S\}$. Indeed, let us consider an $\LL_s$-equation $t(X)=s(X)$. The idempotency of $S$ gives $t(a,a,\ldots,a)=a$, $s(a,a,\ldots,a)=a$, therefore $(a,a,\ldots,a)\in\V_S(t(X)=s(X))$. Thus, the point $(a,a,\ldots,a)$ satisfies any  $\LL_s$-system $\Ss$, and every algebraic set $Y$ should contain all points of the form $(a,a,\ldots,a)$. Finally, we obtain that {\it the empty set is not algebraic set over any idempotent semigroup}.

\item Left zero semigroups $LZ_m$ have a simple structure, and we can explicitly describe the whole class of algebraic sets over $LZ_m$. According to Section~\ref{sec:simplification_of_terms_in_var}, any equation over $LZ_m$ is the equality of two variables $x_i=x_j$. Therefore, every $\LL_s$-system  $\Ss$ in variables $X=\{x_1,x_2,\ldots,x_n\}$ over $LZ_m$ is the set of equalities between variables of $X$. Thus, {\it for any pair of variables $x_i,x_j$ it holds that either the values of $x_i$ and $x_j$ coincide at any point of $\V_{LZ_m}(\Ss)$ or for all $a_i,a_j\in LZ_m$ the set $\V_{LZ_m}(\Ss)$ contains a point, where $x_i=a_i$, $x_j=a_j$}. 

Let $Y$ be a subset of $LZ_m^n$. Using the reasonings above, we give conditions for $Y$ to be algebraic. Let $\pi_{ij}(Y)\subseteq LZ_m^2$ denote the projection of the set $Y$ onto the $i$-th and $j$-th coordinates, i.e.
\[
(a_i,a_j)\in \pi_{ij}(Y)\Leftrightarrow (a_1,a_2,\ldots,a_n)\in Y.
\] 

Thus, {\it a set $Y\subseteq LZ_m^n$ is algebraic over $LZ_m$ iff for any pair $1\leq i\neq j \leq n$ the projection $\pi_{ij}(Y)$ is equal either $LZ_m^2$ or $\pi_{ij}(Y)=\{(a,a)\mid a\in LZ_m\}$.}

\item The empty set is algebraic over the free unar $FU_n$ of rank $n$, since there exists an inconsistent equation $f(x)=x$ over $FU_n$.

\end{enumerate}

\subsection{Radicals and congruent closures}

Let $Y\subseteq\Acal^n$ be an algebraic set over an $\LL$-algebra $\Acal$. The {\it radical} of $Y$ is defined as follows
\[
\Rad_\Acal(Y)=\{t(X)=s(X)| Y\subseteq \V_\Acal(t(X)=s(X))\}.
\]
The {\it radical of an $\LL$-system $\Ss$ over $\Acal$} is defined as the radical of the set $\V_\Acal(\Ss)$:  
\[
\Rad_\Acal(\Ss)=\Rad_\Acal(\V_\Acal(\Ss)).
\]
Equations from the radical $\Rad_\Acal(\Ss)$ are called {\it consequences} of an $\LL$-system $\Ss$. 
 
Obviously, $\Ss\subseteq \Rad_\Acal(\Ss)$ (a system $\Ss$ is a consequence of its equations) for any $\LL$-algebra $\Acal$. Moreover, any radical $\Rad_\Acal(\Ss)$ contains the following types of equations.
\begin{enumerate}
\item {\it All equations obtained from equations of $\Ss$ by the transitive and symmetric closure of the equality relation.} For example, if $t(X)=s(X)\in \Ss$ then $s(X)=t(X)\in\Rad_\Acal(\Ss)$, and $t(X)=s(X),s(X)=u(X)\in\Ss$ imply $t(X)=u(X)\in\Rad_\Acal(\Ss)$.
\item {\it All equations obtained from a functional composition of equations of $\Ss$}. For example, for an $\LL_{s}$-system $\Ss=\{t_1(X)=s_1(X),t_2(X)=s_2(X)\}$ of semigroup equations the equation $t_1(X)\cdot t_2(X)=s_1(X)\cdot s_2(X)$ belongs to the radical $\Rad_S(\Ss)$ over any semigroup $S$.
\end{enumerate}

Above we describe two types of elementary consequences of an $\LL$-system $\Ss$. Such equations form a main subset $[\Ss]$ of the radical $\Rad_\Acal(\Ss)$. Let us give a formal definition of the set $[\Ss]$.

A set of $\LL$-equations $[\Ss]$ is a \textit{congruent closure} of an $\LL$-system $\Ss$, if $[\Ss]$ is the least set containing $\Ss$ and satisfying the following axioms (below $t_i(X),s_i(X)$ are $\LL$-terms):
\begin{enumerate}
\item $t_1(X)=t_1(X)\in[\Ss]$ for every $\LL$-term $t_1(X)$;
\item if $t_1(X)=t_2(X)\in[\Ss]$ then $t_2(X)=t_1(X)\in[\Ss]$;
\item if $t_1(X)=t_2(X)\in[\Ss]$ and $t_2(X)=t_3(X)\in[\Ss]$ then $t_1(X)=t_3(X)\in[\Ss]$;
\item if $t_i(X)=s_i(X)\in[\Ss]$ ($1\leq i\leq n$) and $\LL$ contains an $n$-ary functional symbol $f^{(n)}$ then we have $f(t_1(X),t_2(X),\ldots,t_n(X))=f(s_1(X),s_2(X),\ldots,s_n(X))\in[\Ss]$. 
\end{enumerate}

It is easy to check that for any $\LL$-system $\Ss$ it holds the following inclusion
\begin{equation}
\label{eq:congruent_closure_vs_radical}
[\Ss]\subseteq\Rad_\Acal(\Ss). 
\end{equation}

The converse inclusion of~(\ref{eq:congruent_closure_vs_radical}) does not holds for many systems $\Ss$ and algebras $\Acal$, since the radical may contain nontrivial and unexpected equations. In the following examples we find some equations which do not belong to congruent closures of given systems.  

\begin{enumerate}
\item The radical of any system over an $\LL$-algebra $\Acal$ should contain all identities which are true in $\Acal$.

\item Let $\Zbb$ be the abelian group of integers. The radical of an $\LL_{+g}$-equation $nx=0$ ($n\in\N$) contains the consequence $x=0$. Obviously, in this example one can take an arbitrary abelian group with no torsion instead of $\Zbb$. 

\item Let $\N$ be the monoid of natural numbers. The radical of the equation $x+y=0$ contains $x=0,y=0$.

\item Let $FS_r$ be the free semigroup of rank $r$. Let us consider a $\LL_s$-system $\Ss=\{x_1x_2=x_2x_1,x_2x_3=x_3x_2\}$. By the properties of free semilattices, elements $x,y$ commute iff $x,y$ are powers of the same element $w$. Therefore, $x_1=w^{n_1}$, $x_2=w^{n_2}$, $x_3=w^{n_3}$ for some $w\in FS_r$, and we obtain that the equation $x_1x_3=x_3x_1$ belongs to the radical of $\Ss$. 

However in the free monoid $FM_r$ of rank $r>1$ the equation $x_1x_3=x_3x_1$ does not belong to the radical of  $\Ss$, since $(a_1,1,a_2)\in \V_{F_r}(\Ss)$, but $a_1a_2\neq a_2a_1$, where $a_i$ are free generators of $FM_r$.
 
\item The equation $x_1x_3=x_2x_3$ over the free monoid $FM_r$ implies $x_1=x_2\in\Rad_{FM_r}(x_1x_3=x_2x_3)$, since $FM_r$ has cancellations.

\item The equation $f(x)=f^2(y)$ over the free unar $FU_n$ deduces the equation $x=f(y)\in\Rad_{FU_n}(f(x)=f^2(y))$, since the unary function $f$ is injective over the set $FU_n$.
\end{enumerate}

The following statement contain the list of all elementary properties of radicals.
 
\begin{proposition}
Let $\Acal$ be an $\LL$-algebra.
\begin{enumerate}
\item For algebraic sets $Y_1,Y_2$ it holds 
\begin{equation}
Y_1\subseteq Y_2\Rightarrow\Rad_\Acal(Y_1)\supseteq\Rad_\Acal(Y_2).
\label{eq:about_radical_1}
\end{equation}
In particular,
\[
Y_1= Y_2\Leftrightarrow\Rad_\Acal(Y_1)=\Rad_\Acal(Y_2).
\]

\item For $\LL$-systems $\Ss_1,\Ss_2$ we have
\begin{equation}
\Ss_1\subseteq \Ss_2\Rightarrow\Rad_\Acal(\Ss_1)\subseteq\Rad_\Acal(\Ss_2).
\label{eq:about_radical_2}
\end{equation}

\item Suppose an algebraic set $Y$ is a union of algebraic sets $\{Y_i\mid i\in I\}$, then
\begin{equation}
\Rad_\Acal(Y)=\bigcup_{i\in I}\Rad_\Acal(Y_i).
\label{eq:about_radical_3}
\end{equation}

\end{enumerate}
\end{proposition}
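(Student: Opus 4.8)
The plan is to read membership in a radical purely as a set inclusion: $(t=s)\in\Rad_\Acal(Y)$ means exactly $Y\subseteq\V_\Acal(t=s)$, and all three parts then follow by elementary manipulations of inclusions. For Part~1 I would fix an equation $(t=s)\in\Rad_\Acal(Y_2)$, so that $Y_2\subseteq\V_\Acal(t=s)$; chaining with $Y_1\subseteq Y_2$ gives $Y_1\subseteq\V_\Acal(t=s)$, i.e. $(t=s)\in\Rad_\Acal(Y_1)$, which is~(\ref{eq:about_radical_1}). For the ``in particular'' clause, the implication $Y_1=Y_2\Rightarrow\Rad_\Acal(Y_1)=\Rad_\Acal(Y_2)$ is just two applications of~(\ref{eq:about_radical_1}). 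The reverse implication needs the auxiliary fact that an algebraic set is recovered from its radical, $Y=\V_\Acal(\Rad_\Acal(Y))$; I would prove this by inclusions, the inclusion $Y\subseteq\V_\Acal(\Rad_\Acal(Y))$ being immediate from the definition, and the reverse following from $Y=\V_\Acal(\Ss)$ for some system $\Ss$ (as $Y$ is algebraic): then $\Ss\subseteq\Rad_\Acal(Y)$, and the first proposition (``a big system has a small solution set'') gives $\V_\Acal(\Rad_\Acal(Y))\subseteq\V_\Acal(\Ss)=Y$. Applying $\V_\Acal$ to the hypothesis $\Rad_\Acal(Y_1)=\Rad_\Acal(Y_2)$ then yields $Y_1=Y_2$.

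Part~2 I would deduce from Part~1. By definition $\Rad_\Acal(\Ss_i)=\Rad_\Acal(\V_\Acal(\Ss_i))$, so I first apply the first proposition to $\Ss_1\subseteq\Ss_2$ to obtain $\V_\Acal(\Ss_2)\subseteq\V_\Acal(\Ss_1)$, and then feed these two algebraic sets into the antimonotone law~(\ref{eq:about_radical_1}), which reverses the inclusion once more and delivers $\Rad_\Acal(\V_\Acal(\Ss_1))\subseteq\Rad_\Acal(\V_\Acal(\Ss_2))$, i.e.~(\ref{eq:about_radical_2}). Composing the two inclusion-reversals is precisely what turns the antimonotone behaviour of the radical into the monotone statement required.

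For Part~3 the engine is the equivalence obtained by unfolding the definition along $Y=\bigcup_{i\in I}Y_i$: an equation $(t=s)$ satisfies $Y\subseteq\V_\Acal(t=s)$ if and only if $Y_i\subseteq\V_\Acal(t=s)$ holds for every $i\in I$, that is, if and only if $(t=s)$ lies in $\Rad_\Acal(Y_i)$ for all $i$ at once. I would establish~(\ref{eq:about_radical_3}) by mutual inclusion on this basis. One inclusion, $\Rad_\Acal(Y)\subseteq\bigcup_{i\in I}\Rad_\Acal(Y_i)$, is cheap: each $Y_i\subseteq Y$ gives $\Rad_\Acal(Y)\subseteq\Rad_\Acal(Y_i)$ by~(\ref{eq:about_radical_1}), so every consequence of $Y$ already appears on the right-hand side. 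The hard part will be the opposite inclusion $\bigcup_{i\in I}\Rad_\Acal(Y_i)\subseteq\Rad_\Acal(Y)$: knowing $(t=s)\in\Rad_\Acal(Y_i)$ for a single index only secures $Y_i\subseteq\V_\Acal(t=s)$, whereas membership in $\Rad_\Acal(Y)$ demands that the equation hold on the whole union, i.e. on every $Y_j$ simultaneously. Thus the crux of Part~3 is exactly the passage from ``true on one piece $Y_i$'' to ``true on all of $Y$'', and the definitional equivalence displayed above is the tool I would use to control that passage and to pin down which equations of $\bigcup_{i\in I}\Rad_\Acal(Y_i)$ genuinely survive into $\Rad_\Acal(Y)$.
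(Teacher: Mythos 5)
Your proofs of the first two parts are correct and follow the paper's intended route; in fact they are more complete than the paper's, which only invokes the informal principle ``a small set has a big class of equations.'' In particular, the auxiliary identity $Y=\V_\Acal(\Rad_\Acal(Y))$ for an algebraic set $Y$, which you prove and use for the reverse implication of the ``in particular'' clause, is exactly the fact the paper leaves unstated.

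In the third part you stopped one step short of the right conclusion. The inclusion $\bigcup_{i\in I}\Rad_\Acal(Y_i)\subseteq\Rad_\Acal(Y)$ that you flag as ``the hard part'' is not hard but false, except in the degenerate case where every $Y_i$ equals $Y$: if some $Y_i$ is a proper subset of $Y$, then by your Part~1 we have $\Rad_\Acal(Y)\subseteq\Rad_\Acal(Y_i)$, and by the ``in particular'' clause $\Rad_\Acal(Y_i)\neq\Rad_\Acal(Y)$, so $\Rad_\Acal(Y_i)$ contains an equation that fails somewhere on $Y$; that equation lies in the union but not in $\Rad_\Acal(Y)$. (Concretely, for $Y=\Zbb=\V_\Zbb(x=x)$, $Y_1=\V_\Zbb(x=0)=\{0\}$ and $Y_2=\Zbb$, the equation $x=0$ belongs to $\Rad_\Zbb(Y_1)$ but not to $\Rad_\Zbb(Y)$.) The union in the displayed formula is a misprint for an intersection: the correct statement is $\Rad_\Acal(Y)=\bigcap_{i\in I}\Rad_\Acal(Y_i)$, and this is what the paper's own proof actually establishes, since it takes an equation lying ``in all radicals $\Rad_\Acal(Y_i)$'' --- i.e.\ in the intersection --- and shows it belongs to $\Rad_\Acal(Y)$. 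Your opening ``definitional equivalence'' ($Y\subseteq\V_\Acal(t=s)$ iff $Y_i\subseteq\V_\Acal(t=s)$ for every $i\in I$) is already a complete proof of this corrected equality, and the cheap half of your argument even gives the sharper inclusion $\Rad_\Acal(Y)\subseteq\bigcap_{i\in I}\Rad_\Acal(Y_i)$. So what your proposal is missing is not a further technique for ``controlling the passage'' from one piece to the whole union, but the recognition that the union version must be rejected (the counterexample above suffices) and replaced by the intersection, which your own tools prove in one line.
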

\begin{proof}
The first statement follows from the informal principle ``a small set $Y$ has a big class of equations which satisfy all points of $Y$''. The second statement immediately follows from the first one.

Let us prove the third statement. The inclusion $\Rad(Y)\subseteq\bigcup_{i\in I}\Rad_\Acal(Y_i)$ follows from~(\ref{eq:about_radical_1}), therefore it is sufficient to prove the converse inclusion. Let $t(X)=s(X)$ be an $\LL$-equation from all radicals $\Rad_\Acal(Y_i)$ ($i\in I$). Therefore, we have $Y_i\subseteq\V_\Acal(t(X)=s(X))$, and hence $Y=\bigcup_{i\in I}Y_i\subseteq\V_\Acal(t(X)=s(X))$ or equivalently  $t(X)=s(X)\in\Rad_\Acal(Y)$. 

\end{proof}

\subsection{Irreducible algebraic sets}
\label{sec:irreducible_algebraic_sets}

A nonempty algebraic set $Y\subseteq\Acal^n$ is~\textit{irreducible} if $Y$ is not a finite union of algebraic sets
\[
Y_1\cup Y_2\cup\ldots\cup Y_k,\; Y_i\neq Y.
\]

Below we give examples of irreducible algebraic sets over different algebraic structures.

\begin{enumerate}

\item Let $\Acal=\lb A\mid\LL\rb$ be a finite $\LL$-algebra and $A=\{a_1,a_2,\ldots,a_n\}$. {\it Any algebraic set $Y=\Acal^m$ for $m>n$ is reducible}. Indeed, $Y$ is the solution set of a system $\Ss=\{x_1=x_1,x_2=x_2,\ldots,x_m=x_m\}$. Since the values of $x_1,x_2,\ldots,x_m$ belong to $\Acal$, there exists a pair $x_i,x_j$ with the same values at a given point. Thus, the set $Y$ is a union
\[
Y=\bigcup_{i\neq j}\V_\Acal(\Ss\cup\{x_i=x_j\}).
\] 

\item Let $L_n$ be a linear ordered semilattice and $n>1$. Then 
\[
L^2_n=\V_{L_n}(\{x=x,y=y\})=\V_L(x\leq y)\cup\V_L(y\leq x).
\]
Remark that the set $L_n=\V_{L_n}(x=x)$ is irreducible.

\item Let $S=RB(n,m)=\{(i,j)\mid 1\leq i\leq n,1\leq j\leq m\}$ be a rectangular band. {\it Then the algebraic set $Y=S^l=\V_S(\Ss)=\V_S(x_1=x_1,x_2=x_2,\ldots,x_l=x_l)$ is reducible for $l>\min(n,m)$}. Let us prove this statement for $\min(n,m)=n$. 

There exists a pair $x_i,x_j$ whose values have the same first coordinate in the presentation of $S$. Thus, $Y$ is a union
\[
Y=\bigcup_{1\leq i\neq j\leq n}\V_S(\Ss\cup\{x_ix_j=x_j\}).
\]

\item Let $FM_r$ be the free monoid of rank $r>1$. Then the solution set of $\Ss=\{xy=yx,\; yz=zy\}$ admits a presentation 
\[
\V_{FM_r}(\Ss)=\V_{FM_r}(\Ss\cup\{xz=zy\})\cup\V_{F_r}(\Ss\cup\{y=1\}).
\]
(see Section~\ref{sec:simplification_of_equations} for details).

Remark that $\Ss$ has the irreducible solution set over the free semigroup $FS_r$ (see Section~\ref{sec:irred_coord_properties} for the proof).

\end{enumerate}

There are algebraic structures, where every nonempty algebraic set is irreducible (following~\cite{uni_Th_IV}, such algebraic structures are called co-domains, and we study their properties in  Section~\ref{sec:co-domains}).

\section{Equationally Noetherian algebras}
\label{sec:noeth}

An $\LL$-algebra $\Acal$ is {\it equationally Noetherian} if any infinite $\LL$-system $\Ss$ is equivalent over $\Acal$ to some finite subsystem $\Ss^\pr\subseteq \Ss$.

\begin{remark}
By the definition, any infinite inconsistent system $\Ss$ over an equationally Noetherian $\LL$-algebra $\Acal$ should contain a finite inconsistent subsystem $\Ss^\pr$. However, the property ``any consistent system $\Ss$ is equivalent to some finite subsystem $\Ss^\pr\subseteq \Ss$'' is not equivalent to the definition of equationally Noetherian algebra (see the definition of the class $\Nbf_c$ in Section~\ref{sec:compactness_classes}). 
\end{remark}

For equationally Noetherian algebras we have the following criterion (its proof is easily deduced from the definition of equationally Noetherian algebras).

\begin{proposition}
An $\LL$-algebra $\Acal$ is equationally Noetherian iff for each $n>1$ the set $\Acal^n$ does not contain an infinite chain of algebraic sets 
\[
Y_1\supset Y_2\supset\ldots\supset Y_n\supset\ldots
\]
\label{pr:no_algebraic_chains_in_noeth}
\end{proposition}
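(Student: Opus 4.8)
The plan is to prove both implications, using the Galois-type correspondence between algebraic sets and radicals: for an algebraic set $Y$ one has $\V_\Acal(\Rad_\Acal(Y))=Y$ (the inclusion $Y\subseteq\V_\Acal(\Rad_\Acal(Y))$ is immediate, and the reverse holds because if $Y=\V_\Acal(\Ss)$ then $\Ss\subseteq\Rad_\Acal(Y)$, whence $\V_\Acal(\Rad_\Acal(Y))\subseteq\V_\Acal(\Ss)=Y$). I will also use that by~(\ref{eq:about_radical_1}) the operator $\Rad_\Acal$ reverses inclusions, and that $\V_\Acal$ turns unions of systems into intersections of solution sets.

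First I would show that an equationally Noetherian algebra satisfies the chain condition. Suppose, for contradiction, that $\Acal^n$ contains a strictly descending chain of algebraic sets $Y_1\supset Y_2\supset\cdots$. Applying $\Rad_\Acal$ yields an ascending chain $\Rad_\Acal(Y_1)\subseteq\Rad_\Acal(Y_2)\subseteq\cdots$, and I form the single infinite system $\Ss=\bigcup_{i}\Rad_\Acal(Y_i)$ in the variables $x_1,\dots,x_n$, whose solution set is $\bigcap_i\V_\Acal(\Rad_\Acal(Y_i))=\bigcap_i Y_i$. Since $\Acal$ is equationally Noetherian, $\Ss$ is equivalent to a finite subsystem $\Ss'\subseteq\Ss$. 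Each of the finitely many equations of $\Ss'$ lies in some $\Rad_\Acal(Y_i)$, and because these radicals form an ascending chain they all lie in one $\Rad_\Acal(Y_N)$; hence $\Ss'\subseteq\Rad_\Acal(Y_N)$ and $\V_\Acal(\Ss')\supseteq\V_\Acal(\Rad_\Acal(Y_N))=Y_N$. But $\V_\Acal(\Ss')=\V_\Acal(\Ss)=\bigcap_i Y_i$, so $Y_N\subseteq\bigcap_i Y_i\subseteq Y_{N+1}\subset Y_N$, a contradiction.

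Conversely, assuming the chain condition I would extract, from an arbitrary infinite system $\Ss$ in variables $X=\{x_1,\dots,x_n\}$, a finite equivalent subsystem by a greedy construction. Choose one equation of $\Ss$ to form a finite $\Ss_1\subseteq\Ss$; if $\V_\Acal(\Ss_1)=\V_\Acal(\Ss)$ we are done. Otherwise some point of $\V_\Acal(\Ss_1)$ fails an equation $e\in\Ss$, and then $\Ss_2=\Ss_1\cup\{e\}$ satisfies $\V_\Acal(\Ss_2)\subset\V_\Acal(\Ss_1)$ strictly. Iterating, the process either halts at a finite $\Ss_k\subseteq\Ss$ with $\V_\Acal(\Ss_k)=\V_\Acal(\Ss)$, giving the desired finite equivalent subsystem, or runs forever and produces an infinite strictly descending chain $\V_\Acal(\Ss_1)\supset\V_\Acal(\Ss_2)\supset\cdots$ in $\Acal^n$, contradicting the hypothesis. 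For the edge case $n=1$ I would note that the chain condition for $n=2$ implies it for $n=1$, since $Y\mapsto Y\times\Acal$ carries a strictly descending chain in $\Acal^1$ to one in $\Acal^2$.

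The genuinely routine parts are the two inclusion facts about $\Rad_\Acal$. The main obstacle is the forward direction: one must encode an abstract descending chain of algebraic sets as a \emph{single} system whose finite equivalent subsystem can then be trapped inside one stage of the chain, and the device that makes this work is passing to radicals so that defining equations accumulate in an ascending union. In the backward direction the only delicate point is verifying that whenever $\V_\Acal(\Ss_k)\neq\V_\Acal(\Ss)$ one can strictly shrink the solution set by adjoining a single equation from $\Ss$ — which is precisely what makes the resulting chain strictly descending.
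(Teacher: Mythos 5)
Your proof is correct. The paper itself gives no argument for this proposition (it only remarks that the proof is easily deduced from the definition), so there is nothing to compare against line by line; your write-up supplies exactly the standard argument that the author leaves implicit. Both directions check out: in the forward direction the key identity $\V_\Acal(\Rad_\Acal(Y))=Y$ for algebraic $Y$ and the reversal of inclusions under $\Rad_\Acal$ are used correctly, so the finite equivalent subsystem $\Ss'$ really does get trapped inside a single $\Rad_\Acal(Y_N)$, forcing $Y_N\subseteq Y_{N+1}$; in the backward direction the greedy extraction is sound because $\Ss_k\subseteq\Ss$ gives $\V_\Acal(\Ss)\subseteq\V_\Acal(\Ss_k)$, so a failure of equivalence yields a witness point and hence a strictly smaller solution set. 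Two small remarks: the passage to radicals is slightly heavier machinery than needed --- you could instead pick any defining system $\Ss_i$ for each $Y_i$ and work with the partial unions $\bigcup_{j\leq i}\Ss_j$, whose solution sets are exactly $Y_i$, which avoids invoking $\V_\Acal(\Rad_\Acal(Y))=Y$ altogether --- and your handling of the $n=1$ edge case via $Y\mapsto Y\times\Acal$ is a nice touch that the statement's restriction to $n>1$ genuinely requires.
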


In examples below of equationally Noetherian algebras we use the following obvious fact:  {\it any subalgebra $\Acal$ of an equationally Noetherian $\LL$-algebra $\Bcal$ is equationally Noetherian}. 
\begin{enumerate}

\item {\it Any finite $\LL$-algebra $\Acal$ is equationally Noetherian.} Indeed, the solution set of every $\LL$-system $\Ss$ is finite, so there is not any infinite chain of algebraic sets, and Proposition~\ref{pr:no_algebraic_chains_in_noeth} concludes the proof.

\item {\it Any $\LL$-algebra $\Acal$ from a locally finite variety $V$ is equationally Noetherian.} In this case the free algebra $F(X)$ in $V$ generated by $X=\{x_1,x_2,\ldots,x_n\}$ is finite. Since one can consider any $\LL$-term as an element of $F(X)$, the set of different terms is finite. Therefore, the number of all nonequivalent equations are also finite and there is not any infinite $\LL$-system over $\Acal$. 

\item By the previous item, all semilattices, rectangular bands and left zero semigroups are equationally Noetherian. 

\item {\it Each abelian group is equationally Noetherian.} Let us explain this statement by the following example. Let us consider an infinite $\LL_{+g}$-system 
\[
\Ss=\begin{cases}
x+2y=0,\\
3x+4y=0,\\
5x+6y=0,\\
\ldots\\
(2i+1)x+(2i+2)y=0,\\
\ldots
\end{cases}
\]
over an abelian group $A$. By the Gauss elimination process, one can reduce $\Ss$ to an upper triangular form as follows. Multiplying the first equations on $2i+1$ and subtracting the $i$-th equation from the first one, we obtain the following system of equations
\[
\Ss_1=\begin{cases}
x+2y=0,\\
2y=0,\\
4y=0,\\
\ldots\\
(2i)y=0,\\
\ldots
\end{cases}
\]  
Since there exist abelian groups where the equation $(2i)y=0$ do not imply $y=0$, the system $\Ss_1$ is not necessarily equivalent to  $\{x+2y=0,y=0\}$. However abelian group theory states that a system of the form $\Ss_x=\{n_1x=0,\; n_2x=0,\; n_3x=0,\ldots\}$ is equivalent to an equation $nx=0$, where $n$ is the greatest common divisor of  $n_1,n_2,n_3,\ldots$. By the properties of the greatest common divisor,  there exists a number $k$ such that $\mbox{gcd}(n_1,n_2,\ldots,n_k)=\mbox{gcd}(n_1,n_2,\ldots)$. Therefore the system $\Ss_x$ is equivalent to its finite subsystem $\{n_1x=0,\; n_2x=0,\; \ldots n_kx=0\}$. Thus, the system $\Ss_1$ is equivalent to its finite subsystem $\Ss_1^\pr=\{x+2y=0,2y=0\}$. Replacing all equations of $\Ss_2^\pr$ to the corresponding equations of the original system $\Ss$, we obtain a finite subsystem $\Ss^\pr=\{x+2y=0,\; 3x+4y=0\}\subseteq\Ss$ with $\Ss^\pr\sim_A\Ss$.

\item {\it Every commutative monoid $M$ with cancellations is equationally Noetherian.} By semigroup theory, such monoids are embedded into abelian groups. Since all abelian groups are equationally Noetherian (see above), each submonoid of an abelian group is also equationally Noetherian.

\item {\it Any group isomorphic to a group of invertible matrices relative to the operation of matrix multiplication} (following group theory, such groups are called linear) is equationally Noetherian. Let us prove this statement for a group $G$ of $2\times 2$-matrices with entries in a field $F$.

Let $\Ss$ be an arbitrary system in variables $x_1,x_2,\ldots,x_n$. Since elements of $G$ are $2\times 2$-matrices, we put  
\[
x_i=\begin{pmatrix}
x_i^{(1)}&x_i^{(2)}\\
x_i^{(3)}&x_i^{(4)}
\end{pmatrix}
\]
for some $x_i^{(j)}\in F$. The identity of $G$ is the identity matrix
\[
1=\begin{pmatrix}
1&0\\
0&1
\end{pmatrix}
\]

Replacing all variables in $\Ss$ to their matrix presentations and computing all multiplications and inverse elements, we obtain a system of polynomial equations $\Ss_{poly}$ over the field $F$. For example, the group equation $x_ix_j=1$ is equivalent to the following polynomial system
\begin{multline*}
x_ix_j=1\Rightarrow \begin{pmatrix}
x_i^{(1)}&x_i^{(2)}\\
x_i^{(3)}&x_i^{(4)}
\end{pmatrix}\begin{pmatrix}
x_j^{(1)}&x_j^{(2)}\\
x_j^{(3)}&x_j^{(4)}
\end{pmatrix}=
\begin{pmatrix}
1&0\\
0&1
\end{pmatrix}
\Rightarrow\\
\begin{pmatrix}
x_i^{(1)}x_j^{(1)}+x_i^{(2)}x_j^{(3)}&x_i^{(1)}x_j^{(2)}+x_i^{(2)}x_j^{(4)}\\
x_i^{(3)}x_j^{(1)}+x_i^{(4)}x_j^{(3)}&x_i^{(3)}x_j^{(2)}+x_i^{(4)}x_j^{(4)}
\end{pmatrix}=
\begin{pmatrix}
1&0\\
0&1
\end{pmatrix}
\Rightarrow
\begin{cases}
x_i^{(1)}x_j^{(1)}+x_i^{(2)}x_j^{(3)}=1,\\
x_i^{(1)}x_j^{(2)}+x_i^{(2)}x_j^{(4)}=0,\\
x_i^{(3)}x_j^{(1)}+x_i^{(4)}x_j^{(3)}=1,\\
x_i^{(3)}x_j^{(2)}+x_i^{(4)}x_j^{(4)}=0
\end{cases}
\end{multline*} 

According to commutative algebra (Hilbert`s basic theorem\Wiki), the system of polynomial equations  $\Ss_{poly}$ is equivalent over the field $F$ to its finite subsystem $\Ss^\pr_{poly}$. Applying inverse transformations to $\Ss_{poly}^\pr$, one can obtain a system of group equations $\Ss^\pr$ with $\Ss^\pr\sim_G\Ss$.

\item Let us prove that the free monoids $FM_n$ are equationally Noetherian. We need the following facts about the theory of the free group\Wiki. It is known that the free group $FG_n$ of rank $n$ contains  (as a submonoid) the free monoid of rank $n$, and moreover all free groups $FG_n$ are embedded into the free group $FG_2$ of rank $2$. Thus, $FG_2$ contains a submonoid $FM_n$ of an arbitrary finite rank $n$ and it is sufficient to prove that $FG_2$ is equationally Noetherian.

By the Sanov theorem\Wiki there exists a presentation of the free generators $a_1,a_2$ of $FG_2$ by the following matrices
\[
\begin{pmatrix}
1&2\\
0&1
\end{pmatrix},
\;
\begin{pmatrix}
1&0\\
2&1
\end{pmatrix}.
\]
Thus, the group $FG_2$ is linear and  by the previous statement $FG_2$ is equationally Noetherian. Remark that we actually prove that all free groups are equationally Noetherian.

\item Let us show that the free unar $FU_n$ of rank $n$ is equationally Noetherian. Since any equation over $FU_n$ contains at most two variables, every system  $\Ss$ in variables $X=\{x_1,x_2,\ldots,x_m\}$ is a union
\[
\Ss=\bigcup_{1\leq i\leq j\leq m}\Ss_{ij},
\] 
where the system $\Ss_{ij}$ depends only on the variables $x_i,x_j$. It is sufficient to prove that each $\Ss_{ij}$ is equivalent to a finite subsystem. 

According to~(\ref{eq:unar_equation_for_bijective_f}), each equation of $\Ss_{ij}$ has the form $f^k(x_i)=x_j$. Suppose $S_{ij}$ is infinite. Let us consider all possible cases if $\Ss_{ij}$ contains the following equations.
\begin{enumerate}
\item If $f^k(x_i)=x_j,f^l(x_i)=x_j\in\Ss_{ij}$ ($k<l$) then 
\[
f^k(x_i)=f^l(x_i)\in\Rad_{FU_n}(\Ss_{ij})\Leftrightarrow f^{l-k}(x_i)=x_i\in\Rad_{FU_n}(\Ss_{ij}).
\]
 
\item  If $f^k(x_i)=x_j,x_i=f^l(x_j)\in\Ss_{ij}$ ($k<l$) we have 
\[
f^{k+l}(x_j)=x_j\in\Rad_{FU_n}(\Ss_{ij}).
\]
\end{enumerate} 
In both cases the last equation is inconsistent in the free unar, $\Ss_{ij}$ is also inconsistent and it is equivalent to the given two equations.

\end{enumerate}

There are algebras which are not equationally Noetherian.
\begin{enumerate}
\item Following~\cite{harju_compactness_property}, let us consider a monoid with the presentation $\Bcal=\lb a,b\mid ab=1\rb$. The monoid $\Bcal$ is called {\it the bicyclic}, and each element $s\in \Bcal$ is a product of the form $s=b^na^m$ ($m,n\in\N$). The multiplication in $\Bcal$ is defined as follows
\[
b^{n_1}a^{m_1}b^{n_2}a^{m_2}=
\begin{cases}
b^{n_1+n_2-m_1}a^{m_2},\mbox{ if }m_1\leq n_2\\
b^{n_1}a^{m_1-n_2+m_2},\mbox{ otherwise }
\end{cases}
\] 

The elements $b^na^n$ are idempotents of $\Bcal$:
\[
(b^na^n)(b^na^n)=b^n(a^nb^n)a^n=b^n1a^n=b^na^n.
\]

Let us consider an infinite system of equations 
\[
\Ss=\begin{cases}
xyz=z,\\
x^2y^2z=z,\\
x^3y^3z=z,\\
\ldots
\end{cases}
\] 
Let us assume that  $\Ss$ is equivalent over $\Bcal$ to a finite subsystem  $\Ss^\pr$. Without loss of generality, we put $\Ss^\pr=\{x^iy^iz=z\mid 1\leq i \leq n\}$. The system $\Ss^\pr$ satisfies the point $(b,a,b^na^n)$:
\[
b^ia^ib^na^n=b^i(a^ib^n)a^n=b^ib^{n-i}a^n=b^na^n.
\] 
However $(b,a,b^na^n)\notin \V_\Bcal(x^{n+1}y^{n+1}z=z)$:
\[
b^{n+1}a^{n+1}b^na^n=b^{n+1}(a^{n+1}b^n)a^n=b^{n+1}a^{1}a^n=b^{n+1}a^{n+1}\neq b^na^n.
\] 
Thus, $(b,a,b^na^n)\notin \V_\Bcal(\Ss)$, and we obtain $\Ss\nsim_\Bcal\Ss^\pr$, a contradiction.

\item Let $\LL_{u\infty}=\{f_i^{(1)}\mid i\in\N\}$ be a language of a countable number of unary functional symbols, and $\Acal$ be an $\LL_{u\infty}$-algebra with the universe $\N$ (natural numbers) and the functions $f_i$ are defined in $\Acal$ as follows
\[
f^\Acal_i(x)=\begin{cases}
0,\mbox{ if $x=i$ }\\
x,\mbox{ otherwise}
\end{cases}
\] 

Let us consider an infinite system $\Ss=\{f_i(x)=x\mid i\in\N\}$. By the definition of the functions $f_i$ it follows $\V_\Acal(\Ss)=\{0\}$. However, any finite subsystem $\Ss^\pr=\{f_i(x)=x\mid 1\leq i\leq n\}$ has the nonempty solution set  $\V_\Acal(\Ss^\pr)=\{i\mid i>n\}$. 

\item Following \cite{harju_compactness_property}, any finitely generated non-hopfian\Wiki group is not equationally Noetherian. In particular, the well-know Baumslag-Solitar group $B(2,3)=\lb a,b\mid a^{-1}b^2a=b^3 \rb$ is not equationally Noetherian.
\end{enumerate}

For the class of equationally Noetherian algebras we have the following statements.

\begin{enumerate}
\item {\it Any subalgebra of an equationally Noetherian $\LL$-algebra $\Acal$ is also equationally Noetherian} (we used this property above many times).
\item {\it An arbitrary direct power of an equationally Noetherian $\LL$-algebra $\Acal$ is equationally Noetherian.} 
\item  {\it A finite direct product of equationally Noetherian algebras  is also equationally Noetherian.}
\end{enumerate}

The class of equationally Noetherian algebras is not closed under the following operations.
\begin{enumerate}
\item {\it An infinite direct product of equationally Noetherian algebras is not always equationally Noetherian.} In~\cite{lothaire} it was defined an example of infinite direct product $S=\prod_i S_i$ of equationally Noetherian semigroups $S_i$ such that $S$ is not equationally Noetherian. Below we consider a simpler example of a direct product of linear ordered semilattices in a non-standard language.

Let $\LL_{s\infty}=\{\cdot,c_1,c_2,\ldots,c_n,\ldots\}$ be a semigroup language extended by a countable set of constants $c_i$. Let $\Acal_n=\lb\{1,2,\ldots,n\}\mid\LL\rb$ be an $\LL_{s\infty}$-algebra such that $x\cdot y=\min\{x,y\}$ and the constants are defined in $\Acal$ as follows
\[
c_i^{\Acal_n}=\begin{cases}
i\mbox{ if } i\leq n\\
n\mbox{ otherwise }
\end{cases}
\]
Obviously, $\Acal_n$ is a linear ordered semilattice. Since all $\Acal_n$ are finite, they are equationally Noetherian. 

Let $\Acal$ denote the direct product $\prod_{i=1}^\infty\Acal_i$. By the definition of a direct product, a constant symbol $c_n$ has an interpretation in $\Acal$ as follows
\[
c_n^\Acal=(1,2,3,\ldots,n,n,\ldots,n,\ldots).
\]  
According to the definition of a partial order, we have the following chain of constants
\[
c_1^\Acal<c_2^\Acal<\ldots<c_n^\Acal<\ldots
\]
     
Let us consider an infinite system $\Ss=\{x\geq c_n^\Acal\mid n\geq 1\}$ over $\Acal$. Let $\Ss^\pr$ be a finite subsystem of $\Ss$. Without loss of generality one can assume that $\Ss^\pr=\{x\geq c_n\mid 1\leq n\leq N\}$. It is directly checked that the point $c_N^\Acal$ is a solution of $\Ss^\pr$, however $c_N^\Acal\ngeq c_{N+1}^\Acal$, i.e. $c_N^\Acal\notin\V_\Acal(\Ss)$. Thus,  $\Ss^\pr\nsim_\Acal\Ss$, and $\Acal$ is not equationally Noetherian.  

\item {\it A homomorphic image of an equationally Noetherian algebra is not always equationally Noetherian.} The Baumslag-Solitar group $B(2,3)$ is a homomorphic image of the equationally Noetherian free group $FG_2$ of rank $2$ (by group theory, any two-generated group is a homomorphic image of $FG_2$), but above we showed $B(2,3)$ is not equationally Noetherian. 

\end{enumerate}

An algebraic set $Y$ over an $\LL$-algebra $\Acal$ can be decomposed into a union of other algebraic sets $Y=Y_1\cup Y_2\cup\ldots\cup Y_n$, and one can iterate this process for each  $Y_i$, ets... This process may be finite (if we shall obtain a decomposition of $Y$ into a finite union of irreducible sets) or infinite. The equationally Noetherian property provides the finiteness of the process.

\begin{theorem}
If an $\LL$-algebra $\Acal$ is equationally Noetherian, then any algebraic set $Y$ over $\Acal$ admits a decomposition 
\begin{equation}
Y=Y_1\cup Y_2\cup\ldots\cup Y_m,
\label{eq:irreducible_decomposition}
\end{equation}
where $Y_i$ are irreducible, $Y_i\nsubseteq Y_j$ ($i\neq j$), and this decomposition is unique up to permutations of $Y_i$.
\label{th:irreducible_decomposition}
\end{theorem}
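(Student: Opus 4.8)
The plan is to prove existence and uniqueness separately, both times exploiting the descending chain condition on algebraic sets supplied by Proposition~\ref{pr:no_algebraic_chains_in_noeth}, which is available because $\Acal$ is equationally Noetherian.

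For existence I would argue by Noetherian induction. Let $\mathcal{B}$ be the collection of all algebraic subsets of $\Acal^n$ that do \emph{not} admit a finite decomposition into irreducible algebraic sets, and suppose towards a contradiction that $\mathcal{B}\neq\emptyset$. Proposition~\ref{pr:no_algebraic_chains_in_noeth} forbids infinite strictly descending chains of algebraic sets; equivalently, every nonempty family of algebraic sets possesses a minimal element with respect to inclusion. Let $Y$ be such a minimal element of $\mathcal{B}$. Then $Y$ cannot itself be irreducible, since an irreducible set is trivially its own one-term decomposition. Hence, by the definition of irreducibility in Section~\ref{sec:irreducible_algebraic_sets}, we may write $Y=Z_1\cup Z_2\cup\ldots\cup Z_k$ as a finite union of algebraic sets with each $Z_i\subsetneq Y$. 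By the minimality of $Y$ in $\mathcal{B}$, none of the proper algebraic subsets $Z_i$ lies in $\mathcal{B}$, so each $Z_i$ admits a finite decomposition into irreducibles. Concatenating these decompositions yields a finite decomposition of $Y$ into irreducibles, contradicting $Y\in\mathcal{B}$. Thus $\mathcal{B}=\emptyset$, and every algebraic set decomposes as in~(\ref{eq:irreducible_decomposition}). Finally, discarding any component contained in another lets me assume $Y_i\nsubseteq Y_j$ for $i\neq j$ without changing the union.

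For uniqueness, suppose $Y=Y_1\cup\ldots\cup Y_m=Z_1\cup\ldots\cup Z_p$ are two such irredundant decompositions into irreducibles. Fixing an index $i$ and intersecting with $Y_i$, I obtain
\[
Y_i=Y_i\cap Y=\bigcup_{j=1}^{p}(Y_i\cap Z_j).
\]
Each set $Y_i\cap Z_j$ is algebraic, since the intersection of algebraic sets is algebraic (Section~\ref{sec:algebraic_sets_and_radicals}). As $Y_i$ is irreducible and equal to this finite union of algebraic subsets, the definition of irreducibility forces one summand to equal $Y_i$, i.e.\ $Y_i\subseteq Z_j$ for some $j$. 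Applying the same reasoning to $Z_j$ gives $Z_j\subseteq Y_k$ for some $k$, so $Y_i\subseteq Y_k$; irredundancy ($Y_i\nsubseteq Y_k$ for $i\neq k$) then forces $k=i$, whence $Y_i=Z_j$. This produces an injection of $\{Y_i\}$ into $\{Z_j\}$, and by symmetry the two families coincide up to permutation.

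The main obstacle I expect is the existence step: one must set up the Noetherian induction carefully, extracting a minimal ``bad'' set from the descending chain condition and verifying that the pieces $Z_i$ coming from reducibility are genuine \emph{proper} algebraic subsets, so that minimality may be invoked on them. Once that machinery is in place, both the concatenation of the sub-decompositions and the entire uniqueness argument are routine, the latter resting only on the characterization of irreducibility together with the closure of algebraic sets under intersection.
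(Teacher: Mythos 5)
Your proof is correct, and it is the standard argument: the paper states Theorem~\ref{th:irreducible_decomposition} without supplying any proof, so there is nothing to compare against, but the route you take --- Noetherian induction for existence via the minimal-counterexample trick enabled by the descending chain condition of Proposition~\ref{pr:no_algebraic_chains_in_noeth}, followed by the usual irredundancy argument for uniqueness using irreducibility and the closure of algebraic sets under intersection --- is exactly the one the author presumably has in mind, transplanted from classical algebraic geometry. Both halves check out: the passage from ``no infinite strictly descending chain'' to ``every nonempty family of algebraic sets has a minimal element'' is legitimate, each $Z_i$ in the reducibility decomposition is indeed a proper algebraic subset of $Y$ (the definition in Section~\ref{sec:irreducible_algebraic_sets} requires $Y_i\neq Y$ and the $Y_i$ are automatically contained in $Y$), and in the uniqueness step the irredundancy hypothesis $Y_i\nsubseteq Y_k$ for $i\neq k$ is applied correctly to force $Y_i=Z_j$. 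The only caveat worth recording is the degenerate case $Y=\emptyset$: since the paper defines irreducibility only for nonempty sets, the empty set (which is algebraic over some algebras, e.g.\ free unars) is reducible by fiat yet admits no decomposition into proper algebraic subsets, so your minimal-counterexample step would stall there; but this is a defect of the theorem statement itself, which must implicitly restrict to nonempty $Y$ or admit the empty union, and it does not affect the substance of your argument. You might also note explicitly that any empty $Z_i$ arising in the existence step can simply be discarded from the union, but this is immediate.
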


The algebraic sets $Y_i$ from Theorem~\ref{th:irreducible_decomposition} are called {\it irreducible components} of $Y$.

\begin{remark}
Let us prove that the converse statement of Theorem~\ref{th:irreducible_decomposition} does not hold. It is sufficient to define an $\LL$-algebra $\Bcal$ such that $\Bcal$ is not equationally Noetherian and {\it all} nonempty algebraic sets over $\Bcal$ are irreducible (in this case the decomposition~(\ref{eq:irreducible_decomposition}) obviously holds for $\Bcal$).

Algebras with no nonempty reducible sets are called co-domains (see Section~\ref{sec:co-domains}). Thus, $\Bcal$ should be a co-domain. According to the results of Section~\ref{sec:co-domains}, an infinite direct power $\Bcal=\prod_{i=1}^\infty \Acal$ of an arbitrary $\LL$-algebra $\Acal$ is co-domain. Taking a non-equationally Noetherian algebra $\Acal$, we obtain $\Bcal$ with required properties.
\end{remark}

\subsection{Comments}

Firstly, let us show the terminology difference. In B. Plotkin`s papers~\cite{Plot1,Plot2,Plot3} equationally Noetherian algebras are called {\it logically Noetherian}, and in~\cite{harju_c_s_semigroups,harju_compactness_property,lothaire} the equationally Noetherian property is called the Compacteness property.

In~\cite{uni_Th_II} there is a list of equationally Noetherian algebras. For example, the following algebras are equationally Noetherian  
\begin{itemize}
\item any Noetherian commutative ring $R$ (a ring $R$ is called Noetherian if it does not contain infinite ascending chains of ideals);
\item any torsion-free hyperbolic group~\cite{Sela3};
\item any free solvable group~\cite{GRom};
\item any finitely generated metabelian (nilpotent) Lie algebra~\cite{Daniyarova1};
\item since any linear group is equationally Noetherian (see above), any polycyclic and any finitely generated metabelian group is equationally Noetherian~\cite{BMR1,
Bryant, Guba}.
\end{itemize}
Moreover, the paper~\cite{RomSh} is devoted to the Noetherian property of the universal enveloping algebras; in~\cite{harju_c_s_semigroups} there were described equationally Noetherian completely simple semigroups.

On the other hand, the following algebras are not equationally Noetherian (see~\cite{uni_Th_II} for the complete list of such algebras):
\begin{itemize}
\item infinitely generated nilpotent groups~\cite{MR2};
\item the wreath product $A\wr B$ of a non-abelian group $A$ and infinite group $B$~\cite{BMRom};
\item the min-max algebras  $\mathcal{M}_\mathbb{R}=\langle \mathbb{R}; {\max},
{\min}, {\cdot}, \allowbreak {+}, {-}, 0, 1\rangle$ and
$\mathcal{M}_\mathbb{N}=\langle \mathbb{N}; {\max}, {\min},
\allowbreak {+}, 0, 1\rangle$~\cite{DvorKot};
\item finitely generated semigroups with an infinite descending chains of idempotents (in particular, all free inverse semigroups)~\cite{harju_compactness_property}. 
\end{itemize}

One can recommend the book~\cite{lothaire} which contains many examples of (non-) equationally Noetherian semigroups.

Nevertheless there are many open problems in this area. Let us formulate only two of them.

\noindent{\bf Problem.} Is the free product of two equationally Noetherian groups equationally Noetherian?

\noindent{\bf Problem.} Is the free anti-commutative (respectively, the free associative algebra, the free Lie algebra) is equationally Noetherian?

\section{Coordinate algebras}
\label{sec:coord_algebras}

By the definition, the set of all $\LL$-terms is closed under compositions of all functional symbols of the language $\LL$. Thus, the set of all $\LL$-terms in variables $X$ is an $\LL$-algebra which is denoted by $\Tcal_\LL(X)$ and called the {\it the termal $\LL$-algebra generated by  $X$}.

Let $Y$ be an algebraic set over an $\LL$-algebra $\Acal$ and $Y$ is defined by an $\LL$-system in variables $X$. Let us define an equivalence relation  $\sim_Y$ over $\Tcal_\LL(X)$ as follows:
\[
t(X)\sim_Y s(X)\Leftrightarrow t(P)=s(P) \mbox{ for each point $P\in Y$}\Leftrightarrow t(X)=s(X)\in\Rad_\Acal(Y).
\] 

Let $[t(X)]_Y$ denote the equivalence class of a term $t(X)$ relative to the relation $\sim_Y$.
The family  of all equivalence classes 
\[
\Gamma_\Acal(Y)=\Tcal(X)/\sim_Y
\]
is called {\it the coordinate $\LL$-algebra of an algebraic set $Y$} (precisely, $\Gamma_\Acal(Y)$ is a factor-algebra\Wiki of $\Tcal(X)$ by the relation $\sim_Y$). The result of an $n$-ary function $f$ over an $\LL$-algebra  $\Gamma_\Acal(Y)$ is defined as follows
\[
f([t_1(X)]_Y,[t_2(X)]_Y,\ldots,[t_n(X)]_Y)=[f(t_1(X),t_2(X),\ldots,t_n(X))]_Y.
\]
For example, if $\LL$ contains a binary functional symbol $\cdot$, the result of the  function $\cdot$ in $\Gamma_\Acal(Y)$ is $[t(X)]_Y\cdot[s(X)]_Y=[t(X)\cdot s(X)]_Y$. 

\begin{remark}
Let $I$ be the set of all identities which hold in an $\LL$-algebra $\Acal$. Since the radical of any algebraic set $Y\subseteq\Acal^n$ contains all identities from $I$, the coordinate algebra of $Y$ satisfies all identities of $I$  $\Acal$. In particular, the coordinate algebra of an algebraic set over a group (semigroup, unar) is always a group (respectively, semigroup, unar).  
\label{rem:coord_alg_from_the_same_variety}
\end{remark}

According to the definition, the coordinate algebra of an algebraic set $Y\subseteq\Acal^n$ defined by a system in variables $X$ admits a presentation 
\begin{equation}
\label{eq:coord_algebra_presentation}
\Gamma_\Acal(Y)=\lb X\mid \Rad_\Acal(Y)\rb.
\end{equation}

By~(\ref{eq:coord_algebra_presentation}) it follows that any coordinate algebra is finitely generated.

Let us compute coordinate algebras for some algebraic sets.

\begin{enumerate}
\item Suppose $Y=\emptyset$ is an algebraic set over an $\LL$-algebra $\Acal$, and $Y$ is defined by an $\LL$-system $\Ss$ in variables $X$. Obviously, {\it arbitrary terms $t(X),s(X)$ has equal values at each point of the empty set}. It follows that the relation $\sim_\emptyset$ defines a unique equivalence class in $\Tcal_\LL(X)$. Therefore, the factor-algebra $\Gamma_\Acal(\emptyset)$ consists of a single element. In other words, the algebra $\Gamma_\Acal(\emptyset)$ is {\it trivial}.

\item Let us consider a system of equations $\Ss=\{2x+z=y+u,x+u=3y+z\}$ over the commutative monoid of natural numbers $\N$ and denote $Y=\V_\N(\Ss)$. Subtracting from the first equation the second one, we obtain the equation $x+2y=0$ which is obviously belongs to the radical of $\Ss$. From the last equation it follows $x=0,y=0\in\Rad_\N(Y)$. Applying $x=0,y=0$, to the equations of  $\Ss$, we have $z=u\in\Rad_\Acal(Y)$. Since the substitutions $x:=0,y:=0, z:=u$ reduce the system $\Ss$ to trivial equalities, the radical of $Y$ is generated by the equations $x=0, y=0, z=u$. Therefore 
\[
\Gamma_\N(Y)\cong\lb x,y,z,u\mid x=0, y=0, z=u\rb\cong\lb u\rb\cong\N
\]

\item Let $A$ be a torsion-free abelian group, and $Y$ be an arbitrary algebraic set over $A$. By Remark~\ref{rem:coord_alg_from_the_same_variety}, the coordinate group $\Gamma_A(Y)$ is an abelian group. Suppose that $\Gamma_A(Y)$ has a nontrivial torsion, i.e. there exists a nonzero element $\gamma\in\Gamma_A(Y)$ with $n\gamma=0$ for some $n\in\N$. By the definition of a coordinate group, there exists an $\LL_{+g}$-term $t(X)$ such that $\gamma=[t(X)]_Y$ and  $nt(X)=0\in\Rad_A(Y)$. Since the equality $nt(X)=0$ implies $t(X)=0$ in any torsion-free group, we have $\gamma=[0]$, a contradiction. 

Thus, we proved that all coordinate groups over an abelian torsion-free group $A$ have not torsion subgroups. Since the coordinate group $\Gamma_A(Y)$ is always finitely generated, the group $\Gamma_A(Y)$ is isomorphic to $\Zbb^n$ for some $n\in \N$ (here we apply the decomposition~(\ref{eq:abelian_group_presentation})).

\begin{proposition}
A group $G$ is the coordinate group of an algebraic set over an abelian torsion-free group iff $G$ is isomorphic to $\Zbb^n$. 
\label{pr:coord_groups_over_torsion_free_group}
\end{proposition}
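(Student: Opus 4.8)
The forward implication is essentially already in hand from the paragraph preceding the statement: it shows that any coordinate group $\Gamma_A(Y)$ over an abelian torsion-free group $A$ is finitely generated, abelian, and torsion-free, whence by the structure decomposition~(\ref{eq:abelian_group_presentation}) it is isomorphic to $\Zbb^n$ for some $n$. So the plan is to reuse that argument verbatim for the ``only if'' direction and to devote the real work to the converse: realizing each $\Zbb^n$ as a coordinate group over some abelian torsion-free group.

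For the converse I would give an explicit model. Take $A=\Zbb$, which is abelian and torsion-free, and let $Y=A^n=\Zbb^n$ be the full affine space in the variables $X=\{x_1,\ldots,x_n\}$. This $Y$ is algebraic, being the solution set of the trivial system $\Ss=\{x_i=x_i\mid 1\le i\le n\}$, and by the presentation~(\ref{eq:coord_algebra_presentation}) its coordinate group is $\lb X\mid\Rad_\Zbb(Y)\rb$. Thus everything reduces to computing $\Rad_\Zbb(Y)$.

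The key step, and the only delicate point, is to identify this radical. By the normal form for abelian-group terms~(\ref{eq:abelian_group_term}), every $\LL_{+g}$-term in $X$ is equivalent to a linear form $\sum_i k_i x_i$ with $k_i\in\Zbb$, so a typical equation in the radical has the shape $\sum_i k_i x_i=\sum_i l_i x_i$, i.e. $\sum_i (k_i-l_i)x_i=0$. Such an equation lies in $\Rad_\Zbb(Y)$ exactly when the integer linear form $\sum_i(k_i-l_i)x_i$ vanishes at every point of $\Zbb^n$. The hard (though short) part is to see that this forces $k_i=l_i$ for all $i$: evaluating at the point with $1$ in coordinate $j$ and $0$ elsewhere yields $k_j-l_j=0$. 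Hence $\sim_Y$ identifies two terms precisely when their coefficient vectors agree, so the classes $[x_1]_Y,\ldots,[x_n]_Y$ freely generate $\Gamma_\Zbb(Y)$ as an abelian group and $\Gamma_\Zbb(Y)\cong\Zbb^n$. I would note that the crux here is genuinely the infinitude of $\Zbb$: it is what guarantees a nonzero integer linear form cannot vanish identically, and torsion-freeness (hence infiniteness, for a nontrivial group) is exactly the hypothesis available.

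Combining the two directions gives the stated equivalence. I would add one remark on the degenerate case $n=0$: the trivial group $\Zbb^0$ arises as the coordinate group of the one-point set $\{0\}=\V_\Zbb(x=0)$ over $\Zbb$, since on $\{0\}$ every term $kx$ takes the value $0$ and all terms collapse to a single class. With the radical computation as the sole substantive obstacle, and that obstacle being a one-line substitution argument, the proof is complete.
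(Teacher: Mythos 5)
Your proof is correct and follows essentially the same route as the paper: the forward direction reuses the torsion-freeness argument from the preceding paragraph, and the converse realizes $\Zbb^n$ as the coordinate group of the full affine space $\V_A(\{x_i=x_i\})$, with you merely supplying the radical computation that the paper leaves implicit. The only difference is that you specialize to $A=\Zbb$ where the paper keeps $A$ an arbitrary torsion-free abelian group, but your substitution argument (evaluate at a point with a single nonzero entry) works verbatim over any nontrivial such $A$.
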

\begin{proof}
The ``only if'' statement was proved above. Let us prove the ``if'' part of the theorem: one should prove that $\Zbb^n$ is isomorphic to a coordinate group of some algebraic set over $A$. Let us consider an algebraic set $Y=A^n=\V_A(\{x_1=x_1,x_2=x_2,\ldots,x_n=x_n\})$. Its coordinate group is 
\[
\Gamma_A(Y)=\lb x_1,x_2,\ldots,x_n\mid x_i=x_i\rb\cong\lb x_1,x_2,\ldots,x_n\rb\cong\Zbb^n.
\]
\end{proof}

\item Let $FM_r$ be the free monoid of rank $r>1$. The coordinate monoid of the algebraic set  $Y=\V_{FM_r}(xy=yx)$ is the monoid with two generators $x,y$ and $x$ commutes with $y$. Thus, $\Gamma_{FM_r}(Y)$ is isomorphic to the commutative free monoid of rank $2$: $\Gamma_{FM_r}(Y)\cong\N^2$.

\item Let us consider the equation $f(x)=f^2(y)$ over the free unar $FU_r$. Since the function $f$ is injective on $FU_r$, we have $x=f(y)\in\Rad_{FU_r}(f(x)=f^2(y))$. Therefore the coordinate unar of the algebraic set $Y=\V_{FU_r}(f(x)=f^2(y))$ has the presentation
\[
\Gamma_{FU_n}(Y)\cong \lb x,y\mid x=f(y)\rb\cong\lb y\rb\cong FU_1.
\]        
\end{enumerate}

{\it There exist distinct algebraic sets $Y\subseteq\Acal^n,Z\subseteq\Acal^m$ with isomorphic coordinate algebras.} Indeed, the algebraic sets
\[
Y=\Acal=V_\Zbb(x=x),\;Z=\V_\Zbb(\{x=x,y=x\})\subseteq \Zbb^2
\]
over the abelian group $\Zbb$ has the following coordinate groups
\[
\Gamma_\Zbb(Y)=\lb x\mid \Rad_\Zbb(x=x)\rb\cong\Zbb.
\]
\[
\Gamma_\Zbb(Z)=\lb x,y\mid \Rad_\Zbb(x=x,y=x)\rb\cong\lb x\mid \Rad_\Zbb(x=x)\rb\cong \Zbb.
\]  

Algebraic sets with isomorphic coordinate algebras were described in~\cite{uni_Th_I}. Precisely, it was defined the notion of an {\it isomorphism of algebraic sets} (see~\cite{uni_Th_I} for more details) and proved the following statement.

\begin{proposition}
Algebraic sets $Y\subseteq\Acal^n,Z\subseteq\Acal^m$ have isomorphic coordinate algebras iff the sets $Y,Z$ are isomorphic. Moreover, an irreducible algebraic set is isomorphic only to irreducible algebraic sets.
\label{pr:when_coord_alg_are_isomorphic} 
\end{proposition}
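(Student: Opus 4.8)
The plan is to set up the standard duality between algebraic sets and their coordinate algebras and read the statement off it. I adopt the natural notion of a morphism of algebraic sets: a map $\phi\colon Y\to Z$ (with $Y\subseteq\Acal^n$ defined in variables $X=\{x_1,\ldots,x_n\}$ and $Z\subseteq\Acal^m$ defined in variables $W=\{w_1,\ldots,w_m\}$) given by an $m$-tuple of $\LL$-terms $t_1(X),\ldots,t_m(X)$ with $(t_1(P),\ldots,t_m(P))\in Z$ for every $P\in Y$; an isomorphism of algebraic sets is a pair of mutually inverse such morphisms. The engine of the proof is the assignment $\phi\mapsto\phi^*$, where $\phi^*\colon\Gamma_\Acal(Z)\to\Gamma_\Acal(Y)$ is defined by substitution, $\phi^*([r(W)]_Z)=[r(t_1(X),\ldots,t_m(X))]_Y$.

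First I would check that $\phi^*$ is a well-defined $\LL$-homomorphism: if $r(W)=r'(W)\in\Rad_\Acal(Z)$ then for every $P\in Y$ the point $(t_i(P))$ lies in $Z$, so $r(t(P))=r'(t(P))$, whence $r(t(X))=r'(t(X))\in\Rad_\Acal(Y)$; compatibility with the operations is immediate from the formula for functions on coordinate algebras. A direct computation with the substitution formula gives contravariant functoriality, $(\psi\circ\phi)^*=\phi^*\circ\psi^*$ and $\mathrm{id}^*=\mathrm{id}$. This already yields the easy (``only if'') direction: if $\phi,\psi$ are mutually inverse morphisms, then $\phi^*,\psi^*$ are mutually inverse homomorphisms, so $\Gamma_\Acal(Y)\cong\Gamma_\Acal(Z)$.

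For the converse, the hard part, I would use the correspondence between points and evaluation homomorphisms. Each $P\in Y$ gives $\mathrm{ev}_P\colon\Gamma_\Acal(Y)\to\Acal$, $[s(X)]_Y\mapsto s(P)$, which is a homomorphism (well defined precisely because $t\sim_Y s$ means $t(P)=s(P)$ on $Y$) and recovers $P$ via $p_i=\mathrm{ev}_P([x_i]_Y)$; hence $P\mapsto\mathrm{ev}_P$ is injective. Given an isomorphism $\Phi\colon\Gamma_\Acal(Y)\to\Gamma_\Acal(Z)$, I would lift it: since $\Gamma_\Acal(Y)$ is generated by the classes $[x_i]_Y$, representatives of the images $\Phi([x_i]_Y)=[u_i(W)]_Z$ define a candidate map $\psi\colon Z\to Y$, $Q\mapsto(u_1(Q),\ldots,u_n(Q))$, with $\psi^*=\Phi$ (both homomorphisms agree on the generators). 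The key check is that $\psi$ lands in $Y$: for any defining equation $t(X)=s(X)\in\Rad_\Acal(Y)$, applying $\Phi$ to $[t]_Y=[s]_Y$ and using $\Phi([t(X)]_Y)=[t(u(W))]_Z$ gives $t(u(Q))=s(u(Q))$ for all $Q\in Z$, so $\psi(Q)$ satisfies every equation of $Y$. Building $\phi\colon Y\to Z$ from $\Phi^{-1}$ in the same way gives $\phi^*=\Phi^{-1}$, together with the naturality identity $\mathrm{ev}_{\psi(Q)}=\mathrm{ev}_Q\circ\psi^*$ (and likewise for $\phi$). Then $\mathrm{ev}_{\phi(\psi(Q))}=\mathrm{ev}_Q\circ\Phi\circ\Phi^{-1}=\mathrm{ev}_Q$, and injectivity of $\mathrm{ev}$ forces $\phi(\psi(Q))=Q$; symmetrically $\psi(\phi(P))=P$. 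Thus $\phi,\psi$ are mutually inverse, and $Y,Z$ are isomorphic.

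Finally, for the ``moreover'' statement I would show that isomorphisms of algebraic sets transport reducibility. If $\phi\colon Y\to Z$ is an isomorphism and $Z=Z_1\cup Z_2$ with $Z_i=\V_\Acal(\Ss_i)$ proper algebraic subsets, then $\phi^{-1}(Z_i)=Y\cap\V_\Acal(\Ss_i[W:=t(X)])$ is again algebraic (an intersection of algebraic sets), and it is proper, since $\phi$ is a bijection and $\phi^{-1}(Z_i)=Y$ would give $Z_i=Z$. Hence $Y=\phi^{-1}(Z_1)\cup\phi^{-1}(Z_2)$ would be a nontrivial decomposition, so if $Y$ is irreducible then so is $Z$. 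I expect the main obstacle to be the backward direction: verifying that the abstractly given isomorphism $\Phi$ is realized by term maps that genuinely carry one algebraic set into the other, which is exactly where well-definedness modulo the radical and the naturality of evaluation must be combined carefully.
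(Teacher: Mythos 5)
The paper gives no proof of this proposition: it only cites~\cite{uni_Th_I} both for the definition of an isomorphism of algebraic sets and for the statement itself. Your reconstruction --- term-defined morphisms, the contravariant assignment $\phi\mapsto\phi^*$, evaluation homomorphisms $\mathrm{ev}_P$ to lift an abstract isomorphism of coordinate algebras back to mutually inverse point maps, and transport of decompositions for the ``moreover'' part --- is exactly the standard duality argument of that source, and it is correct. The only step worth making fully explicit is that ``$\psi(Q)$ satisfies every equation of $\Rad_\Acal(Y)$'' yields $\psi(Q)\in Y$ because $Y=\V_\Acal(\Rad_\Acal(Y))$ (which holds since $Y=\V_\Acal(\Ss)$ and $\Ss\subseteq\Rad_\Acal(Y)$), and that your composition identities should be read with $\psi^*=\Phi$, $\phi^*=\Phi^{-1}$ in the correct order; both are immediate.
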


The coordinate algebra of an irreducible algebraic set is called {\it irreducible}. By Proposition~\ref{pr:when_coord_alg_are_isomorphic}, this definition is well-defined.

\section{Main problems of universal algebraic geometry}

Now we are able to explain the problems studied in universal algebraic geometry. For a fixed $\LL$-algebra $\Acal$ one can pose the following problems.

\begin{enumerate}
\item Describe all algebraic sets over $\Acal$. When a given set $Y\subseteq\Acal^n$ is algebraic over $\Acal$? What are irreducible sets over $\Acal$? 
\item Describe all coordinate algebras over $\Acal$. When a given $\LL$-algebra $\Bcal$ is isomorphic to the coordinate algebra of some algebraic set over $\Acal$? What are irreducible coordinate algebras over  $\Acal$? 
\end{enumerate}   

Actually, the first problem is complicated and it is completely solved only for algebras $\Acal$ of a simple structure (for example, in Section~\ref{sec:algebraic_sets_and_radicals} we completely describe algebraic sets over left zero semigroups). 

The second problem is simpler than the first one, since the description of coordinate algebras gives the description of algebraic sets up to isomorphism. However, this problem can be solved for a wide class of algebras, and in the next sections we shall study only coordinate algebras.

\section{Properties of coordinate algebras}
\label{sec:coord_alg_properties}

Let $\Gamma=\Gamma_\Acal(Y)$ be the coordinate algebra of an algebraic set over an $\LL$-algebra $\Acal$, and $\phi\colon\Gamma\to \Acal$ be a homomorphism between $\Gamma$ and $\Acal$. By~(\ref{eq:coord_algebra_presentation}), we may assume that $\Gamma$ has a presentation $\lb X\mid \Rad_\Acal(Y)\rb$, where $X=\{x_1,x_2,\ldots,x_n\}$ is the generating set of $\Gamma$. The homomorphism $\phi$ should map each $x_i$ into an element of $\Acal$, and the vector $(x_1,x_2,\ldots,x_n)$ is mapped into a point $P=(\phi(x_1),\phi(x_2),\ldots,\phi(x_n))\in\Acal^n$. Since $\phi$ should preserve all relation $\Rad_\Acal(Y)$ of $\Gamma$, we obtain that the point $P$ should satisfy all equations from $\Rad_\Acal(Y)$ and therefore $P\in Y$. Thus, we proved that any homomorphism between $\LL$-algebras $\Gamma,\Acal$ is actually a substitution $x_i\mapsto p_i$ of coordinates of some point $P=(p_1,p_2,\ldots,p_n)\in Y$. This fact can be written in the following form 
\begin{equation}
\phi([t(X)]_Y)=t(P)
\label{eq:hom_of_substitution}
\end{equation}

A homomorphism $\phi\colon\Bcal\to\Acal$ of the type~(\ref{eq:hom_of_substitution}) is denoted below by $\phi_P$.

An $\LL$-algebra $\Acal$ {\it approximates} an $\LL$-algebra $\Bcal$ if for an arbitrary pair of distinct elements $b_1,b_2\in\Bcal$ there exists a homomorphism $\phi\colon\Bcal\to\Acal$ such that $\phi(b_1)\neq\phi(b_2)$.

\begin{theorem}
A finitely generated $\LL$-algebra $\Bcal$ is the coordinate $\LL$-algebra of a nonempty algebraic set  $Y$ over an $\LL$-algebra $\Acal$ iff $\Bcal$ is approximated by $\Acal$.
\label{th:coord_iff_approx}
\end{theorem}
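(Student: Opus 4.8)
The plan is to prove both directions of the equivalence by exploiting the presentation $\Bcal\cong\lb X\mid\Rad_\Acal(Y)\rb$ together with the identification~(\ref{eq:hom_of_substitution}) of homomorphisms $\Gamma_\Acal(Y)\to\Acal$ with points of $Y$. The key observation, already established in the excerpt, is that any homomorphism $\phi\colon\Gamma_\Acal(Y)\to\Acal$ has the form $\phi_P$ for some point $P\in Y$, and conversely every point $P\in Y$ determines such a homomorphism via $\phi_P([t(X)]_Y)=t(P)$. So the set $\Hom(\Bcal,\Acal)$ is essentially the algebraic set $Y$ itself.

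For the ``only if'' direction, I would assume $\Bcal=\Gamma_\Acal(Y)$ for a nonempty algebraic set $Y\subseteq\Acal^n$ and show $\Acal$ approximates $\Bcal$. Take two distinct elements $b_1=[t_1(X)]_Y$ and $b_2=[t_2(X)]_Y$ of $\Bcal$. Since $b_1\neq b_2$, by the definition of $\sim_Y$ the equation $t_1(X)=t_2(X)$ does \emph{not} lie in $\Rad_\Acal(Y)$, which means there exists a point $P\in Y$ with $t_1(P)\neq t_2(P)$. The associated homomorphism $\phi_P$ then satisfies $\phi_P(b_1)=t_1(P)\neq t_2(P)=\phi_P(b_2)$, so $\Acal$ separates $b_1$ and $b_2$. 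This direction is essentially a direct unwinding of the definitions.

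For the ``if'' direction, I would assume $\Bcal$ is finitely generated, say by $X=\{x_1,\ldots,x_n\}$, and approximated by $\Acal$, and construct an algebraic set $Y$ over $\Acal$ with $\Gamma_\Acal(Y)\cong\Bcal$. The natural candidate is to write $\Bcal\cong\lb X\mid R\rb$ where $R$ is the set of all equations (in the generators) that hold in $\Bcal$, and to set $Y=\V_\Acal(R)\subseteq\Acal^n$. Every point $P\in Y$ gives a homomorphism $\Bcal\to\Acal$, and these are exactly the homomorphisms; thus $\Rad_\Acal(Y)$ consists of all equations satisfied at every point of $Y$, i.e.\ satisfied under every homomorphism $\Bcal\to\Acal$. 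I must check two things: first, that $Y$ is nonempty (this uses approximation, since if $\Bcal$ is nontrivial there are distinct elements that get separated by some homomorphism, and if $\Bcal$ is trivial one argues separately); second, and this is the crux, that $\Rad_\Acal(Y)=R$, so that $\Gamma_\Acal(Y)=\lb X\mid\Rad_\Acal(Y)\rb\cong\lb X\mid R\rb\cong\Bcal$.

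\textbf{The main obstacle} is precisely the inclusion $\Rad_\Acal(Y)\subseteq R$, i.e.\ showing that no \emph{extra} equations become consequences over $\Acal$ beyond those already holding in $\Bcal$. Equivalently, if $t(X)=s(X)$ fails in $\Bcal$, I must produce a point $P\in Y$ witnessing the failure, so that $t(X)=s(X)\notin\Rad_\Acal(Y)$. This is exactly where approximation is indispensable: the elements $[t(X)]$ and $[s(X)]$ are distinct in $\Bcal$, so by hypothesis there is a homomorphism $\psi\colon\Bcal\to\Acal$ with $\psi([t(X)])\neq\psi([s(X)])$; this $\psi$ is of the form $\phi_P$ for some $P\in Y$, and then $t(P)=\psi([t(X)])\neq\psi([s(X)])=s(P)$, as required. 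The reverse inclusion $R\subseteq\Rad_\Acal(Y)$ is immediate since every relation of $\Bcal$ holds at every point of $Y=\V_\Acal(R)$ by construction. Finally I would confirm the coordinate algebra is well-defined as stated and that the finite generation hypothesis matches the fact that all coordinate algebras are finitely generated.
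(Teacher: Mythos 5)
Your proposal is correct and follows essentially the same route as the paper: the ``only if'' direction unwinds the definition of $\sim_Y$ via the homomorphisms $\phi_P$, and the ``if'' direction realizes $Y=\V_\Acal(R)$ as the set of images of homomorphisms $\Bcal\to\Acal$ and invokes approximation exactly where you identify the crux, namely to rule out extra equations in $\Rad_\Acal(Y)$. The only cosmetic differences are that the paper works with an arbitrary presentation $\lb X\mid R\rb$ and its congruent closure $[R]$ rather than taking $R$ to be all relations holding in $\Bcal$, and that your explicit attention to the nonemptiness of $Y$ addresses a point the paper leaves implicit.
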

\begin{proof}
Let us prove the ``only if'' part of the theorem. Suppose $\Bcal\cong\Gamma_\Acal(Y)$ for some algebraic set $Y$ over $\Acal$, and $[t(X)]_Y,[s(X)]_Y$ are distinct elements of $\Bcal$. Since the terms $t(X),s(X)$ are not $\sim_Y$-equivalent, there exists a point $P\in Y$ with $t(P)\neq s(P)$. Thus, the homomorphism $\phi_P$ gives $\phi_P([t(X)]_Y)\neq \phi_P([s(X)]_Y)$, and the algebra $\Bcal$ is approximated by $\Acal$.

Let us prove the ``if'' part of the statement. Suppose that a finitely generated $\LL$-algebra $\Bcal$ has a presentation $\lb X\mid R\rb$, where $X=\{x_1,x_2,\ldots,x_n\}$ is the set of generators and  $R=\{t_i(X)=s_i(X)\mid i\in I\}$ are defining relations. One can consider $R$ as an $\LL$-system over an $\LL$-algebra $\Acal$. 

The images of homomorphisms $\phi\in \Hom(\Bcal,\Acal)$ are the subset in $\Acal^n$:
\[
Y=\{(\phi(x_1),\phi(x_2),\ldots,\phi(x_n))\mid \phi\in\Hom(\Bcal,\Acal)\}.
\]  

A map $\phi\colon\Bcal\to\Acal$ is a homomorphism of $\Bcal$ iff $\phi$ preserves all defining relations $R$, or equivalently  $(\phi(x_1),\phi(x_2),\ldots,\phi(x_n))\in \V_\Acal(R)$. Thus, $Y=\V_\Acal(R)$, i.e. $Y$ is an algebraic set. It is sufficient to prove that $\Bcal$ is the coordinate algebra of the set $Y$. According to~(\ref{eq:coord_algebra_presentation}), one should prove the equality of the sets $[R]=\Rad_\Acal(Y)$.

Since $Y=\V_\Acal(R)$, for any $t(X)=s(X)\in [R]$ it holds $Y\subseteq\V_\Acal(t(X)=s(X))$, therefore $t(X)=s(X)\in\Rad_\Acal(Y)$. Let us prove the converse inclusion $\Rad_\Acal(Y)\subseteq [R]$. Assume there exists an equation $t(X)=s(X)\in\Rad_\Acal(Y)\setminus[R]$. It follows that the elements $t(X),s(X)$ are distinct in the algebra $\Bcal$, but $t(P)=s(P)$ for all $P\in Y$. By the definition of the set $Y$, we have $\phi(t(X))=\phi(s(X))$ for any homomorphism $\phi\colon\Bcal\to\Acal$. Thus, we obtain a contradiction, since $\Bcal$ is approximated by $\Acal$. 
\end{proof}

Let us apply Theorem~\ref{th:coord_iff_approx} to various algebras.
\begin{enumerate}
\item Let $\Acal=\Zbb_{p^n}$ be the cyclic group of order $p^n$ for a prime $p$. Let us find all  coordinate groups of algebraic sets over $\Acal$. The list of nontrivial subgroup of $\Acal$ is $Sub(\Acal)=\{\Zbb_{p^m}\mid 1\leq m\leq n\}$. Obviously, each group from $Sub(\Acal)$ is approximated by $\Acal$, therefore all groups $\{\Zbb_{p^m}\mid m\leq n\}$ are coordinate groups of algebraic sets over $\Acal$.

Let is prove that any direct sum of groups from $Sub(\Acal)$ is also a coordinate group over $\Acal$. Let 
\[
\Bcal=\bigoplus_{i=1}^m\Zbb_{p^{n_i}},
\]    
where $1\leq n_i\leq n$, and we allow $n_i=n_j$ for distinct $i,j$. Therefore the elements $b,b^\pr\in \Bcal$, $b\neq b^\pr$ admit unique presentations as the following sums
\[
b=b_1+b_2+\ldots+b_m,\; b^\pr=b^\pr_1+b^\pr_2+\ldots+b^\pr_m,
\]
where $b_i,b^\pr_i\in \Zbb_{p^{n_i}}$. Since $b\neq b^\pr$, there exists a number $i$ such that $b_i\neq b_i^\pr$. Then the projection $\pi_{i}$ of $\Bcal$ onto the subgroup $\Zbb_{p^{n_i}}\subseteq\Acal$ gives $\pi_i(b_i)\neq\pi_i(b_i^\pr)$, and $\Bcal$ is approximated by $\Acal$. 

Thus, we proved that the group $\Bcal$ is the coordinate group of some algebraic set $Y$ over $\Acal$. Moreover, one can write a system of equations $\Ss$ such that $\Gamma_\Acal(\V_\Acal(\Ss))\cong \Bcal$.
Indeed, for
\[
\Ss=\bigcup_{i=1}^m\{p^{n_i}x_i=0\},
\] 
the coordinate group of the set  $Y=\V_\Acal(\Ss)$ has the presentation
\[
\Gamma_\Acal(Y)=\lb x_1,x_2,\ldots,x_m\mid p^{n_i}x_i=0\rb \; (1\leq i\leq m),
\]
and we obtain $\Gamma_\Acal(Y)\cong \Bcal$.

Let us consider groups which are not coordinate groups over $\Acal=\Zbb_{p^n}$ (below we shall use the results of Section~\ref{sec:subalgebras_direct_powers_hom} about homomorphisms of abelian groups).
\begin{enumerate}
\item Let $\Bcal=\Zbb$. Any homomorphism $\phi\in\Hom(\Bcal,\Acal)$ maps the element $p^n\in\Zbb$ in $0\in\Zbb_{p^n}$, hence the group $\Acal$ does not approximate $\Bcal$ (there is not a homomorphism  $\psi\in\Hom(\Bcal,\Acal)$ with $\psi(0)\neq\psi(p^n)$).   
\item Let $\Bcal=\Zbb_{q^m}$, where $q$ is prime and $q\neq p$. The set of homomorphisms $\Hom(\Bcal,\Acal)$ contains only the trivial homomorphism $\phi(x)=0$ for all $x\in \Zbb_{q^m}$. Obviously, such group $\Bcal$ is not approximated by $\Acal$.   
\item Suppose $\Bcal=\Zbb_{p^m}$, where $m>n$. Each homomorphism $\phi\in\Hom(\Bcal,\Acal)$ maps $p^n\in\Zbb_{p^m}$ in the zero of $\Zbb_{p^n}$. Thus, the group $\Acal$ does not approximate $\Bcal$,  since there is not a homomorphism $\psi\in\Hom(\Bcal,\Acal)$ with $\psi(0)\neq\psi(p^n)$.

\end{enumerate}

The obtained results allow us to formulate the following proposition.

\begin{proposition}
\label{pr:coord_groups_for_cyclic_group}
Let $Sub(\Acal)=\{\Zbb_{p^k}\mid 1\leq k\leq n\}$ be the set of all nonzero subgroups of the group $\Acal=\Zbb_{p^n}$ for a prime $p$. An abelian group $\Bcal$ is the coordinate group of a nonempty algebraic set over $\Acal$ iff $\Bcal$ is a direct sum of a finite number of groups from $Sub(\Acal)$.
\end{proposition}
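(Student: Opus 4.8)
The plan is to reduce everything to Theorem~\ref{th:coord_iff_approx}, which says that a finitely generated $\LL$-algebra $\Bcal$ is the coordinate algebra of a nonempty algebraic set over $\Acal=\Zbb_{p^n}$ iff $\Bcal$ is approximated by $\Acal$. Since every coordinate algebra is finitely generated (by~(\ref{eq:coord_algebra_presentation})) and, by Remark~\ref{rem:coord_alg_from_the_same_variety}, the coordinate algebra of an algebraic set over an abelian group is abelian, it suffices to describe the finitely generated abelian groups approximated by $\Zbb_{p^n}$. First I would record the convenient reformulation that approximation is equivalent to ``homomorphisms into $\Acal$ separate $0$ from everything'': for each nonzero $b\in\Bcal$ there is $\phi\in\Hom(\Bcal,\Acal)$ with $\phi(b)\neq 0$. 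This follows by applying approximation to a pair $b_1,b_2$ and passing to the difference $b_1-b_2$, the converse being immediate.

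The ``if'' direction is already carried out in the discussion preceding the proposition: if $\Bcal=\bigoplus_{i=1}^m\Zbb_{p^{n_i}}$ with $1\leq n_i\leq n$, the coordinate projections onto the summands $\Zbb_{p^{n_i}}\subseteq\Acal$ separate points, so $\Bcal$ is approximated by $\Acal$ and hence, by Theorem~\ref{th:coord_iff_approx}, is a coordinate group. So the real work lies entirely in the ``only if'' direction.

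For the ``only if'' direction I would start from the structure theorem~(\ref{eq:abelian_group_presentation}), writing $\Bcal\cong\Zbb^r\oplus\Zbb_{q_1^{m_1}}\oplus\ldots\oplus\Zbb_{q_l^{m_l}}$, and rule out every summand that is not a $\Zbb_{p^k}$ with $k\leq n$ by exhibiting a nonzero element of $\Bcal$ that is killed by \emph{all} homomorphisms into $\Acal$. The uniform tool is that every element of $\Acal=\Zbb_{p^n}$ has order dividing $p^n$, so $\phi(p^n b)=p^n\phi(b)=0$ for every $\phi\in\Hom(\Bcal,\Acal)$ and every $b\in\Bcal$. A free summand $\Zbb$ with generator $b$ then gives $p^n b\neq 0$ (infinite order) killed by all $\phi$, contradicting approximation, so $r=0$. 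A summand $\Zbb_{q^m}$ with $q\neq p$ contains a nonzero element $b$ whose order is a power of $q$; its image has order dividing both a power of $q$ and $p^n$, hence order $1$, so $\phi(b)=0$ for all $\phi$ and approximation fails again (these are exactly the $\Hom$ computations recorded in Section~\ref{sec:subalgebras_direct_powers_hom}). Thus $\Bcal$ is a finite abelian $p$-group, $\Bcal\cong\bigoplus_{i=1}^m\Zbb_{p^{n_i}}$. Finally, if some $n_i>n$, the generator $b$ of that summand satisfies $p^n b\neq 0$ while $\phi(p^n b)=0$ for every $\phi$, a contradiction; hence every $n_i\leq n$ and $\Bcal$ is a direct sum of groups from $Sub(\Acal)$.

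The argument is essentially routine once the reformulation of approximation is in hand. The only mild subtlety — the ``hard part'' such as it is — is to be sure the witnessing elements are genuinely nonzero in $\Bcal$: the infinite order of a free generator and the strict inequality $n_i>n$ are precisely what guarantee that $p^n b\neq 0$, and the coprimality $\gcd(q,p)=1$ is what forces the $q$-torsion to vanish under every homomorphism.
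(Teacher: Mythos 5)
Your proof is correct and follows essentially the same route as the paper: reduce to approximation via Theorem~\ref{th:coord_iff_approx}, establish the ``if'' direction with coordinate projections onto the summands, and rule out $\Zbb$, $\Zbb_{q^m}$ ($q\neq p$) and $\Zbb_{p^m}$ ($m>n$) summands exactly as in the paper's preceding examples. Your reformulation of approximation as separating $0$ from nonzero elements, together with the uniform observation $\phi(p^n b)=0$, also makes explicit the step from ``these individual groups are not approximated'' to ``no finitely generated abelian group containing them as a direct summand is approximated'', which the paper leaves implicit.
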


In Theorem~\ref{th:coord_abelian_groups} we shall prove Proposition~\ref{pr:coord_groups_for_cyclic_group} for any finitely generated abelian group $\Acal$.

\item Let $\Acal=LZ_n=\{a_1,a_2,\ldots,a_n\}$ be a left zero semigroup consisting of $n>1$ elements. Let us show that an \textit{arbitrary} finite left zero semigroup $\Bcal=LZ_m=\{b_1,b_2,\ldots,b_m\}$ is the coordinate semigroup of some nonempty algebraic set over $\Acal$. Let $b_i,b_j$ be two distinct elements of $\Bcal$. Then the homomorphism 
\[
\phi(b)=\begin{cases}
a_1,\mbox{ if }b=b_i\\
a_2,\mbox{ otherwise}
\end{cases}
\]
satisfies $\phi(b_i)\neq \phi(b_j)$. Thus, the semigroup $\Bcal$ is approximated by $\Acal$ and by Theorem~\ref{th:coord_iff_approx} $\Bcal$ is the coordinate semigroup of a nonempty algebraic set over $\Acal$.

\item Let $\Acal=RB(n,m)$ be  a rectangular band. If $n=1$ ($m=1$) the semigroup $\Acal$ is a left (respectively, right) zero semigroup and its coordinate semigroups were described above. Thus, we assume $n,m>1$. Let us show that an \textit{arbitrary} finite rectangular band $\Bcal=RB(n^\pr,m^\pr)$ is the coordinate semigroup of a nonempty algebraic set over $\Acal$. Without loss of generality one can assume $\Acal=RB(2,2)$. Let $b_1=(i_1,j_1)$, $b_2=(i_2,j_2)$ be two distinct elements of $\Bcal$, and $i_1\neq i_2$ (similarly one can consider the case $j_1\neq j_2$). It is directly checked that the map 
\[
\phi((i,j))=\begin{cases}
(1,1),\mbox{ if }i=i_1\\
(1,2),\mbox{ otherwise }
\end{cases}
\]
is a homomorphism and  $\phi(b_1)=(1,1)\neq (1,2)=\phi(b_2)$. Thus, the semigroup $\Bcal$ is approximated by $\Acal$, therefore $\Bcal$ is the coordinate band of an algebraic set over $\Acal$.

\item Let $\Acal=L_2$ be a linear ordered semilattice consisting of the elements $0,1$. Let us show that an  \textit{arbitrary} finite semilattice $\Bcal$ is the coordinate semilattice of a nonempty algebraic set over $\Acal$.   

Let us consider two distinct elements $b_1,b_2\in\Bcal$. One can assume $b_2\nleq b_1$ (otherwise, we assign $b_1:=b_2$, $b_2:=b_1$ and obtain the condition $b_2\nleq b_1$). Let us define a map $\phi\colon\Bcal\to\Acal$:
\[
\phi(b)=\begin{cases}
1,\mbox{ if } b\geq b_2,\\
0\mbox{ otherwise}
\end{cases}
\]  

Let us prove the map $\phi$ is a homomorphism of $\Bcal$. Assume the converse: $\phi(xy)\neq\phi(x)\phi(y)$ for some $x,y\in\Bcal$. We have the following cases:
\begin{enumerate}
\item $\phi(xy)=0$, $\phi(x)=\phi(y)=1$;
\item $\phi(xy)=1$, $\phi(x)=0$;
\item $\phi(xy)=1$, $\phi(y)=0$.
\end{enumerate}
In the first case we have $b_2\leq x,b_2\leq y$. Therefore, $b_2\leq xy$, and the definition of  $\phi$ gives $\phi(xy)=1$, a contradiction. In the second case we obtain $xy\geq b_2$ that implies $x\geq b_2$, and the definition of $\psi$ gives $\phi(x)=1$, a contradiction. The proof of the third case is similar to the second one. 

Thus, $\phi\in\Hom(\Bcal,\Acal)$ and $\phi(b_1)\neq\phi(b_2)$. We have that $\Bcal$ is approximated by $L_2$. Since each nontrivial semilattice  $\Acal$ contains $L_2$ as a subsemilattice, we obtain the following statement.

\begin{theorem}
\label{th:any_semilattice_is_coord}
Any finite semilattice $\Bcal$ is the coordinate semilattice of a nonempty algebraic set over a nontrivial semilattice $\Acal$.  
\end{theorem}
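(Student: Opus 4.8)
The plan is to deduce the statement directly from the approximation criterion of Theorem~\ref{th:coord_iff_approx}. Since $\Bcal$ is finite, it is finitely generated, so that theorem reduces the claim ``$\Bcal$ is the coordinate semilattice of a nonempty algebraic set over $\Acal$'' to the purely structural claim ``$\Bcal$ is approximated by $\Acal$.'' Thus my whole task becomes: for every nontrivial semilattice $\Acal$ and every pair of distinct $b_1,b_2\in\Bcal$, produce a homomorphism $\phi\in\Hom(\Bcal,\Acal)$ separating them. I would first solve this for the smallest conceivable target, $\Acal=L_2=\{0,1\}$, and only afterwards upgrade to an arbitrary nontrivial $\Acal$.

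For the $L_2$ case, given distinct $b_1,b_2$ I would, after renaming if necessary (legitimate because $b_1\neq b_2$ forces at least one of $b_2\nleq b_1$, $b_1\nleq b_2$), arrange $b_2\nleq b_1$, and then take $\phi$ to be the characteristic map of the principal filter of $b_2$: set $\phi(b)=1$ when $b\geq b_2$ and $\phi(b)=0$ otherwise. This separates the points immediately, since $\phi(b_2)=1$ while $\phi(b_1)=0$. The step I expect to be the genuine obstacle is verifying that this $\phi$ is a semilattice homomorphism, i.e. $\phi(xy)=\phi(x)\phi(y)$; everything else is bookkeeping. The verification rests on the fact that in a semilattice the product $xy$ is the meet of $x$ and $y$ in the order $x\leq y\Leftrightarrow xy=x$, so that $b_2\leq x$ and $b_2\leq y$ together force $b_2\leq xy$, while conversely $b_2\leq xy\leq x$ forces $b_2\leq x$ (and likewise $b_2\leq y$). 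Ruling out the three offending sign patterns of $(\phi(xy),\phi(x),\phi(y))$ with these two implications is exactly the case analysis already recorded just before the statement, so I would simply invoke it.

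Finally I would remove the restriction to $L_2$ by showing that $L_2$ embeds as a subsemilattice into every nontrivial $\Acal$: picking distinct $a,b\in\Acal$, if they are comparable, say $a<b$, then $\{a,b\}$ is a copy of $L_2$ (with $ab=a$), and if they are incomparable then $ab<a$, so $\{ab,a\}$ is a copy of $L_2$. Composing the separating homomorphisms $\Bcal\to L_2$ constructed above with this embedding $L_2\hookrightarrow\Acal$ shows that $\Bcal$ is approximated by $\Acal$. Theorem~\ref{th:coord_iff_approx} then immediately yields that $\Bcal$ is the coordinate semilattice of a nonempty algebraic set over $\Acal$, completing the argument.
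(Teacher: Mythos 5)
Your proposal is correct and follows essentially the same route as the paper: reduce to approximation via Theorem~\ref{th:coord_iff_approx}, separate distinct $b_1,b_2$ with the characteristic homomorphism of the principal filter of $b_2$ into $L_2$ (with the identical three-case verification), and then pass to an arbitrary nontrivial $\Acal$ via the subsemilattice $L_2\subseteq\Acal$. The only difference is that you spell out explicitly why every nontrivial semilattice contains a copy of $L_2$ (the comparable/incomparable dichotomy), a detail the paper merely asserts.
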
 

\item Let us describe the coordinate unars of algebraic sets of nonempty algebraic sets over the free unar $\Acal=FU_n$ of rank $n$. Let us consider a finitely generated unar $\Bcal$ with the set of generators $b_1,b_2,\ldots,b_m$. Therefore, each element $b\in\Bcal$ is equal to an expression $b=f^k(b_i)$ for some numbers $i,k$.

If there exists numbers $i,k$ such that $f^k(b_i)=b_i$ then the set of homomorphisms $\Hom(\Bcal,\Acal)$ is empty (it is impossible to map $b_i$ into $\Acal$, since there is not any element $a\in\Acal$ with $f^k(a)=a$). Therefore, $\Bcal$ is not a coordinate unar of a nonempty algebraic set over $\Acal$. 

Thus, for any $i$ all elements in sequence $b_i,f(b_i),f^2(b_i),\ldots$ are different. Moreover, one can assume there are not generators $b_i,b_j$ with $f^k(b_i)=b_j$ (otherwise,one can remove $b_j$ from the set of generators).

Assume there exist different elements $b,b^\pr\in \Bcal$ such that $f(b)=f(b^\pr)$. In this case any homomorphism $\phi\in\Hom(\Bcal,\Acal)$ gives $\phi(b)=\phi(b^\pr)$, since $f$ is injective on $\Acal$. Thus, $\Bcal$ is not approximated by $\Acal$ in this case.

Finally, we have all elements of the set $\{f^k(b_i)\}$ ($k\in\N$, $1\leq i\leq m$) are pairwise distinct, and it follows that {\it the unar $\Bcal$ is isomorphic to the free unar of rank $m$}. Thus, we obtain the following statement.

\begin{theorem}
A finitely generated unar $\Bcal$ is the coordinate unar of a nonempty algebraic set over $\Acal=FU_n$ ($n\geq 1$) iff $\Bcal\cong FU_m$ for some $m\geq 1$.
\label{th:unars_coord}
\end{theorem}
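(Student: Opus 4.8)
The plan is to reduce everything to Theorem~\ref{th:coord_iff_approx}, which says that a finitely generated $\LL_u$-algebra $\Bcal$ is the coordinate unar of a nonempty algebraic set over $\Acal=FU_n$ precisely when $\Bcal$ is approximated by $FU_n$. So the whole statement becomes the purely structural claim: \emph{a finitely generated unar $\Bcal$ is approximated by $FU_n$ if and only if $\Bcal\cong FU_m$ for some $m\geq 1$.} I would prove the two implications separately.

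For the ``only if'' direction I would follow the analysis carried out just before the statement. Fixing generators $b_1,\ldots,b_m$ of $\Bcal$, every element has the form $f^k(b_i)$. First I would exclude cyclic elements: if $f^k(b_i)=b_i$ for some $k\geq 1$, then $\Hom(\Bcal,FU_n)=\emptyset$ because $f$ has no periodic points in $FU_n$, and since homomorphisms into $\Acal$ are exactly the points of the algebraic set (equation~(\ref{eq:hom_of_substitution})), the set would be empty, a contradiction. After discarding any generator of the form $b_j=f^k(b_i)$, I would use approximation to force $f$ to be injective on $\Bcal$: if $b\neq b'$ but $f(b)=f(b')$, then every $\phi\in\Hom(\Bcal,FU_n)$ gives $f(\phi(b))=\phi(f(b))=\phi(f(b'))=f(\phi(b'))$, and injectivity of $f$ on $FU_n$ yields $\phi(b)=\phi(b')$, so no homomorphism separates $b$ and $b'$. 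Cancelling $f$ (now injective) then shows the orbits $\{f^k(b_i)\mid k\in\N\}$ are pairwise disjoint and each consists of distinct elements, whence $\Bcal\cong FU_m$.

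For the ``if'' direction, which the preceding discussion does not address, I would show directly that $FU_m$ is approximated by $FU_n$ for every $n\geq 1$. Recall that $FU_n$ is the disjoint union of copies $\N_1,\ldots,\N_n$, and $\N_1$ is a subunar isomorphic to $FU_1$; the inclusion $\N_1\hookrightarrow FU_n$ is a homomorphism, so it suffices to separate distinct elements of $FU_m$ by homomorphisms into $\N_1\cong\N$. Writing distinct elements as $f^k(g_i)$ and $f^l(g_j)$ in the free generators $g_1,\ldots,g_m$, freeness of $FU_m$ lets me assign the generators arbitrary images $c_t\in\N$; the separation condition $\phi(f^k(g_i))\neq\phi(f^l(g_j))$ is just $c_i+k\neq c_j+l$. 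When $i=j$ this holds automatically since $k\neq l$; when $i\neq j$ I take $c_i=0$ and $c_j=k+1$, giving $c_j+l=k+1+l>k=c_i+k$. Thus every pair of distinct elements is separated, $FU_m$ is approximated by $FU_n$, and Theorem~\ref{th:coord_iff_approx} closes this direction.

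The main obstacle I expect is the bookkeeping in the ``only if'' direction: one must verify that reducing to a minimal generating set together with injectivity of $f$ genuinely forces the generator orbits to be disjoint infinite chains, ruling out an eventual collision $f^k(b_i)=f^l(b_j)$ with $i\neq j$. This is handled by cancelling $f$ to obtain $b_i=f^{\,l-k}(b_j)$ (say $k\leq l$) and contradicting minimality of the generating set, but it is the step that fails if the generators are not reduced first, so care is needed there.
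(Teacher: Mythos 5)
Your proposal is correct and follows essentially the same route as the paper: both directions reduce to Theorem~\ref{th:coord_iff_approx}, the ``only if'' part reproduces the paper's pre-theorem analysis (no periodic points, injectivity of $f$ forced by approximation, reduction of the generating set), and your ``if'' part differs from the paper's only in bookkeeping --- you separate a pair $f^k(g_i)\neq f^l(g_j)$ by sending the free generators into a single component $\N_1\cong\N$ with offsets $c_i=0$, $c_j=k+1$, whereas the paper sends them to $a_1$ or $f(a_1)$; the two constructions are interchangeable.
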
 
\begin{proof}
The ``only if'' part of the statement was proved above. Let us prove the ``if'' part. Let $\Bcal=FU_m$, where $\Bcal$ is freely generated by the set of generators $b_1,b_2,\ldots,b_m$, and $a_1$ be an arbitrary free generator of $\Acal$. Let us take two different elements $b,b^\pr\in\Bcal$. If $b=f^k(b_i)$, $b^\pr=f^{k^\pr}(b_i)$ (i.e. $b,b^\pr$ lies in the same subunar $FU_1\subseteq\Bcal$ generated by the element $b_i$), then  the homomorphism
\[
\phi(b_j)=a_1\; \mbox{ for all $1\leq j\leq m$}
\] 
has the property 
\[
\phi(b)=f^k(a_1)\neq f^{k^\pr}(a_1)=\phi(b^\pr).
\]

In the case of $b=f^k(b_i)$, $b=f^{k^\pr}(b_{i^\pr})$, the homomorphism $\phi$ is defined as follows
\[
\phi(b_j)=a_1,
\]
if $k\neq k^\pr$, and 
\[
\phi(b_j)=\begin{cases}
a_1,\mbox{ if } j\neq i^\pr\\
f(a_1),\mbox{ otherwise } j=i^\pr
\end{cases}
\]
if $k=k^\pr$. One can directly check that $\phi(b)\neq \phi(b^\pr)$.

Thus, we proved the unar $\Bcal$ is approximated by $\Acal$, and according to Theorem~\ref{th:coord_iff_approx} $\Bcal$ is the coordinate unar of a nonempty algebraic set over $\Acal$. 
\end{proof}

\end{enumerate}

One can describe coordinate algebras with embeddings.

\begin{theorem}
An $\LL$-algebra $\Bcal$ is the coordinate $\LL$-algebra of a nonempty algebraic set over an $\LL$-algebra $\Acal$ iff $\Bcal$ is embedded into a direct power of $\Acal$.
\label{th:coord_iff_embeds_into_power}
\end{theorem}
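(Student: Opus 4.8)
The plan is to deduce Theorem~\ref{th:coord_iff_embeds_into_power} from the already-established Theorem~\ref{th:coord_iff_approx}, by showing that for a finitely generated algebra $\Bcal$, being approximated by $\Acal$ is equivalent to being embeddable into a direct power of $\Acal$. Thus the whole statement reduces to a single clean lemma relating approximation and embeddability, and the work is to prove that lemma in both directions.

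For the direction ``embeds into a power $\Rightarrow$ approximated,'' suppose there is an embedding $\iota\colon\Bcal\hookrightarrow\Acal^I$ for some index set $I$. Given distinct $b_1,b_2\in\Bcal$, injectivity gives $\iota(b_1)\neq\iota(b_2)$, so these tuples differ in some coordinate $i\in I$. Then the composition $\pi_i\circ\iota\colon\Bcal\to\Acal$ is a homomorphism (a projection of an embedding, both being homomorphisms, as established in Section~\ref{sec:subalgebras_direct_powers_hom}) that separates $b_1$ and $b_2$. Hence $\Acal$ approximates $\Bcal$, and by Theorem~\ref{th:coord_iff_approx}, $\Bcal$ is a coordinate algebra of a nonempty algebraic set over $\Acal$.

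For the converse ``approximated $\Rightarrow$ embeds into a power,'' the natural construction is the diagonal map into the power indexed by all homomorphisms. Let $I=\Hom(\Bcal,\Acal)$ and define $\iota\colon\Bcal\to\Acal^I$ by $\iota(b)=(\phi(b))_{\phi\in I}$, i.e. the $\phi$-th coordinate of $\iota(b)$ is $\phi(b)$. Since each $\phi$ is a homomorphism, $\iota$ respects every functional symbol coordinatewise, so $\iota$ is a homomorphism into the direct power $\Acal^I$. Approximation gives injectivity directly: if $b_1\neq b_2$, there is some $\phi\in I$ with $\phi(b_1)\neq\phi(b_2)$, so $\iota(b_1)$ and $\iota(b_2)$ differ in the $\phi$-th coordinate. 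Thus $\iota$ is an embedding, completing the equivalence.

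The main obstacle, such as it is, lies in handling the case where $\Bcal$ is trivial or where $\Hom(\Bcal,\Acal)=\emptyset$: if no homomorphisms exist the index set is empty and the ``power'' degenerates, but in that situation $\Bcal$ is not approximated by $\Acal$ (unless $\Bcal$ is trivial), so the hypothesis of Theorem~\ref{th:coord_iff_approx} already fails and there is nothing to embed. One should also note that Theorem~\ref{th:coord_iff_embeds_into_power} is stated without the explicit ``finitely generated'' hypothesis of Theorem~\ref{th:coord_iff_approx}; since every coordinate algebra is finitely generated by~(\ref{eq:coord_algebra_presentation}), this is automatic on the coordinate-algebra side, and the approximation argument transfers verbatim. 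The only genuinely delicate point is being careful that the index set of the power is allowed to be infinite, which is harmless since direct powers over arbitrary index sets were defined in Section~\ref{sec:subalgebras_direct_powers_hom}.
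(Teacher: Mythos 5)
Your proof is correct and follows essentially the same route as the paper: both directions reduce to Theorem~\ref{th:coord_iff_approx}, with the embedding given by the diagonal evaluation map and the converse by composing with a coordinate projection. The only cosmetic difference is that you index the direct power by $\Hom(\Bcal,\Acal)$ while the paper indexes it by the points $P\in Y$ (via the homomorphisms $\phi_P$), which amounts to the same thing.
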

\begin{proof}
Let us prove the ``only if'' part of the statement. Suppose $\Bcal\cong\Gamma_\Acal(Y)$ for some algebraic set $Y$ over the $\LL$-algebra $\Acal$. For each $b\in \Bcal$ let us define a vector $v(b)$ such that
\begin{enumerate}
\item coordinates of $v(b)$ are indexed by points $P\in Y$;
\item the value of the $P$-th coordinate is $\phi_P(b)$.
\end{enumerate}
By Theorem~\ref{th:coord_iff_approx}, $\Bcal$ is approximated by $\Acal$, i.e. for any pair $b_1,b_2\in \Bcal$ there exists a homomorphism $\phi_P$ with $\phi_P(b_1)\neq \phi_P(b_2)$. Thus, the vectors $v(b_1),v(b_2)$ are distinct in the $P$-th coordinate, and the map $v$ is an embedding of $\Bcal$ into a direct power of $\Acal$.

Let us prove the ``if'' of the theorem. According to Theorem~\ref{th:coord_iff_approx}, it is sufficient to prove that each subalgebra of a direct power $\prod_{i\in I}\Acal$ is approximated by $\Acal$. Let
\[
\bar{b}_1=(b_{11},b_{12},\ldots,b_{1n},\ldots),\;\bar{b}_2=(b_{21},b_{22},\ldots,b_{2n},\ldots)
\]
be two distinct elements a direct power of $\Acal$ ($b_{ij}\in \Acal$). Therefore there exists a coordinate with index $j$ such that $b_{1j}\neq b_{2j}$. The projection 
\[
\pi_j\colon \prod_{i\in I}\Acal\to \Acal
\]
of a direct power onto the $j$-th coordinate is a homomorphism, and $\pi_j(\bar{b}_1)\neq\pi_j(\bar{b}_2)$. Thus, any subalgebra of a direct power of $\Acal$ is approximated by $\Acal$.
\end{proof}

Let us apply Theorem~\ref{th:coord_iff_embeds_into_power} for descriptions of coordinate algebras.

\begin{enumerate}
\item Theorem~\ref{th:coord_iff_embeds_into_power} allows us to describe the coordinate groups of algebraic sets over a finitely generated abelian group $\Acal$. According to theory of abelian groups, a group $\Acal$ is a direct sum~(\ref{eq:abelian_group_presentation}). Let $Sub_\oplus(\Acal)$ denote the set of all nonzero subgroups of all direct summands of $\Acal$. For example, the group 
\begin{equation}
\Acal=\Zbb\oplus\Zbb\oplus\Zbb_{8}\oplus\Zbb_3\oplus\Zbb_3
\label{eq:example_of_abelian_group}
\end{equation}
gives $Sub_\oplus(\Acal)=\{\Zbb,\Zbb_8,\Zbb_4,\Zbb_2,\Zbb_3\}$. The following result holds.

\begin{theorem}
A finitely generated group $\Bcal$ is the coordinate group of a nonzero algebraic set over an abelian group $\Acal$ iff $\Bcal$ is isomorphic to a finite direct sum of groups from $Sub_\oplus(\Acal)$. 
\label{th:coord_abelian_groups}
\end{theorem}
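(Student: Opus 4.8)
The plan is to invoke Theorem~\ref{th:coord_iff_embeds_into_power}, which reduces the statement to an internal question about abelian groups: a finitely generated group $\Bcal$ is a coordinate group of a nonempty algebraic set over $\Acal$ if and only if $\Bcal$ embeds into a direct power of $\Acal$. Throughout I write $\Acal\cong\Zbb^t\oplus\Zbb_{p_1^{n_1}}\oplus\ldots\oplus\Zbb_{p_l^{n_l}}$ as in~(\ref{eq:abelian_group_presentation}), so that the torsion is $T(\Acal)=\bigoplus_{i=1}^l\Zbb_{p_i^{n_i}}$ and $Sub_\oplus(\Acal)$ consists of $\Zbb$ (present exactly when $t\geq 1$) together with all the groups $\Zbb_{p_i^m}$ for $1\leq m\leq n_i$. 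By Remark~\ref{rem:coord_alg_from_the_same_variety} every coordinate group over $\Acal$ is abelian, so $\Bcal$ is a finitely generated abelian group and hence decomposes as in~(\ref{eq:abelian_group_presentation}).

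For the ``if'' direction I would argue directly. Suppose $\Bcal\cong G_1\oplus\ldots\oplus G_r$ with each $G_k\in Sub_\oplus(\Acal)$. By definition each $G_k$ is a subgroup of some direct summand of $\Acal$, and composing with the inclusion of that summand embeds $G_k$ into $\Acal$. Sending the $k$-th summand $G_k$ into the $k$-th factor of $\Acal^r$ then embeds $\Bcal$ into the finite direct power $\Acal^r$, and Theorem~\ref{th:coord_iff_embeds_into_power} immediately yields that $\Bcal$ is the coordinate group of a nonempty algebraic set over $\Acal$.

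For the ``only if'' direction I would fix, via Theorem~\ref{th:coord_iff_embeds_into_power}, an embedding $\iota\colon\Bcal\hookrightarrow\prod_{i\in I}\Acal$ and write $\Bcal\cong\Zbb^s\oplus\Zbb_{q_1^{m_1}}\oplus\ldots\oplus\Zbb_{q_k^{m_k}}$. The governing principle is that $\iota$ preserves the order of every element and that a tuple in $\prod_{i\in I}\Acal$ has finite order equal to the least common multiple of the (then necessarily finite) orders of its coordinates. Apply this to a generator $g$ of a cyclic torsion summand $\Zbb_{q_j^{m_j}}$: since $\iota(g)$ has finite order $q_j^{m_j}$, each coordinate of $\iota(g)$ is a torsion element of $\Acal$ and therefore lies in $T(\Acal)$, so every coordinate order divides $q_j^{m_j}$ and is thus a power of $q_j$. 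For the least common multiple of these orders to equal $q_j^{m_j}$, some coordinate must have order exactly $q_j^{m_j}$, which forces $T(\Acal)$ to contain an element of that order, i.e. $q_j=p_i$ and $m_j\leq n_i$ for some $i$. Hence $\Zbb_{q_j^{m_j}}\in Sub_\oplus(\Acal)$.

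Finally I would dispose of the free part. If $s>0$ then $\Bcal$ contains an element of infinite order, whose image must have some coordinate of infinite order, so $\Acal$ possesses an element of infinite order and $t\geq 1$; contrapositively, if $\Acal$ is finite then $\prod_{i\in I}\Acal$ has bounded exponent and contains no copy of $\Zbb$. Thus $s>0$ only when $\Zbb\in Sub_\oplus(\Acal)$, and in that case each of the $s$ free summands is a copy of $\Zbb\in Sub_\oplus(\Acal)$. Assembling the free and torsion summands exhibits $\Bcal$ as a finite direct sum of groups from $Sub_\oplus(\Acal)$, completing the proof. I expect the main technical point to be the order computation inside the (possibly infinite) direct power, together with the passage, via the least-common-multiple formula, from ``finite order dividing a fixed prime power'' to the existence of a single coordinate realizing that order; everything else is routine bookkeeping on the decomposition~(\ref{eq:abelian_group_presentation}).
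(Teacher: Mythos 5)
Your proposal is correct and follows essentially the same route as the paper: both directions are reduced via Theorem~\ref{th:coord_iff_embeds_into_power} to the question of embedding $\Bcal$ into a direct power of $\Acal$, and the decomposition~(\ref{eq:abelian_group_presentation}) is then analyzed summand by summand. The only difference is one of completeness — the paper merely illustrates both implications on the example~(\ref{eq:example_of_abelian_group}), whereas you supply the actual order/least-common-multiple argument showing that a torsion summand $\Zbb_{q^m}$ embeds into a direct power of $\Acal$ only if some $\Zbb_{p_i^{n_i}}$ with $p_i=q$, $n_i\geq m$ occurs, and the exponent argument for the free part.
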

\begin{proof}
Let us explain the idea of the proof at the group $\Acal$ given by~(\ref{eq:example_of_abelian_group}). By Theorem~\ref{th:coord_iff_embeds_into_power}, the group $\Bcal=\Zbb_8\oplus\Zbb_8\oplus\Zbb_8$ is coordinate over $\Acal$, since $\Bcal$ is embedded into the direct power $\Acal\oplus\Acal\oplus\Acal$. Similarly, one can embed into a direct power of $\Acal$ any sum of groups from $Sub_\oplus(\Acal)$. 

If a group $\Bcal$ has a direct summand $G\notin Sub_\oplus(\Acal)$ in its presentation~(\ref{eq:abelian_group_presentation}), the subgroup $G$ is not embeddable into any direct power of $\Acal$. For example, the group $\Bcal=\Zbb\oplus\Zbb_{9}$ contain the subgroup $\Zbb_9$ which is not embedded into direct powers of $\Acal$. Thus, $\Bcal$ is not a coordinate group over $\Acal$. 
\end{proof}

\item Applying Theorems~\ref{th:any_semilattice_is_coord} and~\ref{th:coord_iff_embeds_into_power}, one can obtain the well-known result in semilattice theory.

\begin{proposition}
Any finite (actually, one can prove this statement also for infinite semilattices) semilattice $\Bcal$ is embedded into a direct power of $L_2=\{0,1\}$.
\end{proposition}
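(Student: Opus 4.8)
The plan is to read off the result by chaining the two theorems just established, taking $\Acal=L_2$ as the target semilattice throughout. First I would observe that $L_2=\{0,1\}$ is a nontrivial semilattice, so Theorem~\ref{th:any_semilattice_is_coord} applies with this particular choice of $\Acal$: any finite semilattice $\Bcal$ is the coordinate semilattice of some nonempty algebraic set $Y$ over $L_2$. In fact the argument preceding Theorem~\ref{th:any_semilattice_is_coord} does exactly this, constructing for each pair of distinct elements $b_1,b_2\in\Bcal$ a homomorphism $\Bcal\to L_2$ separating them, i.e.\ showing that $\Bcal$ is approximated by $L_2$.

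Next I would invoke the ``only if'' direction of Theorem~\ref{th:coord_iff_embeds_into_power}, again with $\Acal=L_2$: any coordinate $\LL$-algebra of a nonempty algebraic set over $L_2$ is embeddable into a direct power of $L_2$. Combining the two steps, the finite semilattice $\Bcal$ embeds into a direct power of $L_2$ indexed by the points of $Y$; concretely the embedding sends $b\in\Bcal$ to the tuple $(\phi_P(b))_{P\in Y}$ of its images under the separating homomorphisms $\phi_P$.

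I expect no real obstacle in the finite case, since the statement is a formal consequence of the two equivalences. The one point demanding attention is the parenthetical claim about infinite semilattices: coordinate algebras are always finitely generated (by the presentation~(\ref{eq:coord_algebra_presentation})), so the coordinate-algebra machinery of Theorem~\ref{th:coord_iff_embeds_into_power} does not directly cover an infinite $\Bcal$. For that case I would bypass the coordinate-algebra language and build the embedding by hand: it suffices to produce, for every pair of distinct $x,y\in\Bcal$, a homomorphism $\Bcal\to L_2$ separating them, and then take the product map into the direct power $L_2^{\Hom(\Bcal,L_2)}$. The separating homomorphism is constructed exactly as in the proof of Theorem~\ref{th:any_semilattice_is_coord} --- after relabelling so that $y\nleq x$, send $b\mapsto 1$ iff $b\geq y$ --- and the verification that this map respects the multiplication uses only the order-theoretic identities of a semilattice, so it carries over verbatim with no finiteness assumption.
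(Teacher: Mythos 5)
Your argument is exactly the paper's: the proposition is stated as an immediate consequence of chaining Theorem~\ref{th:any_semilattice_is_coord} (every finite semilattice is approximated by, hence a coordinate semilattice over, $L_2$) with the ``only if'' direction of Theorem~\ref{th:coord_iff_embeds_into_power}, and your explicit description of the embedding $b\mapsto(\phi_P(b))_{P\in Y}$ is just the map $v$ from that theorem's proof. Your extra observation that the infinite case needs the separating homomorphisms directly (since coordinate algebras are finitely generated) is correct and is a sensible way to justify the parenthetical remark, which the paper itself leaves unproved.
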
 

\item Let $\Acal=\N$ be the additive monoid of natural numbers. According to Theorem~\ref{th:coord_iff_embeds_into_power}, a finitely generated monoid $\Bcal$ is a coordinate monoid over $\Acal$ iff $\Bcal$ is embedded into a direct power of $\N$. Remark that the monoid $\N^n$ has a rich class of submonoids. For example, it contains monoids which are not isomorphic to a direct powers of $\N$.   

\end{enumerate}

\section{Coordinate algebras of irreducible algebraic sets}
\label{sec:irred_coord_properties}

An $\LL$-algebra $\Acal$ {\it discriminates} an $\LL$-algebra $\Bcal$ if for any finite set of distinct elements $b_1,b_2,\ldots,b_n$ there exists a homomorphism  $\phi\colon\Bcal\to\Acal$ such that $\phi(b_i)\neq\phi(b_j)$ for all $i\neq j$, i.e. $\phi$ is {\it injective} on the set $\{b_1,b_2,\ldots,b_n\}$. Obviously, the approximation is a special case ($n=2$) of the discrimination. 

The following theorem shows the role of the discrimination in the description of {\it irreducible} coordinate algebras.

\begin{theorem}
A finitely generated $\LL$-algebra $\Bcal$ is the coordinate $\LL$-algebra of an irreducible algebraic set $Y$ over an  $\LL$-algebra $\Acal$ iff $\Bcal$ is discriminated by $\Acal$.
\label{th:coord_iff_discr}
\end{theorem}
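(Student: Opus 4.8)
The plan is to prove both directions by exploiting the duality between reducibility of $Y$ (its expression as a finite union of proper algebraic subsets) and the failure of a single homomorphism to separate finitely many elements of $\Bcal$. Since discrimination is the finite-set strengthening of approximation and in particular implies approximation, I can lean on Theorem~\ref{th:coord_iff_approx} to supply the algebraic set whenever discrimination is assumed, and to realize every homomorphism $\Bcal\to\Acal$ as an evaluation $\phi_P$ at a point $P\in Y$ via~(\ref{eq:hom_of_substitution}). Throughout I fix the presentation $\Bcal\cong\Gamma_\Acal(Y)=\lb X\mid\Rad_\Acal(Y)\rb$ and write elements of $\Bcal$ as classes $[t(X)]_Y$.

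For the ``only if'' direction, I would assume $\Bcal\cong\Gamma_\Acal(Y)$ with $Y$ irreducible and take distinct elements $[t_1(X)]_Y,\ldots,[t_k(X)]_Y$. For each pair $l\neq l'$ the inequality $[t_l]_Y\neq[t_{l'}]_Y$ means $t_l(X)=t_{l'}(X)\notin\Rad_\Acal(Y)$, so $Y_{ll'}:=Y\cap\V_\Acal(t_l(X)=t_{l'}(X))$ is a proper algebraic subset of $Y$. If no point of $Y$ separated all the $t_l$, then every $P\in Y$ would satisfy some equation $t_l(P)=t_{l'}(P)$, giving $Y=\bigcup_{l<l'}Y_{ll'}$ as a finite union of proper algebraic subsets, contradicting irreducibility. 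Hence some $P\in Y$ makes all $t_l(P)$ distinct, and the evaluation homomorphism $\phi_P$ is injective on $\{[t_1]_Y,\ldots,[t_k]_Y\}$; thus $\Acal$ discriminates $\Bcal$.

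For the ``if'' direction, I would assume $\Acal$ discriminates $\Bcal$. Since discrimination implies approximation, Theorem~\ref{th:coord_iff_approx} yields an algebraic set $Y$ with $\Bcal\cong\Gamma_\Acal(Y)$, and it remains to show $Y$ is irreducible. Suppose instead $Y=Y_1\cup\cdots\cup Y_m$ with each $Y_i\subsetneq Y$. Then monotonicity of the radical~(\ref{eq:about_radical_1}) together with $Y_i\neq Y$ forces $\Rad_\Acal(Y_i)\supsetneq\Rad_\Acal(Y)$, so I may pick an equation $t_i(X)=s_i(X)\in\Rad_\Acal(Y_i)\setminus\Rad_\Acal(Y)$; thus $Y_i\subseteq\V_\Acal(t_i=s_i)$ while $[t_i]_Y\neq[s_i]_Y$ in $\Bcal$. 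Applying discrimination to the finite set of distinct classes occurring among $[t_1]_Y,[s_1]_Y,\ldots,[t_m]_Y,[s_m]_Y$ produces a point $P\in Y$ with $\phi_P$ injective on that set, so $t_i(P)\neq s_i(P)$ for every $i$. Then $P\notin\V_\Acal(t_i=s_i)\supseteq Y_i$ for all $i$, so $P\in Y\setminus\bigcup_i Y_i$, contradicting $Y=\bigcup_i Y_i$. Hence $Y$ is irreducible.

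The main obstacle is bookkeeping rather than a new idea. In the ``if'' direction I must extract, for each proper subset $Y_i$, a single separating equation witnessing the strict radical inclusion, and then feed precisely the right finite family of classes into the discrimination hypothesis so that injectivity on that family simultaneously separates all $m$ pairs; the subtlety is that different pairs may share a class, so I work with the \emph{set} of distinct classes among the $[t_i]_Y,[s_i]_Y$ rather than with the pairs. Dually, the only nontrivial point in the ``only if'' direction is recognizing that ``no single point separates all $t_l$'' translates exactly into a finite covering of $Y$ by the proper sets $Y_{ll'}$, which is where irreducibility is consumed.
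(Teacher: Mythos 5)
Your proposal is correct and follows essentially the same route as the paper's proof: the ``only if'' direction uses the covering $Y=\bigcup_{i<j}\bigl(Y\cap\V_\Acal(t_i=t_j)\bigr)$ by proper algebraic subsets, and the ``if'' direction selects equations $t_i=s_i\in\Rad_\Acal(Y_i)\setminus\Rad_\Acal(Y)$ and applies discrimination to the finite set of classes $[t_i]_Y,[s_i]_Y$ to contradict $Y=\bigcup_i Y_i$. The only difference is presentational (you argue contrapositively where the paper argues by direct contradiction), so nothing further is needed.
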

\begin{proof}
Let us prove the ``only if'' part of the statement. Suppose $\Bcal\cong\Gamma_\Acal(Y)$ for a nonempty algebraic set $Y$ over $\Acal$. Let us assume that $\Bcal$ is not discriminated by $\Acal$, i.e. there exist distinct elements $[t_1(X)]_Y,[t_2(X)]_Y,\ldots,[t_m(X)]_Y\in\Bcal$ such that for any homomorphism $\phi\in\Hom(\Bcal,\Acal)$ there are indexes $i\neq j$ with  $\phi([t_i(X)]_Y)=\phi([t_j(X)]_Y)$. Since any homomorphism of coordinate algebras has the form~(\ref{eq:hom_of_substitution}), the assumption above is formulated as follows: for each point $P\in Y$ there exist indexes $i\neq j$ such that $t_i(P)=t_j(P)$. 
Therefore
\begin{equation}
Y=\bigcup_{i< j}(Y\cap\V_\Acal(t_i(X)=t_j(X))).
\label{eq:1111}
\end{equation}

According to Theorem~\ref{th:coord_iff_approx}, $\Bcal$ is approximated by $\Acal$, i.e. for all $i\neq j$ there exists a point $P_{ij}\in Y$ with $t_i(P_{ij})\neq t_j(P_{ij})$. It follows that $Y_{ij}\subset Y$ for all $i<j$. Thus,~(\ref{eq:1111}) is a proper decomposition of the set $Y$, and we obtain a contradiction with the irreducibility of the set $Y$.    

Let us prove the ``if'' part of the statement. Since the approximation is a special case of the discrimination, $\Bcal$ is approximated by $\Acal$. According to  Theorem~\ref{th:coord_iff_approx}, there exists an algebraic set over $\Acal$ with $\Gamma_\Acal(Y)\cong \Bcal$. Assume that $Y$ is reducible and it admits a decomposition
\[
Y=Y_1\cup Y_2\cup\ldots \cup Y_m\; (Y_i\subset Y,\; Y_i\nsubseteq Y_j \mbox{ for all }i\neq j).
\] 
By~(\ref{eq:about_radical_1}), it follows  $\Rad_\Acal(Y)\subset \Rad_\Acal(Y_i)$, and there exist equations $t_i(X)=s_i(X)\in \Rad_\Acal(Y_i)\setminus \Rad_\Acal(Y)$. Let us show that an arbitrary homomorphism $\phi\in\Hom(\Bcal,\Acal)$ is not injective on the set  $M=\{[t_i(X)]_Y,[s_i(X)]_Y\mid 1\leq i\leq m\}$. Following~(\ref{eq:hom_of_substitution}), $\phi=\phi_P$ for some point  $P\in Y$. Since $Y$ is a union of $Y_i$, the have $P\in Y_k$ for some $1\leq k\leq m$. Therefore $t_k(P)=s_k(P)$ and 
\begin{equation}
\phi_P([t_k(X)]_Y)=\phi_P([s_k(X)]_Y).
\label{eq:2222}
\end{equation}

Since $t_k(X)=s_k(X)\notin \Rad_\Acal(Y)$, the elements $[t_k(X)]_Y,[s_k(X)]_Y$ are distinct in the $\LL$-algebra $\Bcal$. So $[t_k(X)]_Y,[s_k(X)]_Y$ are distinct elements of the set $M$. However, by~(\ref{eq:2222}) the homomorphism $\phi_P$ is not injective on $M$, a contradiction with the discrimination of  $\Bcal$.   
\end{proof}

\begin{theorem}
\label{th:discr_is_embedding_for_finite}
Let $\Acal$ be a finite $\LL$-algebra. A finitely generated    
$\LL$-algebra $\Bcal$ is the coordinate algebra of an irreducible algebraic set over $\Acal$ iff $\Bcal$ is embedded into $\Acal$. 
\end{theorem}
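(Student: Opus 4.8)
The plan is to reduce everything to Theorem~\ref{th:coord_iff_discr} and then observe that, for a \emph{finite} target algebra, discrimination collapses to an honest embedding. By Theorem~\ref{th:coord_iff_discr}, a finitely generated $\Bcal$ is the coordinate algebra of an irreducible algebraic set over $\Acal$ if and only if $\Acal$ discriminates $\Bcal$. Thus it suffices to prove, under the hypothesis that $\Acal$ is finite, the equivalence
\[
\Acal \text{ discriminates } \Bcal \iff \Bcal \text{ embeds into } \Acal.
\]

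The ``if'' direction is immediate and does not even use finiteness of $\Acal$: an embedding $\iota\colon\Bcal\to\Acal$ is an injective homomorphism, so for any finite collection of distinct elements $b_1,\ldots,b_n\in\Bcal$ the single map $\phi=\iota$ already separates them pairwise, $\iota(b_i)\neq\iota(b_j)$ for $i\neq j$. Hence $\Acal$ discriminates $\Bcal$.

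The ``only if'' direction is where finiteness is essential, and I expect the cardinality bound to be the crux of the argument. Suppose $\Acal$ discriminates $\Bcal$ and write $N=|\Acal|$. First I would bound the size of $\Bcal$: if $\Bcal$ contained $N+1$ pairwise distinct elements $b_1,\ldots,b_{N+1}$, then discrimination would furnish a homomorphism $\phi\colon\Bcal\to\Acal$ injective on $\{b_1,\ldots,b_{N+1}\}$, i.e.\ an injection of an $(N+1)$-element set into the $N$-element set $\Acal$, which is impossible. Therefore $|\Bcal|\le N$, so $\Bcal$ is finite (in particular, the finite-generation hypothesis of the statement becomes automatic here).

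Now let $b_1,\ldots,b_m$ list \emph{all} (finitely many) elements of $\Bcal$. Applying discrimination once more to this complete list yields a homomorphism $\phi\colon\Bcal\to\Acal$ injective on $\{b_1,\ldots,b_m\}=\Bcal$, i.e.\ injective on the whole of $\Bcal$. Such a $\phi$ is precisely an embedding of $\Bcal$ into $\Acal$, which completes the argument. The only delicate point is the cardinality estimate; once $\Bcal$ is known to be finite, the conclusion is a direct application of the definition of discrimination to the entire algebra at once.
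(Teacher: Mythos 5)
Your proof is correct and follows essentially the same route as the paper: both directions reduce to Theorem~\ref{th:coord_iff_discr}, the cardinality (pigeonhole) bound forces $|\Bcal|\leq|\Acal|$, and a single discriminating homomorphism applied to the full (finite) list of elements of $\Bcal$ yields the embedding. The converse via the trivial discrimination of a subalgebra is also exactly the paper's argument.
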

\begin{proof}
Let us prove the ``only if'' part of the statement. If $\Bcal$ contains at least $n>|\Acal|$ different elements $b_1,b_2,\ldots,b_n$, there is not any homomorphism $\phi\colon \Bcal\to\Acal$ which is injective on the set $\{b_i\}$, and we obtain a contradiction with Theorem~\ref{th:coord_iff_discr}. Therefore, $|\Bcal|\leq|\Acal|$, i.e. $\Bcal$ is also finite. Let $b_1,b_2,\ldots,b_m$ ($m\leq n$) be all elements of $\Bcal$. An injective on $\{b_i\}$ homomorphism  $\phi\colon\Bcal\to\Acal$ provides the embedding of $\Bcal$ into $\Acal$.

The proof of the ``if'' part of the statement immediately follows from the obvious discrimination of any subalgebra $\Bcal\subseteq\Acal$ by $\Acal$.
\end{proof}

\begin{theorem}
\label{th:discr_is_embedding_for_locally_finite}
Let $\Acal$ be an $\LL$-algebra from a locally finite variety $V$ (remark that $\Acal$ may not be finitely generated). Then a finitely generated   
$\LL$-algebra $\Bcal$ is the coordinate algebra of an irreducible algebraic set over $\Acal$ iff $\Bcal$ is embedded into $\Acal$. 
\end{theorem}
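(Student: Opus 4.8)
The plan is to reduce this to the finite case by exploiting the hypothesis that $V$ is locally finite. The crucial preliminary observation is that $\Bcal$ is forced to be \emph{finite}: in the forward direction $\Bcal$ is a coordinate algebra over $\Acal$, so by Remark~\ref{rem:coord_alg_from_the_same_variety} it satisfies all identities of $\Acal$ and hence lies in $V$; being finitely generated inside a locally finite variety, $\Bcal$ must have finite cardinality. This is exactly the ingredient that replaces the bound $|\Bcal|\le|\Acal|$ used in Theorem~\ref{th:discr_is_embedding_for_finite}; since here $\Acal$ may be infinite we cannot quote that theorem directly, but we can rerun its argument with finiteness of $\Bcal$ now coming from a different source.

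For the ``only if'' direction I would first invoke Theorem~\ref{th:coord_iff_discr} to translate the hypothesis ``$\Bcal$ is the coordinate algebra of an irreducible algebraic set over $\Acal$'' into ``$\Bcal$ is discriminated by $\Acal$''. Writing $\Bcal=\{b_1,b_2,\ldots,b_m\}$ for the finitely many elements of $\Bcal$, I would then apply the discrimination property to this complete list: there is a homomorphism $\phi\colon\Bcal\to\Acal$ with $\phi(b_i)\neq\phi(b_j)$ for all $i\neq j$. Since $\{b_1,\ldots,b_m\}$ is all of $\Bcal$, such a $\phi$ is injective on the whole algebra, i.e. an embedding of $\Bcal$ into $\Acal$.

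For the ``if'' direction the argument is immediate and does not even need local finiteness: if $\iota\colon\Bcal\to\Acal$ is an embedding, then $\iota$ is injective, so for every finite collection of distinct elements $b_1,\ldots,b_n\in\Bcal$ the single homomorphism $\iota$ already separates them. Hence $\Bcal$ is discriminated by $\Acal$, and Theorem~\ref{th:coord_iff_discr} yields that $\Bcal$ is the coordinate algebra of an irreducible algebraic set over $\Acal$.

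The only genuinely delicate point is the finiteness of $\Bcal$, together with the care needed to see that $\Bcal\in V$ so that local finiteness applies; once that is secured, both implications are short. I do not expect any computational obstacle, since the whole content is funneled through Theorem~\ref{th:coord_iff_discr} and the structural fact that finitely generated algebras in $V$ are finite.
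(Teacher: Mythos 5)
Your proposal is correct and follows the paper's own route: the paper likewise derives finiteness of $\Bcal$ from $\Bcal\in V$ (Remark~\ref{rem:coord_alg_from_the_same_variety}) together with local finiteness, and then reruns the argument of Theorem~\ref{th:discr_is_embedding_for_finite} via Theorem~\ref{th:coord_iff_discr}. No gaps; your explicit remark that finiteness now comes from local finiteness rather than the bound $|\Bcal|\le|\Acal|$ is exactly the point the paper glosses over.
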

\begin{proof}
The proof of this theorem repeats the proof of Theorem~\ref{th:discr_is_embedding_for_finite}, since $\Bcal\in V$ (Remark~\ref{rem:coord_alg_from_the_same_variety}) that gives the finiteness of  $\Bcal$.
\end{proof}

Let us describe the coordinate algebras of irreducible algebraic sets over some algebras.

\begin{enumerate}
\item Let $\Acal$ be a torsion-free abelian group. Proposition~\ref{pr:coord_groups_over_torsion_free_group} provides that a group $\Bcal$ is approximated by $\Acal$ iff $\Bcal\cong\Zbb^n$ for some $n$. Let us show that $\Bcal$ is discriminated by $\Acal$. First of all we assume the following.
\begin{enumerate}
\item We assume that $\Acal\cong\Zbb$ (any torsion-free group $G$ contains a subgroup $\Zbb$, therefore the discrimination of $\Bcal$ by $\Zbb$ implies the discrimination by $G$).
\item We assume $\Bcal=\Zbb^2=\Zbb\oplus\Zbb$ (the general case $\Bcal\cong\Zbb^n$ is considered by the induction on $n$ and its proof is similar to the case $n=2$).
\end{enumerate}

Let $b_1,b_2,\ldots,b_n$ be different elements of $\Bcal\cong\Zbb^2$. Each $b_i$ is a pair $b_i=(b_{i1},b_{i2})$, where $b_{ij}\in\Zbb$. Let us consider a map $\phi_\al\colon\Bcal\to\Zbb$ $\phi_\al((x,y))=\al x+ y$, where a number $\al\in\Zbb\setminus\{0\}$ will be defined below. It is easy to see that $\phi_\al\in\Hom(\Bcal,\Acal)$. Let us choose $\al$ such that the homomorphism is $\phi_\al$ injective on $\{b_i\}$. An integer number $\al$ is said to be {\it forbidden} if $\phi_\al$ is not injective on the set $\{b_i\}$. Let us prove there exist at most finite set of forbidden numbers. Indeed, the equality $\phi_\al(b_i)=\phi_\al(b_j)$ implies
\[
\al b_{i1}+ b_{i2}=\al b_{j1}+ b_{j2}\Leftrightarrow \al(b_{i1}-b_{j1})+(b_{i2}-b_{j2})=0,
\]
and the last equation has a unique solution $\al_{ij}$. Choosing all pairs $i,j$, we obtain a finite set of forbidden numbers $\al_{ij}$. Thus, there exists an integer permitted number $\al$ such that the homomorphism $\phi_\al$ is injective on $\{b_i\}$, and therefore $\Bcal$ is discriminated by $\Acal$.

According to Theorem~\ref{th:coord_iff_discr}, we have the following result.

\begin{proposition}
A finitely generated abelian group $G$ is the coordinate group of an irreducible algebraic set over a torsion-free abelian group $A$ iff $G$ is isomorphic to $\Zbb^n$ for some $n$. 
\label{pr:irred_coord_groups_over_torsion_free_group}
\end{proposition}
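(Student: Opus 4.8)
The plan is to restate the claim in terms of discrimination and then reuse the machinery already developed for approximation. By Theorem~\ref{th:coord_iff_discr}, a finitely generated abelian group $G$ is the coordinate group of an irreducible algebraic set over $A$ if and only if $G$ is discriminated by $A$; so the whole proposition reduces to showing that $G$ is discriminated by a torsion-free abelian group $A$ precisely when $G\cong\Zbb^n$. For the necessity (``only if'') direction I would observe that discrimination is stronger than approximation (it is the special case of two distinct elements), so if $G$ is discriminated by $A$ then $G$ is approximated by $A$; Proposition~\ref{pr:coord_groups_over_torsion_free_group} then forces $G\cong\Zbb^n$ for some $n$.

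For the sufficiency (``if'') direction I would assume $G\cong\Zbb^n$ and produce, for any finite set of distinct elements, a homomorphism into $A$ that is injective on them. First I would reduce the codomain: any nonzero element of the torsion-free group $A$ generates a subgroup isomorphic to $\Zbb$, and since discrimination by a subalgebra implies discrimination by the whole algebra (compose a separating map into $\Zbb$ with the inclusion $\Zbb\hookrightarrow A$), it suffices to discriminate $\Zbb^n$ by $\Zbb$. Given distinct $b_1,\dots,b_k\in\Zbb^n$, every homomorphism $\phi\colon\Zbb^n\to\Zbb$ has the form $\phi(x_1,\dots,x_n)=c_1x_1+\dots+c_nx_n$ with $c_i\in\Zbb$, and I want integer coefficients making all the values $\phi(b_i)$ pairwise distinct. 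For each pair $i\neq j$ the difference $b_i-b_j$ is a nonzero integer vector, and the coefficient tuples $(c_1,\dots,c_n)$ that fail to separate $b_i$ and $b_j$ are exactly those annihilating $b_i-b_j$, i.e.\ lying on a single proper hyperplane. As there are only finitely many pairs, I need only choose an integer point of $\Zbb^n$ off finitely many proper hyperplanes, which is always possible.

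The main obstacle is exactly this last step --- exhibiting one separating functional for arbitrarily many points simultaneously. For $n=2$ the one-parameter family $\phi_\al(x,y)=\al x+y$ already suffices, since each pair forbids only the unique value $\al_{ij}$ solving $\al(b_{i1}-b_{j1})+(b_{i2}-b_{j2})=0$, leaving infinitely many admissible $\al$. For general $n$ I would either induct on $n$, peeling off one coordinate at a time, or argue directly that $\Zbb^n$ cannot be covered by finitely many proper hyperplanes (for instance by restricting the coefficients to a suitably generic line, which again reduces to a finitely-many-forbidden-values count). Once discrimination of $\Zbb^n$ by $\Zbb$ is secured, Theorem~\ref{th:coord_iff_discr} closes the argument.
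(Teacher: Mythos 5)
Your proposal is correct and follows essentially the same route as the paper: reduce via Theorem~\ref{th:coord_iff_discr} to showing that discrimination by a torsion-free $A$ is equivalent to $G\cong\Zbb^n$, get the ``only if'' direction from Proposition~\ref{pr:coord_groups_over_torsion_free_group} since discrimination implies approximation, and for the ``if'' direction discriminate $\Zbb^n$ by a copy of $\Zbb\subseteq A$ using an integer linear functional chosen to avoid the finitely many coefficient tuples that identify some pair $b_i,b_j$. The only cosmetic difference is that you phrase the general-$n$ step as avoiding finitely many proper hyperplanes (or a generic line), where the paper does $n=2$ explicitly with the family $\al x+y$ and defers general $n$ to an induction.
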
 

\item Similarly, one can prove that any submonoid in $\N^n$ is discriminated by $\N$, and Theorem~\ref{th:coord_iff_discr} gives the following result.

\begin{proposition}
A finitely generated commutative monoid $M$ is the coordinate monoid of an irreducible algebraic set over  $\N$ iff $M$ is embedded into a direct power of $\N$. 
\label{pr:irred_coord_groups_over_N}
\end{proposition}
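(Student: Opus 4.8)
The plan is to reduce everything to Theorem~\ref{th:coord_iff_discr}, which says that a finitely generated $\LL$-algebra is the coordinate algebra of an irreducible algebraic set over $\N$ if and only if it is discriminated by $\N$. Thus the proposition will follow once I establish the equivalence: a finitely generated commutative monoid $M$ is discriminated by $\N$ if and only if $M$ embeds into a direct power of $\N$. This mirrors the torsion-free abelian group situation of Proposition~\ref{pr:irred_coord_groups_over_torsion_free_group}, the only essential difference being that the separating homomorphisms must take values in $\N$, i.e. must be given by \emph{non-negative} integer coefficients.

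For the ``only if'' direction I would argue cheaply: an irreducible algebraic set is in particular nonempty, so if $M$ is its coordinate monoid then Theorem~\ref{th:coord_iff_embeds_into_power} immediately gives an embedding of $M$ into a direct power of $\N$. (Equivalently, discrimination is the case $n=2$ of approximation, and Theorem~\ref{th:coord_iff_approx} together with Theorem~\ref{th:coord_iff_embeds_into_power} yields the embedding.)

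The content is in the ``if'' direction, where I assume $M$ embeds into some direct power $\prod_{i\in I}\N$ and must show $M$ is discriminated by $\N$. First I would fix a finite set of pairwise distinct elements $b_1,b_2,\ldots,b_n\in M$. Viewed inside $\prod_{i\in I}\N$ they are distinct tuples, so for every pair $k\neq l$ I pick one coordinate on which $b_k$ and $b_l$ differ; collecting these finitely many coordinates into a finite set $J\subseteq I$, the projection $\pi_J\colon\prod_{i\in I}\N\to\N^{|J|}$ is a monoid homomorphism that is still injective on $\{b_1,\ldots,b_n\}$. This reduces the problem to separating finitely many distinct points of a \emph{finite} power $\N^{|J|}$ by a single homomorphism $\N^{|J|}\to\N$.

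It remains to produce such a homomorphism, and this is where the positivity constraint is the only real obstacle. Any monoid homomorphism $\N^{|J|}\to\N$ has the form $\psi(x_1,\ldots,x_{|J|})=\sum_{l}\al_l x_l$ with $\al_l\in\N$, so I need non-negative integers $\al_l$ for which $\psi$ separates the projected points; equivalently $\sum_l\al_l v_l\neq 0$ for every nonzero difference vector $v=\pi_J(b_k)-\pi_J(b_l)\in\Zbb^{|J|}$. In the group case one simply avoids finitely many forbidden hyperplanes using arbitrary integer coefficients, but here I must stay in the non-negative orthant. The clean way is a positional (base-$T$) choice $\al_l=T^{\,l-1}$: if $T$ exceeds twice the largest coordinate occurring among the projected points, then each difference $v$ has all entries of absolute value $<T/2$, so $\sum_l v_l T^{\,l-1}$ is a balanced base-$T$ expansion of a nonzero integer and hence is nonzero. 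Thus $\psi$ is injective on the projected points, and $\psi\circ\pi_J$ restricted to $M$ is a homomorphism $M\to\N$ injective on $\{b_1,\ldots,b_n\}$. This exhibits the required discrimination, and Theorem~\ref{th:coord_iff_discr} then completes the proof.
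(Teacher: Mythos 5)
Your proposal is correct and follows essentially the same route as the paper, which simply says that ``similarly'' to the torsion-free abelian case one shows any submonoid of $\N^n$ is discriminated by $\N$ and then invokes Theorem~\ref{th:coord_iff_discr}. Your base-$T$ choice of coefficients $\al_l=T^{\,l-1}$ is a clean, explicit way to meet the non-negativity constraint that the paper leaves implicit (the paper's template would instead avoid finitely many forbidden values of a single parameter and induct on the number of coordinates), and your reduction from an arbitrary direct power to a finite one via projection is a routine but worthwhile detail the paper omits.
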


\item Since the varieties of semilattices, left zero semigroups and rectangular bands are locally finite, irreducible coordinate semigroups in such varieties satisfy Theorem~\ref{th:discr_is_embedding_for_locally_finite}. 

\item Let $A$ be a finitely generated abelian group. By~(\ref{eq:abelian_group_presentation}), the group $A$ admits a presentation $\Acal=\Zbb^n\oplus T(A)$. If $n=0$ (i.e. $A=T(A)$), the group $A$ is finite, and Theorem~\ref{th:discr_is_embedding_for_finite} states that irreducible coordinate groups over $A$ are exactly the subgroups of $A$.

Suppose now $n>0$ and consider a finitely generated abelian group $B=\Zbb^m\oplus T(B)$ such that $B$ is discriminated by $A$. Since the subgroup $T(B)$ is finite, the discrimination gives that $T(B)$ is embedded into $T(A)$. By~\ref{pr:coord_groups_over_torsion_free_group}, the subgroup $\Zbb^m\subseteq B$ is discriminated by $\Zbb^n\subseteq A$, and we obtain the following result.

\begin{theorem}
A finitely generated abelian group $B=\Zbb^m\oplus T(B)$ is an irreducible coordinate group over a finitely generated abelian group $A=\Zbb^n\oplus T(A)$ iff the torsion $T(B)$ is embedded into $T(A)$ and $m=0$ for $n=0$.
\label{th:abelian_groups_irred_coord}
\end{theorem}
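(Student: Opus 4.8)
The plan is to invoke Theorem~\ref{th:coord_iff_discr}, which lets me replace ``$\Bcal$ is the coordinate algebra of an irreducible algebraic set over $\Acal$'' by ``$\Bcal$ is discriminated by $\Acal$''. So I would prove the equivalent statement: $\Bcal=\Zbb^m\oplus T(B)$ is discriminated by $\Acal=\Zbb^n\oplus T(A)$ iff $T(B)$ embeds into $T(A)$ and $n=0$ forces $m=0$. I would split on whether $n=0$. When $n=0$ the group $\Acal=T(A)$ is finite, so Theorem~\ref{th:discr_is_embedding_for_finite} reduces discrimination to embeddability of $\Bcal$ in $\Acal$; an embedding into a finite group forces $\Bcal$ to be finite, hence $m=0$ and $\Bcal=T(B)\hookrightarrow T(A)$, and conversely if $m=0$ and $T(B)\hookrightarrow T(A)$ then $\Bcal$ is isomorphic to a subalgebra of $\Acal$ and is discriminated by the inclusion. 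This disposes of $n=0$, and in the remaining case $n>0$ the clause ``$m=0$ for $n=0$'' is vacuous.

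For the forward direction when $n>0$, I would assume $\Bcal$ is discriminated by $\Acal$ and extract the embedding of the torsion. Since $T(B)$ is a finitely generated torsion abelian group it is finite; applying discrimination to the finite list of all elements of $T(B)$ gives one homomorphism $\phi\colon\Bcal\to\Acal$ injective on $T(B)$. As any homomorphism carries torsion elements to torsion elements, $\phi(T(B))\subseteq T(A)$, so $\phi|_{T(B)}$ is the desired embedding $T(B)\hookrightarrow T(A)$.

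For the backward direction when $n>0$, I would build a discriminating homomorphism by hand. Fix an embedding $\iota\colon T(B)\hookrightarrow T(A)$ and finitely many distinct $b_1,\dots,b_r\in\Bcal$, written $b_k=(v_k,t_k)$ with $v_k\in\Zbb^m$ and $t_k\in T(B)$. Using that $\Zbb^m$ is discriminated by $\Zbb$ (this is exactly the content of the proof of Proposition~\ref{pr:irred_coord_groups_over_torsion_free_group}), I would choose $\psi\colon\Zbb^m\to\Zbb\hookrightarrow\Zbb^n$ injective on the distinct $v_k$, and set $\phi(v,t)=(\psi(v),\iota(t))\in\Zbb^n\oplus T(A)=\Acal$. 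If $\phi(b_k)=\phi(b_l)$ then $\psi(v_k)=\psi(v_l)$ and, by injectivity of $\iota$, $t_k=t_l$; the former gives $v_k=v_l$, whence $b_k=b_l$. So $\phi$ is injective on $\{b_1,\dots,b_r\}$, proving discrimination.

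I expect the crux to be the backward construction for $n>0$: separating distinct elements requires handling both the free and the torsion coordinates with a \emph{single} homomorphism. The trick I would rely on is to route the free part into a $\Zbb$-summand of $\Acal$ and the torsion part into $T(A)$, so that the two separation tasks become independent; then any two points with equal $\psi$-image can differ only in torsion, which $\iota$ resolves. Checking that this genuinely separates every pair $b_k\neq b_l$ is the main thing to verify, whereas the $n=0$ case and the forward direction follow quickly from the already-proved Theorems~\ref{th:coord_iff_discr} and~\ref{th:discr_is_embedding_for_finite}.
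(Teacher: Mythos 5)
Your proof is correct and follows essentially the same route as the paper: reduce to discrimination via Theorem~\ref{th:coord_iff_discr}, dispose of $n=0$ with Theorem~\ref{th:discr_is_embedding_for_finite}, and for $n>0$ extract the torsion embedding from discrimination while conversely discriminating the free and torsion summands separately. Your explicit componentwise homomorphism $\phi(v,t)=(\psi(v),\iota(t))$ merely spells out the combination step that the paper leaves implicit, and it checks out.
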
 

\item Let us show (as we promised in Section~\ref{sec:irreducible_algebraic_sets}) that an algebraic set $Y=\V_{FS_r}(\{xy=yx,yz=zy\})$ over the free semigroup $FS_r$ is irreducible. Let us compute the coordinate semigroup of the set $Y$. Remark that the equation $xz=zx$ belongs to the radical of $Y$, therefore the coordinate semigroup $\Gamma_{FS_r}(Y)$ generated by the elements $x,y,z$ is commutative. Since there are not other relations between the elements $x,y,z\in \Gamma_{FS_r}(Y)$, the semigroup $\Gamma_{FS_r}(Y)$ is isomorphic to the free commutative semigroup of rank $3$, i.e. $\Gamma_{FS_r}(Y)\cong(\N\setminus 0)^3$. To prove the irreducibility of $Y$ it is sufficient to show that the semigroup $(\N\setminus 0)^3$ is discriminated by $FS_r$. The last property follows from Proposition~\ref{pr:irred_coord_groups_over_N}, where we proved that $(\N\setminus 0)^3$ is discriminated by $\N\setminus 0\subseteq FS_r$.

\end{enumerate}

\section{When all algebraic sets are irreducible}
\label{sec:co-domains}

There are $\LL$-algebras where all nonempty algebraic sets are irreducible. Following~\cite{uni_Th_IV}, such algebras are called \textit{co-domains}. Theorems ~\ref{th:coord_iff_approx},~\ref{th:coord_iff_discr} give the following criterion for an $\LL$-algebra to be a co-domain.

\begin{theorem}
An $\LL$-algebra $\Acal$ is a co-domain iff for any finitely generated $\LL$-algebra $\Bcal$ the following conditions are equivalent 
\begin{enumerate}
\item $\Bcal$ is approximated by $\Acal$; 
\item $\Bcal$ is discriminated by $\Acal$
\end{enumerate}
(remark that the second condition implies the first one for any $\LL$-algebra $\Acal$).
\label{th:co-domain_criterion}
\end{theorem}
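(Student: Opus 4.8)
The plan is to read off both implications directly from the two characterization theorems proved earlier, namely Theorem~\ref{th:coord_iff_approx} (finitely generated coordinate algebra of a nonempty set $\Leftrightarrow$ approximation) and Theorem~\ref{th:coord_iff_discr} (finitely generated coordinate algebra of an irreducible set $\Leftrightarrow$ discrimination), together with the standing fact that every coordinate algebra is finitely generated. The role of the co-domain property is precisely to interpolate between these two theorems: it forces every nonempty algebraic set to be irreducible, which is the exact difference between the hypotheses of the two theorems.

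First I would settle the forward direction. Assume $\Acal$ is a co-domain and let $\Bcal$ be finitely generated. Discrimination always implies approximation (it is the case $n=2$, as noted in the statement), so only the converse requires argument. If $\Bcal$ is approximated by $\Acal$, then Theorem~\ref{th:coord_iff_approx} furnishes a nonempty algebraic set $Y$ with $\Bcal\cong\Gamma_\Acal(Y)$. Since $\Acal$ is a co-domain, $Y$ is irreducible, and Theorem~\ref{th:coord_iff_discr} then yields that $\Bcal$ is discriminated by $\Acal$. Thus the two conditions coincide.

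For the reverse direction, assume the equivalence of the two conditions for every finitely generated $\LL$-algebra, and take an arbitrary nonempty algebraic set $Y\subseteq\Acal^n$. Put $\Bcal=\Gamma_\Acal(Y)$, which is finitely generated by the presentation~(\ref{eq:coord_algebra_presentation}). Because $Y$ is nonempty, Theorem~\ref{th:coord_iff_approx} gives that $\Bcal$ is approximated by $\Acal$, and the hypothesis upgrades this to discrimination of $\Bcal$ by $\Acal$.

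The one delicate step—the part I would treat with care—is concluding irreducibility of \emph{this particular} $Y$. Discrimination of $\Bcal$ combined with Theorem~\ref{th:coord_iff_discr} tells us that $\Bcal$ is the coordinate algebra of \emph{some} irreducible algebraic set, but a priori that set need not be $Y$. To close the gap I would invoke Proposition~\ref{pr:when_coord_alg_are_isomorphic}: algebraic sets with isomorphic coordinate algebras are isomorphic, and an irreducible algebraic set is isomorphic only to irreducible ones; applying this to $Y$ and the witnessing irreducible set forces $Y$ to be irreducible. Alternatively, one observes that the ``if'' argument inside the proof of Theorem~\ref{th:coord_iff_discr} in fact derives a contradiction from the reducibility of \emph{any} algebraic set whose coordinate algebra is discriminated, so it applies verbatim to our $Y$. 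Either route shows every nonempty algebraic set over $\Acal$ is irreducible, i.e. $\Acal$ is a co-domain, completing the proof.
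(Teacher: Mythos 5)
Your proposal is correct and follows exactly the route the paper intends (the paper gives no explicit proof, simply noting that Theorems~\ref{th:coord_iff_approx} and~\ref{th:coord_iff_discr} combine to yield the criterion). Your careful handling of the delicate step in the reverse direction --- that discrimination of $\Gamma_\Acal(Y)$ only directly yields irreducibility of \emph{some} algebraic set, closed either via Proposition~\ref{pr:when_coord_alg_are_isomorphic} or by observing that the ``if'' argument of Theorem~\ref{th:coord_iff_discr} refutes reducibility of any set with discriminated coordinate algebra --- is a genuine point the paper glosses over, and both of your fixes are valid.
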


Let us show examples of co-domains in different classes of algebras.

\begin{enumerate}
\item Let us show that an {\it infinite direct power of any $\LL$-algebra $\Acal$ is a co-domain}. 
Let $\Acal^\om=\prod_{i\in I}\Acal$ denote an infinite power of $\Acal$, and suppose that a finitely generated  $\LL$-algebra $\Bcal$ is approximated by $\Acal^\om$. By Theorem~\ref{th:coord_iff_embeds_into_power}, $\Bcal$ is embedded into a direct power  $\Acal^\om$, so $\Bcal$ is embedded into a direct power of $\Acal$. Thus, one can treat elements of $\Bcal$ as vectors with coordinates from $\Acal$. Let $b_1,b_2,\ldots,b_n$ be different elements of  $\Bcal$, i.e. there exists an index  $i_{jk}\in I$ such that the elements $b_j,b_k$ have different $i_{jk}$-th coordinates. Let $I^{\pr}=\{i_{kj}\mid 1\leq j<k\leq n\}$, and  $\Bcal^{\pr}$ be the projection $\pi(\Bcal)$ onto the coordinates $I^{\pr}$. Since $\Bcal^{\pr}$ is a finite direct power of $\Acal$, the algebra $\Bcal^{\pr}$ is embedded into $\Acal^\infty$ and the images $\pi(b_1),\pi(b_2),\ldots,\pi(b_n)$ are different in $\Acal^\infty$. Thus, $\Bcal$ is discriminated by $\Acal^\om$, and by Theorem~\ref{th:co-domain_criterion} the algebra $\Acal^\om$ is a co-domain.

\item According to Propositions~\ref{pr:irred_coord_groups_over_torsion_free_group},~\ref{pr:irred_coord_groups_over_N}, any torsion-free abelian group and the monoid $\N$ are co-domains.

\item Let $\Fcal$ be the free semilattice of infinite rank. According to semilattice theory, $\Fcal$ is isomorphic to the family of all finite subsets of natural numbers $\N$ relative to the operation of set union. Moreover, semilattice theory states that any finite semilattice is embedded into $\Fcal$. Therefore $\Fcal$  discriminates any finite semilattice (in the locally finite varieties of semilattices every finitely generated semilattice is finite). Thus $\Fcal$ is a co-domain.


\end{enumerate}

The following algebras are not co-domains.

\begin{enumerate}
\item Let $\Acal=\{a_1,a_2,\ldots,a_n\}$ be a finite $\LL$-algebra. Then the solution set of the $\LL$-system  $\Ss=\{x_1=x_1,x_2=x_2,\ldots,x_{n+1}=x_{n+1}\}$ is reducible:
\[
\V_\Acal(\Ss)=\bigcup_{i\neq j}\V_\Acal(\Ss\cup \{x_i=x_j\}).
\]

\item By the previous item, any cyclic group $\Zbb_{n}$ is not a co-domain. Moreover, below we prove that {\it any finitely generated abelian group with a nontrivial torsion is not a co-domain.}  Let $\Acal=\Zbb^n\oplus T(\Acal)$. Since $T(\Acal)$ is finite for a finitely generated $\Acal$, there exists a natural number $m$ such that $mx=0$ for each $x\in T(\Acal)$. Let $k$ denote the order of $T(\Acal)$ and $\Ss$ be an $\LL$-system 
\[
\{mx_i=0\mid 1\leq i\leq k+1\},
\]
which is obviously satisfied only by points  $(a_1,a_2,\ldots,a_{k+1})\in T^{k+1}(\Acal)$. Since $|T(\Acal)|=k$, there exists a pair of coordinates  $i,j$ such that $a_i=a_j$. Therefore the solution set of  $\Ss$ is reducible and admits a presentation
\[
\V_\Acal(\Ss)=\bigcup_{i\neq j}\V_\Acal(\Ss\cup \{x_i=x_j\}).
\]  

\item The free monoid $FM_r$ of rank $r>1$ is not a co-domain, since
\begin{multline*}
\V_{FM_r}(\{xy=yx,yz=zy\})=\V_{FM_r}(\{xy=yx,yz=zy,xz=zx\})\cup\\
\V_{FM_r}(\{xy=yx,yz=zy,y=1\})
\end{multline*}

\end{enumerate}

\section{The intervention of model theory}

\subsection{Quasi-identities and universal formulas}
\label{sec:quasi-identities_and_univ_formulas}

One of the unexpected result of Unifying theorems is the description of coordinate algebras by first-order formulas of special kind. Below we assume that a reader knows the definition of a first-order formula\Wiki of a language $\LL$ and main identities of the first-order logic. The operations of the conjunction, disjunction and negation are denoted by $\wedge,\vee,\neg$ respectively. Moreover, the expression $x\neq y$ is the abbreviation for $\neg(x=y)$.

A \textit{universal formula} of a language $\LL$ is a formula equivalent to  
\begin{equation}
\label{eq:universal_formula}
\forall x_1\forall x_2\ldots \forall x_n \varphi(x_1,x_2,\ldots,x_n),
\end{equation}
where $\varphi$ is a quantifier-free formula.
A \textit{quasi-identity} of $\LL$ is a universal formula, where the subformula $\varphi$ has the form 
\[
(t_1(X)=s_1(X))\wedge(t_2(X)=s_2(X))\wedge\ldots \wedge(t_m(X)=s_m(X))\to (t(X)=s(X)),
\]
where $t_i(X),s_i(X),t(X),s(X)$ are $\LL$-terms in variables $X=\{x_1,x_2,\ldots,x_n\}$.

Many properties of algebraic structures can be written as universal formulas or quasi-identities.

\begin{enumerate}
\item It is easy to see that any identity 
\[
\forall x_1\forall x_2\ldots \forall x_n \; t(X)=s(X)
\] 
of a language $\LL$ is a quasi-identity, since it is equivalent to
\[
\forall x_1\forall x_2\ldots \forall x_n \left((x_1=x_1)\to (t(X)=s(X))\right)
\] 

\item The cancellation property in the language $\LL_s$ is written as the following quasi-identity
\[
\forall x\forall y\forall z ((xz=yz)\to (x=y))\wedge ((zx=zy)\to (x=y)).
\]  
For a commutative semigroup of the language $\LL_{+s}$ the cancellation property is written as the following quasi-identity
\[
\forall x\forall y\forall z ((x+z=y+z)\to (x=y)).
\] 

\item One of the principal properties of the additive monoid of natural numbers can be written as the quasi-identity
\[
\forall x\forall y((x+y=0)\to (x=0)).
\]  
Following~\cite{MorShev}, this property is called the~\textit{positiveness}, and in Section~\ref{sec:logic_coord} it is used in the description of coordinate monoids over $\N$.

\item The injectivity of a function $f$ defined on an unar $U$ can be written as the following quasi-identity of the language $\LL_u$ 
\[
\forall x\forall y((f(x)=f(y))\to (x=y)).
\] 
Obviously, all free unars $FU_n$ satisfy this formula.

\item The transitivity of the commutation in a semigroup can be written as the following quasi-identity 
\[
\forall x\forall y\forall z((xy=yx)\wedge(yz=zy)\to (xz=zx)).
\]

\item If an abelian group $A$ satisfies the following list of quasi-identities
\[
\{\forall x ((nx=0)\to(x=0))\mid n\in\N\}
\]
the group $A$ is torsion-free.

\end{enumerate}

The following properties of $\LL$-algebras can be written as universal formulas .
\begin{enumerate}
\item If a semilattice $S$ satisfies the following formula of the language $\LL_s$
\[
\forall x\forall y\; ((x\leq y)\vee (y\leq x)),
\]
$S$ is linearly ordered (precisely, in this formula one should replace all occurrences of inequalities $a\leq b$ to $ab=a$, and after that we obtain a formula of the language $\LL_s$)

\item All free monoids satisfy the universal formula
\[
\forall x\forall y\forall z((xy=yx)\wedge(yz=zy)\to ((y=1)\vee(xz=zx))).
\]
\item The following universal formula states that an $\LL$-algebra $\Acal$ {\it contain at most $n$  elements}:
\[
\forall x_1\forall x_2\ldots \forall x_n\forall x_{n+1}(\bigvee_{i\neq j}(x_i=x_j)).
\]
Remark that the properties {\it ``an $\LL$-algebra $\Acal$ contain exactly $n$ elements''} and {\it ``an $\LL$-algebra $\Acal$ contains at least $n$ elements''} are not expressible by universal formulas.

\item The property `` in a group $G$ there are at most $k$ elements of the order $n$'' is written as the following universal formula of the language $\LL_g$
\[
\forall x_1\forall x_2\ldots \forall x_k\forall x_{k+1} (\bigwedge_{i=1}^{k+1}\bigwedge_{j=1}^{n-1}(x_i^{j}\neq 1)\bigwedge_{i=1}^{k+1}(x_i^n=1)\to\bigvee_{i\neq j}(x_i=x_j)).
\]

\end{enumerate} 

Let $\Acal$ be an $\LL$-algebra. {\it The quasi-variety $\qvar(\Acal)$ (universal closure $\ucl(\Acal)$)} is the class of all $\LL$-algebras $\Bcal$ such that $\Bcal$ satisfies all quasi-varieties (respectively, universal formulas) which are true in $\Acal$. 

Since any quasi-identity is a universal formula, we have the inclusion $\ucl(\Acal)\subseteq\qvar(\Acal)$.

The importance of the classes $\qvar(\Acal)$, $\ucl(\Acal)$ in universal algebraic geometry follows from the next two sections.

\subsection{Coordinate algebras}
\label{sec:logic_coord}
\begin{theorem}
If a finitely generated $\LL$-algebra $\Bcal$ is the coordinate algebra of a nonempty algebraic set over an $\LL$-algebra $\Acal$, then $\Bcal\in\qvar(\Acal)$.
\label{th:coord_then_qvar}
\end{theorem}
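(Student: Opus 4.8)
The plan is to reduce the statement to the standard fact that quasi-identities are inherited by subalgebras of direct powers, and then to invoke the embedding criterion already at our disposal. First I would apply Theorem~\ref{th:coord_iff_embeds_into_power}: since $\Bcal$ is the coordinate algebra of a nonempty algebraic set over $\Acal$, the algebra $\Bcal$ embeds into a direct power $\Acal^I=\prod_{i\in I}\Acal$. Because $\qvar(\Acal)$ consists of all $\LL$-algebras satisfying every quasi-identity true in $\Acal$, it suffices to show that each such quasi-identity survives the two operations involved: passing to the direct power $\Acal^I$, and passing to a subalgebra of $\Acal^I$. Then in particular it holds in $\Bcal$, which is exactly $\Bcal\in\qvar(\Acal)$.

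Fix a quasi-identity $q=\forall x_1\ldots\forall x_n\big(\bigwedge_{i=1}^m (t_i=s_i)\to (t=s)\big)$ true in $\Acal$. The first step is to check $\Acal^I\models q$. The key observation is that term evaluation in a direct product is coordinatewise, so for any tuple $\bar b=(\bar b_1,\ldots,\bar b_n)$ of elements of $\Acal^I$ and any projection $\pi_k$ onto the $k$-th factor we have $\pi_k(u(\bar b))=u(\pi_k(\bar b))$ for every $\LL$-term $u$. Hence, if all premises $t_i(\bar b)=s_i(\bar b)$ hold in $\Acal^I$, then $t_i(\pi_k(\bar b))=s_i(\pi_k(\bar b))$ holds in $\Acal$ for every $k\in I$; since $q$ is true in $\Acal$, this forces $t(\pi_k(\bar b))=s(\pi_k(\bar b))$ for all $k$, that is, $t(\bar b)=s(\bar b)$ in $\Acal^I$. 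Thus the conclusion holds and $\Acal^I\models q$.

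The second step passes to the subalgebra $\Bcal\subseteq\Acal^I$. Here term evaluation in $\Bcal$ agrees with that in $\Acal^I$, because the operations of $\Bcal$ are restrictions of those of $\Acal^I$ and $\Bcal$ is closed under them. Therefore, whenever a tuple from $\Bcal$ satisfies the premises of $q$ inside $\Bcal$, it satisfies them inside $\Acal^I$, where the already established $\Acal^I\models q$ delivers the conclusion; and since the values $t(\bar b),s(\bar b)$ lie in $\Bcal$, that equality also holds in $\Bcal$. Consequently $\Bcal\models q$, and as $q$ was an arbitrary quasi-identity true in $\Acal$, we obtain $\Bcal\in\qvar(\Acal)$.

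I expect the two preservation steps to be routine. The only points demanding care are the bookkeeping for coordinatewise evaluation in the direct power, i.e. making precise that each projection $\pi_k$ commutes with interpreting terms, and the remark that the conclusion of $q$ is an equality between elements that already belong to $\Bcal$, so it transfers back from $\Acal^I$ to $\Bcal$. This last observation is precisely what makes the subalgebra step valid for quasi-identities (and in fact for arbitrary universal formulas), and it is the subtle part one must not skip.
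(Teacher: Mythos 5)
Your proof is correct, but it takes a different route from the paper's. You go through Theorem~\ref{th:coord_iff_embeds_into_power}: embed $\Bcal$ into a direct power of $\Acal$ and then invoke the standard preservation of quasi-identities under direct products (coordinatewise term evaluation) and under subalgebras. The paper instead works directly from Theorem~\ref{th:coord_iff_approx}: assuming a quasi-identity true in $\Acal$ fails in $\Bcal$ at a tuple $(b_1,\ldots,b_n)$, it uses approximation to produce a single homomorphism $\phi\colon\Bcal\to\Acal$ separating $t(\bar b)$ from $s(\bar b)$; since $\phi$ preserves the premise equalities, the image point in $\Acal$ satisfies the premises but not the conclusion, contradicting the truth of the quasi-identity in $\Acal$. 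The two arguments are close in substance (the embedding theorem is itself deduced from the approximation theorem), but they package the work differently. Your version reduces everything to the classical fact that quasi-varieties are closed under $S$ and $P$, which is clean and modular; note, however, that your product-preservation step is specific to Horn conclusions and would not extend to arbitrary universal formulas, whereas the paper's separating-homomorphism technique is exactly the one that upgrades to the irreducible case (Theorem~\ref{th:coord_then_ucl}), where discrimination supplies one homomorphism witnessing several inequalities simultaneously. Your closing remark that the subalgebra step works for all universal formulas is right, but for the full universal-formula analogue the product step is the one that breaks, which is why the paper's route is the one reused later.
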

\begin{proof}
By Theorem~\ref{th:coord_iff_approx}, $\Bcal$ is approximated by $\Acal$. Suppose there exists a quasi-identity 
\[
\varphi\colon \forall x_1\forall x_2\ldots\forall x_n (\bigwedge_{i=1}^m t_i=s_i\to t=s)
\]
such that $\varphi$ is true in $\Acal$, but false in $\Bcal$ (above $t_i,s_i,t,s$ are $\LL$-terms in variables $X=\{x_1,x_2,\ldots,x_n\}$). Therefore, there exist elements $b_1,b_2,\ldots,b_n\in\Bcal$ such that $t_i(b_1,b_2,\ldots,b_n)=s_i(b_1,b_2,\ldots,b_n)$, but $t(b_1,b_2,\ldots,b_n)\neq s(b_1,b_2,\ldots,b_n)$. Since $\Bcal$ is approximated by $\Acal$, there exists a homomorphism  $\phi\in\Hom(\Bcal,\Acal)$ with 
\begin{equation}
\label{eq:ineq111}
\phi(t(b_1,b_2,\ldots,b_n))=t(\phi(b_1),\phi(b_2),\ldots,\phi(b_n)))\neq s(\phi(b_1),\phi(b_2),\ldots,\phi(b_n))=\phi(s(b_1,b_2,\ldots,b_n)). 
\end{equation}

Since $\phi$ preserves all equalities between elements of $\Bcal$, we have the following equalities in  $\Acal$:
\[
t_i(\phi(b_1),\phi(b_2),\ldots,\phi(b_n)))=s_i(\phi(b_1),\phi(b_2),\ldots,\phi(b_n)). 
\] 
Thus, the point $P=(\phi(b_1),\phi(b_2),\ldots,\phi(b_n))$ make the subformula 
\[
\bigwedge_{i=1}^m t_i=s_i
\]
true. Since $\varphi$ holds in $\Acal$, the equation $t(X)=s(X)$ should satisfy the point $P$, and we came to the contradiction with~(\ref{eq:ineq111}).
\end{proof}

The profit of Theorem~\ref{th:coord_then_qvar} is the following: for a given $\LL$-algebra $\Acal$ one can restrict the class of coordinate algebras over $\Acal$. For example, Theorem~\ref{th:coord_then_qvar} immediately gives that any coordinate monoid of algebraic set over the monoid $\N$ is positive, since $\N$ is positive and the property of positiveness can be written as quasi-identity of the language $\LL_{+m}$ (see above). 

Actually, for equationally Noetherian algebras it holds the converse to Theorem~\ref{th:coord_then_qvar} statement. 

\begin{theorem}
If a finitely generated  $\LL$-algebra $\Bcal$ belongs to the quasi-variety  $\qvar(\Acal)$ generated by an equationally Noetherian $\LL$-algebra $\Acal$, then $\Bcal$ is the coordinate algebra of a nonempty algebraic set over $\Acal$.
\label{th:qvar_then_coord}
\end{theorem}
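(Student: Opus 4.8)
The plan is to reduce the statement to an approximation property and then exploit equational Noetherianity to convert an a priori infinite family of relations into a single quasi-identity. By Theorem~\ref{th:coord_iff_approx} it suffices to prove that $\Bcal$ is approximated by $\Acal$; the conclusion that $\Bcal$ is the coordinate algebra of a nonempty algebraic set then follows immediately.

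First I would fix a finite generating set $X=\{x_1,\ldots,x_n\}$ of $\Bcal$ and let $R$ be the full set of relations holding among these generators, so that $\Bcal=\lb X\mid R\rb$. Exactly as in the ``if'' part of the proof of Theorem~\ref{th:coord_iff_approx}, any point $P\in\Acal^n$ satisfying every equation of $R$ determines a homomorphism $\phi_P\colon\Bcal\to\Acal$ by $\phi_P([w(X)])=w(P)$, and conversely. Given two distinct elements $b_1=[w_1(X)]$ and $b_2=[w_2(X)]$ of $\Bcal$, the task therefore becomes to produce a point $P\in\V_\Acal(R)$ with $w_1(P)\neq w_2(P)$.

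The key step is to apply equational Noetherianity to the system $R$, regarded as an $\LL$-system over $\Acal$: there is a finite subsystem $R'\subseteq R$ with $\V_\Acal(R')=\V_\Acal(R)$. Suppose, for contradiction, that no separating point exists, i.e.\ every $P\in\V_\Acal(R)=\V_\Acal(R')$ satisfies $w_1(P)=w_2(P)$. Writing $R'=\{t_i(X)=s_i(X)\mid 1\le i\le m\}$, this says precisely that the quasi-identity
\[
\varphi\colon\ \forall x_1\ldots\forall x_n\Big(\bigwedge_{i=1}^m t_i(X)=s_i(X)\ \to\ w_1(X)=w_2(X)\Big)
\]
holds in $\Acal$. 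Since $\Bcal\in\qvar(\Acal)$, the formula $\varphi$ also holds in $\Bcal$; but evaluating at the generators $x_1,\ldots,x_n$, the premises of $\varphi$ are satisfied (each relation of $R'\subseteq R$ holds in $\Bcal$), so $w_1(X)=w_2(X)$ holds in $\Bcal$, i.e.\ $b_1=b_2$, contradicting $b_1\neq b_2$. Hence a separating point $P$ exists, the homomorphism $\phi_P$ distinguishes $b_1$ from $b_2$, and $\Bcal$ is approximated by $\Acal$.

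The main obstacle is exactly the passage from the infinite relation set $R$ to a finite $R'$: only a finite conjunction may serve as the premise of a first-order quasi-identity, and it is precisely equational Noetherianity that guarantees $\V_\Acal(R)=\V_\Acal(R')$ for such a finite $R'$. Without this hypothesis one would be left with an infinitary implication lying outside the scope of $\qvar(\Acal)$, and the argument would collapse; this is where the converse to Theorem~\ref{th:coord_then_qvar} genuinely uses the Noetherian assumption.
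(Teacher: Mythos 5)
Your proposal is correct and follows essentially the same route as the paper: reduce to approximation via Theorem~\ref{th:coord_iff_approx}, use equational Noetherianity to replace the (possibly infinite) relation set by a finite subsystem with the same solution set over $\Acal$, package the failure of separation as a quasi-identity true in $\Acal$, and transfer it to $\Bcal$ via $\Bcal\in\qvar(\Acal)$ to force the two elements to coincide. The only cosmetic difference is that you argue pairwise for each $b_1\neq b_2$ while the paper fixes one offending pair in a global proof by contradiction; the substance is identical.
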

\begin{proof}
Assume the converse: the $\LL$-algebra $\Bcal$ is not the coordinate algebra of a nonempty algebraic set over $\Acal$. By Theorem~\ref{th:coord_iff_approx} it follows that there exist two distinct elements $b,b^\pr\in\Bcal$ such that for any homomorphism $\phi\in\Hom(\Bcal,\Acal)$ it holds $\phi(b)=\phi(b^\pr)$. Suppose $\Bcal$ has a presentation $\lb b_1,b_2,\ldots,b_n\mid R\rb$. Let $\bar{b}$ denote the vector of the generators $(b_1,b_2,\ldots,b_n)$. There exist $\LL$-terms $t(\bar{b}),t^\pr(\bar{b})$ with $b=t(b_1,b_2,\ldots,b_n)$, $b^\pr=t^\pr(b_1,b_2,\ldots,b_n)$.

One can treat the set of defining relations $R=\{t_i(\bar{b})=s_i(\bar{b})\mid i\in I\}$ as an $\LL$-system in variables $b_1,b_2,\ldots,b_n$ over the algebra $\Acal$. Since $\Acal$ is equationally Noetherian, there exists a finite subsystem $R^\pr=\{t_i(\bar{b})=s_i(\bar{b})\mid i\in I^\pr\}$ such that $R^\pr\sim_\Acal R$ (in other words, the algebras $\Bcal^\pr=\lb b_1,b_2,\ldots,b_n\mid R^\pr\rb$ and $\Bcal$ have the same set of homomorphisms to the algebra $\Acal$). Let us show that the following quasi-identity 
\[
\varphi\colon \forall x_1\forall x_2\ldots\forall x_n (\bigwedge_{i\in I^\pr} t_i=s_i\to t=s).
\]
holds in $\Acal$.

If $\varphi$ does not holds in $\Acal$, there exist elements $a_1,a_2,\ldots,a_n$ such that 
\[
t_i(a_1,a_2,\ldots,a_n)=s_i(a_1,a_2,\ldots,a_n)\; (i\in I^\pr), t(a_1,a_2,\ldots,a_n)\neq s(a_1,a_2,\ldots,a_n).
\] 
Therefore the homomorphism $\phi\in\Hom(\Bcal^\pr,\Acal)=\Hom(\Bcal,\Acal)$ defined by $\phi(b_j)=a_j$ (one can check that this map is really homomorphism) satisfies  $\phi(b)=t(a_1,a_2,\ldots,a_n)\neq s(a_1,a_2,\ldots,a_n)=\phi(b^\pr)$. The last inequality contradicts the choice of the elements $b,b^\pr$. Thus, we obtain that $\varphi$ is true in $\Acal$.

Since $\Bcal\in\qvar(\Acal)$, the algebra $\Bcal$ satisfies the quasi-identity $\varphi$. The equalities $t_i(b_1,b_2,\ldots,b_n)=s_i(b_1,b_2,\ldots,b_n)$ ($i\in I^\pr$) hold in $\Bcal$, since they are defining relations of $\Bcal$.  The truth of $\varphi$ in $\Bcal$ implies $b=t(b_1,b_2,\ldots,b_n)=s(b_1,b_2,\ldots,b_n)=b^\pr$ that contradicts the choice of the elements $b,b^\pr$.
\end{proof}

Actually, one can prove Theorem~\ref{th:qvar_then_coord} for a wide class of  $\qq$-compact algebras (see details in Section~\ref{sec:compactness_classes}). 

The sense of Theorems~\ref{th:coord_then_qvar},~\ref{th:qvar_then_coord} is the following: any coordinate $\LL$-algebra $\Bcal$ of a nonempty algebraic set over an equationally Noetherian $\LL$-algebra $\Acal$ inherits the properties of $\Acal$ which can be written as quasi-identities of the language $\LL$. For example, a coordinate group over the cyclic group $\Zbb_{n}$ is not necessarily cyclic, since the property ``to be a cyclic group'' is not expressible as a quasi-identity of the language $\LL_{+g}$ (see Theorem~\ref{th:coord_abelian_groups_with_logic}). Moreover, Theorems~\ref{th:coord_then_qvar},~\ref{th:qvar_then_coord} explain the following phenomenon. One can consider the set $\Zbb=\{0,\pm 1,\pm, 2,\ldots\}$ as a commutative monoid in the language $\LL_{+m}$. Then the coordinate algebras of algebraic sets over the monoid $\Zbb$ are not necessarily abelian groups, since the property ``to be a group'' can not be written as a quasi-identity of the language $\LL_{+m}$ (see example below).

Thus, it follows from Theorems~\ref{th:coord_then_qvar},~\ref{th:qvar_then_coord} that the class of coordinate algebras over a given equationally Noetherian $\LL$-algebra $\Acal$ is logical, i.e. it can be defined by a set of logical formulas (quasi-identities). Let us give examples which demonstrate such property of coordinate algebras.

\begin{enumerate}
\item Let $A=\Zbb^k\oplus T(A)$ be a finitely generated abelian group. Below we define a set of quasi-identities $\Sigma_A$ which are true in $A$. Initially, we  put $\Sigma_A=\emptyset$.

A group $A$ has a {\it period} $n$ if for each $a\in A$ it holds $na=0$ and  $n$ is the minimal number with such property. For example, the group $A=\Zbb_2\oplus\Zbb_3$ has the period $n=6$. One can prove that {\it the period of a finitely generated abelian group exists iff $A=T(A)$}, and the period equals the least common multiple of the orders of direct summand of $T(A)$. For example, the periods of the groups $\Zbb_4\oplus\Zbb_3$, $A=\Zbb_2\oplus\Zbb_4\oplus\Zbb_8$ are $12$, $8$ respectively. 

Thus, if a group $A$ has the period $n$ we add to the set $\Sigma_A$ the following identity
\[
\forall x\; (nx=0)
\]
(remind that any identity is equivalent to a quasi-identity)

From the set of quasi-identities 
\begin{equation}
\Sigma_{p,n}=\{\forall x\; (p^nx=0\to \; p^{n-1}x=0)\mid p\mbox{ is prime },n\in\N\},
\label{eq:fghhs}
\end{equation}
let us add to $\Sigma_A$ the quasi-identies which are true in the group $A$. Let us explain, the sense of the set $\Sigma_{p,n}$ by the following examples (below $p,q$ are pimes).
\begin{enumerate}
\item If $A=\Zbb$ (one can take here any torsion-free group), then each quasi-identity of the form~(\ref{eq:fghhs}) is true in $A$ (the condition  $p^n x=0$ implies in $A$ the equality $x=0$, and $p^{n-1}x=0$ obviously holds).
 
\item Let $A=\Zbb_{q^n}$. All quasi-identities~(\ref{eq:fghhs}) for $p\neq q$ are true in $A$, since there are not nonzero elements of order $p^n$ in $A$, and the left part of the implication~(\ref{eq:fghhs}) becomes false, therefore the whole formula~(\ref{eq:fghhs}) becomes true. Moreover, any quasi-identity~(\ref{eq:fghhs}) for $p=q$ and $n<m$ is also true in $A$, since the condition  $q^mx=0$ implies that an element $x\in A$ becomes zero in a power less than $q^m$. By the other hand, the set $\Sigma_A$ does not contain all formulas with $p=q$ and $m\leq n$, since the condition $q^mx=0$ does not now imply  $q^{m-1}x=0$, because the group $\Zbb_{q^n}$ contains an element of order  $q^m$.

\item Let $A=\Zbb_{p^n}\oplus\Zbb_{q^m}$. By the reasonings from the previous item, one can prove that a quasi-identity $Q$ from $\Sigma_{p,n}$ is true in $A$ iff  $Q$ is true in $\Zbb_{p^n}$ and $\Zbb_{q^m}$ simultaneously.

\item Actually, the following general statement holds: {\it if the torsion $T(A)$ of a finitely generated abelian group $A$ does not contain $\Zbb_{q^m}$ as a subgroup of a direct summand, then $A$ satisfies the following quasi-identities 
\[
\forall x\; (q^nx=0\to \; q^{n-1}x=0)
\] 
for every $n\geq m$.}
\end{enumerate}

\begin{theorem}
Let $A,B$ be finitely generated abelian groups. Then the following conditions are equivalent:
\begin{enumerate}
\item $B$ is the coordinate group of a nonempty algebraic set over $A$; 
\item $B$ is a direct sum of groups from $Sub_\oplus(A)$.
\item $B$ satisfies all quasi-identities $\Sigma_A$.
\end{enumerate}
\label{th:coord_abelian_groups_with_logic}
\end{theorem}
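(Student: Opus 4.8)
The plan is to prove the equivalence by closing the cycle $(1)\Rightarrow(3)\Rightarrow(2)\Rightarrow(1)$. The equivalence $(1)\Leftrightarrow(2)$ is already Theorem~\ref{th:coord_abelian_groups}, so $(2)\Rightarrow(1)$ requires no new work; the only genuinely new content is the passage between the logical condition $(3)$ and the structural condition $(2)$. For the easy step $(1)\Rightarrow(3)$ I would invoke Theorem~\ref{th:coord_then_qvar}: if $B$ is a coordinate group over $A$ then $B\in\qvar(A)$, so $B$ satisfies every quasi-identity true in $A$. Since each formula placed in $\Sigma_A$ was put there precisely because it holds in $A$, we immediately get $B\models\Sigma_A$.

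The core is $(3)\Rightarrow(2)$. I would write $B=\Zbb^m\oplus T(B)$ with $T(B)=\bigoplus_j\Zbb_{q_j^{l_j}}$ and introduce, for a prime $q$ and $n\ge 1$, the quasi-identity $Q_{q,n}\colon\ \forall x\,(q^n x=0\to q^{n-1}x=0)$. The key observation is that $Q_{q,n}$ holds in a finitely generated abelian group $G$ iff $G$ has no element of order exactly $q^n$, equivalently iff the largest power of $q$ occurring among the cyclic summands of $T(G)$ is strictly smaller than $q^n$. Writing $s_q(G)$ for that maximal exponent (with $s_q(G)=0$ when $q$ does not divide $|T(G)|$), this says $Q_{q,n}$ holds in $G$ iff $s_q(G)<n$. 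By the construction of $\Sigma_A$ (item (iv) above), $Q_{q,n}$ lies in $\Sigma_A$ exactly when $n>s_q(A)$. Applying the hypothesis $B\models\Sigma_A$ to the borderline instance $Q_{q,\,s_q(A)+1}$ then forces $s_q(B)\le s_q(A)$ for every prime $q$. Consequently each torsion summand $\Zbb_{q^l}$ of $B$ has $l\le s_q(A)$, so it is isomorphic to the subgroup of order $q^l$ inside the summand $\Zbb_{q^{s_q(A)}}$ of $T(A)$, and hence $\Zbb_{q^l}\in Sub_\oplus(A)$.

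It remains to control the free part $\Zbb^m$, and this is exactly where the period identity enters. If $A$ is finite, i.e. $A=T(A)$, then $A$ has a period $n$ and the identity $\forall x\,(nx=0)$ belongs to $\Sigma_A$; since $\Zbb$ violates this identity while $B$ satisfies it, we conclude $m=0$, and $B$ is a direct sum of torsion groups from $Sub_\oplus(A)$. If $A$ is infinite, its decomposition~(\ref{eq:abelian_group_presentation}) has a $\Zbb$ summand, so $\Zbb\in Sub_\oplus(A)$ and the free part $\Zbb^m$ is itself a finite direct sum of members of $Sub_\oplus(A)$. In both cases $B$ is a finite direct sum of groups from $Sub_\oplus(A)$, which is $(2)$; and $(2)\Rightarrow(1)$ is Theorem~\ref{th:coord_abelian_groups}, closing the cycle.

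The main obstacle is conceptual rather than computational: the quasi-identities $Q_{q,n}$ are by themselves blind to a free part, since every torsion-free group satisfies all of them (example (i) above). Thus the $Q_{q,n}$ alone force only the numerical constraints $s_q(B)\le s_q(A)$ on the torsion, and the period identity $\forall x\,(nx=0)$ is indispensable to rule out a free summand in the case $\Zbb\notin Sub_\oplus(A)$. The delicate part of the argument is therefore the correct bookkeeping of maximal $q$-exponents together with recognizing that the presence or absence of the period identity in $\Sigma_A$ exactly encodes the dichotomy "$A$ finite versus $A$ infinite", i.e. whether $\Zbb\in Sub_\oplus(A)$.
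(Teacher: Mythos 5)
Your proposal is correct and follows essentially the same route as the paper: $(1)\Leftrightarrow(2)$ from Theorem~\ref{th:coord_abelian_groups}, $(1)\Rightarrow(3)$ from Theorem~\ref{th:coord_then_qvar}, and $(3)\Rightarrow(2)$ by using the quasi-identities $\forall x\,(q^nx=0\to q^{n-1}x=0)$ to bound the torsion summands and the period identity to exclude a free summand when $A=T(A)$. The only cosmetic difference is that you run the torsion argument directly via the maximal exponents $s_q$, where the paper argues by contradiction from a hypothetical summand $\Zbb_{q^m}\notin Sub_\oplus(A)$.
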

\begin{proof}
The equivalence of the first two conditions was proved in Theorem~\ref{th:coord_abelian_groups}. By Theorem~\ref{th:coord_then_qvar}, we have $(a)\Rightarrow(c)$.  Thus, it is sufficient to prove $(c)\Rightarrow(b)$.

Assume the presentation~(\ref{eq:abelian_group_presentation}) of $B$ contains as a direct summand $\Zbb_{q^m}\notin Sub_\oplus(A)$. By the properties of the set $\Sigma_A$ the group $A$ satisfies the formulas
\[
\forall x\; (q^nx=0\to \; q^{n-1}x=0)
\] 
for any $n\geq m$. Since $B$ contains $\Zbb_{q^m}$ as direct summand, the equality $q^{m}x=0$ does not imply $q^{m-1}x=0$ in $\Zbb_{q^m}$. Thus, the quasi-identity
\[
\forall x\; (q^mx=0\to \; q^{m-1}x=0)
\]
is false in $B$, that contradicts the condition.
 
Therefore, for all direct summands $\Zbb_{q^n}$ of the group $B$ we proved  $\Zbb_{q^n}\in Sub_\oplus(A)$. Assume now that $\Zbb$ is a direct summand in $B$. If $\Zbb\notin Sub_\oplus(A)$ the group $A$ has a finite period $n$, and the definition of the set $\Sigma_A$ gives  $(\forall x\; (nx=0))\in \Sigma_A$. By the condition, such quasi-identity should hold in $B$, so $B$ has a finite period. We came to the contradiction, since $B$ contains $\Zbb$.

Thus, we proved that $B$ is a direct sum of groups from $Sub_\oplus(A)$. 
\end{proof}

\item Let us consider the set of integer numbers $\Zbb$ as a commutative monoid of the language $\LL_{+m}$. For coordinate algebras of algebraic sets over $\Zbb$ we have the following theorem.

\begin{theorem}
A finitely generated commutative monoid $M$ is the coordinate monoid of an algebraic set over the monoid $\Zbb$ iff $M$ satisfies the following quasi-identities:
\[
\forall x\forall y\forall z (x+z=y+z\to x=y)\; \mbox{(cancellation property)},
\] 
\[
\varphi_n\colon\;  \forall x\; (nx=0\to x=0),\; n\geq 1
\] 
\label{th:about_monoid_Z_coord_logic}
(the series $\{\varphi_n\}$ provides that $M$ does not contain elements of finite order).
\end{theorem}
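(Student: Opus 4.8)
The plan is to route the statement through the logical criterion of Section~\ref{sec:logic_coord}. First I would record that $\Zbb$, regarded as an $\LL_{+m}$-monoid, is a commutative monoid with cancellation and is therefore equationally Noetherian (Section~\ref{sec:noeth}). Moreover every $\LL_{+m}$-term is an $\N$-linear combination of variables, so it vanishes at the all-zero point; hence every algebraic set over $\Zbb$ contains $(0,\ldots,0)$ and is in particular nonempty, so the empty-set anomaly never arises. Consequently Theorems~\ref{th:coord_then_qvar} and~\ref{th:qvar_then_coord} apply verbatim: a finitely generated $M$ is a coordinate monoid over $\Zbb$ if and only if $M\in\qvar(\Zbb)$.

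The forward implication is then immediate. Cancellation and each $\varphi_n$ are quasi-identities that hold in the integers (which are cancellative and torsion-free), so if $M$ is a coordinate monoid over $\Zbb$ then $M\in\qvar(\Zbb)$ by Theorem~\ref{th:coord_then_qvar}, whence $M$ inherits both families of quasi-identities.

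For the converse I would not verify all quasi-identities of $\Zbb$ directly; instead I would use Theorem~\ref{th:coord_iff_embeds_into_power} and exhibit an embedding of $M$ into a finite direct power $\Zbb^n$. The natural construction is the Grothendieck group $G(M)$ of formal differences $a-b$ (with $a,b\in M$): the cancellation law makes the canonical map $M\to G(M)$ injective, and finite generation of $M$ makes $G(M)$ a finitely generated abelian group. If $G(M)$ is torsion-free, then by the structure theorem~(\ref{eq:abelian_group_presentation}) we have $G(M)\cong\Zbb^n$, and the composite $M\hookrightarrow G(M)\cong\Zbb^n$ is the desired embedding into a direct power of $\Zbb$; Theorem~\ref{th:coord_iff_embeds_into_power} then certifies that $M$ is a coordinate monoid over $\Zbb$.

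I expect the torsion-freeness of $G(M)$ to be the main obstacle, and it is where the hypotheses must be used with care. Torsion-freeness of $G(M)$ is equivalent to the implication $nx=ny\Rightarrow x=y$ for all $x,y\in M$, whereas the stated quasi-identities $\varphi_n$ deliver only its special case $nx=0\Rightarrow x=0$, that is, the absence of torsion \emph{at the identity}. These are genuinely different: the submonoid $\langle(1,0),(1,\bar 1)\rangle$ of $\Zbb\oplus\Zbb_2$ is cancellative and satisfies every $\varphi_n$, yet $2(1,\bar 1)=2(1,0)$ while $(1,\bar 1)\neq(1,0)$, so its group of differences has torsion and it embeds into no $\Zbb^n$. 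The crux is therefore to upgrade $\varphi_n$ to the full cancellation of multiples $nx=ny\Rightarrow x=y$; I would either attempt to derive this from cancellation together with finite generation, or, more safely, record the stronger quasi-identities $\psi_n\colon\forall x\forall y\,(nx=ny\to x=y)$ (which also hold in $\Zbb$) among the defining conditions and prove torsion-freeness of $G(M)$ from them. Once torsion-freeness is secured, the remaining steps are routine.
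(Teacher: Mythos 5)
Your route is the same as the paper's: the forward direction via Theorem~\ref{th:coord_then_qvar}, and the converse by passing to the group of differences $G(M)$ (a finitely generated abelian group $\Zbb^n\oplus T$, by cancellation and finite generation) and invoking Theorem~\ref{th:coord_iff_embeds_into_power}. Where you differ is that you refuse to take the last step --- $M\hookrightarrow\Zbb^n$ --- for granted, and your caution is justified: the paper's proof ends with the bare assertion that, since the quasi-identities $\varphi_n$ hold in $M$, ``$M$ is embedded into a subgroup $\Zbb^n\subseteq A$'', and this inference is invalid. The hypotheses $\varphi_n$ give only $M\cap T=\{0\}$, which does not make the projection $\Zbb^n\oplus T\to\Zbb^n$ injective on $M$. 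Your example $M=\langle(1,\bar 0),(1,\bar 1)\rangle\subseteq\Zbb\oplus\Zbb_2$ is a genuine counterexample to the theorem as stated: it is finitely generated, commutative, cancellative, and satisfies every $\varphi_n$ (the only solution of $n(k+l,\bar l)=(0,\bar 0)$ with $k,l\in\N$ is $k=l=0$), yet its group of differences is $\Zbb\oplus\Zbb_2$, so any monoid embedding of $M$ into a direct power $\Zbb^I$ would induce a group embedding $\Zbb\oplus\Zbb_2\hookrightarrow\Zbb^I$, which is impossible because $\Zbb^I$ is torsion-free. By Theorem~\ref{th:coord_iff_embeds_into_power} (and since $M$ is nontrivial, hence not the coordinate monoid of the empty set), $M$ is not a coordinate monoid over $\Zbb$.

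Of your two proposed repairs only the second is viable: the implication $nx=ny\Rightarrow x=y$ cannot be derived from cancellation, finite generation and the $\varphi_n$ --- your own example witnesses this --- so the series $\varphi_n$ must be replaced by $\psi_n\colon\forall x\forall y\,(nx=ny\to x=y)$, which hold in $\Zbb$ and are exactly what is needed to make $G(M)$ torsion-free, since $n(a-b)=0$ in $G(M)$ if and only if $na=nb$ in $M$. With that strengthening both your argument and the paper's go through.
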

\begin{proof}
The ``only if'' statement follows from Theorem~\ref{th:coord_then_qvar}, since all quasi-identities above hold in $\Zbb$. Let us prove the ``if'' statement of the theorem. Following Theorem~\ref{th:coord_iff_embeds_into_power}, it is sufficient to prove the embedding of $M$ into a direct power of the monoid $\Zbb$. By semigroup theory, a commutative monoid $M$ is embedded into an abelian group iff $M$ has the cancellation property. Thus, there exists an abelian group $A$ with a submonoid isomorphic to $M$. Since $M$ is finitely generated, so is $A$. Following~(\ref{eq:abelian_group_presentation}), $A$ is isomorphic to a direct sum $\Zbb^n\oplus T(A)$. Since the quasi-identities $\varphi_n$ hold in $M$, the monoid $M$ does not contain elements of finite order, and actually $M$ is embedded into a subgroup $\Zbb^n\subseteq A$.
\end{proof}

\item One can compare Theorem~\ref{th:about_monoid_Z_coord_logic} with the similar result for the monoid  $\N$. 

\begin{theorem}
A finitely generated commutative monoid $M$ is the coordinate monoid of an algebraic set over $\N$ iff $M$ satisfies the following quasi-identities:
\[
\forall x\forall y\forall z (x+z=y+z\to x=y)\; \mbox{(cancellation property)},
\] 
\[
\forall x\forall y (x+y=0\to x=0)\; \mbox{(positiveness)}.
\] 
\label{th:about_N_coord_logic}
\end{theorem}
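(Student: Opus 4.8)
The plan is to prove the two implications separately, mirroring the proof of Theorem~\ref{th:about_monoid_Z_coord_logic} and using the embedding criterion of Theorem~\ref{th:coord_iff_embeds_into_power}. For the ``only if'' direction I would simply invoke Theorem~\ref{th:coord_then_qvar}: any coordinate monoid of a nonempty algebraic set over $\N$ lies in $\qvar(\N)$. Since both the cancellation property and positiveness were exhibited in Section~\ref{sec:quasi-identities_and_univ_formulas} as quasi-identities of $\LL_{+m}$, and both evidently hold in $\N$, they must hold in $M$. Note that no Noetherian hypothesis is needed here, as Theorem~\ref{th:coord_then_qvar} is unconditional.

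For the ``if'' direction, Theorem~\ref{th:coord_iff_embeds_into_power} reduces the task to embedding $M$ into some direct power $\N^k$. First, cancellation lets me embed $M$ into its group of differences $A$, which is finitely generated because $M$ is, so $A\cong\Zbb^n\oplus T(A)$ by~(\ref{eq:abelian_group_presentation}). Positiveness forbids nonzero elements of finite order in $M$: if $x\neq 0$ and $kx=0$, then $x+(k-1)x=0$, contradicting positiveness. Consequently the projection $\pi\colon A\to A/T(A)\cong\Zbb^n$ is injective on $M$, since any $m\in M$ with $\pi(m)=0$ is a torsion element of $M$ and hence $0$. Because $\pi|_M$ is an injective monoid homomorphism, $\pi(M)\cong M$ inherits positiveness, so $M$ embeds as a finitely generated \emph{positive} submonoid (i.e.\ $M\cap(-M)=\{0\}$) of $\Zbb^n$.

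The heart of the proof, and the step I expect to be the main obstacle, is to show that every finitely generated positive submonoid $M\subseteq\Zbb^n$ embeds into $\N^k$. After discarding any zero generators $g_1,\dots,g_r$, I would argue that positiveness forces the rational cone $C=\mathrm{cone}(M)\subseteq\mathbb{Q}^n$ to be pointed: a relation $\sum_i c_i g_i=0$ with $c_i\geq 0$ not all zero would give $c_j g_j\in M\cap(-M)=\{0\}$ for some $j$ with $c_j>0$, whence $g_j=0$ (we are torsion-free), contrary to the choice of generators. A pointed cone has a full-dimensional dual cone, so I can select $n$ linearly independent integral linear functionals $\ell_1,\dots,\ell_n$ that are nonnegative on $M$. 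The map $m\mapsto(\ell_1(m),\dots,\ell_n(m))$ is then an injective monoid homomorphism $\Zbb^n\to\Zbb^n$ carrying $M$ into $\N^n$, and composing with $M\hookrightarrow\Zbb^n$ yields $M\hookrightarrow\N^n$.

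Theorem~\ref{th:coord_iff_embeds_into_power} then concludes the argument. The delicate points, which is where I would spend most of the effort, are the passage from monoid-level positiveness to geometric pointedness of $C$, and the convex-geometric fact that a pointed cone admits enough nonnegative separating functionals to yield an injective integral map; everything else is routine bookkeeping. Should one prefer to avoid the cone machinery, this embedding of finitely generated positive cancellative commutative monoids into $\N^k$ is a standard result on affine monoids and could instead be cited from semigroup theory, exactly as the embedding of a cancellative commutative monoid into its difference group is cited above.
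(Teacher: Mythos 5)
Your proof is correct, and its skeleton coincides with the paper's: the ``only if'' direction via Theorem~\ref{th:coord_then_qvar} (which, as you note, needs no Noetherian hypothesis), and the ``if'' direction by reducing to an embedding of $M$ into a finite direct power of $\N$. The difference is that the paper stops there and cites the embedding of a finitely generated positive cancellative commutative monoid into $\N^k$ from~\cite{grillet} or~\cite{MorShev}, whereas you actually prove it. Your argument for that lemma is sound in all its steps: positiveness kills torsion, so $M$ injects into $\Zbb^n$ via the difference group modulo torsion; positiveness of $M\subseteq\Zbb^n$ forces the rational cone it generates to be pointed (after clearing denominators in the putative nonnegative relation, which you leave implicit but is immediate); and a pointed rational polyhedral cone has a full-dimensional dual, yielding $n$ linearly independent integral functionals that are nonnegative on $M$ and hence an injective monoid map $\Zbb^n\to\Zbb^n$ carrying $M$ into $\N^n$. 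What your route buys is a self-contained proof of exactly the fact the paper outsources (it is essentially the standard affine-monoid argument); what the paper's route buys is brevity. One cosmetic remark: the paper's reduction is stated via Proposition~\ref{pr:irred_coord_groups_over_N}, while yours goes through Theorem~\ref{th:coord_iff_embeds_into_power}; these amount to the same thing here, and your choice is arguably the cleaner citation for the non-irreducible statement.
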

\begin{proof}
By Proposition~\ref{pr:irred_coord_groups_over_N}, it is sufficient to prove that any positive commutative monoid with cancellations is embedded into a direct power of $\N^n$. One can see the proof of this fact in~\cite{grillet} or~\cite{MorShev} (remark that in~\cite{grillet} a positive monoid is called a {\it reduced monoid}).
\end{proof}

\item For unars of the language $\LL_u$ we have the following result. 

\begin{theorem}
For a nontrivial finitely generated unar $U$ the following conditions are equivalent:
\begin{enumerate}
\item $U$ is isomorphic to a free unar $FU_m$ for some $m\geq 1$;
\item $U$ is the coordinate unar of an algebraic set over the free unar  $FU_n$ ($n\geq 1$);
\item $U$ satisfies the quasi-identities 
\[
\varphi\colon\; \forall x \forall y (f(x)=f(y)\to x=y)\; (\mbox{the function $f$ is injective}),
\] 
\[
\varphi_n\colon\; \forall x \forall y (f^n(x)=x\to x=y) \; (n\geq 1)
\] 
(the quasi-identities $\varphi_n$ provides the following property: if an unar $U$ contains an element $x$ such that $x=f^n(x)$ for some $n\geq 1$ then the unar $U$ is trivial).  
\end{enumerate} 
\end{theorem}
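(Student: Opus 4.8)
The plan is to establish $(a)\Leftrightarrow(b)$, then $(b)\Rightarrow(c)$, and finally $(c)\Rightarrow(a)$, which together make the three conditions equivalent. The equivalence $(a)\Leftrightarrow(b)$ is almost immediate from Theorem~\ref{th:unars_coord}, which characterizes the finitely generated coordinate unars of nonempty algebraic sets over $FU_n$ as precisely the free unars $FU_m$. The only wrinkle is the wording ``algebraic set'' versus ``nonempty algebraic set'', but since $U$ is nontrivial and the coordinate algebra of the empty set is always trivial, condition $(b)$ forces the defining algebraic set to be nonempty, and Theorem~\ref{th:unars_coord} applies verbatim.

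For $(b)\Rightarrow(c)$ I would invoke Theorem~\ref{th:coord_then_qvar}: being a coordinate unar of a nonempty algebraic set over $FU_n$ gives $U\in\qvar(FU_n)$, so $U$ satisfies every quasi-identity true in $FU_n$. It then suffices to check that $\varphi$ and each $\varphi_n$ hold in $FU_n$. The quasi-identity $\varphi$ holds because $f$ is injective on the disjoint union of copies of $\N$ constituting $FU_n$ (distinct points have distinct successors). Each $\varphi_n$ holds vacuously: the forward orbit of every element of $FU_n$ is infinite, so $f^n(x)=x$ has no solution and the premise of $\varphi_n$ is never satisfied.

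The substantive step is $(c)\Rightarrow(a)$, which I would carry out by a direct structural analysis of $U$. Since $U$ is finitely generated and $\LL_u$ has no constants and only the unary symbol $f$, every element of $U$ has the form $f^j(b_i)$ for one of finitely many generators $b_1,\dots,b_k$ and some $j\geq 0$. Nontriviality of $U$ together with the quasi-identities $\varphi_n$ forces $U$ to have \emph{no periodic points}: if $f^n(a)=a$ for some $a$ and some $n\geq 1$, then $\varphi_n$ would yield $a=y$ for every $y$, making $U$ trivial. Combined with the injectivity supplied by $\varphi$, each element has out-degree $1$ and in-degree at most $1$ in the functional graph, and there are no cycles; hence every connected component is an infinite chain. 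In particular the forward orbit of each generator $b_i$ is an infinite chain of pairwise distinct elements, since an equality $f^p(b_i)=f^q(b_i)$ with $p<q$ would, after cancelling by injectivity, produce a periodic point.

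It remains to exclude bi-infinite ($\Zbb$-type) components and to bound the number of components. Here finite generation is decisive: a component is covered by the forward orbits of the finitely many generators lying in it, and the union of finitely many forward rays inside a single chain is again a ray, never the whole line $\Zbb$; thus no component can be bi-infinite, and each component possesses an element with no preimage, so it is an $\N$-ray isomorphic (via $x_i\mapsto i$) to $\N$ as a unar. Finally, since the $k$ forward orbits of the generators meet only finitely many components, $U$ is a disjoint union of $m\leq k$ copies of $\N$, i.e.\ $U\cong FU_m$ with $m\geq 1$. The main obstacle is precisely this last structural argument---ruling out bi-infinite components and proving finiteness of the number of components---whereas the remaining implications are routine applications of the quasi-variety machinery already developed.
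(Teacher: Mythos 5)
Your proof is correct and follows essentially the same route as the paper's: the equivalence of $(a)$ and $(b)$ via Theorem~\ref{th:unars_coord}, the implication $(b)\Rightarrow(c)$ via Theorem~\ref{th:coord_then_qvar}, and a structural argument that a nontrivial finitely generated unar satisfying $\varphi$ and the $\varphi_n$ must be free. The paper compresses that last step into the phrase ``one can see that a unar satisfying $\varphi,\varphi_n$ is either free or trivial,'' whereas you supply the missing chain-decomposition argument (injectivity plus no periodic points yields disjoint chains, and finite generation rules out $\Zbb$-type components and bounds their number), which is precisely the content the paper leaves to the reader.
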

\begin{proof}
One can see that a unar $U$ satisfying $\varphi,\varphi_n$ is either free or trivial. The trivial unar is the coordinate unar of the empty set, therefore $U$ is free.
\end{proof}

\end{enumerate}

\subsection{Irreducible algebraic sets}
\label{sec:logic_coord_irred}
Let us consider the role of the class $\ucl(\Acal)$ in algebraic geometry over an $\LL$-structure $\Acal$. First of all we prove the following auxiliary statement from model theory.

\begin{proposition}
An $\LL$-algebra $\Bcal$ belongs to the universal closure $\ucl(\Acal)$ of an $\LL$-algebra $\Acal$ iff  $\Bcal$ satisfies all formulas of the form
\begin{equation}
\label{eq:universal_formuila_special}
\forall x_1\forall x_2\ldots \forall x_n (\bigwedge_{i=1}^m(t_i=s_i)\to (\bigvee_{j=1}^l(t_j^\pr=s_j^\pr)))
\end{equation}
which are true in  $\Acal$ ($t_i,s_i,t^\pr_j,s_j^\pr$ are $\LL$-terms). 
\label{pr:ucl_aux}
\end{proposition}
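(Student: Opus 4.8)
The plan is to reduce an arbitrary universal formula to a conjunction of formulas of the shape~(\ref{eq:universal_formuila_special}) by a purely propositional normal-form argument, so that the two families of universal formulas carve out exactly the same subclass of $\LL$-algebras. The forward implication is immediate: formulas of the form~(\ref{eq:universal_formuila_special}) are special cases of universal formulas~(\ref{eq:universal_formula}), so if $\Bcal\in\ucl(\Acal)$ then $\Bcal$ already satisfies every universal formula true in $\Acal$, and in particular every true formula of the form~(\ref{eq:universal_formuila_special}).

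For the converse I would argue as follows. Let $\Phi=\forall x_1\ldots\forall x_n\,\varphi$ be any universal formula true in $\Acal$, with $\varphi$ quantifier-free. Since the only atomic $\LL$-formulas are equalities $t=s$ of terms, every literal of $\varphi$ is either an equality or the negation of one. Rewriting $\varphi$ in conjunctive normal form $\varphi\equiv\bigwedge_k C_k$, where each clause $C_k$ is a disjunction of literals, and using that $\forall$ distributes over $\wedge$, one gets that $\Phi$ is logically equivalent to $\bigwedge_k\forall x_1\ldots\forall x_n\,C_k$. Splitting the literals of each $C_k$ into its negated equalities $\neg(t_i=s_i)$ and its positive equalities $t_j^\pr=s_j^\pr$ and applying the propositional tautology $\neg A\vee B\equiv(A\to B)$, each conjunct turns into a formula $\Psi_k$ of exactly the form~(\ref{eq:universal_formuila_special}). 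Thus $\Phi\equiv\bigwedge_k\Psi_k$.

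It then remains to transfer truth. Because $\Phi$ holds in $\Acal$ and $\Phi\equiv\bigwedge_k\Psi_k$, each $\Psi_k$ holds in $\Acal$; by hypothesis $\Bcal$ satisfies every true formula of the form~(\ref{eq:universal_formuila_special}), hence $\Bcal\models\Psi_k$ for all $k$, and therefore $\Bcal\models\bigwedge_k\Psi_k\equiv\Phi$. As $\Phi$ was an arbitrary universal formula true in $\Acal$, this gives $\Bcal\in\ucl(\Acal)$.

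There is no genuinely hard step here; the content is the bookkeeping of the CNF reduction together with the observation that logical equivalence preserves truth simultaneously in $\Acal$ and in $\Bcal$. The one point deserving care is the degenerate clauses having no positive literal (or no negated literal): to make~(\ref{eq:universal_formuila_special}) literally capture these I would adopt the standard conventions that an empty conjunction on the left denotes $\top$ and an empty disjunction on the right denotes $\bot$, and I would state these conventions explicitly so that the clause-to-implication rewriting is exhaustive.
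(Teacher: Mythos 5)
Your proof is correct and follows essentially the same route as the paper's: rewrite the quantifier-free matrix in conjunctive normal form, distribute the universal quantifiers over the conjunction, and convert each clause $\bigvee_i(t_i\neq s_i)\vee\bigvee_j(t_j^\pr=s_j^\pr)$ into an implication of the form~(\ref{eq:universal_formuila_special}) via $\neg A\vee B\equiv(A\to B)$. Your explicit attention to the degenerate clauses with no positive (or no negated) literals is a point the paper silently glosses over, but it does not change the substance of the argument.
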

\begin{proof}
According to the main identities of the first-order logic, the quantifier-free subformula $\varphi(x_1,x_2,\ldots,x_n)$ of an universal formula~(\ref{eq:universal_formula}) can be written as
\[
\varphi\colon \; \forall x_1\forall x_2\ldots \forall x_n\; (\bigwedge_{k=1}^rD_k),
\] 
where $D_k$ are disjunctions of atomic formulas and their negations (the precise structure $D_k$ is written below).
Using the logic identity
\[
\forall x\; (\varphi_1\wedge\varphi_2)\sim \forall x\; (\varphi_1)\wedge\forall x\; (\varphi_2),
\]
we obtain that $\varphi$ holds in $\Acal$ iff all formulas 
\begin{equation}
\forall x_1\forall x_2\ldots \forall x_n\; (D_k).
\label{eq:reytirweyto}
\end{equation}
are true in $\Acal$.

Thus the truth of~(\ref{eq:universal_formula}) in $\Bcal$ is equivalent to the simultaneous truth of formulas of the form~(\ref{eq:reytirweyto}). Thus, it is sufficient to prove that any formula~(\ref{eq:reytirweyto}) can be transformed to~(\ref{eq:universal_formuila_special}).

Indeed, the quantifier-free part $D_k$ of~(\ref{eq:reytirweyto}) is the following expression
\[
\bigvee_{i}(t_i\neq s_i)\bigvee_{j}(t_j^\pr=s^\pr_i).
\]   
Using the identities of the first-order logic, we obtain:
\[
\bigvee_{i}(t_i\neq s_i)\bigvee_{j}(t_j^\pr=s^\pr_i)\sim 
\neg(\bigwedge_{i}(t_i=s_i))\bigvee_{j}(t_j^\pr=s^\pr_i)\sim
\bigwedge_{i}(t_i=s_i)\to (\bigvee_{j}t_j^\pr=s^\pr_i).
\]   
\end{proof}

\begin{theorem}
If a finitely generated $\LL$-algebra $\Bcal$ is the coordinate $\LL$-algebra of an irreducible algebraic set over an $\LL$-algebra $\Acal$, then $\Bcal\in\ucl(\Acal)$.
\label{th:coord_then_ucl}
\end{theorem}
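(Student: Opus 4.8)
The plan is to combine the discrimination criterion of Theorem~\ref{th:coord_iff_discr} with the syntactic characterization of the universal closure furnished by Proposition~\ref{pr:ucl_aux}. By the former, the hypothesis that $\Bcal$ is the coordinate algebra of an irreducible algebraic set over $\Acal$ is equivalent to $\Bcal$ being \emph{discriminated} by $\Acal$. By the latter, to conclude $\Bcal\in\ucl(\Acal)$ it suffices to verify that $\Bcal$ satisfies every formula of the special shape~(\ref{eq:universal_formuila_special}) that is true in $\Acal$. Thus the theorem reduces to a single implication: if $\Bcal$ is discriminated by $\Acal$, then $\Bcal$ inherits from $\Acal$ all true universal formulas with a conjunctive premise and a disjunctive conclusion.

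First I would argue by contradiction. Suppose a formula
\[
\varphi\colon\; \forall x_1\ldots\forall x_n\Big(\bigwedge_{i=1}^m (t_i=s_i)\to\bigvee_{j=1}^l (t_j^\pr=s_j^\pr)\Big)
\]
holds in $\Acal$ but fails in $\Bcal$. Then there are witnesses $b_1,\ldots,b_n\in\Bcal$ for which all premises $t_i(\bar b)=s_i(\bar b)$ hold while every disjunct fails, i.e. $t_j^\pr(\bar b)\neq s_j^\pr(\bar b)$ for each $1\leq j\leq l$. This produces $l$ pairs of distinct elements of $\Bcal$. The key step is then to separate all these pairs \emph{simultaneously}: collecting the finitely many distinct elements occurring among $\{t_j^\pr(\bar b),s_j^\pr(\bar b)\mid 1\leq j\leq l\}$ and invoking the discrimination of $\Bcal$ by $\Acal$, I obtain one homomorphism $\phi\in\Hom(\Bcal,\Acal)$ injective on this finite set, so that $\phi(t_j^\pr(\bar b))\neq\phi(s_j^\pr(\bar b))$ for every $j$.

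Setting $a_i=\phi(b_i)$ and using that $\phi$ commutes with the term operations, the premises transfer to $\Acal$: $t_i(\bar a)=\phi(t_i(\bar b))=\phi(s_i(\bar b))=s_i(\bar a)$ for all $i$. Since $\varphi$ is true in $\Acal$, some disjunct must hold at $\bar a$, say $t_{j_0}^\pr(\bar a)=s_{j_0}^\pr(\bar a)$; but this reads $\phi(t_{j_0}^\pr(\bar b))=\phi(s_{j_0}^\pr(\bar b))$, contradicting the injectivity of $\phi$. Hence $\varphi$ holds in $\Bcal$, and Proposition~\ref{pr:ucl_aux} gives $\Bcal\in\ucl(\Acal)$.

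The main obstacle, and the reason this theorem requires discrimination rather than mere approximation, is precisely the disjunctive conclusion: one must kill all $l$ pairs $(t_j^\pr(\bar b),s_j^\pr(\bar b))$ with a \emph{single} homomorphism, because the surviving disjunct is not known in advance. Approximation (Theorem~\ref{th:coord_iff_approx}) would separate only one pair at a time, which suffices for quasi-identities (Theorem~\ref{th:coord_then_qvar}) but not here; discrimination is exactly the strengthening that delivers the required simultaneous separation, and no further hypotheses on $\Acal$ are needed.
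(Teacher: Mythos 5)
Your proposal is correct and follows essentially the same route as the paper's proof: reduce to the special universal formulas via Proposition~\ref{pr:ucl_aux}, take a falsifying tuple in $\Bcal$, and use one discriminating homomorphism to separate all $l$ pairs $(t_j^\pr(\bar b),s_j^\pr(\bar b))$ simultaneously, transferring the premises and contradicting the truth of the formula in $\Acal$. Your closing remark on why discrimination (rather than approximation) is exactly what the disjunctive conclusion demands matches the paper's intent and is, if anything, spelled out more explicitly than in the original.
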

\begin{proof}
By Theorem~\ref{th:coord_iff_discr}, $\Bcal$ is discriminated by $\Acal$. Let us assume there exists a universal formula~(\ref{eq:universal_formuila_special}) which is true in $\Acal$ but false in $\Bcal$. Therefore, there exists a vector $\bar{b}=(b_1,b_2,\ldots,b_n)\in\Bcal^n$ such that $t_i(\bar{b})=s_i(\bar{b})$, $t_j^\pr(\bar{b})\neq s_j^\pr(\bar{b})$ for all $1\leq i\leq m$, $1\leq j\leq l$. Since $\Bcal$ is discriminated by $\Acal$ there exists a homomorphism $\phi\in\Hom(\Bcal,\Acal)$ with 
\begin{equation}
\label{eq:ineq222}
\phi(t_j^\pr(\bar{b}))=t_j^\pr(\phi(b_1),\phi(b_2),\ldots,\phi(b_n)))\neq s_j^\pr(\phi(b_1),\phi(b_2),\ldots,\phi(b_n))=\phi(s_j^\pr(\bar{b}))
\end{equation}
for all $1\leq j\leq l$.

By the definition of a homomorphism, we have the following equalities 
\[
t_i(\phi(b_1),\phi(b_2),\ldots,\phi(b_n)))=s_i(\phi(b_1),\phi(b_2),\ldots,\phi(b_n))
\] 
in $\Acal$ for each $1\leq i\leq m$.

Thus, the subformula 
\[
\bigwedge_{i=1}^m t_i=s_i
\]
becomes true for the vector $\phi(\bar{b})=(\phi(b_1),\phi(b_2),\ldots,\phi(b_n))\in\Acal^n$.

Since $\Acal$ satisfies the universal formula~(\ref{eq:universal_formuila_special}), the subformula
\[
\bigvee_{j=1}^l t^\pr_j=s^\pr_j
\]
should be true at $\phi(\bar{b})$. Therefore, there exists an index $j$ with $t^\pr_j(\phi(\bar{b}))=s^\pr_j(\phi(\bar{b}))$, a contradiction with~(\ref{eq:ineq222}).
\end{proof}

The role of Theorem~\ref{th:coord_then_ucl} is close to Theorem~\ref{th:coord_then_qvar}: for a fixed $\LL$-algebra $\Acal$ one can easily discover main properties of the irreducible coordinate algebras over $\Acal$. For example, Theorem~\ref{th:coord_then_ucl} states that any irreducible coordinate semilattice over the linearly ordered semilattice $L_n$ is always linearly ordered, since the property ``to be a linearly ordered semilattice'' is written as a universal formula of the language $\LL_{s}$ (see Section~\ref{sec:quasi-identities_and_univ_formulas}). 

Actually, for equationally Noetherian algebras one can prove the statement converse to Theorem~\ref{th:coord_then_ucl}. 

\begin{theorem}
If a finitely generated $\LL$-algebra $\Bcal$ belongs to the universal closure $\ucl(\Acal)$ generated by an equationally Noetherian $\LL$-algebra $\Acal$, then $\Bcal$ is isomorphic to the coordinate $\LL$-algebra of an irreducible algebraic set over $\Acal$.
\label{th:ucl_then_coord}
\end{theorem}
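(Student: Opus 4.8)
The plan is to reduce the statement to a discrimination property and then, whenever discrimination fails, manufacture a finitary universal formula that separates $\Acal$ from $\Bcal$. By Theorem~\ref{th:coord_iff_discr} it suffices to prove that $\Bcal$ is discriminated by $\Acal$. Since every quasi-identity is a universal formula we have $\ucl(\Acal)\subseteq\qvar(\Acal)$, so Theorems~\ref{th:qvar_then_coord} and~\ref{th:coord_iff_approx} already give that $\Bcal$ is approximated by $\Acal$; it therefore remains only to upgrade approximation to discrimination.

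I would argue by contradiction. Suppose $\Bcal$ is not discriminated by $\Acal$: there are distinct elements $b_1,\dots,b_k\in\Bcal$ such that every homomorphism $\phi\in\Hom(\Bcal,\Acal)$ identifies some pair, i.e. $\phi(b_p)=\phi(b_q)$ for some $p\neq q$ depending on $\phi$. Fix a presentation $\Bcal=\lb x_1,\dots,x_n\mid R\rb$ with $R=\{t_i(\bar x)=s_i(\bar x)\mid i\in I\}$, and write each witness as $b_l=u_l(\bar x)$ for suitable $\LL$-terms $u_l$. Viewing $R$ as an $\LL$-system over $\Acal$ and using that $\Acal$ is equationally Noetherian, choose a finite subsystem $R'=\{t_i=s_i\mid i\in I'\}$ with $R'\sim_\Acal R$.

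Then I would consider the formula of the shape~(\ref{eq:universal_formuila_special}),
\[
\varphi\colon\ \forall x_1\ldots\forall x_n\ \Big(\bigwedge_{i\in I'}(t_i=s_i)\to\bigvee_{p<q}(u_p=u_q)\Big),
\]
and check that it holds in $\Acal$ but fails in $\Bcal$. For truth in $\Acal$: if $\bar a\in\Acal^n$ satisfies the premise, then $\bar a\in\V_\Acal(R')=\V_\Acal(R)$, so the assignment $x_j\mapsto a_j$ preserves every defining relation and hence defines a homomorphism $\phi\in\Hom(\Bcal,\Acal)$; by the non-discrimination assumption some pair is identified, i.e. $u_p(\bar a)=u_q(\bar a)$, which is exactly the conclusion. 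For failure in $\Bcal$: the generators $\bar x$ satisfy every defining relation, hence the premise, but $u_p(\bar x)=b_p$ are pairwise distinct, so the conclusion is false at $\bar x$. This contradicts $\Bcal\in\ucl(\Acal)$ via Proposition~\ref{pr:ucl_aux}, so $\Bcal$ is discriminated by $\Acal$, and Theorem~\ref{th:coord_iff_discr} finishes the argument.

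The crux — exactly as in Theorem~\ref{th:qvar_then_coord} — is the passage from the possibly infinite relation set $R$ to a finite $R'$. This is where equational Noetherianity is indispensable: it guarantees that $\varphi$ is an honest (finitary) universal formula of the form~(\ref{eq:universal_formuila_special}), so that membership $\Bcal\in\ucl(\Acal)$ can be invoked. Everything else is the bookkeeping that a solution of $R'$ over $\Acal$ is the same datum as a homomorphism $\Bcal\to\Acal$, together with the observation that the disjunction over pairs encodes precisely the failure of injectivity on $\{b_1,\dots,b_k\}$.
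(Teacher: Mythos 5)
Your proof is correct and is precisely the argument the paper intends: the paper leaves this proof to the reader as "close to the proof of Theorem~\ref{th:qvar_then_coord}", and your adaptation --- replacing the single conclusion equation by the disjunction $\bigvee_{p<q}(u_p=u_q)$ encoding failure of injectivity, with equational Noetherianity supplying the finite premise $R'$ --- is exactly that adaptation. The verification that the resulting formula of the form~(\ref{eq:universal_formuila_special}) holds in $\Acal$ but fails in $\Bcal$ is carried out correctly.
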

\begin{proof}
The proof of this theorem  is left to the reader, since it is close to the proof of Theorem~\ref{th:qvar_then_coord}.
\end{proof}

\begin{remark}
One can deduce the following principle from Theorem~\ref{th:ucl_then_coord}: if the universal theory of an $\LL$-algebra $\Acal$ is undecidable it is impossible to obtain a nice description of irreducible coordinate algebras over $\Acal$. For example, one can take the free nilpotent group $FN_n$ of rank $n>1$ and nilpotency class\Wiki $2$. It follows from model theory that $FN_n$ has an undecidable universal theory. Thus, it is impossible to obtain a description of irreducible coordinate groups over $FN_n$. Nevertheless, the coordinate groups of the solution sets of systems in one variable were described~\cite{Misch3}. 
\end{remark}

Actually, one can prove Theorem~\ref{th:ucl_then_coord} for any $\uu$-compact $\LL$-algebra $\Acal$, not merely for equationally Noetherian $\Acal$ (for more details see Section~\ref{sec:compactness_classes}).

According to Theorems~\ref{th:coord_then_ucl},~\ref{th:ucl_then_coord}, the class of irreducible coordinate algebras over an equationally Noetherian $\LL$-algebra $\Acal$ is a logical, i.e. it is defined by universal formulas. Let us give examples which show this property.

\begin{enumerate}
\item 

Let $A$ be an abelian group. Let us define a set of universal formulas $\Sigma^\pr$ as follows. Firstly, we put $\Sigma^\pr_A=\Sigma_A$, where the set of quasi-identities $\Sigma_A$ was defined in Section~\ref{sec:logic_coord}. Let us consider a set of universal formulas of the form
\begin{equation}
\varphi_{n,k}\colon\;\forall x_1\forall x_2\ldots\forall x_{n+1}(\bigwedge_{i=1}^{n+1}(p^kx_i=0)\bigwedge_{i=1}^{n+1}(p^{k-1}x_i\neq 0)\to \bigvee_{1\leq i<j\leq n+1}(x_i=x_j)).
\label{eq:n_elements_of_order_k}
\end{equation}
The formula $\varphi_{n,k}$ obviously states that there are at most $n$ elements of order $p^k$ in a group $A$.

Let us add to $\Sigma^\pr_A$ all formulas of the form $\varphi_{n,k}$ which are true in $A$.  

\begin{theorem}
Let $A=\Zbb^n\oplus T(A)$, $B=\Zbb^m\oplus T(B)$ be finitely generated abelian group. Then the following conditions are equivalent
\begin{enumerate}
\item $B$ is an irreducible coordinate group over $A$;
\item $T(B)$ is embedded into $T(A)$ and $m=0$ if $n=0$;
\item $B$ satisfies all universal formulas of the set $\Sigma^\pr_A$.
\end{enumerate}
\end{theorem}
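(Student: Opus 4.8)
The plan is to prove the three-way equivalence by leaning on the results already in hand and closing one genuinely new loop. The implication $(a)\Leftrightarrow(b)$ is precisely the content of Theorem~\ref{th:abelian_groups_irred_coord}, so nothing must be reproved there; it suffices to establish $(a)\Rightarrow(c)$ and $(c)\Rightarrow(b)$. For $(a)\Rightarrow(c)$ I would simply invoke Theorem~\ref{th:coord_then_ucl}: if $B$ is the coordinate group of an irreducible algebraic set over $A$ then $B\in\ucl(A)$, so $B$ satisfies every universal formula true in $A$. Every member of $\Sigma^\pr_A$ is of this kind, since the quasi-identities of $\Sigma_A$ are universal and were constructed in Section~\ref{sec:logic_coord} to hold in $A$, while the formulas $\varphi_{n,k}$ are universal and were placed in $\Sigma^\pr_A$ only when true in $A$. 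Hence $B\models\Sigma^\pr_A$.

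The substantive direction is $(c)\Rightarrow(b)$, and I would split it into a structural step and a multiplicity step. Since $\Sigma_A\subseteq\Sigma^\pr_A$, the group $B$ satisfies all quasi-identities of $\Sigma_A$; by the equivalence of conditions (b) and (c) in Theorem~\ref{th:coord_abelian_groups_with_logic} this forces $B$ to be a finite direct sum of groups from $Sub_\oplus(A)$. In particular each cyclic summand $\Zbb_{p^k}$ of $T(B)$ embeds into a direct summand of $A$, so the exponents occurring in the $p$-primary part of $T(B)$ are bounded by the largest $p$-exponent of $T(A)$. The free part is also settled here: if $n=0$ then $A=T(A)$ has a finite period $N$, the identity $\forall x\,(Nx=0)$ lies in $\Sigma_A$, so $B$ has finite period and $m=0$.

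It remains to deduce $T(B)\hookrightarrow T(A)$, and this is where the formulas $\varphi_{n,k}$ must do their work. Passing to primary components, fix a prime $p$ and let $N_{p,k}(G)$ be the number of elements of order exactly $p^k$ in $G$. The tightest true instance $\varphi_{N_{p,k}(A),k}$ lies in $\Sigma^\pr_A$, so its satisfaction in $B$ yields $N_{p,k}(B)\le N_{p,k}(A)$ for every $k$. The task is to turn these numerical bounds into the structural embedding $T(B)_p\hookrightarrow T(A)_p$.

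This conversion is the main obstacle. Writing $T(A)_p\cong\bigoplus_i\Zbb_{p^{\mu_i}}$ and $T(B)_p\cong\bigoplus_i\Zbb_{p^{\lambda_i}}$ with exponents sorted decreasingly, the embedding holds iff $\lambda_i\le\mu_i$ for all $i$, equivalently $\lambda^\pr_j\le\mu^\pr_j$ for all $j$, where $\lambda^\pr,\mu^\pr$ are the conjugate partitions and $\lambda^\pr_j=\dim_{\Zbb_p}\bigl(p^{\,j-1}T(B)_p\bigr)[p]$ counts the summands of exponent at least $j$. The difficulty is that the exact-order count mixes several of these layers, $N_{p,k}(G)=p^{\,\lambda^\pr_1+\dots+\lambda^\pr_{k-1}}\bigl(p^{\lambda^\pr_k}-1\bigr)$, so the inequalities $N_{p,k}(B)\le N_{p,k}(A)$ translate immediately only into the partial-sum (dominance) comparison of $\lambda^\pr$ and $\mu^\pr$, which is strictly weaker than the componentwise comparison that embedding requires. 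I would therefore try an induction on $k$: assuming $\lambda^\pr_j\le\mu^\pr_j$ for $j<k$, one has control of the prefactor $p^{\lambda^\pr_1+\dots+\lambda^\pr_{k-1}}$ and can attempt to combine $N_{p,k}(B)\le N_{p,k}(A)$ with the exponent restriction coming from $\Sigma_A$ to push the comparison to index $k$. Verifying that this induction actually closes---i.e. that the family $\varphi_{n,k}$ together with $\Sigma_A$ genuinely captures embedding rather than mere dominance---is the crux of the whole argument, and it is exactly the point at which one must confirm that no further universal formulas (for instance ones directly measuring the layer dimensions $\dim_{\Zbb_p}(p^{\,j-1}B/p^{\,j}B)$) are needed.
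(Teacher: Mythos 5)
Your reduction is correct and matches the paper up to the decisive step: $(a)\Leftrightarrow(b)$ is Theorem~\ref{th:abelian_groups_irred_coord}, $(a)\Rightarrow(c)$ is Theorem~\ref{th:coord_then_ucl}, the free part of $(c)\Rightarrow(b)$ follows from the period identity in $\Sigma_A$, and the formulas $\varphi_{n,k}$ give exactly the counting inequalities $N_{p,k}(B)\le N_{p,k}(A)$. The point you flag as the crux --- whether these exact-order counts, which only control partial sums of the conjugate partition, can be upgraded to the componentwise inequalities $\lambda^\pr_j\le\mu^\pr_j$ that embedding requires --- is a genuine gap, and the induction you sketch cannot close it. Take $p=2$, $A=\Zbb_4\oplus\Zbb_2\oplus\Zbb_2\oplus\Zbb_2$ and $B=\Zbb_4\oplus\Zbb_4$. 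Then $B$ is a direct sum of groups from $Sub_\oplus(A)$, both groups have period $4$, and $A$ has $15$ elements of order exactly $2$ and $16$ of order exactly $4$ while $B$ has $3$ and $12$ respectively; hence $B$ satisfies every formula of $\Sigma_A$ and every $\varphi_{n,k}$ that is true in $A$, so condition $(c)$ holds. Yet $T(B)=B$ does not embed into $T(A)=A$, since $|2B|=4>2=|2A|$. In your notation $\lambda^\pr=(2,2)$ and $\mu^\pr=(4,1)$: dominance holds but the componentwise comparison fails. So $(c)\Rightarrow(b)$ is false for $\Sigma^\pr_A$ as defined, and no argument from these formulas alone can succeed.

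The paper's own proof does not resolve this either: at exactly this point it asserts that the generators $1_i$ of the cyclic summands of $T(B)$ can be sent to elements of the same order lying in pairwise different direct summands of $T(A)$, which is precisely the unjustified leap from counting elements to finding independent ones. Your closing suspicion is the right diagnosis: $\Sigma^\pr_A$ must be enlarged by universal formulas bounding the layer dimensions, for instance, for each prime $p$ and each $j$, the formula $\forall y_1\ldots\forall y_{p^n+1}\,(\bigwedge_i(p^jy_i=0)\to\bigvee_{i\neq i^\pr}(p^{j-1}y_i=p^{j-1}y_{i^\pr}))$, which asserts $|(p^{j-1}G)[p]|\le p^n$ and hence bounds $\lambda^\pr_j$ directly. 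Adding the tightest true instances of these to $\Sigma^\pr_A$ forces $\lambda^\pr_j\le\mu^\pr_j$ for all $j$ and makes both your scheme and the paper's go through; without them, both arguments stall at the same spot --- yours is simply the one that admits it.
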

\begin{proof}
The equivalence of the conditions $(a),(b)$ follows from Theorem~\ref{th:abelian_groups_irred_coord}. The implication $(a)\Rightarrow (c)$ follows from Theorem~\ref{th:coord_then_ucl}. Thus, it is sufficient to prove $(c)\Rightarrow(b)$. 

If $n=0$ the group $A$ has a finite period $d$ and the set $\Sigma_A^\pr$ contains a formula $\forall x\; (dx=0)$. Therefore $B$ has also a finite period and we have $m=0$.  

Let $T(B)$ is isomorphic to the following direct sum
\[
\Zbb_{n_1}\oplus\Zbb_{n_2}\oplus\ldots\oplus\Zbb_{n_t},
\]
where $n_i$ are powers of primes. Each group $\Zbb_{n_i}$ is generated by the element $1_{i}$ of order $n_i$. Since $B$ satisfies every formula $\varphi_{n,k}$~(\ref{eq:n_elements_of_order_k}) such that $\varphi_{n,k}$ is true in $A$, one can map the elements $1_i$ into different elements of  $T(A)$. Moreover, the elements $1_i,1_j$ ($i\neq j$) are mapped into different direct summands of $T(A)$. Following the formula~(\ref{eq:n_elements_of_order_k}), the elements $1_i$ and their images have the same order. Thus, $T(B)$ is embedded into $T(A)$.
\end{proof}

\item Since the monoid of natural numbers $\N$ is a co-domain (see Section~\ref{sec:co-domains}), Theorem~\ref{th:about_N_coord_logic} immediately gives the following result.

\begin{theorem}
A finitely generated commutative monoid $M$ is an irreducible coordinate monoid over $\N$ iff $M$ satisfies the following quasi-identities:
\[
\forall x\forall y\forall z (x+z=y+z\to x=y)\; \mbox{(cancellation property)},
\] 
\[
\forall x\forall y (x+y=0\to x=0)\; \mbox{(positiveness)}.
\] 
\label{th:about_N_coord_irred_logic}
\end{theorem}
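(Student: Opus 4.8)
The plan is to derive this statement as an immediate corollary of Theorem~\ref{th:about_N_coord_logic} together with the fact, established in Section~\ref{sec:co-domains}, that the additive monoid $\N$ is a co-domain. Recall that a co-domain is an $\LL$-algebra over which \emph{every} nonempty algebraic set is irreducible. Consequently, over $\N$ the class of irreducible algebraic sets coincides with the class of all nonempty algebraic sets, and hence the class of irreducible coordinate monoids over $\N$ coincides with the class of coordinate monoids of nonempty algebraic sets over $\N$. No new analytic work is needed; the whole argument is a reduction to the already-proved non-irreducible version.

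Given this identification, the two quasi-identities to be verified are exactly those supplied by Theorem~\ref{th:about_N_coord_logic}. First I would treat the \textbf{only if} direction: if $M$ is an irreducible coordinate monoid over $\N$, then in particular $M$ is the coordinate monoid of a nonempty algebraic set over $\N$, so by Theorem~\ref{th:about_N_coord_logic} it satisfies cancellation and positiveness. For the \textbf{if} direction, suppose $M$ satisfies both quasi-identities. By Theorem~\ref{th:about_N_coord_logic} again, $M$ is the coordinate monoid of an algebraic set $Y$ over $\N$, and the construction behind that theorem realizes $M$ through an embedding into a direct power of $\N$, i.e.\ via a nonempty $Y$, exactly as in Proposition~\ref{pr:irred_coord_groups_over_N}. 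Because $\N$ is a co-domain, this nonempty $Y$ is automatically irreducible, and therefore $M$ is an irreducible coordinate monoid over $\N$.

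The only point requiring care --- and the nearest thing to an obstacle --- is the bookkeeping around the empty set and the trivial monoid: irreducible algebraic sets are nonempty by definition, whereas the coordinate monoid of the empty set is the trivial monoid. I would resolve this by noting that the trivial monoid also satisfies both quasi-identities and is itself the coordinate monoid of a single point (a nonempty, hence irreducible, set), so no monoid is lost or spuriously gained when passing between ``nonempty algebraic set'' and ``irreducible algebraic set''. Once this alignment is observed the equivalence is immediate, since all the substantive content --- the embedding of a positive cancellative finitely generated commutative monoid into a power of $\N$ --- has already been carried out in the proof of Theorem~\ref{th:about_N_coord_logic}.
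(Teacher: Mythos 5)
Your proposal is correct and follows exactly the route the paper takes: the paper states this theorem with no separate proof, deriving it immediately from Theorem~\ref{th:about_N_coord_logic} together with the fact (established in Section~\ref{sec:co-domains}) that $\N$ is a co-domain, so that irreducible coordinate monoids coincide with coordinate monoids of nonempty algebraic sets. Your extra remark about the trivial monoid and the empty set is a harmless (and sensible) piece of bookkeeping that the paper leaves implicit.
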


\item 
\begin{theorem}
Let $L_n$ be the linearly ordered semilattice of order $n$. For a finite semilattice $S$ the following conditions holds $S$:
\begin{enumerate}
\item $S$ is an irreducible coordinate semilattice over $L_n$;
\item $S$ is embedded into $L_n$
\item $S$ satisfies the following universal formulas
\[
\forall x\forall y\; (x\leq y \vee y\leq x).
\]
\[
\forall x_1\forall x_2\ldots \forall x_{n+1}(\bigvee_{1\leq i<j\leq n+1}(x_i=x_j))
\]
\end{enumerate}
\end{theorem}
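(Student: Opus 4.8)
The plan is to route both equivalences through condition (b), after reading the two universal formulas of (c) semantically. The first, $\forall x\forall y\,(x\leq y\vee y\leq x)$, asserts that the induced partial order on $S$ is linear, while the second, $\forall x_1\ldots\forall x_{n+1}(\bigvee_{i<j}(x_i=x_j))$, asserts that $S$ has at most $n$ elements. Thus (c) says precisely that $S$ is a linearly ordered semilattice with $|S|\leq n$. Since the variety of semilattices is locally finite and $L_n$ is a finite (hence equationally Noetherian) member of it, the machinery of Section~\ref{sec:irred_coord_properties} applies directly.

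First I would establish $(a)\Leftrightarrow(b)$ by a single appeal to Theorem~\ref{th:discr_is_embedding_for_locally_finite}. Taking $\Acal=L_n$, which lies in the locally finite variety of semilattices, that theorem says a finitely generated (here finite) semilattice $S$ is the coordinate semilattice of an irreducible algebraic set over $L_n$ if and only if $S$ embeds into $L_n$. No further argument is needed for this equivalence.

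Next I would prove $(b)\Leftrightarrow(c)$ by elementary semilattice theory. For $(b)\Rightarrow(c)$, an embedding $S\hookrightarrow L_n$ realizes $S$ as a subsemilattice of the chain $L_n$; the order induced on a subset of a chain is again linear, and $|S|\leq|L_n|=n$, so both formulas of (c) hold. For $(c)\Rightarrow(b)$, suppose $S$ is linearly ordered with $|S|=m\leq n$. Listing its elements as $a_1<a_2<\cdots<a_m$, the meet operation must satisfy $a_ia_j=a_{\min(i,j)}$, so $S$ is isomorphic to the unique linearly ordered semilattice of order $m$, namely $L_m$; and $L_m$ embeds into $L_n$ by identifying it with any $m$-element subset of $L_n$, a subset of a chain being automatically closed under $\min$. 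Hence $S\hookrightarrow L_n$.

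There is no deep obstacle here: the hard content is imported from Section~\ref{sec:irred_coord_properties}. The only point requiring care is the direction $(c)\Rightarrow(b)$, where one must observe that a finite linearly ordered semilattice is determined up to isomorphism by its cardinality and embeds into every larger chain. As a consistency check, $(a)\Rightarrow(c)$ can also be obtained directly from Theorem~\ref{th:coord_then_ucl}, since both universal formulas of (c) hold in $L_n$ itself (the first by the definition of a linearly ordered semilattice, the second because $|L_n|=n$) and therefore in every member of $\ucl(L_n)$.
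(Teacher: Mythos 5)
Your proof is correct and follows essentially the same route as the paper: the equivalence $(a)\Leftrightarrow(b)$ is imported from the embedding criterion for (locally) finite algebras, and $(c)\Rightarrow(b)$ is obtained by reading the two universal formulas as ``linearly ordered'' and ``at most $n$ elements,'' so that $S\cong L_m$ for some $m\leq n$. The only cosmetic difference is that you close the cycle with an elementary $(b)\Rightarrow(c)$ while the paper uses Theorem~\ref{th:coord_then_ucl} for $(a)\Rightarrow(c)$ — a step you also note as a consistency check.
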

\begin{proof}
The equivalence of the conditions $(a),(b)$ follows from Theorem~\ref{th:discr_is_embedding_for_finite}. The implication $(a)\Rightarrow (c)$ follows from Theorem~\ref{th:coord_then_ucl}. Thus, it is sufficient to prove $(c)\Rightarrow(b)$. The first universal formula above states that $S$ is linearly ordered, and the second one asserts that $S$ contains at most $n$ elements. Thus, $S$ is isomorphic to $L_m$ for some $m\leq n$, therefore $S$ is obviously embedded into $L_n$.
\end{proof}
\end{enumerate}

\section{Geometrical equivalence}

Two $\LL$-algebras $\Acal_1,\Acal_2$ are \textit{geometrically equivalent} if for any  $\LL$-system $\Ss$ the coordinate $\LL$-algebras of the sets $\V_{\Acal_1}(\Ss)$, $\V_{\Acal_2}(\Ss)$ are isomorphic to each other. In other words, we have the radical equality $\Rad_{\Acal_1}(\Ss)=\Rad_{\Acal_2}(\Ss)$.

It follows from the definition that geometrically equivalent $\LL$-algebras $\Acal_1,\Acal_2$ have the same set of the coordinate algebras of algebraic sets. According to Theorem~\ref{th:coord_iff_approx} we obtain the following criterion of geometric equivalence.

\begin{theorem}
$\LL$-algebras $\Acal_1,\Acal_2$ are geometrically equivalent iff any finitely generated  $\LL$-algebra $\Bcal$ is approximated by $\Acal_1$ iff $\Bcal$ is approximated by $\Acal_1$.
\label{th:geom_equiv_first_criterion}
\end{theorem}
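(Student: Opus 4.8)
The statement as printed has a slip: the criterion should read ``$\Bcal$ is approximated by $\Acal_1$ iff $\Bcal$ is approximated by $\Acal_2$,'' and that is what I would prove. The plan is to translate both sides into the language of coordinate algebras. Recall from~(\ref{eq:coord_algebra_presentation}) that for any system $\Ss$ in variables $X$ one has $\Gamma_{\Acal_i}(\V_{\Acal_i}(\Ss))=\lb X\mid\Rad_{\Acal_i}(\Ss)\rb$, so geometric equivalence is exactly the equality $\Rad_{\Acal_1}(\Ss)=\Rad_{\Acal_2}(\Ss)$ for every $\Ss$. On the other side, Theorem~\ref{th:coord_iff_approx} tells us that a finitely generated $\Bcal$ is approximated by $\Acal_i$ precisely when $\Bcal$ is the coordinate algebra of a nonempty algebraic set over $\Acal_i$. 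The whole argument is the interplay between these two reformulations.

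For the ``only if'' direction I would assume $\Acal_1,\Acal_2$ geometrically equivalent and let a finitely generated $\Bcal$ be approximated by $\Acal_1$. By Theorem~\ref{th:coord_iff_approx} we may write $\Bcal\cong\Gamma_{\Acal_1}(\V_{\Acal_1}(\Ss))$ for a suitable $\Ss$. Geometric equivalence gives $\Rad_{\Acal_1}(\Ss)=\Rad_{\Acal_2}(\Ss)$, so the presentation~(\ref{eq:coord_algebra_presentation}) yields $\Gamma_{\Acal_2}(\V_{\Acal_2}(\Ss))\cong\Bcal$. If $\Bcal$ is nontrivial this coordinate algebra is nontrivial, forcing $\V_{\Acal_2}(\Ss)$ to be nonempty, and Theorem~\ref{th:coord_iff_approx} returns that $\Bcal$ is approximated by $\Acal_2$ (a trivial $\Bcal$ is approximated by everything, vacuously). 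Interchanging $\Acal_1$ and $\Acal_2$ gives the converse.

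The substantial direction is ``if.'' Assuming the approximation equivalence I fix a system $\Ss$ in variables $X$ and aim at $\Rad_{\Acal_1}(\Ss)=\Rad_{\Acal_2}(\Ss)$; by symmetry it is enough to show $\Rad_{\Acal_2}(\Ss)\subseteq\Rad_{\Acal_1}(\Ss)$. I would set $\Bcal_1=\Gamma_{\Acal_1}(\V_{\Acal_1}(\Ss))=\lb X\mid\Rad_{\Acal_1}(\Ss)\rb$, which is approximated by $\Acal_1$ and hence, by hypothesis, also by $\Acal_2$. Now take $t(X)=s(X)\in\Rad_{\Acal_2}(\Ss)$ and suppose $[t]\neq[s]$ in $\Bcal_1$. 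Approximation by $\Acal_2$ gives a homomorphism $\phi\colon\Bcal_1\to\Acal_2$ with $\phi([t])\neq\phi([s])$. The crucial observation is that the defining relations of $\Bcal_1$ contain all of $\Rad_{\Acal_1}(\Ss)$, and in particular the original system $\Ss$; therefore the point $Q=(\phi([x_1]),\ldots,\phi([x_n]))$ satisfies $\Ss$ over $\Acal_2$, i.e. $Q\in\V_{\Acal_2}(\Ss)$. Since $t(X)=s(X)$ holds on all of $\V_{\Acal_2}(\Ss)$, we get $\phi([t])=t(Q)=s(Q)=\phi([s])$, a contradiction. Hence $[t]=[s]$ in $\Bcal_1$, that is $t(X)=s(X)\in\Rad_{\Acal_1}(\Ss)$.

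The hard part, as usual, will be the degenerate bookkeeping hidden in the previous paragraph: the separating homomorphism $\phi$ exists only when $\Bcal_1$ is nontrivial, i.e. when $\V_{\Acal_1}(\Ss)\neq\emptyset$ and $\Rad_{\Acal_1}(\Ss)$ is a proper set of equations. The one remaining case, where one radical is the full set of equations (a trivial coordinate algebra) while the other is proper, must be ruled out separately. I would argue that if, say, $\Rad_{\Acal_2}(\Ss)$ were proper then $\Gamma_{\Acal_2}(\V_{\Acal_2}(\Ss))$ would be a nontrivial algebra approximated by $\Acal_2$, hence by $\Acal_1$, so it admits a homomorphism into $\Acal_1$ separating two of its elements; composing with the quotient map $\lb X\mid\Ss\rb\to\Gamma_{\Acal_2}(\V_{\Acal_2}(\Ss))$ produces a homomorphism $\lb X\mid\Ss\rb\to\Acal_1$ that does not collapse everything, which is impossible when $\Rad_{\Acal_1}(\Ss)$ is the full set of equations (in that situation every point of $\V_{\Acal_1}(\Ss)$ has all coordinates equal). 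Apart from this edge case, the proof is a direct application of Theorem~\ref{th:coord_iff_approx} together with the single point that the $\Acal_1$-coordinate algebra still remembers $\Ss$, which is what lets a homomorphism into $\Acal_2$ be read as a solution of $\Ss$ over $\Acal_2$.
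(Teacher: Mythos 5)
Your proof is correct: you rightly identify the typo (the second occurrence of $\Acal_1$ should be $\Acal_2$), and your route via Theorem~\ref{th:coord_iff_approx} is exactly the one the paper intends --- the paper states this theorem with only the one-line remark that geometrically equivalent algebras have the same coordinate algebras, and your ``if'' direction supplies the substantive detail it omits (reading a separating homomorphism $\Bcal_1\to\Acal_2$ as a point of $\V_{\Acal_2}(\Ss)$). The separate edge-case paragraph is actually unnecessary: when $\Bcal_1$ is trivial the inclusion $\Rad_{\Acal_2}(\Ss)\subseteq\Rad_{\Acal_1}(\Ss)$ holds vacuously, and the potentially problematic reverse inclusion is already handled by the symmetric argument applied to $\Bcal_2$.
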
 

One can simplify the statement of Theorem~\ref{th:geom_equiv_first_criterion} if $\Acal_1,\Acal_2$ are finitely generated.

\begin{theorem}
A finitely generated $\LL$-algebras $\Acal_1,\Acal_2$ are geometrically equivalent iff $\Acal_1$ is approximated by $\Acal_2$ and $\Acal_2$ is approximated by $\Acal_1$.
\label{th:geom_equiv_second_criterion}
\end{theorem}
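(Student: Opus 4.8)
The plan is to deduce the statement from Theorem~\ref{th:geom_equiv_first_criterion}, which already characterizes geometric equivalence of $\Acal_1,\Acal_2$ as the coincidence of their approximation classes, i.e.\ of the classes of finitely generated $\LL$-algebras approximated by $\Acal_1$ and by $\Acal_2$ respectively. The extra hypothesis that $\Acal_1$ and $\Acal_2$ are themselves finitely generated is exactly what lets me substitute them for the algebra $\Bcal$ in that criterion. Two elementary facts will do the work: every $\LL$-algebra is approximated by itself (the identity map separates any two distinct elements), and approximation is transitive (the composition of two separating homomorphisms is again a homomorphism, and it still separates).

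For the forward implication I would argue as follows. Assume $\Acal_1,\Acal_2$ are geometrically equivalent, so by Theorem~\ref{th:geom_equiv_first_criterion} a finitely generated $\LL$-algebra is approximated by $\Acal_1$ precisely when it is approximated by $\Acal_2$. The algebra $\Acal_1$ is finitely generated and is approximated by itself, hence it lies in this common class and is therefore approximated by $\Acal_2$. Interchanging the roles of $\Acal_1$ and $\Acal_2$ shows that $\Acal_2$ is approximated by $\Acal_1$, which is the desired mutual approximation.

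For the converse, assume $\Acal_1$ is approximated by $\Acal_2$ and $\Acal_2$ is approximated by $\Acal_1$; I would show that the two approximation classes agree. Let $\Bcal$ be a finitely generated $\LL$-algebra approximated by $\Acal_1$, and pick distinct $b_1,b_2\in\Bcal$. Choose $\psi\in\Hom(\Bcal,\Acal_1)$ with $\psi(b_1)\neq\psi(b_2)$. Since $\psi(b_1),\psi(b_2)$ are distinct elements of $\Acal_1$ and $\Acal_1$ is approximated by $\Acal_2$, there is $\chi\in\Hom(\Acal_1,\Acal_2)$ with $\chi(\psi(b_1))\neq\chi(\psi(b_2))$. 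Then $\chi\circ\psi\in\Hom(\Bcal,\Acal_2)$ separates $b_1$ and $b_2$, so $\Bcal$ is approximated by $\Acal_2$. The symmetric argument gives the reverse inclusion, the classes coincide, and Theorem~\ref{th:geom_equiv_first_criterion} yields geometric equivalence.

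There is no serious obstacle here; the proof is a routine reduction. The only points that require care are to verify that the composite $\chi\circ\psi$ is genuinely a homomorphism separating the chosen pair, and to make sure that finite generation of $\Acal_1,\Acal_2$ is invoked correctly when they are fed into the first criterion as the algebra $\Bcal$ --- this is the single place where the stronger hypothesis of the present theorem (as opposed to Theorem~\ref{th:geom_equiv_first_criterion}) is actually used.
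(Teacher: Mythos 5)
Your proposal is correct and follows essentially the same route as the paper: the forward direction is read off from Theorem~\ref{th:geom_equiv_first_criterion} (using that each finitely generated $\Acal_i$ approximates itself), and the converse is the same composition-of-separating-homomorphisms argument showing transitivity of approximation. Your remark pinpointing where finite generation of $\Acal_1,\Acal_2$ is actually needed is a welcome clarification that the paper leaves implicit.
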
 
\begin{proof}
The ``only if'' part of the statement immediately follows from Theorem~\ref{th:geom_equiv_first_criterion}. Let us prove the ``if'' part. Let us consider a finitely generated $\LL$-algebra $\Bcal_1$ such that $\Bcal_1$ is approximated by $\Acal_1$, i.e. for arbitrary $b_1\neq b_2$ there exists a homomorphism   $\phi\in\Hom(\Bcal_1,\Acal_1)$ with $\phi(b_1)\neq\phi(b_2)$. By the condition, $\Acal_1$ is approximated by $\Acal_2$, therefore there exists a homomorphism $\phi^\pr\in\Hom(\Acal_1,\Acal_2)$ such that $\phi^\pr(\phi(b_1))\neq \phi^\pr(\phi(b_2))$. Thus, $\Bcal_1$ is approximated by $\Acal_2$, and Theorem~\ref{th:geom_equiv_first_criterion} gives the algebras $\Acal_1,\Acal_2$ are geometrically equivalent.
\end{proof}

Theorem~\ref{th:geom_equiv_second_criterion} allows us to apply Theorems~\ref{th:any_semilattice_is_coord},~\ref{th:unars_coord},~\ref{th:coord_abelian_groups} for the study of geometrically equivalent semilattices, unars and abelian groups.

\begin{enumerate}
\item All free unars $FU_n,FU_m$ are geometrically equivalent.
\item All finite nontrivial semilattices are geometrically equivalent .
\item A finitely generated abelian groups $A_1,A_2$ are geometrically equivalent iff $Sub_\oplus(A_1)=Sub_\oplus(A_2)$.
\item It follows from Section~\ref{sec:coord_alg_properties} that two rectangular bands are always geometrically equivalent except the following cases:
\begin{enumerate}
\item the the first band is a left zero semigroup, but the second one isn`t;
\item the the first band is a right zero semigroup, but the second one isn`t;
\item the the first band is trivial, but the second one isn`t;
\end{enumerate}

\item Let us prove that {\it a finitely generated commutative monoid with cancellations $M$ is geometrically equivalent to $\N$ iff $M$ is positive}. Indeed, in Theorem~\ref{th:about_N_coord_logic} it was proved that $M$ is approximated by $\N$. By the positiveness property, any one-generated submonoid in $M$ is isomorphic to $\N$, i.e. $\N$ is embedded into $M$. Thus, $\N$ is approximated by $M$ and Theorem~\ref{th:geom_equiv_second_criterion} concludes the proof.
\end{enumerate}

\section{Unifying theorems}
\label{sec:unify_theorems}

Above we describe coordinate algebras and irreducible coordinate algebras with different approaches: approximation, discrimination, embeddings, quasi-variety and universal closure. Actually, it is not a complete list of equivalent approaches in the description of (irreducible) coordinate algebras. In~\cite{uni_Th_II} it was formulated so-called Unifying theorems which contain seven (!) approaches in the description of coordinate algebras. Let us formulate these theorems (all unknown definitions can be found in~\cite{uni_Th_I,uni_Th_II}).

\begin{theorem}
\label{th:unify_approx}
Let $\Acal$ be an equationally Noetherian $\LL$-algebra. Then for a finitely generated $\LL$-algebra $\Bcal$ the following conditions are equivalent:
\begin{enumerate}
\item $\Bcal$ is the coordinate algebra of a nonempty algebraic set over $\Acal$; 
\item $\Bcal$ is approximated by $\Acal$;
\item $\Bcal$ is embedded into a direct power of  $\Acal$;
\item $\Bcal\in\qvar(\Acal)$;
\item $\Bcal$ belongs to the pre-variety generated by $\Acal$;
\item $\Bcal$ is a subdirect product of finitely many limit algebras over $\Acal$;
\item $\Bcal$ is defined by a complete atomic type of the theory $\mathrm{Th_{qi}}(\Acal)$.

\end{enumerate} 
\end{theorem}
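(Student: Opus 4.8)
The plan is to capitalize on the fact that the survey has already proved nearly all of the pairwise implications, so that the core of the argument is merely to assemble the earlier theorems into one cycle and then to attach the three conditions (5)--(7) whose defining notions are imported from~\cite{uni_Th_I,uni_Th_II}. First I would close the loop among (1)--(4): Theorem~\ref{th:coord_iff_approx} yields (1)$\Leftrightarrow$(2), Theorem~\ref{th:coord_iff_embeds_into_power} yields (1)$\Leftrightarrow$(3), Theorem~\ref{th:coord_then_qvar} yields (1)$\Rightarrow$(4), and Theorem~\ref{th:qvar_then_coord} yields (4)$\Rightarrow$(1). I would emphasize that (1)$\Leftrightarrow$(2)$\Leftrightarrow$(3) hold over an arbitrary $\Acal$, and that the equationally Noetherian hypothesis enters at exactly one point, namely the implication (4)$\Rightarrow$(1) of Theorem~\ref{th:qvar_then_coord}, which is what allows $\qvar(\Acal)$ to be folded into the cycle.

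Next I would settle condition (5) by unwinding a definition. The pre-variety generated by $\Acal$ is the least class containing $\Acal$ and closed under subalgebras and direct products; for a single generator this is precisely $\mathbf{SP}(\Acal)$, the class of subalgebras of direct powers of $\Acal$. Hence belonging to the pre-variety is literally the statement of condition (3), and (3)$\Leftrightarrow$(5) needs no work beyond this observation. Condition (6) I would handle through the irreducible decomposition of Theorem~\ref{th:irreducible_decomposition}. Writing the set $Y$ with $\Gamma_\Acal(Y)\cong\Bcal$ as a finite irreducible union $Y=Y_1\cup\ldots\cup Y_m$, the map $[t]_Y\mapsto([t]_{Y_1},\ldots,[t]_{Y_m})$ into $\prod_{i=1}^m\Gamma_\Acal(Y_i)$ is injective, since a term equation lies in $\Rad_\Acal(Y)$ iff it lies in every $\Rad_\Acal(Y_i)$ (statement~(\ref{eq:about_radical_3})), and it projects onto each factor; thus $\Bcal$ is a subdirect product of the coordinate algebras of irreducible sets, which are the limit algebras over $\Acal$ in the terminology of~\cite{uni_Th_II}. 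For the converse, a subdirect product of limit algebras embeds into the direct product of its factors, and each factor embeds into a direct power of $\Acal$ by Theorem~\ref{th:coord_iff_embeds_into_power}, so the whole embeds into a direct power of $\Acal$, returning to condition (3). The finiteness of the decomposition, and hence of the subdirect factorization, is where the Noetherian hypothesis is used a second time.

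The hard part will be condition (7), because its vocabulary---the quasi-identity theory $\mathrm{Th_{qi}}(\Acal)$ and the property of being \emph{defined by a complete atomic type}---is never introduced in the preceding sections and must be taken from~\cite{uni_Th_I,uni_Th_II}. My plan is to recall from those papers the type-theoretic dictionary: a finitely generated algebra $\Bcal=\langle b_1,\ldots,b_n\rangle$ determines the set $p$ of all atomic and negated-atomic formulas in $x_1,\ldots,x_n$ satisfied by the tuple $(b_1,\ldots,b_n)$, and $\Bcal$ is recovered from $p$ as the term algebra modulo the equations in $p$; the condition that $p$ be a complete atomic type of $\mathrm{Th_{qi}}(\Acal)$ translates precisely into the consistency of $p$ with every quasi-identity true in $\Acal$, that is, into $\Bcal\in\qvar(\Acal)$. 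Verifying this correspondence rigorously is the genuinely model-theoretic content of the Unifying theorem and the only step I cannot reduce to results already established in the survey; accordingly I would present (1)--(6) in full and cite~\cite{uni_Th_I,uni_Th_II} for the equivalence (4)$\Leftrightarrow$(7), consistent with the survey's practice of deferring unfamiliar definitions to the original papers.
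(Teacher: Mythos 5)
Your proposal is correct in substance, but it takes a genuinely different route from the paper for the simple reason that the paper offers no proof at all: Section~\ref{sec:unify_theorems} merely \emph{formulates} the Unifying theorems and refers the reader to~\cite{uni_Th_I,uni_Th_II} for both the missing definitions and the arguments. What you do instead is reconstruct most of the theorem from the survey's own internal results, which is a legitimate and arguably more useful presentation. Your cycle among (1)--(4) via Theorems~\ref{th:coord_iff_approx},~\ref{th:coord_iff_embeds_into_power},~\ref{th:coord_then_qvar} and~\ref{th:qvar_then_coord} is exactly right, as is the observation that the Noetherian hypothesis is consumed only by Theorem~\ref{th:qvar_then_coord} within that cycle; the identification of the pre-variety generated by $\Acal$ with $\mathbf{SP}(\Acal)$ disposes of (5) by definition. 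For (6), your subdirect decomposition through the irreducible components of Theorem~\ref{th:irreducible_decomposition} is sound --- note only that the radical identity you invoke is the \emph{intersection} form $\Rad_\Acal(Y)=\bigcap_i\Rad_\Acal(Y_i)$, which is what the paper's own proof of~(\ref{eq:about_radical_3}) actually establishes even though the displayed formula there misprints a union; and your identification of the factors with ``limit algebras'' silently imports the equivalence (1)$\Leftrightarrow$(6) of Theorem~\ref{th:unify_discr}, which the survey likewise never proves. Deferring (7) to~\cite{uni_Th_I,uni_Th_II} matches the survey's own practice. In short: the paper buys brevity by outsourcing the whole proof; your assembly buys a self-contained verification of conditions (1)--(5) and a transparent reduction of (6), at the cost of still leaning on the references for the vocabulary of limit algebras and atomic types.
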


\begin{theorem}
\label{th:unify_discr}
Let $\Acal$ be an equationally Noetherian $\LL$-algebra. Then for a finitely generated $\LL$-algebra $\Bcal$ the following conditions are equivalent:
\begin{enumerate}

\item $\Bcal$ is the coordinate algebra of an irreducible algebraic set over $\Acal$; 
\item $\Bcal$ is discriminated by $\Acal$;
\item $\Bcal$ is embedded into a ultrapower of $\Acal$;
\item $\Bcal\in\ucl(\Acal)$;
\item $\mathrm{Th_{\exists}}(\Acal)\supseteq \mathrm{Th_{\exists}}(\Bcal)$;
\item $\Bcal$ is a limit algebra over $\Acal$;
\item $\Bcal$ is defined by a complete atomic type of the theory $\mathrm{Th_{\forall}}(\Acal)$.

\end{enumerate} 
\end{theorem}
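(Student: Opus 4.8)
The plan is to treat (1), (2) and (4) as settled anchors and to splice the purely model-theoretic conditions into one cycle of implications. Here (1)$\Leftrightarrow$(2) is Theorem~\ref{th:coord_iff_discr}, (1)$\Rightarrow$(4) is Theorem~\ref{th:coord_then_ucl}, and (4)$\Rightarrow$(1) is Theorem~\ref{th:ucl_then_coord} (the sole place where the equationally Noetherian hypothesis is consumed). It therefore suffices to arrange the remaining conditions so that the cycle closes as (2)$\Rightarrow$(3)$\Rightarrow$(4)$\Rightarrow$(1)$\Rightarrow$(2); routing the ultrapower condition through the geometric ones in this order lets me avoid invoking the Keisler--Shelah theorem, which a direct proof of (3)$\Leftrightarrow$(4) would otherwise require.

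The genuine model-theoretic content is the passage between discrimination and embeddability into an ultrapower. For (2)$\Rightarrow$(3) I would index by the directed set $I$ of finite subsets of $\Bcal$: discrimination gives, for each $F\in I$, a homomorphism $\phi_F\colon\Bcal\to\Acal$ injective on $F$, and I would fix an ultrafilter $\Ucal$ on $I$ containing every cone $\{F\mid F\supseteq F_0\}$ ($F_0\in I$), which exists since the cones have the finite intersection property. The map $b\mapsto[(\phi_F(b))_{F\in I}]$ is a homomorphism of $\Bcal$ into the ultrapower, and for $b\neq b^\pr$ the set of coordinates where the images differ contains the cone above $\{b,b^\pr\}$, hence lies in $\Ucal$, so the map is an embedding. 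For (2)$\Leftarrow$(3), given distinct $b_1,\ldots,b_n$, the {\L}o\'{s} theorem shows that each coordinate set on which the images of $b_i,b_j$ differ lies in the ultrafilter; a finite intersection of these is again in the ultrafilter and so nonempty, and projecting onto any coordinate in it produces a homomorphism $\Bcal\to\Acal$ injective on $\{b_1,\ldots,b_n\}$. Finally (3)$\Rightarrow$(4) is immediate: an ultrapower of $\Acal$ is elementarily equivalent to $\Acal$, hence models $\mathrm{Th}_\forall(\Acal)$, and universal sentences descend to substructures, so $\Bcal\in\ucl(\Acal)$.

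The conditions (5) and (7) are then syntactic reformulations. For (4)$\Leftrightarrow$(5) I would only dualize quantifiers: $\Bcal\in\ucl(\Acal)$ says that every universal sentence true in $\Acal$ holds in $\Bcal$, and taking negations turns this into ``every existential sentence true in $\Bcal$ is true in $\Acal$'', i.e. $\mathrm{Th}_\exists(\Acal)\supseteq\mathrm{Th}_\exists(\Bcal)$. For (7) I would unwind the definition from~\cite{uni_Th_II}: a finitely generated $\Bcal=\lb X\mid R\rb$ is defined by a complete atomic type of $\mathrm{Th}_\forall(\Acal)$ precisely when its generators realize a maximal set of atomic and negated-atomic formulas consistent with $\mathrm{Th}_\forall(\Acal)$, and by Proposition~\ref{pr:ucl_aux}, which reduces $\ucl$-membership to the special implications between conjunctions and disjunctions of equations, this is equivalent to condition (4).

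The condition (6), that $\Bcal$ be a limit algebra over $\Acal$, is where I expect the main difficulty, since its definition is not reproduced here and must be imported from~\cite{uni_Th_II}; the substantive task is to identify the limit-algebra construction with discrimination, i.e. (6)$\Leftrightarrow$(2). I would obtain (6)$\Rightarrow$(2) by reading injective homomorphisms into $\Acal$ off the defining limit data, and (2)$\Rightarrow$(6) by using the equationally Noetherian hypothesis to reduce the discriminating family to a stabilizing chain whose limit recovers $\Bcal$ --- the point at which the Noetherian assumption re-enters, just as in the proof of Theorem~\ref{th:ucl_then_coord}.
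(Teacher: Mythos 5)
First, a point of comparison: the paper does not prove Theorem~\ref{th:unify_discr} at all --- it is quoted from~\cite{uni_Th_I,uni_Th_II}, and the survey itself only supplies the fragment $(1)\Leftrightarrow(2)\Leftrightarrow(4)$ via Theorems~\ref{th:coord_iff_discr},~\ref{th:coord_then_ucl} and~\ref{th:ucl_then_coord}. You correctly identify and reuse exactly that fragment, and your cycle $(2)\Rightarrow(3)\Rightarrow(4)\Rightarrow(1)\Rightarrow(2)$ is a sound way to splice in condition (3): the ultrafilter on the finite subsets of $\Bcal$ for $(2)\Rightarrow(3)$ is correct, $(3)\Rightarrow(4)$ by {\L}o\'{s} plus descent of universal sentences to substructures is correct, and $(4)\Leftrightarrow(5)$ by quantifier duality is correct. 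For conditions (1)--(5) you have therefore done more than the paper does.

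Two genuine gaps remain. The auxiliary argument you offer for the direct implication $(3)\Rightarrow(2)$ would fail: elements of an ultrapower are equivalence classes, so ``projecting onto a coordinate'' requires a choice of representatives, and the resulting map $\Bcal\to\Acal$ respects each individual relation of $\Bcal$ only on a $\Ucal$-large set of indices; when $\Bcal$ has infinitely many defining relations there need be no single coordinate at which the map is a homomorphism. This is not cosmetic: by Theorem~\ref{unif_theorems_holds_iff_compactness} the passage from (3) back to (2) genuinely requires $\uu$-compactness, so no ultrafilter manipulation alone can establish it --- it must be routed through $(4)\Rightarrow(1)\Rightarrow(2)$, where the equationally Noetherian hypothesis is consumed, exactly as your official cycle does; you should simply delete the direct argument. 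Second, conditions (6) and (7) are not actually proven: for (7) you gesture at Proposition~\ref{pr:ucl_aux} without carrying out the identification of complete atomic types of $\mathrm{Th_{\forall}}(\Acal)$ with $\ucl$-membership, and for (6) you concede you lack the definition of a limit algebra and give only a plan. Since these are precisely the two conditions whose definitions the survey declines to reproduce, deferring them to~\cite{uni_Th_I,uni_Th_II} (as the paper itself does) would be legitimate --- but then the deferral should be stated as such rather than presented as part of a proof.
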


\begin{remark}
Fragments of Unifying theorems were appearing in many papers. In~\cite{BMR1,MR2} (in~\cite{Daniyarova1}) Theorems~\ref{th:unify_approx},~\ref{th:unify_discr} were proved for groups (for Lie algebras, respectively). Finally,~\cite{uni_Th_I,uni_Th_II} prove these theorems for an arbitrary algebra.
\end{remark}

\section{Appearances of constants}
\label{sec:appearing_of_const}

In the previous sections we consider groups monoid and semigroups in standard languages  $\LL_g,\LL_{m},\LL_s$. Commutative groups monoids and semigroups were considered in the additive languages $\LL_{+g},\LL_{+m},\LL_{+s}$. Thus, equations over groups, monoids and semigroups contain only trivial elements $1$ ($0$ in commutative case).  In the following sections we consider a general form of equations which may contain nontrivial constants.

Let $C=\{c_i\mid i\in I\}$ be a set of constant symbols. Below the extended language  $\LL\cup C$ is denoted by $\LL(C)$. 

For shortness we give the necessary definitions only for groups. The corresponding definitions for semigroups, monoid and algebras of an arbitrary language $\LL$ are similar. A group $G$ considered as an algebraic system of the language  $\LL_g(C)$ is called a {\it $C$-group}. A $C$-group $G$ differs from ``classic'' group $G$, since a $C$-group $G$ has a subgroup $C(G)$ generated by constants of the set $C$. If  $C(G)=G$ (i.e. any element of the group $G$ is a product of constants) a  $C$-group $G$ is called  \textit{Diophantine}.

\subsection{Equations with constants}

The language extension changes the classes of terms and equations. Let us show the  equations in groups and semigroups in languages with constants. 

\begin{enumerate}
\item Any $\LL_g(C)$-equation $t(X)=s(X)$ is equivalent over a Diophantine  $G$-group $G$ to an equality
\begin{equation}
w(X)=1,
\end{equation}
where $w(X)$ is a product of variables in integer powers and constants $g\in G$. Precisely, $w(X)$ is an element of the free product\Wiki $F(X)\ast G$, where $F(X)$ is the free group generated by the set $X$.

Similarly any $\LL_{+g}(C)$-equation over a Diophantine abelian $C$-group $A$ is equivalent to an  expression
\begin{equation}
\label{eq:abelian_group_equation_with_const}
k_1x_1+k_2x_2+\ldots+ k_nx_n=a,
\end{equation}
where $a\in A$.

\item By the definition, an $\LL_s(C)$-equation over a Diophantine $C$-semigroup $S$ is an expression
\begin{equation*}
t(X)=s(X),
\end{equation*}
where $t(X),s(X)$ are products of variables and constants $s\in S$. Any $\LL_{+m}(C)$-equation over a a commutative Diophantine $C$-monoid with cancellations $M$ is written as
\begin{equation}
\label{eq:cancellation_semigroup_equation_with_const}
\sum_{i\in I}k_ix_i+a=\sum_{j\in J}k_jx_j+a^\pr,
\end{equation}
where $a,a^\pr\in M$, $I,J\subseteq \{1,2,\ldots,n\}$, $I\cap J=\emptyset$ (i.e. each variable occurs in at most one part of the equation).
\end{enumerate}

\subsection{New algebraic sets}

In the previous section we showed that an extension of the language changes the classes of terms and equations. Therefore we obtain a wide class of algebraic sets over extended languages. Moreover, a language extension may change the class of irreducible algebraic sets. Let us give examples which show such properties of an extension (below $G$ is a Diophantine $C$-group).

\begin{enumerate}
\item {\it Any point $(a_1,a_2,\ldots,a_n)\in G^n$ is algebraic set over $G$} (actually, this fact holds for any Diophantine $C$-semigroup and $C$-monoid), since it is the solution set of the following $\LL_g(C)$-system $\Ss=\{x_i=a_i\mid 1\leq i\leq n\}$. Moreover, for $|G|>1$ the empty set is algebraic over $G$: $\V_{\Acal}(c=c^\pr)=\emptyset$, where $c,c^\pr$ are different elements of $G$.

\item {\it There is an algebraic set $Y\subseteq G^n$ such that $Y$ is irreducible over a group $G$, but $Y$ is reducible over a Diophantine $C$-group $G$}. Let $G=\Zbb_2=\{0,1\}$. The algebraic set $Y=G=\V_G(x=x)$ is irreducible over an abelian group $G$ of the language $\LL_{+g}$, since $\Gamma_{G}(Y)\cong\Zbb_2$ and $\Gamma_G(Y)$ is obviously discriminated by $G$. Let us extend the language $\LL_{+g}$ by new constants $C=\{0,1\}$ and obtain an abelian Diophantine $C$-group $G$. The set $Y$ becomes reducible over the $C$-group $G$, since $Y=\V_G(x=0)\cup \V_G(x=1)$.

Moreover, one can prove  that {\it a finite algebraic set $Y$ is irreducible over a Diophantine  $C$-group $G$ iff $|Y|=1$}.

\item {\it If a semigroup $S$ is a co-domain, the Diophantine $C$-semigroup $S$ is not always a co-domain.} Indeed, in Section~\ref{sec:co-domains} we proved that the commutative monoid $\N$ is a co-domain in the language $\LL_{+m}$. However in the language $\LL_{+m}(C)$ there are reducible algebraic  sets over the Diophantine $C$-monoid:
\[
\V_\N(x+y=1)=\V_\N(\{x=0,y=1\})\cup \V_\N(\{x=1,y=0\}).
\]

\end{enumerate}

\subsection{Equationally Noetherian algebras with constants}
\label{sec:noeth_const}

\begin{enumerate}

\item {\it If an $\LL$-algebra $\Acal$ is equationally Noetherian the $\LL(C)$-algebra $\Acal$ is not necessarily equationally Noetherian.}  Indeed, let $L_\infty=\{a_1,a_2,a_3,\ldots\}$ be an infinite linearly ordered semilattice with $a_i<a_{i+1}$. The semilattice $L_\infty$ is equationally Noetherian as an algebra of the language $\LL_s$ ($L_\infty$ belongs to a locally finite variety, see Section~\ref{sec:noeth}). However, for the Diophantine $C$-semilattice $L_\infty$ there is an infinite $\LL_s(C)$-system $\Ss=\{x\geq a_n\mid n\geq 1\}$ such that $\Ss$ is not equivalent to any finite subsystem. Thus, the Diophantine $C$-semilattice $L_\infty$ is not equationally Noetherian.

Moreover, for the variety of semilattices we have the following result.

\begin{proposition}\textup{~\cite{shevl_at_service}}
A Diophantine $C$-semilattice $S$ is equationally Noetherian iff $S$ is finite. 
\label{pr:noeth_criterion_for_semilattices_with_const}
\end{proposition}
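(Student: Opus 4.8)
The plan is to route everything through Proposition~\ref{pr:no_algebraic_chains_in_noeth}, which reduces equational Noetherianity to the absence of infinite strictly descending chains of algebraic sets. The ``if'' direction is then immediate: if $S$ is finite it is a finite $\LL_s(C)$-algebra, and every finite algebra is equationally Noetherian (all its algebraic sets sit inside the finite sets $S^n$, so no infinite descending chain exists); the constants play no role here. The whole content is the ``only if'' direction, which I would prove contrapositively: assuming $S$ infinite, I build an infinite strictly descending chain of algebraic sets in $S^1$. The decisive use of the Diophantine hypothesis $C(S)=S$ is that every $a\in S$ is the value of an $\LL_s(C)$-term in constants, so arbitrary elements of $S$ may be used as parameters; in particular, for $a,b\in S$ the sets $\{x\mid xa=x\}=\{x\le a\}$, $\{x\mid xa=a\}=\{x\ge a\}$ and $\{x\mid xa=b\}$ are all algebraic over $S$.

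First I would dispose of the case where $(S,\le)$ contains an infinite chain. By a Ramsey argument such a chain contains an infinite strictly monotone sequence. An ascending sequence $d_1<d_2<\cdots$ gives the strictly descending chain of principal filters $\{x\ge d_k\}=\V_S(xd_k=d_k)$, with $d_k\in\{x\ge d_k\}\setminus\{x\ge d_{k+1}\}$; a descending sequence gives the strictly descending chain of principal ideals $\{x\le d_k\}=\V_S(xd_k=x)$, with $d_k\in\{x\le d_k\}\setminus\{x\le d_{k+1}\}$. Either way Proposition~\ref{pr:no_algebraic_chains_in_noeth} applies.

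It remains to treat the case where every chain of $S$ is finite, so that $S$ satisfies both chain conditions; being infinite, it then contains an infinite antichain. The key step is to extract an infinite \emph{meet-homogeneous fan}: distinct $q_1,q_2,\ldots\in S$ together with $r_i\in S$ such that $q_iq_j=r_i$ for all $j>i$, with $q_i\ne r_i$ (the last condition is automatic in the chain-free case, since $q_i=r_i$ infinitely often would produce an infinite ascending chain). Granting a fan, the equations $E_i\colon xq_i=r_i$ finish the proof: $q_i\notin\V_S(E_i)$ because $q_iq_i=q_i\ne r_i$, whereas $q_m\in\V_S(E_i)$ for every $m>i$ because $q_mq_i=r_i$. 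Hence $Y_n=\V_S(\{E_1,\ldots,E_n\})$ contains $q_{n+1},q_{n+2},\ldots$ but not $q_1,\ldots,q_n$, so $q_{n+1}\in Y_n\setminus Y_{n+1}$ and $Y_1\supsetneq Y_2\supsetneq\cdots$ is the desired infinite descending chain.

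The hard part is exactly the extraction of the fan, because the pairwise meets of an antichain can be erratic: one can construct a semilattice of height three (hence doubly chain-bounded) with an infinite antichain $\{a_i\}$ all of whose pairwise products $a_ia_j$ are distinct, so no sub-antichain has constant pairwise meets and a naive sunflower/$\Delta$-system argument fails. I would overcome this either by applying the canonical version of Ramsey's theorem (Erd\H{o}s--Rado) to the colouring $\{i,j\}\mapsto q_iq_j$ — whose canonical patterns are constant, $\min$-dependent, $\max$-dependent, or injective — or, more self-containedly, by a well-founded recursion on $(S,\le)$: choosing $q_1$ and asking whether some value $q_1x$ is attained on an infinite set (then I restrict there and continue to build the fan) or whether all such fibres are finite (then the values $q_1x$ form an infinite set lying strictly below $q_1$, and I recurse on it). Since each ``descend'' step strictly lowers the anchor, the descending chain condition forbids infinitely many descents, so the process eventually yields an infinite fan. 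In the constant and $\min$-dependent outcomes the fan appears directly with $r_i=q_iq_j$, while the $\max$-dependent outcome forces the relevant meets into a descending chain that stabilises by the chain condition, reducing it to the constant case.
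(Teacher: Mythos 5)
The paper does not actually prove this proposition: it is stated with a citation to~\cite{shevl_at_service}, and the only argument in the surrounding text is the preceding $L_\infty$ example, which is exactly the ascending-chain subcase of your Case~1. So there is no in-paper proof to compare against; judged on its own, your argument is sound and supplies considerably more than the text does. The ``if'' direction via finiteness is fine (the constants are harmless there), and the reduction of ``only if'' to producing an infinite strictly descending chain of algebraic sets is the right move. The chain/antichain dichotomy, the principal-filter and principal-ideal chains $\V_S(xd_k=d_k)$ and $\V_S(xd_k=x)$ in the chain case, and the observation that a fan $q_iq_j=r_i$ ($j>i$, $q_i\neq r_i$) yields the strictly descending chain $\V_S(xq_1=r_1)\supsetneq \V_S(\{xq_1=r_1,\,xq_2=r_2\})\supsetneq\cdots$ all check out: idempotency gives $q_iq_i=q_i\neq r_i$, the antichain property gives $r_i=q_iq_j<q_i$, and the Diophantine hypothesis is used exactly where you say, namely to let arbitrary elements of $S$ occur as closed terms in equations.

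The one loose end is in your canonical-Ramsey variant of the fan extraction: you list the injective pattern among the possible outcomes but your closing sentence only disposes of the constant, $\min$-dependent and $\max$-dependent ones, and your own height-three example shows the injective pattern genuinely occurs. In that outcome no infinite subset of the antichain carries a fan, so one is forced to descend to the set of meets $\{q_1q_j\}$ and restart -- which is precisely the descent step of your second, well-founded-recursion variant. That variant does close the argument: each descent strictly lowers the anchor, the absence of infinite chains bounds the number of descents, and after the last descent every step is a restriction, producing the infinite fan. Two small points to make explicit when writing it up: after a descent the new infinite image set need not be an antichain, so you must thin it to one (another application of Ramsey, legitimate since all chains are finite) in order to keep the guarantee $q_i\neq r_i$; and Proposition~\ref{pr:no_algebraic_chains_in_noeth} is stated for $n>1$ while your chains live in $S^1$, which is repaired by a dummy variable or by exhibiting the infinite system $\{xq_i=r_i\mid i\in\N\}$ directly and noting that no finite subsystem is equivalent to it.
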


\item Let us consider a Diophantine nilpotent $C$-group of the nilpotency class $2$ given by the presentation
\[
G=\lb a_1,a_2,\ldots,a_n,\ldots,b_1,b_2,\ldots,b_n,\ldots\mid [a_i,b_j]=1\; i\leq j\rb.
\] 
An infinite $\LL_{g}(C)$-system $\Ss=\{[x,a_i]=1\mid i\geq 1\}$ is not equivalent to any finite subsystem (a finite subsystem $\Ss^\pr=\{[x,a_i]=1\mid 1\leq i\leq n\}$ is satisfied by the point $x=b_n$, but $b_n\notin\V_G(\Ss)$). Thus, the Diophantine $C$-group $G$ is not equationally Noetherian. However, the group $G$ in the standard group language $\LL_g$ is equationally Noetherian (actually, one can prove that any nilpotent group of the nilpotency class $2$ is equationally Noetherian). 

\item {\it All classes of algebras considered in Section~\ref{sec:noeth} (abelian and linear groups, free semilattices, rectangular bands, left zero semigroups) remain equationally Noetherian in extended languages} $\LL(C)$. We give this fact with no proof. 

\end{enumerate}

\section{Coordinate algebras with constants}

In Section~\ref{sec:coord_algebras} it was given the definition of the coordinate $\LL$-algebra over an arbitrary language $\LL$ (including languages with an arbitrary number of constant symbols). Thus, the reader can apply this general definition for $C$-groups and $C$-semigroups. We remark below only the principal differences between coordinate algebras in constant-free languages and in languages with constants. {\it For simplicity we deal with groups in all examples below}.

Firstly, the coordinate $C$-group $\Gamma_G(Y)$ of an algebraic set $Y$ over a Diophantine $C$-group $G$ is not always Diophantine. Thus, all elements of a  $C$-group $\Gamma_G(Y)$ can be divided into three groups: 
\begin{enumerate}
\item {\it constants}, i.e. the images of interpretations of constant symbols of the language $\LL_{g}(C)$ (Proposition~\ref{pr:A_embedded_into_Gamma} describes below a subgroup generated by constants);
\item {\it ``transcendental'' elements}, i.e. the elements which cannot be obtained from constants;
\item results of functions applied to transcendental and constants elements. 
\end{enumerate}

Let us consider a nonempty algebraic set $Y$ over a Diophantine $C$-group $G$. By the definition, distinct constants  $a,a^\pr\in G$ are $\LL_{g}(C)$-terms and $a\nsim_Y a^\pr$ (i.e. $[a]_Y\neq [a^\pr]_Y$ in $\Gamma_G(Y)$). Thus, we obtain the following statement.

\begin{proposition}
The coordinate $C$-group $\Gamma_\Acal(Y)$ for $Y\neq\emptyset$ contains a subgroup isomorphic to $G$, and this subgroup is generated by constants (this statement holds for any Diophantine $C$-algebra not merely for a group).
\label{pr:A_embedded_into_Gamma}
\end{proposition}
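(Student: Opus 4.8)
The plan is to build an explicit injective group homomorphism $\iota\colon G\to\Gamma_G(Y)$ whose image is exactly the subgroup generated by the classes of the constants. Because $G$ is Diophantine, every element $g\in G$ is the value $t^G$ of some variable-free $\LL_g(C)$-term $t$, i.e. a product of constant symbols and their inverses. I would define $\iota(g)=[t]_Y$ for such a $t$, and then check that this assignment is well-defined, multiplicative, injective, and has the advertised image.

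The first thing to verify is well-definedness, and this is exactly where the hypothesis $Y\neq\emptyset$ is used. A closed term $t$ contains no variables, so its value $t(P)=t^G$ does not depend on the point $P$. Hence whenever two closed terms $t,t^\pr$ satisfy $t^G=(t^\pr)^G$, we get $t(P)=t^\pr(P)$ for every $P\in Y$; since $Y$ is nonempty this yields $t\sim_Y t^\pr$, so $[t]_Y=[t^\pr]_Y$. Thus $\iota(g)$ is independent of the chosen representative term, and $\iota$ is well-defined.

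Next I would verify that $\iota$ is a group homomorphism and is injective. For the homomorphism property, choosing closed representatives $t_g,t_h$ for $g,h$, the word $t_gt_h$ represents $gh$ and $t_g\1$ represents $g\1$; by the definition of the operations on $\Gamma_G(Y)$ one has $[t_g]_Y[t_h]_Y=[t_gt_h]_Y$ and $([t_g]_Y)\1=[t_g\1]_Y$, so $\iota$ preserves multiplication, inversion and the identity. Injectivity is precisely the observation recorded just before the statement: if $g\neq h$, then $t_g,t_h$ take the distinct constant values $g\neq h$ at every $P\in Y$, whence $t_g\nsim_Y t_h$ and $\iota(g)\neq\iota(h)$.

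Finally, since every $g\in G$ is a product of constant symbols and their inverses, $\iota(g)$ is the corresponding product of the classes $[c]_Y$ with $c\in C$; conversely each such class equals $\iota(c^G)$ and so lies in the image. Hence $\iota(G)$ is exactly the subgroup of $\Gamma_G(Y)$ generated by the constants, and $\iota$ identifies it with $G$. I expect the well-definedness step to be the only genuinely delicate point---in particular the use of $Y\neq\emptyset$, which is what prevents the collapse of $\Gamma_G(Y)$ to a single class---while the homomorphism and injectivity checks are routine. The same argument applies verbatim to any Diophantine $C$-algebra, replacing ``product of constants and inverses'' by ``composition of the functional symbols of $\LL$ applied to constants''.
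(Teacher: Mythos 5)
Your proof is correct and follows essentially the same route as the paper, which simply observes that closed $\LL_g(C)$-terms take constant values on all of $Y$, so for nonempty $Y$ distinct elements of the Diophantine group $G$ give non-$\sim_Y$-equivalent terms; your write-up just makes the induced map $\iota$ and its verification explicit. One small remark: the hypothesis $Y\neq\emptyset$ is not actually needed for well-definedness (which holds vacuously when $Y=\emptyset$) but only for the injectivity step, where you need at least one point of $Y$ at which two closed terms with distinct values disagree.
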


Remark that the set $Y=\emptyset$ makes Proposition~\ref{pr:A_embedded_into_Gamma} false, since the relation $\sim_\emptyset$ over the set of $\LL_g(C)$-terms generates a unique equivalence class,  and the coordinate $C$-algebra $\Gamma_G(Y)$ is trivial.  

Thus, a constant subgroup of $\Gamma_G(Y)$ is isomorphic to $G$, so $G$ is embedded into $\Gamma_G(Y)$. The principal question is~\textit{a way of this embedding, because there is often at least one embedding of $G$ into  $\Gamma_G(Y)$}. The following example shows the differences in algebraic geometries for different embeddings of $G$ into $\Gamma_G(Y)$.

Let $G_1,G_2$ be two isomorphic copies of the abelian group $\Zbb$, but $G_1$ is Diophantine (i.e. all elements of $G_1$ are marked as constants) whereas the set of constants of $G_2$ is $2\Zbb=\{0,\pm 2,\pm 4,\ldots\}$. It is clear that $G_1,G_2$ are abelian $C$-groups in the language $\LL_{+g}(C)$ and the constant subgroups of  $G_1,G_2$ are isomorphic to $\Zbb$. The abelian $C$-group $G_1$ is approximated by the $C$-group $\Zbb$ (since $G_1\cong \Zbb$). However $G_2$ is not approximable by the $C$-group $\Zbb$ (by the definition of a homomorphism $\phi\colon G_2\to \Zbb$, the constants of $G_2$ should be mapped into the constants of $\Zbb$; therefore the relation $1+1=2$ becomes $\phi(1)+\phi(1)=1$, since $2\in G_2$ and $1\in\Zbb$ are the interpretations of the same constant symbol; but there is not any element $x$ in $\Zbb$ satisfying the equality $x+x=1$). Since Unifying theorem~\ref{th:unify_approx} was proved for an arbitrary language $\LL$, it also holds for abelian groups of the language $\LL_{+g}(C)$. Thus, the $C$-group $G_1$ is the coordinate  $C$-group of an algebraic set over the abelian Diophantine $C$-group $\Zbb$, but $G_2$ isn't.

Let us give examples of coordinate algebras over Diophantine $C$-groups and $C$-monoids.

\begin{enumerate}
\item 
\begin{proposition}
The coordinate $C$-group of an algebraic set $Y$ over a Diophantine $C$-group $G$ is isomorphic to $G$ iff $|Y|=1$ (actually, this statement holds for any Diophantine $C$-algebra, not merely for groups). 
\end{proposition}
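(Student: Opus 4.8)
The plan is to route both directions through the correspondence between homomorphisms and points established in Section~\ref{sec:coord_alg_properties}: every homomorphism $\Gamma_G(Y)\to G$ has the form $\phi_P$ of~(\ref{eq:hom_of_substitution}) for a point $P\in Y$, acting by $\phi_P([t(X)]_Y)=t(P)$, and for the language $\LL_g(C)$ such a map automatically respects constants. Throughout I assume $G$ is nontrivial (for trivial $G$ the empty set is a degenerate exception). Under this assumption an isomorphism $\Gamma_G(Y)\cong G$ already forces $Y\neq\emptyset$, since by the first computation in Section~\ref{sec:coord_algebras} the algebra $\Gamma_G(\emptyset)$ is trivial.

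For the \emph{if} direction, suppose $Y=\{P\}$ with $P=(p_1,\dots,p_n)\in G^n$. I would show that the single homomorphism $\phi_P\colon\Gamma_G(Y)\to G$ is an isomorphism. Injectivity is immediate from the definition of $\sim_Y$: the equality $\phi_P([t]_Y)=\phi_P([s]_Y)$ means $t(P)=s(P)$, which, since $Y=\{P\}$, is exactly the condition $[t]_Y=[s]_Y$. Surjectivity uses that $G$ is Diophantine: every $g\in G$ is a product of constants, hence $g=w^G=w(P)=\phi_P([w]_Y)$ for the constant-only term $w$. Because the paper defines an isomorphism as a bijective homomorphism, a bijective $\phi_P$ is already an isomorphism, and it maps the constant $[c]_Y$ to $c^G$, so it is an isomorphism of $C$-groups; thus $\Gamma_G(Y)\cong G$.

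For the \emph{only if} direction, suppose $\Gamma_G(Y)\cong G$ as $C$-groups. The isomorphism is in particular a homomorphism $\Gamma_G(Y)\to G$, so it equals $\phi_P$ for some $P\in Y$, and it is injective. Fix a coordinate $i$ and write $p_i$ for the $i$-th coordinate of $P$; since $G$ is Diophantine, $p_i$ is (the value of) a constant term. The variable $x_i$ and this constant term agree at $P$, namely $x_i(P)=p_i=p_i(P)$, so $\phi_P([x_i]_Y)=\phi_P([p_i]_Y)$, and injectivity yields $[x_i]_Y=[p_i]_Y$; equivalently the equation $x_i=p_i$ belongs to $\Rad_G(Y)$. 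By the very definition of the radical, every $Q\in Y$ then satisfies $q_i=p_i$. As $i$ was arbitrary, every $Q\in Y$ equals $P$, whence $|Y|=1$.

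The main subtlety, rather than any hard computation, is the bookkeeping of constants. One must use that the isomorphism is a $C$-isomorphism, so that it is a genuine $\LL_g(C)$-homomorphism and the point--homomorphism correspondence applies; and the Diophantine hypothesis is precisely what makes each coordinate $p_i$ a constant term, turning injectivity into the collapse $x_i=p_i$ inside $\Rad_G(Y)$. I would also flag the tempting but invalid shortcut of arguing, from Proposition~\ref{pr:A_embedded_into_Gamma}, that $\Gamma_G(Y)\cong G$ makes the constant subgroup exhaust $\Gamma_G(Y)$: a group can be isomorphic to a proper subgroup of itself (e.g.\ $\Zbb\cong 2\Zbb$), so this inference fails, and the $\phi_P$-argument above is what sidesteps it.
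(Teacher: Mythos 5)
Your proof is correct, and both directions rest on the same observation as the paper's: over a one-point set every term class is determined by its (constant) value at $P$, while a second point makes $[x_1]_Y$ escape the constant subgroup. The packaging differs slightly in the \emph{only if} direction. The paper argues contrapositively: with two points $P\neq Q$ differing in the first coordinate, the class $[x_1]_Y$ is not $\sim_Y$-equivalent to any constant, hence $\Gamma_G(Y)$ strictly contains its constant subgroup and cannot be isomorphic to $G$. You instead run the argument forward through the point--homomorphism correspondence: the assumed isomorphism is some injective $\phi_P$, injectivity forces $x_i=p_i\in\Rad_G(Y)$ for every $i$, and hence $Y=\{P\}$. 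One remark on your final paragraph: the ``shortcut'' you flag as invalid is essentially the route the paper takes, and it is in fact sound here --- a $C$-isomorphism must send constants to constants, so its restriction to the constant subgroup of $\Gamma_G(Y)$ already surjects onto the Diophantine group $G$, and injectivity then forces $\Gamma_G(Y)$ to coincide with its constant subgroup; the $\Zbb\cong 2\Zbb$ phenomenon only arises for abstract (non-$C$) isomorphisms. Your $\phi_P$ argument makes this step explicit rather than sidestepping a genuine error, and is arguably the cleaner write-up.
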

\begin{proof}
If a set $Y$ consists of a single $P=(p_1,p_2,\ldots,p_n)$ the definition of the equivalence relation   $\sim_Y$ gives
\[
t(X)\sim_Y s(X)\Leftrightarrow t(P)=s(P).
\] 
Therefore for each $\LL_g(C)$-term $t(X)$ there exists a unique constant $a\in G$ with $t(X)\sim_Y a$ ($t(P)=a$), and the equivalence class $[t(X)]_Y$ is uniquely defined by a constant $a\in[t(X)]_Y$. Thus, the factor-algebra $\Gamma_G(Y)=\Tcal(X)/\sim_Y$ is isomorphic to the group generated by constants, i.e. $\Gamma_G(Y)\cong G$.

Let us assume that $Y$ contains a least two points $P=(p_1,p_2,\ldots,p_n)$, $Q=(q_1,q_2,\ldots,q_n)$. Without loss of generality we put $p_1\neq q_1$. Therefore an $\LL$-term $t(X)=x_1$ is not $\sim_Y$-equivalent to any constant $a\in G$. Thus, the coordinate $C$-group $\Gamma_G(Y)$ is not equivalent to $G$.
\end{proof}

\item Let us consider an equation $2x+3y+5z=5$ over the Diophantine $C$-monoid $\N$. Obviously, its solution set is $Y=\{(1,1,0),(0,0,1)\}$, therefore the equation $x=y$ belongs to the radical of $Y$. Thus, the coordinate $C$-monoid for the set $Y$ is a commutative $C$-monoid given by the following presentation
\begin{multline*}
\Gamma_\Acal(Y)=\lb x,y,z\mid \Rad_\N(2x+3y+5z=5)\rb\cong
\lb x,y,z\mid \Rad_\N(2x+3y+5z=5),x=y\rb\cong\\
\lb x,y,z\mid \Rad_\N(2x+3x+5z=5)\rb\cong
 \lb x,z\mid \Rad_\N(5x+5z=5)\rb\cong \lb x,z\mid \Rad_\N(x+z=1)\rb.
\end{multline*}
It is directly checked that $\Rad_\N(x+z=1)$ coincides with the congruent closure $[x+z=1]$, therefore
\[
\Gamma_\Acal(Y)\cong \lb x,z\mid x+z=1\rb.
\] 
The defined relation $x+y=1$ describes the way of an embedding of the constant monoid $\N$ into $\Gamma_\Acal(Y)$
\end{enumerate}

\subsection{Coordinate groups over abelian groups}

In this section we describe coordinate $C$-groups over a finitely generated abelian Diophantine $C$-group $A$. Moreover we compare algebraic geometries over Diophantine group $\Zbb$ and monoid $\N$.

By Proposition~\ref{pr:A_embedded_into_Gamma}, a group $A$ is embedded into any coordinate $C$-group of a nonempty algebraic set over $A$. The following propositions specify the properties of this embeddings.

A subgroup $A$ of an abelian group $G$ is {\it pure} if any consistent in $G$ equation $nx=a$ ($a\in A$) is also consistent in $A$. According to group theory, we have the following statement.

\begin{proposition}
Any  pure subgroup $A$ of a finitely generated abelian group $G$ is a direct summand of  $G$, i.e. $G\cong A\oplus G^\pr$ for some finitely generated group $G^\pr$. 
\label{pr:servant_property_from_group_theory}
\end{proposition}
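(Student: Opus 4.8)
The plan is to prove that the short exact sequence $0\to A\to G\to G/A\to 0$ splits, which is equivalent to the asserted decomposition $G\cong A\oplus G^\pr$. First I would reformulate purity in the convenient form $A\cap nG=nA$ for every $n\in\N$: the inclusion $nA\subseteq A\cap nG$ is automatic, and the converse inclusion is precisely the statement that an equation $nx=a$ with $a\in A$ that is solvable in $G$ is already solvable in $A$. Since $G$ is finitely generated, so is the quotient $G/A$, and by the structure theorem~(\ref{eq:abelian_group_presentation}) the group $G/A$ is a direct sum of cyclic groups, say $G/A=\bigoplus_{i}\langle\bar{g}_i\rangle$, where $\bar{g}_i$ has order $n_i$ (allowing $n_i=\infty$ for the free summands).

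The key construction is to choose lifts of the generators $\bar{g}_i$ that respect their orders. For a generator $\bar{g}_i$ of infinite order I would take an arbitrary preimage $g_i\in G$. For a generator $\bar{g}_i$ of finite order $n_i$, any preimage $g_i$ satisfies $n_ig_i\in A$ (because $n_i\bar{g}_i=0$ in $G/A$) and also $n_ig_i\in n_iG$; hence $n_ig_i\in A\cap n_iG=n_iA$ by purity, so $n_ig_i=n_ia_i$ for some $a_i\in A$. Replacing $g_i$ by $g_i^\pr=g_i-a_i$ yields a preimage of $\bar{g}_i$ with $n_ig_i^\pr=0$; since its image $\bar{g}_i$ already has order $n_i$, the element $g_i^\pr$ has order exactly $n_i$ in $G$. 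This adjustment step, where purity is used in an essential way, is the heart of the argument.

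Finally I would set $G^\pr=\langle g_i^\pr\mid i\rangle$ and verify that it is a complement to $A$. The equality $A+G^\pr=G$ holds because every $x\in G$ has image $\sum_i m_i\bar{g}_i$ in $G/A$, so $x-\sum_i m_ig_i^\pr\in A$. For $A\cap G^\pr=0$, suppose $\sum_i m_ig_i^\pr\in A$; projecting to $G/A$ gives $\sum_i m_i\bar{g}_i=0$, and the direct-sum structure of $G/A$ forces $n_i\mid m_i$ for the finite-order indices and $m_i=0$ for the infinite-order ones. Since $n_ig_i^\pr=0$ by construction, each term $m_ig_i^\pr$ vanishes, so $\sum_i m_ig_i^\pr=0$. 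Thus $G=A\oplus G^\pr$, and $G^\pr$ is finitely generated as a subgroup of the finitely generated group $G$. The main obstacle is precisely the order-matching in the second paragraph; once the lifts are chosen with the correct orders, the complementation is a routine check using the decomposition of $G/A$.
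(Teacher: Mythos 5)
Your proof is correct. Note that the paper itself offers no argument for this proposition: it is stated with the remark ``According to group theory, we have the following statement'' and used as a black box, so there is nothing to compare your route against. What you supply is the standard textbook proof, and every step checks out: the reformulation of purity as $A\cap nG=nA$ matches the paper's definition (consistency of $nx=a$ in $G$ implies consistency in $A$ is exactly $A\cap nG\subseteq nA$, and the reverse inclusion is trivial); the adjustment $g_i^\pr=g_i-a_i$ produces a lift of $\bar{g}_i$ of exact order $n_i$, which is indeed the only place purity is needed; and the verification that $G=A\oplus G^\pr$ correctly uses the direct-sum structure of $G/A$ to force $n_i\mid m_i$ and hence $m_ig_i^\pr=0$. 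Finite generation of $G^\pr$ is immediate since it is generated by the finitely many $g_i^\pr$. Your proof fills a gap the survey deliberately leaves to the literature.
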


\begin{proposition}
Let $\Gamma_A(Y)$ be the coordinate $C$-group of a nonempty algebraic set over an abelian Diophantine group $A$. Then $A$ is pure in $\Gamma_A(Y)$. 
\label{pr:coord_group_has_servant_A}
\end{proposition}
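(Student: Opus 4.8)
The plan is to unwind the definition of purity into a statement purely about term evaluation. By definition, $A$ is pure in $\Gamma_A(Y)$ means that whenever an equation $nx=a$ with $a\in A$ is consistent in $\Gamma_A(Y)$, it is already consistent in $A$. So I would fix $a\in A$ and $n\in\N$, assume there is an element $\gamma\in\Gamma_A(Y)$ with $n\gamma=a$ (here $a$ denotes the class $[a]_Y$ of the constant symbol for $a$), and produce an element $b\in A$ with $nb=a$. The two facts I would lean on are: first, that $\Gamma_A(Y)$ is again an abelian group (Remark~\ref{rem:coord_alg_from_the_same_variety}), so the multiple $n\gamma$ and the class arithmetic $n[t(X)]_Y=[nt(X)]_Y$ make sense; and second, the presentation $\Gamma_A(Y)=\lb X\mid\Rad_A(Y)\rb$, which says that every element of $\Gamma_A(Y)$ is a class $[t(X)]_Y$ of some $\LL_{+g}(C)$-term.

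\textbf{Core step.} Writing $\gamma=[t(X)]_Y$ for a suitable term $t(X)$, the hypothesis $n\gamma=[a]_Y$ becomes $[nt(X)]_Y=[a]_Y$, which by the definition of $\sim_Y$ is exactly the radical membership $nt(X)=a\in\Rad_A(Y)$. By definition of the radical this means $nt(P)=a$ holds in $A$ for \emph{every} point $P\in Y$. Since $Y$ is nonempty, I would simply choose any point $P\in Y$ and set $b:=t(P)$. Because $P\in Y\subseteq A^n$ and every constant symbol of $\LL_{+g}(C)$ is interpreted in $A$, the value $t(P)$ is an honest element of $A$, and the relation $nt(P)=a$ reads $nb=a$ in $A$. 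Thus $nx=a$ is consistent in $A$, which is precisely purity of $A$ in $\Gamma_A(Y)$.

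\textbf{What to check and where the (mild) difficulty lies.} This is essentially a one-point-evaluation argument rather than a homomorphism chase, so there is no serious obstacle; the care is entirely in the bookkeeping. The three points I would verify explicitly are: (i) that $a\in A$ really is represented by a constant term whose evaluation at any $P$ returns $a$ itself (this is where the Diophantine hypothesis is used, guaranteeing every element of $A$ is a constant and hence $\sim_Y$-equivalent to a value independent of $P$); (ii) that the nonemptiness of $Y$ is genuinely needed, since for $Y=\emptyset$ the coordinate algebra degenerates and $A$ is not even embedded; and (iii) that passing from the equation $n\gamma=a$ in $\Gamma_A(Y)$ to the radical relation $nt(X)=a\in\Rad_A(Y)$ uses only that $\Gamma_A(Y)$ lies in the variety of abelian groups, so that $n[t]_Y=[nt]_Y$. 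With these in place the proof is short and direct.
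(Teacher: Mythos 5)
Your proof is correct, but it follows a genuinely different route from the paper. You argue directly from the presentation $\Gamma_A(Y)=\Tcal(X)/\sim_Y$: given $\gamma=[t(X)]_Y$ with $n\gamma=[a]_Y$, the equality $[nt(X)]_Y=[a]_Y$ unwinds to $nt(X)=a\in\Rad_A(Y)$, i.e.\ $nt(P)=a$ at \emph{every} $P\in Y$, and a single point $P$ of the nonempty set $Y$ then hands you the witness $b=t(P)\in A$ with $nb=a$. The paper instead routes the claim through the Unifying theorem: for each equation $nx=a$ that is inconsistent in $A$ it writes down the (vacuously true in $A$) quasi-identity $\forall x\,(nx=a\to x=0)$, invokes $\Gamma_A(Y)\in\qvar(A)$ to transfer it, and concludes that $nx=a$ stays inconsistent in $\Gamma_A(Y)$. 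Your argument is more elementary and self-contained --- it needs only the definition of $\sim_Y$, the fact that $\Gamma_A(Y)$ is abelian so that $n[t]_Y=[nt]_Y$, and the Diophantine hypothesis to identify $a$ with a constant term evaluating to itself --- whereas the paper's argument deliberately exhibits purity as a quasi-identity-definable property; that pays off later, since the same formulas $\varphi_{\emptyset,n,a}$ are reused as part of the axiom set $\Sigma_{AA}$ in Theorem~\ref{th:coord_abelian_groups_const}, where one must recognize purity in an abstract group $B$ satisfying those quasi-identities rather than in a concretely presented coordinate group. Your three checkpoints (the role of the Diophantine hypothesis, the necessity of $Y\neq\emptyset$, and the compatibility of class arithmetic with multiples) are exactly the right things to verify, and all of them go through.
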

\begin{proof}
By Unifying theorem~\ref{th:unify_approx}, the $C$-group $\Gamma_A(Y)$ should satisfy all quasi-identities $\varphi$ of the language $\LL_{+g}(C)$ such that $\varphi$ holds in $A$. Since $A$ is Diophantine, one can explicitly use any element $a\in A$ in formulas. Let us consider the following quasi-identity for any equation $nx=a$ with $\V_A(nx=a)=\emptyset$
\begin{equation}
\varphi_{\emptyset,n,a}\colon \forall x\; (nx=a\to x=0).
\label{eq:quasi-identity_for_servant}
\end{equation}  
Let us show that the quasi-identity  $\varphi_{\emptyset,n,a}$ is true in $A$. Indeed, the left part of the implication in $\varphi_{\emptyset,n,a}$ does not hold for any $x\in A$ (since the equation $nx=a$ is inconsistent in $A$), therefore the whole implication is true in $A$.

By Theorem~\ref{th:unify_approx}, all formulas $\varphi_{\emptyset,n,a}$ should hold in  $\Gamma_A(Y)$, therefore any inconsistent over $A$ equation $nx=a$ remains inconsistent in $\Gamma_A(Y)$. Thus, $A$ is a pure subgroup of $\Gamma_A(Y)$.
\end{proof}

By Propositions~\ref{pr:servant_property_from_group_theory},~\ref{pr:coord_group_has_servant_A} we obtain the following statement about coordinate $C$-groups over an abelian Diophantine group $A$.

\begin{proposition}
Suppose an abelian $C$-group $\Gamma_A(Y)$ is the coordinate $C$-group of a nonempty algebraic set over a finitely generated abelian Diophantine group $A$. Then $\Gamma_A(Y)$ is isomorphic to a direct sum $A\oplus G^\pr$ for some finitely generated group $G^\pr$. 
\label{pr:coord_group_has_direct_summand_A}
\end{proposition}

Thus, we obtain the following theorem.

\begin{theorem}
Let $A,B$ be finitely generated abelian $C$-groups, and $A$ is Diophantine. Then the following conditions are equivalent:
\begin{enumerate}
\item $B$ is the coordinate $C$-group of a nonempty algebraic set over $A$; 
\item $B$ is a direct sum of groups from $Sub_\oplus(A)$ and $B=A\oplus B^\pr$ for some subgroup  $B^\pr\subseteq B$;
\item $B$ satisfies all quasi-identities $\Sigma_{AA}$, where $\Sigma_{AA}$ consists of the set $\Sigma_{A}$ (see Theorem~\ref{th:coord_abelian_groups}) and all quasi-identities of the form~(\ref{eq:quasi-identity_for_servant}) which hold in $A$.
\end{enumerate}
\label{th:coord_abelian_groups_const}
\end{theorem}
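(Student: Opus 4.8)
The plan is to establish the cycle $(1)\Rightarrow(3)\Rightarrow(2)\Rightarrow(1)$, which yields the equivalence of all three conditions. Throughout I may assume $B$ is nontrivial (if $B$ is trivial then $A$ is trivial and each condition holds vacuously), and I write $\eta\colon A\to B$ for the interpretation homomorphism of the constants; since $A$ is Diophantine its image $A_B=\eta(A)$ is the full constant subgroup of $B$. For $(1)\Rightarrow(3)$ I would apply Theorem~\ref{th:coord_then_qvar} in the language $\LL_{+g}(C)$: a coordinate $C$-group of a nonempty algebraic set over $A$ lies in $\qvar(A)$, hence satisfies every quasi-identity of $\LL_{+g}(C)$ that holds in $A$. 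Every member of $\Sigma_{AA}$ is such a quasi-identity --- the subset $\Sigma_A$ by its construction preceding Theorem~\ref{th:coord_abelian_groups_with_logic}, and the formulas~(\ref{eq:quasi-identity_for_servant}) by their very definition --- so $B$ satisfies all of $\Sigma_{AA}$.

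The substantive step is $(3)\Rightarrow(2)$. Since $\Sigma_A\subseteq\Sigma_{AA}$ is constant-free, $B$ regarded as an $\LL_{+g}$-algebra satisfies $\Sigma_A$, and Theorem~\ref{th:coord_abelian_groups_with_logic} immediately yields that $B$ is a direct sum of groups from $Sub_\oplus(A)$. It remains to split off the constants as $B=A_B\oplus B'$ with $A_B\cong A$. For this I would exploit the constant-bearing quasi-identities~(\ref{eq:quasi-identity_for_servant}) in two ways. The instances with $n=0$ read $\forall x\,(0=a\to x=0)$ for each $a\neq 0$ (the equation $0\cdot x=a$ being inconsistent over $A$); were $\eta(a)=0$ for some $a\neq 0$, the hypothesis of this implication would hold in the nontrivial $B$ and force $B$ trivial, so $\eta$ is injective and $A_B\cong A$. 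The instances with $n\geq 1$ reproduce the argument of Proposition~\ref{pr:coord_group_has_servant_A}: any equation $nx=a$ inconsistent over $A$ remains inconsistent over $B$, which is precisely the purity of $A_B$ in $B$. Proposition~\ref{pr:servant_property_from_group_theory} then converts purity into the desired direct decomposition $B=A_B\oplus B'$, completing $(2)$.

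For $(2)\Rightarrow(1)$ the goal is a \emph{$C$-group} embedding of $B$ into a direct power of $A$, after which Theorem~\ref{th:coord_iff_embeds_into_power} finishes (no Noetherian hypothesis is needed here). The delicate point is that a $C$-homomorphism must send each constant to the diagonal, so an arbitrary abelian embedding will not serve. By Krull--Schmidt the complement $B'$ in $B=A\oplus B'$ is again a direct sum of groups from $Sub_\oplus(A)$, so Theorem~\ref{th:coord_abelian_groups} furnishes an ordinary embedding $\iota=(\iota_1,\dots,\iota_m)\colon B'\hookrightarrow A^m$. I would then define $\psi\colon A\oplus B'\to A^{m+1}$ by $\psi(a+b')=(a,\,a+\iota_1(b'),\dots,a+\iota_m(b'))$. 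This is a homomorphism carrying each constant $a$ to the diagonal $(a,\dots,a)$, hence a $C$-homomorphism, and it is injective: the zeroth coordinate recovers $a$, and the remaining coordinates then recover $\iota(b')$, so $b'=0$ whenever $\psi(a+b')=0$.

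I expect the main obstacle to be $(3)\Rightarrow(2)$: squeezing both the faithfulness of the constant subgroup and its purity out of the single family~(\ref{eq:quasi-identity_for_servant}) is where the constants genuinely do work, and it is the one place where the argument does not reduce to the constant-free Theorem~\ref{th:coord_abelian_groups_with_logic}. The diagonal correction in the embedding $\psi$ of $(2)\Rightarrow(1)$ is the secondary subtlety that distinguishes algebraic geometry over $\LL_{+g}(C)$ from that over $\LL_{+g}$; the remaining routine matter is the Krull--Schmidt cancellation identifying the summands of $B'$ as lying in $Sub_\oplus(A)$.
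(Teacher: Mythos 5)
Your proof is correct and follows essentially the same route as the paper: $(1)\Rightarrow(3)$ by Theorem~\ref{th:coord_then_qvar}, $(3)\Rightarrow(2)$ by combining Theorem~\ref{th:coord_abelian_groups_with_logic} with purity of the constant subgroup and Proposition~\ref{pr:servant_property_from_group_theory}, and $(2)\Rightarrow(1)$ by the same constant-corrected homomorphisms $a+b'\mapsto a+\psi(b')$, merely packaged as a single $C$-embedding into $A^{m+1}$ (invoking Theorem~\ref{th:coord_iff_embeds_into_power}) rather than as a family of approximating maps (invoking Theorem~\ref{th:unify_approx}). Your additional check in $(3)\Rightarrow(2)$ that the interpretation of constants is injective on $A$ addresses a point the paper leaves implicit and is a welcome refinement rather than a deviation.
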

\begin{proof}
By Theorem~\ref{th:coord_then_qvar} we obtain the implication $(1)\Rightarrow(3)$.

Let us prove $(2)\Rightarrow(1)$. Following Unifying theorem~\ref{th:unify_approx}, it is sufficient to prove that $B$ is approximated by $A$. Let $b_1,b_2\in B$ be two distinct elements of $B$. We find a homomorphism $\phi\in\Hom(B,A)$ such that $\phi(b_1)\neq\phi(b_2)$ (recall that a homomorphism of  $C$-groups acts trivially on the subgroup of constants $A\subseteq B$). By the definition of a direct sum, the elements $b_i$ admit a unique presentation as sums $b_i=a_i+b_i^\pr$, $a_i\in A$, $b_i^\pr\in B^\pr$. If $a_1\neq a_2$ one can define $\phi$ as th projection onto $A$: $\phi(a+b^\pr)=a$. Let us assume now $a_1=a_2=a$. The group $B^\pr$ satisfies the conditions of Theorem~\ref{th:coord_abelian_groups},  therefore $B^\pr$ is approximated (as a group in the language $\LL_{+g}$) by  $A$. Thus, there exists a homomorphism $\psi\colon B^\pr\to A$ such that $\psi(b_1^\pr)\neq\psi(b_2^\pr)$, and the homomorphism of $C$-groups $\phi(a+b^\pr)=a+\psi(b^\pr)$ ($\phi$ is a homomorphism of abelian $C$-groups, since it acts trivially on the set $A\subseteq B$) has the property $\phi(b_1)\neq\phi(b_2)$.

Let us prove $(3)\Rightarrow(2)$. Since $B$ satisfies the formulas $\Sigma_A\subseteq\Sigma_{AA}$, Theorem~\ref{th:coord_abelian_groups_with_logic} states that $B$ is a direct sum of groups from the set $Sub_\oplus(A)$. The quasi-identities ~(\ref{eq:quasi-identity_for_servant}) from $\Sigma_{AA}$ provide the purity of $A$ in $B$, therefore $A$ is embedded into $B$ as a direct summand (Proposition~\ref{pr:servant_property_from_group_theory}).  
\end{proof}

\begin{corollary}
An abelian $C$-group $B$ is the coordinate $C$-group of a nonempty algebraic set over an abelian Diophantine $C$-group $A=\Zbb$ iff $B$ is isomorphic to a direct sum $A\oplus \Zbb^n$ for some $n\geq 0$.
\label{cor:about_Z_const}
\end{corollary}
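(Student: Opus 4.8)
The plan is to derive the corollary directly from Theorem~\ref{th:coord_abelian_groups_const} by specializing $A=\Zbb$ and computing the relevant data. First I would determine the set $Sub_\oplus(\Zbb)$. Since $\Zbb$ is indecomposable, its only nonzero direct summand is $\Zbb$ itself, and every nonzero subgroup of $\Zbb$ has the form $k\Zbb\cong\Zbb$ for some $k\geq 1$. Hence, up to isomorphism, $Sub_\oplus(\Zbb)=\{\Zbb\}$, so condition~(2) of Theorem~\ref{th:coord_abelian_groups_const} reads: $B$ is a finite direct sum of copies of $\Zbb$ (equivalently $B\cong\Zbb^m$ for some $m\geq 1$) and $B=A\oplus B^\pr$ for some subgroup $B^\pr\subseteq B$, where $A\cong\Zbb$ is the constant subgroup embedded via Proposition~\ref{pr:A_embedded_into_Gamma}.

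Next I would show that condition~(2) is equivalent to ``$B\cong A\oplus\Zbb^n$ for some $n\geq 0$''. For the forward implication, suppose $B\cong\Zbb^m$ and $B=A\oplus B^\pr$ with $A\cong\Zbb$. Then $B^\pr$ is a subgroup of a finitely generated free abelian group, hence torsion-free and finitely generated, so by the decomposition~(\ref{eq:abelian_group_presentation}) it is free: $B^\pr\cong\Zbb^n$. Comparing ranks across the direct sum $B=A\oplus B^\pr$ gives $m=1+n$, so in particular $n=m-1\geq 0$, and thus $B\cong A\oplus\Zbb^n$. Conversely, if $B\cong A\oplus\Zbb^n$ then $B\cong\Zbb^{n+1}$ is a finite direct sum of groups from $Sub_\oplus(\Zbb)$ and visibly has $A$ as a direct summand, so condition~(2) holds.

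Combining the two equivalences with Theorem~\ref{th:coord_abelian_groups_const} yields the claim. I expect the only point requiring care to be the passage from ``$A$ is a direct summand of the abstract group $B\cong\Zbb^m$'' to ``$B^\pr\cong\Zbb^n$'': one must invoke that subgroups of finitely generated free abelian groups are free and that rank is additive over direct sums, rather than merely matching abstract isomorphism types. Everything else is a routine reading of the cited theorem.
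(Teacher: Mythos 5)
Your proposal is correct and follows exactly the route the paper intends: the corollary is stated without proof as an immediate specialization of Theorem~\ref{th:coord_abelian_groups_const} to $A=\Zbb$, and your computation of $Sub_\oplus(\Zbb)=\{\Zbb\}$ together with the freeness of subgroups of finitely generated free abelian groups is the right way to fill in the details. The only (minor) point you leave implicit is that the finite generation of $B$ required by the theorem is automatic, since coordinate algebras are finitely generated by~(\ref{eq:coord_algebra_presentation}) and $A\oplus\Zbb^n$ is finitely generated.
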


Unlike the group $\Zbb$, the class of coordinate $C$-monoids over the Diophantine monoid of natural numbers $\N$ is not trivial (see~\cite{shevl_over_N_qvar} for the axiomatization of $\qvar(\N)$).

\subsection{Irreducible coordinate groups over abelian groups}

For irreducible coordinate $C$-groups over an abelian Diophantine $C$-group $A$ we have the following theorem.

\begin{theorem}
Let $A,B$ be a finitely generated abelian $C$-groups and $A$ is Diophantine. Then the following conditions are equivalent:
\begin{enumerate}
\item $B$ is the coordinate $C$-group of an irreducible algebraic set over $A$; 
\item $B=A\oplus B^\pr$, where
\[
B^\pr=\begin{cases}
\{0\}\mbox{ if  $A=T(A)$},\\
\Zbb^n\mbox{ if  $A$ contains $\Zbb$ as a direct summand}
\end{cases}
\]
\item $B$ satisfies all universal formulas of the set $\Sigma_{AA}\cup\Sigma_A^\pr$ (see Theorems~\ref{th:abelian_groups_irred_coord},~\ref{th:coord_abelian_groups_const}).
\end{enumerate}
\label{th:coord_abelian_groups_const_irred}
\end{theorem}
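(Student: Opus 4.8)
The plan is to establish the cycle of implications $(1)\Rightarrow(3)\Rightarrow(2)\Rightarrow(1)$, reusing the characterisations already proved for the non-irreducible and constant-free settings. Note first that no new appeal to the equationally Noetherian property is needed: I would use the discrimination criterion (Theorem~\ref{th:coord_iff_discr}) and the one-directional statement Theorem~\ref{th:coord_then_ucl}, neither of which requires it.

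For $(1)\Rightarrow(3)$ I would simply invoke Theorem~\ref{th:coord_then_ucl}. Every formula collected in $\Sigma_{AA}$ and in $\Sigma_A^\pr$ is a universal formula of $\LL_{+g}(C)$ that is true in $A$ by construction: the quasi-identities of $\Sigma_A$ and the purity formulas~(\ref{eq:quasi-identity_for_servant}) were selected to hold in $A$ (Proposition~\ref{pr:coord_group_has_servant_A}), and the formulas~(\ref{eq:n_elements_of_order_k}) only record bounds on the number of elements of each prime-power order in $A$. Hence an irreducible coordinate $C$-group $B$ lies in $\ucl(A)$ and satisfies all of them.

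For $(3)\Rightarrow(2)$ I would peel off the structure of $B$ in three stages. Since $\Sigma_A\subseteq\Sigma_{AA}$, Theorem~\ref{th:coord_abelian_groups_with_logic} (applied to the underlying constant-free group, as all these formulas are constant-free) shows $B$ is a finite direct sum of groups from $Sub_\oplus(A)$. The purity formulas~(\ref{eq:quasi-identity_for_servant}) force $A$ to be pure in $B$, so by Proposition~\ref{pr:servant_property_from_group_theory} I may write $B=A\oplus B^\pr$. Finally, the formulas~(\ref{eq:n_elements_of_order_k}) of $\Sigma_A^\pr$ bound the elements of each order $p^k$ in $B$ by those in $A$, which as in Theorem~\ref{th:abelian_groups_irred_coord} embeds $T(B)$ into $T(A)$; combined with $T(A)\subseteq T(B)$ (because $A$ is a summand) and finiteness this yields $T(B)=T(A)$, hence $T(B^\pr)=\{0\}$ and $B^\pr\cong\Zbb^s$. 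If $A=T(A)$ then $A$ has a finite period $d$, $\Sigma_A$ contains $\forall x\,(dx=0)$, $B$ is torsion, and the torsion-free summand must vanish, giving $B^\pr=\{0\}$; otherwise $A$ has a $\Zbb$-summand and $B^\pr\cong\Zbb^s$ is the asserted shape.

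For $(2)\Rightarrow(1)$ I would verify, via Theorem~\ref{th:coord_iff_discr}, that $B=A\oplus B^\pr$ is discriminated by the Diophantine $C$-group $A$. When $A=T(A)$ we have $B=A$ and the identity discriminates. Otherwise a $C$-homomorphism $\phi\colon B\to A$ must fix every constant, so it restricts to the identity on $A$ and has the form $\phi(a+b^\pr)=a+\psi(b^\pr)$ with $\psi\colon\Zbb^s\to\Zbb\subseteq A$ landing in a fixed $\Zbb$-summand. Given distinct $b_i=a_i+b_i^\pr$, I would take $\psi_\al(x_1,\dots,x_s)=x_1+\al x_2+\dots+\al^{s-1}x_s$. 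On the coordinates outside the chosen summand $\phi(b_i)$ agrees with $a_i$ and is independent of $\al$, so pairs differing there are separated automatically; for a pair agreeing there, $\phi(b_i)=\phi(b_j)$ reduces to one polynomial condition $\psi_\al(b_i^\pr-b_j^\pr)=c_{ij}$ in $\al$, which is never satisfied when $b_i^\pr=b_j^\pr$ (then $c_{ij}\neq0$) and otherwise has finitely many roots. Choosing $\al$ outside this finite forbidden set makes $\phi$ injective on $\{b_i\}$.

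The hard part will be the $(3)\Rightarrow(2)$ step, precisely the claim that the complementary summand $B^\pr$ is \emph{torsion-free}: the quasi-identities alone only produce a $Sub_\oplus(A)$-decomposition with $A$ split off, and I genuinely need the non-quasi-identity universal formulas~(\ref{eq:n_elements_of_order_k}) together with the two-sided containment $T(B)\hookrightarrow T(A)\subseteq T(B)$ to collapse $T(B^\pr)$. A second delicate point is checking, in $(2)\Rightarrow(1)$, that shifting by the constants $a_i$ never degenerates the separating condition into an identity — exactly the case $b_i^\pr=b_j^\pr$ with $a_i\neq a_j$ handled above.
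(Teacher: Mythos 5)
Your overall architecture --- $(1)\Rightarrow(3)$ by Theorem~\ref{th:coord_then_ucl}, $(3)\Rightarrow(2)$ by combining the constant-free classification with purity, and $(2)\Rightarrow(1)$ by exhibiting a one-parameter discriminating family of $C$-homomorphisms --- is the same as the paper's. Your $(3)\Rightarrow(2)$ is in fact argued in more detail than the original: the paper only says that $\Sigma_A\cup\Sigma_A^\pr$ ``gives the desired structure,'' whereas your two-sided containment $T(B)\hookrightarrow T(A)\subseteq T(B)$ together with finiteness, forcing $T(B^\pr)=\{0\}$, is exactly the point that needs to be made explicit.

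However, the step $(2)\Rightarrow(1)$ contains a genuine error. Your family $\psi_\al(x_1,\dots,x_s)=x_1+\al x_2+\dots+\al^{s-1}x_s$ has coefficient $1$ on $x_1$ independently of $\al$. Take $A=\Zbb$ (Diophantine), $B=A\oplus\Zbb$, $b_1=(0,1)$, $b_2=(1,0)$: here $b_1^\pr-b_2^\pr$ is supported only on the first coordinate, $c_{12}=a_2-a_1=1$, and the ``polynomial condition'' $\psi_\al(b_1^\pr-b_2^\pr)=c_{12}$ reads $1=1$ --- an identity in $\al$, not an equation with finitely many roots. Every map in your family sends both $b_1$ and $b_2$ to $1\in A$, so no choice of $\al$ separates them; your claim that the condition ``otherwise has finitely many roots'' fails exactly when $b_i^\pr-b_j^\pr$ lies on the first coordinate axis and the constant shift compensates, which is a case beyond the one ($b_i^\pr=b_j^\pr$) you flagged and handled. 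The implication itself is still true (in the example $\phi(a,z)=a+2z$ separates the two points), and the repair is easy: make every coefficient depend on the parameter, e.g.\ $\psi_\al(x)=\al x_1+\al^2x_2+\dots+\al^sx_s$, so that $\psi_\al(b_i^\pr-b_j^\pr)-c_{ij}$ is the zero polynomial in $\al$ only when $b_i^\pr=b_j^\pr$ and $c_{ij}=0$, i.e.\ only when $b_i=b_j$. This is in effect what the paper does: it first chooses $\zeta\colon\Zbb^n\to\Zbb$ injective on the distinct components $z_i$ (by Theorem~\ref{th:abelian_groups_irred_coord}) and then sets $\phi(b)=a^\pr+(a+d\cdot\zeta(z))$, so the entire image of the $B^\pr$-part carries the free scaling factor $d$ and each coincidence $\phi(b_i)=\phi(b_j)$ imposes at most one forbidden value of $d$.
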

\begin{proof}
Theorem~\ref{th:coord_then_ucl} immediately proves the implication $(1)\Rightarrow(3)$.

Let us prove $(2)\Rightarrow(1)$. By Unifying theorem~\ref{th:unify_discr}, it is sufficient to prove that $B$ is discriminated by the Diophantine $C$-group $A$. If $B^\pr=\{0\}$ then $B\cong A$ and $B$ is obviously discriminated by $A$. Let $B=A\oplus \Zbb^n$ and $A=\Zbb\oplus A^\pr$. Therefore any element of  $B$ is written as $b=a^\pr+a+z$, where $a^\pr\in A^\pr$, $a\in\Zbb\subseteq A$, $z\in\Zbb^n$. Let us consider a set of elements $b_i\in B$ given by sums $b_i=a_i^\pr+a_i+z_i$ ($1\leq i\leq k$).

According to Theorem~\ref{th:abelian_groups_irred_coord}, for any finite set of elements $z_1,z_2,\ldots,z_k\in \Zbb^n$ there exists a group homomorphism $\zeta\colon \Zbb^n\to \Zbb$ with $\zeta(z_i)\neq\zeta(z_j)$ for $z_i\neq z_j$. Let us consider a map $\phi\colon B\to A$, $\phi(b)=\phi(a^\pr+a+z)=a^\pr+(a+d\cdot\zeta(z))$ (the brackets contain an elements of the subgroup $\Zbb\subseteq A$), where the integer $d$ will be defined below. It is clear that $\phi$ is a homomorphism of $C$-groups, since it preserves the operations $+,-$ and it acts trivially on $A\subseteq B$. Let us choose $d$ which makes $\phi$ injective on the set $\{b_i\}$. If the equality $\phi(z_i)=\phi(z_j)$ holds there appears a single constraint for $d$:
\[
a_i^\pr +a_i+d\zeta(z_i)=a_i^\pr+a_j+d\zeta(z_j)\Rightarrow d=\frac{a_i-a_j}{\zeta(z_j)-\zeta(z_i)}.
\]
Iterating all possible pairs $b_i,b_j$, we obtain at most finite number of constraints for $d$. Therefore there exists an integer value of $d$ such that $\phi$ is injective on $\{b_i\}$. Thus, $B$ is discriminated by $A$.

Let us prove $(3)\Rightarrow(2)$. The quasi-identities of the form~(\ref{eq:quasi-identity_for_servant}) from $\Sigma_{AA}$ provides the purity of the group $A$ in $B$. By Proposition~\ref{pr:servant_property_from_group_theory}, $A$ is embedded into $B$ as direct summand. According the quasi-identities $\Sigma_A\cup\Sigma_A^\pr$ and Theorem~\ref{th:coord_then_qvar}, we obtain the desired structure of $B$. 
\end{proof} 

\begin{corollary}
A finitely generated $C$-group $B$ is the coordinate $C$-group of an irreducible algebraic set over the Diophantine $C$-group $A=\Zbb$ iff $B\cong A\oplus\Zbb^n$ for some $n\geq 0$. Applying Corollary~\ref{cor:about_Z_const}, we obtain that the abelian Diophantine $C$-group $\Zbb$ is a co-domain in the language $\LL_{+g}(C)$.
\label{cor:about_Z_const_irred}
\end{corollary}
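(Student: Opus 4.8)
The plan is to read off the first equivalence as the special case $A=\Zbb$ of Theorem~\ref{th:coord_abelian_groups_const_irred}, and then to deduce the co-domain assertion by juxtaposing the resulting class with the class of all (not necessarily irreducible) coordinate $C$-groups supplied by Corollary~\ref{cor:about_Z_const}.

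For the first statement I would observe that $\Zbb$ is torsion-free, so its torsion $T(\Zbb)=\{0\}$ is proper and we are \emph{not} in the case $A=T(A)$ of Theorem~\ref{th:coord_abelian_groups_const_irred}; rather, $A=\Zbb$ trivially contains $\Zbb$ as a direct summand. Hence condition~(2) of that theorem forces $B^\pr\cong\Zbb^n$ for some $n\geq 0$, i.e. $B\cong A\oplus\Zbb^n$. The equivalence $(1)\Leftrightarrow(2)$ of Theorem~\ref{th:coord_abelian_groups_const_irred} then states precisely that $B$ is the coordinate $C$-group of an irreducible algebraic set over $\Zbb$ if and only if $B\cong A\oplus\Zbb^n$, which is the first claim.

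For the co-domain assertion I would compare this description with Corollary~\ref{cor:about_Z_const}, which says that the coordinate $C$-groups of \emph{nonempty} algebraic sets over $\Zbb$ are exactly the groups $A\oplus\Zbb^n$ as well. Thus the class of coordinate $C$-groups of nonempty algebraic sets over $\Zbb$ coincides with the class of coordinate $C$-groups of irreducible algebraic sets over $\Zbb$. By Theorems~\ref{th:coord_iff_approx} and~\ref{th:coord_iff_discr}, this coincidence is exactly the statement that, for every finitely generated abelian $C$-group $B$, approximation by $\Zbb$ is equivalent to discrimination by $\Zbb$; by the co-domain criterion (Theorem~\ref{th:co-domain_criterion}) this means $\Zbb$ is a co-domain in the language $\LL_{+g}(C)$.

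The only point that needs care — and the nearest thing to an obstacle — is the quantifier in Theorem~\ref{th:co-domain_criterion}, which ranges over \emph{all} finitely generated $\LL_{+g}(C)$-algebras $\Bcal$, whereas the two characterizations above are phrased for abelian $C$-groups. This is harmless: by Theorem~\ref{th:coord_iff_embeds_into_power} any $\Bcal$ approximated by $\Zbb$ embeds into a direct power of $\Zbb$ and is therefore itself an abelian $C$-group, and discrimination is a stronger condition of the same kind. Hence Corollary~\ref{cor:about_Z_const} and the first part of the present corollary apply to every relevant $\Bcal$, the two classes genuinely coincide, and Theorem~\ref{th:co-domain_criterion} applies verbatim.
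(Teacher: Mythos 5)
Your proposal is correct and follows essentially the same route as the paper: the first equivalence is the specialization of Theorem~\ref{th:coord_abelian_groups_const_irred} to $A=\Zbb$, and the co-domain claim comes from matching this class against the class of all coordinate $C$-groups from Corollary~\ref{cor:about_Z_const} and invoking the approximation/discrimination criteria. Your extra remark that any finitely generated $\LL_{+g}(C)$-algebra approximated by $\Zbb$ is automatically an abelian $C$-group (via embedding into a direct power) is a detail the paper leaves implicit, and it correctly closes the quantifier gap in applying Theorem~\ref{th:co-domain_criterion}.
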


Unlike $\Zbb$, the Diophantine $C$-monoid of natural numbers $\N$ has a complicated set of irreducible $C$-monoids.

\begin{theorem}\textup{\cite{shevl_over_N_irred}}
Let $\N$ be the Diophantine $C$-monoid of natural numbers. Consider a finitely generated $C$-monoid $M$ of the language $\LL_{+m}$ such that the set of homomorphisms of $C$-monoids $\Hom(M,\N)$ is nonempty. The $C$-monoid $M$ is the coordinate $C$-monoid of an irreducible algebraic set over $\N$ iff the constant submonoid of $M$ is isomorphic to $\N$, and for any $a\in \N$ $M$ satisfies the following universal formula of the language $\LL_{+m}(C)$ 
\[
\varphi_{a}\colon\; \forall x\forall y\; (x+y=a\to\bigvee_{i=0}^a(x=i)).
\]
The sense of the formula $\varphi_a$ is the following: if a sum of elements $x,y\in M$ belongs to the submonoid $\N\subseteq M$, then both summands $x,y$ belong to $\N\subseteq M$. Equivalently, 
\[
x,y\in M\setminus\N\Rightarrow x+y\in M\setminus\N.
\]
\end{theorem}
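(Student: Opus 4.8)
The plan is to route everything through the constant-language version of the discrimination criterion. First I would record that the Diophantine $C$-monoid $\N$ is equationally Noetherian in the language $\LL_{+m}(C)$: it embeds as a sub-$C$-algebra of the Diophantine $C$-group $\Zbb$, which remains equationally Noetherian by the remarks of Section~\ref{sec:noeth_const}, and subalgebras inherit the property. Consequently Theorems~\ref{th:coord_iff_discr},~\ref{th:coord_then_ucl},~\ref{th:ucl_then_coord} (equivalently the Unifying Theorem~\ref{th:unify_discr}) apply verbatim over $\LL_{+m}(C)$, so the target equivalence reduces to the single claim
\[
M \text{ is the coordinate $C$-monoid of an irreducible set over } \N \iff M\in\ucl(\N),
\]
and it remains to show that, under the standing hypotheses, membership in $\ucl(\N)$ is captured exactly by the two explicit conditions.

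For the ``only if'' direction I would argue as follows. An irreducible algebraic set is nonempty, so Proposition~\ref{pr:A_embedded_into_Gamma} forces the constant submonoid of $M\cong\Gamma_\N(Y)$ to be isomorphic to $\N$, which gives the first condition. For the second, I would check that each $\varphi_a$ is a universal formula of $\LL_{+m}(C)$ true in $\N$ --- in $\N$ the premise $x+y=a$ yields $x\le a$, hence $x\in\{0,1,\dots,a\}$ --- and then invoke Theorem~\ref{th:coord_then_ucl} to conclude $M\in\ucl(\N)$, so in particular $M\models\varphi_a$.

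The substance lies in the ``if'' direction, where I would show directly that $M$ is discriminated by $\N$ and appeal to Theorem~\ref{th:coord_iff_discr}. The case $a=0$ of $\varphi_a$ is exactly positiveness, and I would use the full family $\{\varphi_a\}$ to establish that the constant submonoid $\N\subseteq M$ is a \emph{face}: every sum landing in $\N$ has both summands in $\N$, so $M\setminus\N$ is a prime ideal. This face decomposition lets me treat the constant part and the transcendental part separately. On the transcendental directions I would lean on the constant-free theory: since $\N$ is a co-domain (Section~\ref{sec:co-domains}) and its irreducible coordinate monoids are precisely the cancellative positive ones, embeddable into a power of $\N$ (Theorem~\ref{th:about_N_coord_irred_logic}, Proposition~\ref{pr:irred_coord_groups_over_N}), the behaviour off the face is discriminated in $\N^n$. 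Finally, given finitely many distinct $m_1,\dots,m_k\in M$, I would assemble a single $C$-homomorphism $\phi\colon M\to\N$ separating them by combining a separating homomorphism on the transcendental part with a forbidden-value genericity argument fixing the constant shifts --- the same mechanism as in the proof of Theorem~\ref{th:coord_abelian_groups_const_irred}, where only finitely many integer parameters are excluded.

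I expect the main obstacle to be precisely this last gluing step in the ``if'' direction: one must verify that the two explicit conditions, together with $\Hom(M,\N)\neq\emptyset$ and finite generation, genuinely force $M$ into the cancellative, face-split shape on which the $\N^n$-discrimination argument runs, so that no universal formula of $\LL_{+m}(C)$ true in $\N$ can fail in $M$. In other words, the delicate point is confirming that the countable family $\{\varphi_a\}$ plus the constant-submonoid condition axiomatizes $\ucl(\N)$ rather than a proper superclass; the interplay between cancellation and the face property for elements differing only by a constant shift is exactly what must be controlled.
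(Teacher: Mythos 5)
Your proposal is correct and follows essentially the same route as the paper: the ``only if'' direction via Theorem~\ref{th:coord_then_ucl} and Proposition~\ref{pr:A_embedded_into_Gamma}, and the ``if'' direction by proving discrimination directly --- quotienting $M$ by the constant submonoid (your ``face'' decomposition is exactly the paper's relation $m\sim m^\pr\Leftrightarrow m+a_1=m^\pr+a_2$), discriminating the quotient by $\N$ via Theorem~\ref{th:about_N_coord_irred_logic}, and gluing with a map of the form $\phi=\psi+d\,\zeta(\chi(\cdot))$ where only finitely many values of $d$ are forbidden. The obstacle you flag (that $\{\varphi_a\}$ must force the quotient to be positive and cancellative) is precisely the step the paper disposes of with a brief ``it is easy to check,'' so your assessment of where the real work lies is accurate.
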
 
\begin{proof}
The ``only if'' part of the theorem follows from Theorem~\ref{th:coord_then_ucl}, since the formulas $\varphi_a$ obviously hold in $\N$. Let us prove the ``if'' part. By Unifying theorem~\ref{th:unify_discr}, it is sufficient to prove that for any finite set of distinct elements  $m_1,m_2,\ldots,m_n\in M$ there is a homomorphism $\phi\in\Hom(M,\N)$ injective on $\{m_i\}$.

Since $\N\subseteq M$, one can define an equivalence relation on $M$ as follows:
\[
m\sim m^\pr\Leftrightarrow \exists a_1,a_2\in\N\colon\; m+a_1=m^\pr+a_2.
\]

Let $M^\pr=M/\sim$ be the factor-monoid (the zero of $M^\pr$ is the equivalence class $[0]=\N\subseteq M$). Let $\chi\colon M\to M^\pr$ denote the canonical homomorphism between $M$ and $M^\pr$.

It is easy to check that the monoid $M^\pr$ of the language $\LL_{+m}$ is positive and has the cancellation property (the positiveness of $M^\pr$ follows from the implication $x,y\in M\setminus\N\Rightarrow x+y\notin \N$ and $[x]+[y]\neq [0]$). By Theorem~\ref{th:about_N_coord_irred_logic}, $M^\pr$ is discriminated (as a monoid of the language $\LL_m$) by the monoid $\N$. 

Therefore, there exists a monoid homomorphism $\zeta\colon M^\pr\to \N$ such that $\zeta(\chi(m_i))\neq \zeta(\chi(m_j))$ if $\chi(m_i)\neq\chi(m_j)$. Let us consider a map $\phi(m)=\psi(m)+d\zeta(\chi(m))$, where $\psi$ is an arbitrary element of the nonempty set $\Hom(M,\N)\neq\emptyset$ and a number $d$ will be defined below.

We have exactly two cases.

1) Let us assume that $\chi(m_i)\neq\chi(m_j)$ for any $i\neq j$.
Since $M$ is positive, $M$ has not a torsion and the equality
\[
\phi(m_i)=\psi(m_i)+d\zeta(\chi_A(m_i))=\psi(m_j)+d\zeta(\chi_A(m_j))=\phi(m_j).
\]
holds only for a unique $d$. Since the set $\{m_i\}$ is finite there exists a number $d$ such that $\phi$ is injective on $m_1,\ldots,m_k$.

2) If $\chi(m_i)=\chi(m_j)$ some $i\neq j$ then
$m_i+a=m_j+a^\prime$ for $a,a^\prime\in \N$, $a\neq a^\pr$. It is directly checked that the equality $\phi(m_i)=\phi(m_j)$ implies $\phi(a)=\phi(a^\prime)$. The last equality is impossible, since $\phi$ is a homomorphism of $C$-monoids, and $\phi$ acts trivially on the set of constants.

Thus, $\phi$ has the property $\phi(m_i)\neq\phi(m_j)$ ($i\neq j$), and the $C$-monoid $\N$ discriminates  $M$. 
\end{proof}

\section{Equational domains}
\label{sec:equational_domains}
Recall that in Section~\ref{sec:co-domains} we studied co-domains. A co-domain is an $\LL$-algebra $\Acal$, where any nonempty algebraic set is irreducible. In the current section we consider $\LL$-algebras where \textit{any} finite union of algebraic sets remains algebraic.

Let us give the main definition, following~\cite{uni_Th_IV}. An $\LL$-algebra $\Acal$ is called an {\it equational domain (e.d.)} if any finite union  $Y_1\cup Y_2\cup\ldots\cup Y_n$ of algebraic sets over $\Acal$ is an algebraic set. Let us give examples which show the properties of this definition.

\begin{enumerate}
\item Obviously, one can simplify the definition of e.d. a follows: an $\LL$-algebra $\Acal$ is called an {\it equational domain (e.d.)} if any union of two algebraic sets  $Y_1,Y_2$ over $\Acal$ is an algebraic set.

\item It is easy to see that {\it the trivial  $\LL$-algebra  $\Acal$ (i.e.  $|\Acal|=1$) is an e.d}. The proof of this fact immediately follows from  $|\Acal^n|=1$ and there is a unique algebraic set in the affine space  $\Acal^n$.
 
\item {\it Any nontrivial group $G$ of the language $\LL_g$ is not an e.d.} To  prove this fact we consider the union of the following algebraic sets over $G$
\[
M=\{(x,y)\in G^2\mid x=1\mbox{ or }y=1 \}=\V_\Acal(x=1)\cup\V_\Acal(y=1).
\]
Let us assume that $M$ is the solution set of some system $\Ss(x,y)$, and $w(x,y)=1$ be an arbitrary equation from $\Ss(x,y)$, where 
\[
w(x,y)=x^{n_1}y^{m_1}x^{n_2}y^{m_2}\ldots x^{n_k}y^{m_k},\; m_i,n_i\in\Zbb.
\]

Since any point $(1,y)\in M$ should satisfy the equation  $w(x,y)=1$, we obtain that the following identity holds in $G$
\[
y^{m_1+m_2+\ldots+m_k}=1
\] 
Similarly, the points $(x,1)\in M$ gives the truth of the identity
\[
x^{n_1+n_2+\ldots+n_k}=1.
\] 
in $G$.

Since $G$ is nontrivial, there exists an element $g\in G\setminus\{1\}$. Computing $w(g,g)=1$, we obtain
\[
w(g,g)=g^{n_1}g^{m_1}g^{n_2}g^{m_2}\ldots g^{n_k}g^{m_k}=(g^{n_1+n_2+\ldots+n_k})(g^{m_1+m_2+\ldots+m_k})=1\cdot 1=1,
\]
i.e. $w(x,y)=1$ is satisfied by $(g,g)$. Since we arbitrarily chose the equation $w(x,y)=1\in\Ss$, we obtain $(g,g)\in\V_G(\Ss)$ that contradicts $(g,g)\notin M=\V_G(\Ss)$.

\item In~\cite{shevl_domains_without_const} it was proved that  {\it any nontrivial semigroup of the language $\LL_s$ is not an e.d.} 

\item Let $\LL_r=\lb +^{(2)},-^{(2)},\cdot^{(2)},0\rb$ be the language of ring\Wiki theory. An arbitrary field\Wiki $F$ is an $\LL_r$-algebra with natural interpretations of the functional symbols. Applying the axioms of field theory, one can show that any equation in variables $X=\{x_1,x_2,\ldots,x_n\}$ is equivalent over $F$ to an expression of the form
\begin{equation}
\label{eq:field_equation}
\sum_i x_1^{m_{1i}}x_2^{m_{1i}}\ldots x_n^{m_{1n}}=0.
\end{equation}

Let us prove that {\it a field $F$ (as an $\LL_r$-algebra) is an e.d. } The absence of {\it zero-divisors} plays a key role in this proof. Recall that elements $a,b\neq 0$ are called zero-divisors if $ab=0$.

Consider a union $Y=Y_1\cup Y_2$ of algebraic sets $Y_1,Y_2$ over $F$. There exist systems $\Ss_1=\{t_i(X)=0\mid i\in I\}$,  $\Ss_2=\{s_j(X)=0\mid j\in J\}$ with the solution sets $Y_1,Y_2$ respectively. The direct check gives that the solution set of the system
\[
\Ss=\{t_i(X)s_j(X)=0\mid i\in I,j\in J\}
\]
equals $Y$. Thus, $Y$ is an algebraic set. 

The obtained result can be applied to a wide class of $\LL_r$-algebras. Precisely, {\it any commutative associative ring (as an $\LL_r$-algebra) with no zero-divisors is an e.d}.

\end{enumerate}

The main result of this section is the following theorem.

\begin{theorem}
An $\LL$-algebra $\Acal$ is an e.d. iff the set of points
\begin{equation}
\label{eq:domain_main_set}
M=\{(x_1,x_2,x_3,x_4)\in\Acal^4\mid x_1=x_2\mbox{ or }x_3=x_4 \}=\V_\Acal(x_1=x_2)\cup\V_\Acal(x_3=x_4)
\end{equation}
is algebraic over $\Acal$.
\label{th:domain_about_main_set}
\end{theorem}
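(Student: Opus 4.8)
The plan is to dispose of the ``only if'' direction at once and to put all the weight on the converse. If $\Acal$ is an e.d., then the set $M$ in~(\ref{eq:domain_main_set}) is by its very definition the union of the two algebraic sets $\V_\Acal(x_1=x_2)$ and $\V_\Acal(x_3=x_4)$, and hence is algebraic. So only the ``if'' direction requires work.

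For that direction I would assume $M=\V_\Acal(\Dcal)$ for some $\LL$-system $\Dcal$ in the variables $x_1,x_2,x_3,x_4$. By the first remark following the definition of an e.d., it suffices to prove that the union $Y_1\cup Y_2$ of any two algebraic sets $Y_1=\V_\Acal(\Ss_1)$, $Y_2=\V_\Acal(\Ss_2)$ in a common space $\Acal^n$ is algebraic. The central device is a substitution trick: given four $\LL$-terms $t(X),s(X),p(X),q(X)$ in the variables $X=\{x_1,\ldots,x_n\}$, I form the system $\Dcal^\pr$ obtained from $\Dcal$ by replacing $x_1,x_2,x_3,x_4$ by $t,s,p,q$ respectively. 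A point $P\in\Acal^n$ then satisfies $\Dcal^\pr$ exactly when $(t(P),s(P),p(P),q(P))\in M$, i.e.\ exactly when $t(P)=s(P)$ or $p(P)=q(P)$. Consequently
\[
\V_\Acal(t=s)\cup\V_\Acal(p=q)=\V_\Acal(\Dcal^\pr)
\]
is algebraic. This is the only place the hypothesis on $M$ enters, and the main conceptual point is precisely recognizing that the single four-variable set $M$ encodes the disjunction of two \emph{arbitrary} equalities through substitution of terms.

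To finish, I would write each $Y_i$ as an intersection of one-equation solution sets, namely $\V_\Acal(\Ss_1)=\bigcap_{(t=s)\in\Ss_1}\V_\Acal(t=s)$ and likewise for $\Ss_2$, and then apply the distributive identity $(\bigcap_i A_i)\cup(\bigcap_j B_j)=\bigcap_{i,j}(A_i\cup B_j)$ to obtain
\[
Y_1\cup Y_2=\bigcap_{\substack{(t=s)\in\Ss_1\\(p=q)\in\Ss_2}}\bigl(\V_\Acal(t=s)\cup\V_\Acal(p=q)\bigr).
\]
Each set appearing in this intersection is algebraic by the substitution trick, and an arbitrary intersection of algebraic sets is again algebraic (Section~\ref{sec:algebraic_sets_and_radicals}); hence $Y_1\cup Y_2$ is algebraic, as required. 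I expect no serious computational obstacle: the only items demanding care are the verification of the distributive identity and the observation that the possibly infinite systems $\Ss_1,\Ss_2$ cause no difficulty, since intersections of arbitrarily many algebraic sets remain algebraic.
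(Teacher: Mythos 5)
Your proposal is correct and follows essentially the same route as the paper's proof: the substitution of arbitrary terms $t,s,p,q$ for $x_1,x_2,x_3,x_4$ in the system defining $M$ to handle the union of two single-equation solution sets, followed by the distributive law and the closure of algebraic sets under arbitrary intersections. No gaps.
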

\begin{proof}
The ``only if'' part of the theorem obviously follows from the definition of an e.d. Let us prove the ``if'' part of the statement. Suppose $M$ is the solution set of a system $\Ss(x_1,x_2,x_3,x_4)$. 

Firstly, we show that the union of the solution sets of two equations $t_1(X)=t_2(X)$, $t_3(X)=t_4(X)$ is algebraic over $\Acal$. Indeed, changing all variables $x_i$ in $\Ss(x_1,x_2,x_3,x_4)$ to the terms $t_i(X)$, we obtain
\[
\V_\Acal(t_1(X)=t_2(X))\cup\V_\Acal(t_3(X)=t_4(X))=\V_\Acal(\Ss(t_1(X),t_2(X),t_3(X),t_4(X))),
\]
and therefore the union of the solution set of two equations is an algebraic set over $\Acal$.

Let us consider the union of two algebraic sets $Y_1,Y_2$ given by systems of equations $\Ss_1=\{t_i(X)=s_i(X)\mid i\in I\}$,  $\Ss_2=\{p_j(X)=r_j(X)\mid j\in J\}$. In other words,
\[
Y_1=\bigcap_{i\in I}\V_\Acal(t_i(X)=s_i(X)),\;Y_2=\bigcap_{j\in J}\V_\Acal(p_j(X)=r_j(X)).
\]
By the distributive law of set intersection, we obtain
\[
Y=\bigcap_{i\in I,j\in J}(\V_\Acal(t_i(X)=s_i(X))\cup\V_\Acal(p_j(X)=r_j(X)))=\bigcap_{i\in I,j\in J}\Ss(t_i(X),s_i(X),p_j(X),r_j(X)),
\] 
i.e. $Y$ is algebraic.
\end{proof}

The statement of Theorem~\ref{th:domain_about_main_set} gives the following observation. \textit{Extending a language $\LL$ by new constants symbols $C$, we enrich the class of algebraic sets. Thus, it is possible that the set $M$~(\ref{eq:domain_main_set}) is not algebraic over a group (semigroup)  $\Acal$, but $M$ becomes algebraic over the Diophantine $C$-group ($C$-semigroup) $\Acal$. In other words, the class of e.d. in the languages $\LL_g(C)$ (respectively, $\LL_s(C)$) is wider than in the languages $\LL_g$ (respectively, $\LL_s$), and the following examples demonstrate this effect}.

\begin{enumerate}
\item In~\cite{uni_Th_IV} it was proved the following criterion for e.d. in the class of Diophantine $C$-groups.

\begin{theorem}
A Diophantine $C$-group $G$ is an e.d. iff there is not a pair of elements $x,y\in G\setminus\{1\}$ such that $[x,y^g]=1$ for every $g\in G$ (the denotation $y^g$ equals $g^{-1}yg$).
\label{th:domains_for_groups}
\end{theorem}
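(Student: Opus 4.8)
The plan is to reduce the statement, via Theorem~\ref{th:domain_about_main_set}, to a concrete question about a two-variable set and then settle that question by analysing the radical. Since $G$ is a group, every $\LL_g(C)$-equation is equivalent to one of the form $w=1$, and the set $M$ of~(\ref{eq:domain_main_set}) is algebraic if and only if the two-variable set
\[
N=\V_G(u=1)\cup\V_G(v=1)=\{(a,b)\in G^2\mid a=1\text{ or }b=1\}
\]
is algebraic. Indeed, if $N=\V_G(\Ss(u,v))$ then $M=\V_G(\Ss(x_1x_2\1,x_3x_4\1))$, and conversely substituting the constant $1$ for $x_2$ and $x_4$ turns a system defining $M$ into one defining $N$. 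Thus by Theorem~\ref{th:domain_about_main_set} it suffices to decide when $N$ is algebraic, i.e. when $N=\V_G(\Rad_G(N))$.

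First I would pin down $\V_G(\Rad_G(N))$. For every constant $g\in G$ the word $[u,v^g]$ lies in $\Rad_G(N)$: it vanishes whenever $u=1$ (then $[1,v^g]=1$) and whenever $v=1$ (then $v^g=1$, so $[u,1]=1$). Hence $\V_G(\Rad_G(N))\subseteq\{(a,b)\mid [a,b^g]=1\text{ for all }g\in G\}$. The crux is the opposite inclusion, which I isolate as the main lemma: \emph{if $a$ commutes with every conjugate of $b$, then $w(a,b)=1$ for every $w\in\Rad_G(N)$.} Granting this, $\V_G(\Rad_G(N))$ equals the set of pairs $(a,b)$ with $[a,b^g]=1$ for all $g$; this set always contains $N$, so $N$ is algebraic exactly when it contains no extra pair, i.e. exactly when there is no $a,b\in G\setminus\{1\}$ with $[a,b^g]=1$ for all $g$ — which is the assertion of the theorem.

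To prove the lemma I would exploit the commuting hypothesis to separate the two variables. Write a radical word as $w=C_0s_1C_1\cdots s_kC_k$ with $s_i\in\{u^{\pm1},v^{\pm1}\}$ and $C_i\in G$, and conjugate each syllable to the left so that $w=\big(\prod_i d_is_id_i\1\big)D$, where $d_i=C_0\cdots C_{i-1}$ and $D=C_0\cdots C_k$. After the substitution $u=x$, $v=y$ each factor is a conjugate of $x^{\pm1}$ or of $y^{\pm1}$. The key observation is that \emph{any conjugate of $x$ commutes with any conjugate of $y$}: $x^p$ and $y^q$ commute iff $x$ commutes with $y^{qp\1}$, which holds by hypothesis. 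Therefore the factors may be sorted, collecting the $x$-syllables (in their original order) into a product $W_x$ and the $y$-syllables into $W_y$, giving $w(x,y)=W_xW_yD$. Specialising the same bookkeeping to $v=1$ and to $u=1$ gives $w(x,1)=W_xD$ and $w(1,y)=W_yD$, while $w(1,1)=D$. Because $w\in\Rad_G(N)$ all three of $w(x,1)$, $w(1,y)$, $w(1,1)$ vanish, so $W_x=W_y=D\1$ and $D=1$; hence $w(x,y)=W_xW_yD=D\1 D\1 D=D\1=1$. The main obstacle is exactly this lemma: making the constant-collection rewriting precise and verifying that the hypothesis ``$x$ commutes with all conjugates of $y$'' is strong enough to license the sort (an earlier attempt using only $w(x,1)=w(1,y)=1$, without the full ``for all $u_0$, for all $v_0$'' strength of the radical, already fails over $\Zbb$). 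Once the lemma is in place the theorem follows immediately from Theorem~\ref{th:domain_about_main_set} as above.
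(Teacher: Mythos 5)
The paper itself does not prove this theorem --- it is quoted from~\cite{uni_Th_IV} without proof --- so there is no in-text argument to compare against; judged on its own, your proof is correct. The reduction of the four-variable set $M$ of Theorem~\ref{th:domain_about_main_set} to the two-variable set $N=\V_G(u=1)\cup\V_G(v=1)$ is the standard group simplification and works in both directions (substitute $u=x_1x_2\1$, $v=x_3x_4\1$ one way, the constant $1$ for $x_2,x_4$ the other); the membership $[u,v^g]\in\Rad_G(N)$ gives the ``if'' direction; and your lemma supplies the ``only if'' direction. The lemma itself is sound: writing $w\in F(u,v)\ast G$ in the normal form $\bigl(\prod_i d_i s_i d_i\1\bigr)D$, the hypothesis that $a$ commutes with every conjugate of $b$ does license sorting the syllables while preserving the relative order within each letter, so that $W_a$ and $W_b$ coincide with the products appearing in the specializations, yielding the identity $w(a,b)=w(a,1)\,w(1,1)\1\,w(1,b)$ and hence $w(a,b)=1$ on the radical. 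It is worth noting that the paper does contain the constant-free special case of exactly this computation (Section~\ref{sec:equational_domains}, the proof that no nontrivial group of the language $\LL_g$ is an e.d., where $w(g,g)=g^{\sum n_i}g^{\sum m_i}$); your lemma is the correct generalization of that calculation to the Diophantine setting, with ``$a$ commutes with all conjugates of $b$'' replacing global commutativity as the licence to separate the variables. The only points requiring care --- that the Diophantine hypothesis is what makes $[u,v^g]=1$ a legitimate $\LL_g(C)$-equation for every $g\in G$, and that the sorted blocks agree with the specializations $w(a,1)$, $w(1,b)$ --- are both handled.
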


According to Theorem~\ref{th:domains_for_groups}, the following Diophantine $C$-groups are e.d. in the language $\LL_g(C)$.
\begin{enumerate}
\item {\it The free group $FG_n$ of rank $n>1$ is an e.d.} Let us prove this fact. Indeed, if assume the existence of elements $x,y\neq 1$ such that $[x,y^g]=1$ for all $g\in FG_n$ we obtain for $g=1$ that $x$ commutes with $y$. By the properties of the free groups, there exists $w\in FG_n$ with $x=w^k$, $y^g=w^l$ for some $k,l\in\Zbb$. However it is easy to find an element $h\in FG_n$ such that $y^h$ is not a power of $w$, and the equality $[x,y^{h}]=1$ fails.

\item {\it Any simple\Wiki non-abelian Diophantine $C$-group $G$ is an e.d.}. Let us assume the existence of elements $x,y\in G\setminus\{1\}$ such that $[x,y^g]=1$ for all $g\in G$. By the assumption, the set 
\[
G_x=\{y\mid [x,y^g]=1\mbox{ for all }g\in G\}
\]
is nonempty. If $y\in G_x$ then for $y^{-1}$ it holds
\[
[x,(y^{-1})^g]=[x,(y^g)^{-1}]=[y^g,x]^{(y^g)^{-1}}=1^{(y^g)^{-1}}=1
\] 
(we use the commutator identity $[a,b^{-1}]=[b,a]^{b^{-1}}$).
Thus, $y^{-1}\in G_x$.

If $y,z\in G_x$ then the identity $[a,cb]=[a,b][a,c]^b$ gives
\[
[x,(yz)^g]=[x,(y^gz^g)]=[x,z^g][x,y^g]^{z^g}=1\cdot 1=1.
\]
Thus, $yz\in G_x$ and we proved that $G_x$ is a subgroup in $G$. Let us show that $G_x$ is a normal\Wiki subgroup. If $z\in G$, $y\in G_x$ then 
\[
[x,(z^{-1}yz)^g]=[x,y^{zg}]=1,
\] 
and we obtain $z^{-1}yz\in G_x$. Thus, $G_x$ is a normal subgroup.

Since $G$ is simple, $G_x=G$. Therefore for $g=1$ we obtain that for any $z\in G$ it holds $[x,z]=1$. Thus $x$ belongs to the center\Wiki $Z$ of $G$. Since $Z$ is always a normal subgroup, we obtain $G=Z$. Thus, $G$ is abelian, a contradiction.  

\end{enumerate} 

\item The examples of e.d. in the class of groups allow us to define an e.d. in the class of semigroups of the language $\LL_{s}(C)$. Let us consider a finite Diophantine $C$-group $G$ such that $G$ is an e.d. in the language $\LL_{g}(C)$ (for example, one can choose any finite simple non-abelian group). According to Theorem~\ref{th:domain_about_main_set}, the set $M$ is the solution set of a system $\Ss$ of $\LL_{g}(C)$-equations. If all equations of $\Ss$ do not contain the operation of the inversion $\Ss$ becomes a system of $\LL_{s}(C)$-equations, and $G$ is an e.d. as a $C$-semigroup of the language $\LL_{s}(C)$. If $\Ss$  contains occurrences of ${}^{-1}$ one can replace all inversions to a sufficiently large positiver power as follows 
\[
x^{-1}=x^{|G|-1},
\] 
and $G$ becomes an e.d. as $C$-semigroup of the language $\LL_s(C)$.
\end{enumerate}  

It follows from Theorem~\ref{th:domains_for_groups} that the following classes of Diophantine $C$-groups are not e.d. in the language $\LL_{g}(C)$:
\begin{enumerate}
\item abelian groups;
\item groups with a nontrivial center (for example, nilpotent\Wiki groups)
\end{enumerate}

For semigroups of the language $\LL_{s}(C)$ we have the following results.

\begin{enumerate}
\item The free semigroup $FS_n$ and the free monoid $FM_n$ are not e.d. in the languages $\LL_{s}(C)$ $\LL_m(C)$ respectively (see the proof in~\cite{uni_Th_IV}).  

\item {\it Any nontrivial Diophantine $C$-semigroup $S$ with a zero\Wiki is not an e.d. in the language $\LL_{s}(C)$.} Let us prove this fact and consider the union of two algebraic sets $M_2=\{(x,y)\mid x=a\mbox{ or } y=a\}$, where $a$ is a nonzero element of $S$. Let us assume that $M_2$ is algebraic, i.e. there exists a system of  $\LL_{s}(C)$-equations $\Ss(x,y)$ with the solution set $M_2$. Since the point $(0,0)$ does not belong to $M_2$, there exists an equation in $\Ss$ which is not satisfied by $(0,0)$. It is clear that such equation should have the form $t(x,y)=c$, where $c$ is a nonzero constant. If the left part of $t(x,y)=c$ contain the variable $x$ (respectively, $y$) it is not satisfied by the point $(0,a)\in M_2$ (respectively,  $(a,0)\in M_2$). Thus, we obtained the contradiction with the choice of $\Ss$.

\end{enumerate}

\subsection{Comments}

The notion of an e.d. is the generalization of domains in commutative algebra (a domain is a commutative associative ring with no zero-divisors), since any domain is an e.d. in the ring language $\LL_r=\{+,-,\cdot,0\}$ (see above). Besides commutative rings and groups, in~\cite{uni_Th_IV} it was described e.d. in the classes of Lie algebras, anti-commutative algebras and associative algebras.

The first examples of e.d. in groups were the free non-abelian groups (it was initially proved by G.~Gurevich, see the proof in~\cite{makanin}) and finite non-abelian simile groups (see the proof above). 

We studied e.d. in various classes of semigroups. Above we mentioned about e.d. in the constant-free language $\LL_s$ (see~\cite{shevl_domains_without_const} for more details). Moreover, we described e.d. in the classes of completely simple~\cite{shevl_domains_c_s_finite}, inverse~\cite{shevl_domains_inverse} and completely regular~\cite{shevl_domains_clifford} semigroups. The classes of inverse and completely regular semigroups do not contain proper e.d. In other words, we proved the following theorems.

\begin{theorem}\textup{(\cite{shevl_domains_clifford})}
If a Diophantine completely regular $C$-semigroup $S$ of the language $\LL_s(C)$ is an e.d. then $S$ is completely simple.
\end{theorem}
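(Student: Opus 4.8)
The plan is to argue by contraposition: assuming that $S$ is completely regular but \emph{not} completely simple, I will show that $S$ is not an equational domain by exhibiting a single finite union of algebraic sets that fails to be algebraic. Since being an equational domain means that \emph{every} such union is algebraic, one counterexample suffices, and Theorem~\ref{th:domain_about_main_set} will serve as the canonical test set to benchmark the construction against. The structural input is the standard decomposition of a completely regular semigroup as a semilattice $Y$ of completely simple semigroups $\{S_\gamma\mid\gamma\in Y\}$, together with the structure homomorphism $\sigma\colon S\to Y$. The hypothesis that $S$ is not completely simple is exactly the statement $|Y|>1$, equivalently that $S$ possesses a non-primitive idempotent.

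From this I would first extract a pair of comparable idempotents. Taking a non-primitive idempotent $e$ and passing to the local submonoid $eSe$ (which has identity $e$ and is again completely regular), I realise $f$ as the identity of a maximal subgroup inside $eSe$ meeting a strictly lower $\sigma$-component; this produces idempotents $e\neq f$ with $ef=fe=f$, so that $\{e,f\}$ is a copy of the two-element semilattice whose zero is $f$. The idea is then to use $f$ as a distinguished absorbing constant and to imitate the proof that a nontrivial Diophantine $C$-semigroup with a zero is not an equational domain: one takes the union $\V_S(x=e)\cup\V_S(y=e)$, observes that a point built from $f$ lies outside it, and tries to force some defining equation to collapse to an equality between a genuine constant and an element of the ideal $\sigma^{-1}(\{\gamma\in Y\mid\gamma\le\sigma(f)\})$, which is impossible.

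A clean boundary case occurs when the minimal ideal (kernel) of $S$ is a single element: then $S$ literally has a zero, it is nontrivial and Diophantine, and the cited result that such semigroups are never equational domains finishes the argument immediately. The general case, where no global zero is available, is where the remaining work — and the main difficulty — concentrates.

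The hard part will be the presence of constants. Because $S$ is Diophantine, \emph{every} element of $S$ occurs as a constant of $\LL_s(C)$, so a defining equation may freely mix variables with constants drawn from components having nothing to do with $e$ and $f$. In the ``semigroup with a zero'' argument the whole force comes from the fact that the global zero annihilates every word containing a variable, constants included; here $f$ is only a \emph{relative} zero (it absorbs within $\{e,f\}$ and pushes products into the lower ideal, but $fc\neq f$ for a general constant $c$), so the neutralisation of constant subterms has to be localised. Concretely, I expect to confine all evaluations to a principal factor of a $\sigma$-component, or to $eSe$, and then to check that a union which is algebraic over $S$ remains algebraic after this localisation; preserving algebraicity under the localisation is the delicate point, since one cannot simply descend along $\sigma$ to the semilattice quotient — the equational-domain property is \emph{not} inherited by homomorphic images (for instance $FG_2$ is an equational domain while its image $\Zbb^2$ is not). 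Showing that the comparable idempotents $e>f$ genuinely obstruct algebraicity of the union \emph{inside} $S$, in the teeth of arbitrary constants, is the crux of the whole proof.
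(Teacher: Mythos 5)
There is a genuine gap: your text is a plan that stops precisely where the proof has to begin. (Note that the survey itself only quotes this theorem from~\cite{shevl_domains_clifford} without proof, so the comparison here is against what a complete argument would require.) The sound parts are the reduction: non-complete-simplicity of a completely regular semigroup does give two comparable idempotents $e\neq f$ with $ef=fe=f$, and the degenerate case where the kernel is a single element (a genuine zero) is indeed killed by the cited result on Diophantine $C$-semigroups with zero. But in the general case you never actually exhibit a union of two algebraic sets and prove it is not algebraic. You propose the candidate $\V_S(x=e)\cup\V_S(y=e)$ (or the canonical set $M$ of Theorem~\ref{th:domain_about_main_set}) and then correctly observe that the "zero" argument breaks because $f$ is only a relative zero -- $fc\neq f$ for a general constant $c$ -- so an equation $t(x,y)=s(x,y)$ of a hypothetical defining system cannot be collapsed by substituting $f$. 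Having identified this obstruction, you do not overcome it; you only name two possible escape routes (localising to $eSe$, or to a principal factor) and concede that the key step, preservation of algebraicity under the localisation, is unproved. That step is not a routine verification: as you yourself note, the e.d.\ property does not pass to homomorphic images, and it is equally unclear that the trace on $(eSe)^4$ of a set algebraic over $S$ is algebraic over $eSe$ in a way you could exploit, since the defining equations involve constants from all of $S$ and $eSe$ is not closed under multiplication by them. So the contrapositive is asserted but not established.

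A workable repair would have to engage with the equations directly rather than through a quotient or a subalgebra. For instance, one can try to evaluate a putative defining system for $M$ at the four points obtained by letting $(x_1,x_2,x_3,x_4)$ range over $\{(e,e,e,f),(f,f,e,f),(e,f,e,e),(e,f,f,f)\}\subseteq M$ and at $(e,f,e,f)\notin M$, and then use that $e$ acts as a two-sided identity and $f$ as a two-sided zero on the subsemilattice $\{e,f\}$, together with multiplication of both sides of each equation by suitable idempotents, to force the value of every term at $(e,f,e,f)$ to coincide with its value at one of the four points in $M$ -- a contradiction. Carrying this out requires controlling how the constants sitting inside the terms interact with $e$ and $f$ (e.g.\ by passing to the images $ece$, $fcf$ of the constants and tracking which $\sigma$-component each subword lands in), and that bookkeeping is exactly the content your proposal defers. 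Until it is done, the claim is not proved.
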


\begin{theorem}\textup{(\cite{shevl_domains_inverse}}
If a Diophantine inverse $C$-semigroup $S$ of the language $\LL_{s-inv}(C)=\{\cdot,^{-1}\}\cup\{c_i\mid i\in I\}$ (all inverse semigroups admit the operation of the inversion) is an e.d. then $S$ is a group.
\label{th:domains_in_inverse_semigroups}
\end{theorem}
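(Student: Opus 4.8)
The plan is to prove the contrapositive: if an inverse Diophantine $C$-semigroup $S$ is not a group, then $S$ is not an e.d. By Theorem~\ref{th:domain_about_main_set} it suffices to exhibit a union of two algebraic sets over $S$ that fails to be algebraic; equivalently, to produce a point lying outside the union but satisfying every equation that holds on the whole union (a set $Y$ is algebraic exactly when it coincides with the solution set of its radical). Throughout I use that $S$ is Diophantine, so every element of $S$ — in particular every idempotent — is a product of constant symbols and may therefore occur as a constant inside equations.

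First I would pin down the idempotent structure forced by the hypothesis. An inverse semigroup is a group precisely when it has a single idempotent, so a non-group $S$ has at least two. Since idempotents of an inverse semigroup commute and form a semilattice under $e\wedge f = ef$, from two distinct idempotents $e_0\neq f_0$ I get $e_0 f_0 \le e_0$ and $e_0 f_0 \le f_0$ with $e_0 f_0$ differing from at least one of them; relabelling, this yields two comparable idempotents $e<f$ with $ef=fe=e$ and $e\neq f$. These relations, together with the fact that idempotents are self-inverse, are the only structural inputs available.

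The crux is to choose the reducible set and the ``bad'' point. A natural choice is the two-variable union $M_2=\V_S(x=f)\cup\V_S(y=f)$ (legitimate since $f$ is a constant) together with the candidate point $(e,e)$, which lies outside $M_2$ because $e\neq f$. Assuming $M_2=\V_S(\Ss)$, some equation $t(x,y)=s(x,y)$ of $\Ss$ must fail at $(e,e)$, while it holds on both lines $x=f$ and $y=f$; in particular $t(f,e)=s(f,e)$, $t(e,f)=s(e,f)$ and $t(f,f)=s(f,f)$. The goal is to combine these with the relations $ef=fe=e$ to force $t(e,e)=s(e,e)$ and reach a contradiction. To do this I would first normalise the inverse-semigroup words $t,s$: using $(uv)^{-1}=v^{-1}u^{-1}$, self-inversion of idempotents, and the local identities $aa^{-1},a^{-1}a$, rewrite each as a reduced product of letters $x^{\pm1},y^{\pm1}$ and constants, and then track how replacing an occurrence of $f$ by $e=ef$ propagates through the product via commutativity of idempotents.

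The hard part will be exactly this word analysis. In the group case (Theorem~\ref{th:domains_for_groups}) every equation collapses to the form $w=\one$ and the two lines decouple through independent exponent sums, which is precisely what makes non-abelian free groups equational domains. Over an inverse semigroup no such collapse is available: equations are genuinely two-sided words mixing variables, their inverses and arbitrary constants, and substituting an idempotent neither annihilates nor cancels cleanly. The resolution must lean on the structural theory of inverse semigroups — the semilattice of idempotents, the natural partial order, and Vagner--Preston-type normal forms — to show that the disjunction ``$x=f$ or $y=f$'' cannot be captured exactly by equations unless $S$ has a single idempotent. Finally I would dispose of the easy sub-case separately: if $S$ happens to possess a zero, then $M_2$ is already handled by the earlier observation that a nontrivial Diophantine $C$-semigroup with a zero is not an e.d.; the genuinely new content is the zero-free case treated above.
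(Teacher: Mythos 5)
The survey does not actually prove this theorem --- it is quoted from the external paper \cite{shevl_domains_inverse} --- so the only question is whether your argument stands on its own, and it does not: the core of the proof is missing. Your setup is sensible (pass to the contrapositive, note that a non-group inverse semigroup has two distinct idempotents and hence a strictly comparable pair $e<f$ with $ef=fe=e$, and try to exhibit a non-algebraic union of two algebraic sets), but the entire mathematical content lies in the step you defer: showing that every equation $t(x,y)=s(x,y)$ valid on $\V_S(x=f)\cup\V_S(y=f)$ is also valid at $(e,e)$. You describe this as ``the hard part'' and gesture at normal forms and the structural theory of inverse semigroups without carrying out any of the word analysis. That analysis is precisely what the cited paper supplies, and it is not routine: unlike the zero case (where every term containing a variable collapses to $0$ at $(0,0)$, which is why the paper's argument for semigroups with zero is three lines), in a zero-free inverse semigroup substituting $e$ into a mixed word of variables, inverses and constants produces no such collapse, and the claim that the value at $(e,e)$ is forced by the values at $(f,\cdot)$, $(\cdot,f)$ and $(f,f)$ is exactly what needs proof.

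There is a second, related gap: your choices of the reducible set $M_2$ and of the witness point $(e,e)$ are commitments that are never justified. It is conceivable a priori that for some non-group inverse $S$ there is an equation in $\Rad_S(M_2)$ that fails at $(e,e)$ (in which case $(e,e)$ is simply not a witness and you would need a different point, or a different union, or the four-variable set $M$ of Theorem~\ref{th:domain_about_main_set} together with Rosenblat-type arguments as in Theorem~\ref{th:rosenblat}). Nothing in the proposal rules this out. As written, the argument establishes only the easy reductions (existence of $e<f$; the zero sub-case, which the paper already handles) and leaves the theorem itself unproved.
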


Remark that Theorem~\ref{th:domains_in_inverse_semigroups} generalizes the following result of ~\cite{rosenblat}.

\begin{theorem}\textup{\cite{rosenblat}}
If a set $M$~(\ref{eq:domain_main_set}) is the solution set of a single equation of the language  $\LL_{s-inv}(C)=\{\cdot,^{-1}\}\cup\{c_i\mid i\in I\}$ over an inverse semigroup $S$ then $S$ is a group.
\label{th:rosenblat}
\end{theorem}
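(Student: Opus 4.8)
The plan is to work directly with the single equation and show that $S$ has only one idempotent, which for an inverse semigroup is equivalent to being a group. Write the hypothesised equation as $w(x_1,x_2,x_3,x_4)=v(x_1,x_2,x_3,x_4)$, so that $M=\V_S(w=v)$ with $M$ as in~(\ref{eq:domain_main_set}). Reading off the three defining features of $M$, I first record: (A) $w(a,a,c,d)=v(a,a,c,d)$ for all $a,c,d\in S$, since every point with $x_1=x_2$ lies in $M$; (B) the symmetric identity $w(a,b,c,c)=v(a,b,c,c)$ for all $a,b,c$; and (C) $w(a,b,c,d)\neq v(a,b,c,d)$ whenever $a\neq b$ and $c\neq d$, since such points are exactly the ones missing from $M$. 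Equivalently, fixing any off-diagonal pair $(c,d)$ with $c\neq d$, the two-variable equation $w(x_1,x_2,c,d)=v(x_1,x_2,c,d)$ cuts out exactly the diagonal of $S^2$, and symmetrically in the other pair.

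Recall that an inverse semigroup is a group precisely when it has a unique idempotent, equivalently when every idempotent is a two-sided identity. I would therefore argue by contradiction: if $S$ is not a group, there is an idempotent $e$ that is not a two-sided identity, so $te\neq t$ (or $et\neq t$) for some $t\in S$. The aim is to manufacture from $e$ a single point $(a,b,c,d)$ lying off both diagonals --- that is, with $a\neq b$ and $c\neq d$ --- on which $w$ and $v$ nevertheless take equal values, contradicting (C). The guiding idea is that multiplying a coordinate by the idempotent $e$ changes its value ($t\mapsto te$) but, once the diagonal identities (A),(B) have been exploited, should not be detectable by the words $w,v$ at a suitably chosen point; collapsing an off-diagonal pair onto a value already governed by (A) then forces the equality $w=v$ at a genuinely off-diagonal point.

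The main obstacle is exactly this last step: proving that substitution by $e$ leaves the word values unchanged requires controlling where each variable occurs inside $w$ and $v$. I would make this precise through the Vagner--Preston representation, embedding $S$ into the symmetric inverse monoid $I_S$, in which every idempotent becomes a partial identity $\mathrm{id}_A$ and ``multiplication by $e$'' is simply restriction of domain; the effect of the substitution on a word then reduces to a transparent statement about the domains of partial bijections, trackable by induction on the length of $w$. (An alternative to the representation is a direct induction on word length, bookkeeping idempotent prefixes and suffixes.) Pushing this through yields the contradiction with (C), so no non-identity idempotent exists and $S$ is a group.

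Finally, I note a much shorter route, legitimate once the survey's machinery is granted. The set $M$ in~(\ref{eq:domain_main_set}) is by hypothesis algebraic, being the solution set of the single equation $w=v$; Theorem~\ref{th:domain_about_main_set} then says that $S$ is an equational domain, and Theorem~\ref{th:domains_in_inverse_semigroups} says that an equational domain which is an inverse semigroup must be a group. This proves Theorem~\ref{th:rosenblat} at once, and displays it as the special case of Theorem~\ref{th:domains_in_inverse_semigroups} remarked upon before the statement. The merit of the direct argument above is that it sidesteps the full strength of those results and isolates where the inverse (rather than merely regular) structure is used, namely in the idempotent-absorption step.
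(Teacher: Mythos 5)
The survey gives no proof of Theorem~\ref{th:rosenblat}: the result is quoted from \cite{rosenblat} and immediately subsumed by Theorem~\ref{th:domains_in_inverse_semigroups}, so there is no in-paper argument to compare yours against. Your ``shorter route'' is exactly that subsumption read forwards: a single equation with solution set $M$ makes $M$ algebraic, Theorem~\ref{th:domain_about_main_set} then makes $S$ an equational domain, and Theorem~\ref{th:domains_in_inverse_semigroups} forces $S$ to be a group. This deduction is formally correct, but it derives the special case from a strictly stronger theorem that the survey also states without proof, so it cannot count as an independent proof of Rosenblat's result.

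The direct argument is where an actual proof would have to live, and there it has a genuine gap. Your extraction of (A), (B), (C) from the definition of $M$ is correct, as is the reduction to showing that $S$ has a unique idempotent. But the entire content of the theorem is the step you defer: actually producing a point $(a,b,c,d)$ with $a\neq b$ and $c\neq d$ at which $w$ and $v$ nonetheless agree. The claim that multiplying a coordinate by the idempotent $e$ ``should not be detectable by the words $w,v$ at a suitably chosen point'' is a hope, not an argument: the words may contain constants and inversions, the perturbed variable may occur many times in both $w$ and $v$ interleaved with the others, and an idempotent inserted at one occurrence propagates through the word as a domain restriction that can perfectly well change the value of one side and not the other. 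Compare the survey's own proof that a nontrivial group is not an e.d.\ in the constant-free language $\LL_g$: even in that much easier setting one must first collect exponent-sum identities from the two diagonals before the off-diagonal point $(g,g)$ can be exhibited. Until you carry out the Vagner--Preston (or word-induction) bookkeeping and exhibit the offending point, the contradiction with (C) is not established and the proof is incomplete.
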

 
The class of completely simple semigroups contains proper e.d. which are not groups. One can see such results in our papers~\cite{shevl_domains_c_s_finite} (for finite completely simple semigroups of the language $\LL_s(S)$) and~\cite{shevl_domains_c_s} (for arbitrary completely simple semigroups of the language $\LL_{s-inv}(C)=\{\cdot,^{-1}\}\cup\{c_i\mid i\in I\}$).  In~\cite{shevl_domains_finite} we describe all finite e.d. in the semigroup language $\LL_{s}(C)$.

In conclusion we remark that there is not any analogue of Theorem~\ref{th:domains_for_groups} for semigroups. In particular, we do not know e.d. among semigroups with the infinite minimal two-sided ideal and among semigroups with infinite descending chain of two-sided ideals.

\section{Types of equational compactness}
\label{sec:compactness_classes}
Actually, one can prove Unifying theorem~\ref{th:unify_approx},~\ref{th:unify_discr} for a wide class of $\LL$-algebras, not merely for equational Noetherian ones. Thus, in~\cite{uni_Th_III} there were defined the following generalizations of equationally Noetherian algebras.

An $\LL$-algebra $\Acal$ is {\it $\qq$-compact} if for any $\LL$-system $\Ss$ and an $\LL$-equation $t(X)=s(X)$ such that
\[
\V_\Acal(\Ss)\subseteq \V_\Acal(t(X)=s(X))
\]  
there exists a finite subsystem $\Ss^\pr\subseteq\Ss$ with
\[
\V_\Acal(\Ss^\pr)\subseteq \V_\Acal(t(X)=s(X)).
\]

An $\LL$-algebra $\Acal$ is {\it $\uu$-compact} if for any system $\Ss$ and $\LL$-equations $\{t_i(X)=s_i(X)\mid 1\leq i\leq m\}$ such that
\[
\V_\Acal(\Ss)\subseteq \bigcup_{i=1}^m\V_\Acal(t_i(X)=s_i(X))
\]  
there exists a finite subsystem $\Ss^\pr\subseteq\Ss$ with
\[
\V_\Acal(\Ss^\pr)\subseteq \bigcup_{i=1}^m\V_\Acal(t_i(X)=s_i(X)).s
\]  
Let $\Nbf,\Qbf,\Ubf$ denote the classes of equationally Noetherian, $\qq$-compact and $\uu$-compact $\LL$-algebras, respectively. By the definitions, it follows $\Nbf\subseteq \Ubf\subseteq \Qbf$. The importance of the classes $\Qbf,\Ubf$ follows from the following statement. 

\begin{theorem}\textup{\cite{uni_Th_III}}.
The statement of Theorem~\ref{th:unify_approx} (Theorem~\ref{th:unify_discr}) holds for an $\LL$-algebra $\Acal$ iff $\Acal$ is $\qq$-compact (respectively, $\uu$-compact). 
\label{unif_theorems_holds_iff_compactness}
\end{theorem}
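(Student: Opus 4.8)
The plan is to locate, in each Unifying theorem, the single implication that genuinely needs the compactness hypothesis, and to show that this implication holds for every finitely generated $\Bcal$ exactly when $\Acal$ has the corresponding compactness property. I will carry out Theorem~\ref{th:unify_approx} together with $\qq$-compactness in full; the case of Theorem~\ref{th:unify_discr} and $\uu$-compactness is parallel, with single equations replaced by finite disjunctions, quasi-identities replaced by the universal formulas of Proposition~\ref{pr:ucl_aux}, and approximation / coordinate algebras replaced by discrimination / irreducible coordinate algebras. The opening observation is that the equivalences linking the geometric conditions (coordinate algebra, approximation, embedding into a direct power) were proved for an \emph{arbitrary} $\LL$-algebra in Theorems~\ref{th:coord_iff_approx},~\ref{th:coord_iff_embeds_into_power}, and that $(\text{coordinate})\Rightarrow(\Bcal\in\qvar(\Acal))$ holds in general by Theorem~\ref{th:coord_then_qvar}. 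Hence the only implication that can fail for a non-compact $\Acal$ is $\Bcal\in\qvar(\Acal)\Rightarrow\Bcal\text{ is approximated by }\Acal$, and the theorem reduces to showing that this one implication holds for all finitely generated $\Bcal$ iff $\Acal$ is $\qq$-compact.

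For the ``if'' direction I would re-run the proof of Theorem~\ref{th:qvar_then_coord} with $\qq$-compactness in place of the equationally Noetherian property. Given $\Bcal\in\qvar(\Acal)$ with presentation $\lb b_1,\ldots,b_n\mid R\rb$ and assuming $\Bcal$ is not approximated, Theorem~\ref{th:coord_iff_approx} yields terms $t,t^\pr$ with $t(\bar b)\neq t^\pr(\bar b)$ in $\Bcal$ yet $\phi(t(\bar b))=\phi(t^\pr(\bar b))$ for every $\phi\in\Hom(\Bcal,\Acal)$; since $\Hom(\Bcal,\Acal)$ is exactly $\V_\Acal(R)$, this says $\V_\Acal(R)\subseteq\V_\Acal(t=t^\pr)$. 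Now $\qq$-compactness supplies a finite $R^\pr\subseteq R$ with $\V_\Acal(R^\pr)\subseteq\V_\Acal(t=t^\pr)$, so the quasi-identity $\forall\bar x\,(\bigwedge_{e\in R^\pr}e\to(t=t^\pr))$ holds in $\Acal$; as $\Bcal\in\qvar(\Acal)$ it must hold in $\Bcal$, forcing $t(\bar b)=t^\pr(\bar b)$, a contradiction. Thus $\qq$-compactness is precisely the device that turns ``$t=t^\pr$ is a consequence of the infinite system $R$'' into a finitely premised quasi-identity.

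For the ``only if'' direction I argue by contraposition. If $\Acal$ is not $\qq$-compact, fix a witnessing system $\Ss$ in variables $X$ and an equation $t=s$ with $t=s\in\Rad_\Acal(\Ss)$ but $t=s\notin\Rad_\Acal(\Ss^\pr)$ for every finite $\Ss^\pr\subseteq\Ss$. I form the finitary radical $\rho=\bigcup_{\Ss^\pr}\Rad_\Acal(\Ss^\pr)$, the union over all finite $\Ss^\pr\subseteq\Ss$; this is a congruence on $\Tcal_\LL(X)$ because the $\Rad_\Acal(\Ss^\pr)$ form an increasing directed family by~(\ref{eq:about_radical_2}), and I set $\Bcal=\Tcal_\LL(X)/\rho$, which is finitely generated by the classes of $X$. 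Three facts are needed: (i) $[t]_\rho\neq[s]_\rho$, immediate since $t=s\notin\rho$ by the choice of $\Ss$; (ii) every $\phi\in\Hom(\Bcal,\Acal)$ comes from a point of $\V_\Acal(\Ss)$ and so identifies $t$ and $s$, because $\Ss\subseteq\rho\subseteq\Rad_\Acal(\Ss)$ gives $\V_\Acal(\rho)=\V_\Acal(\Ss)\subseteq\V_\Acal(t=s)$; and (iii) $\Bcal\in\qvar(\Acal)$. By (i) and (ii), $\Bcal$ is not approximated by $\Acal$, hence (Theorem~\ref{th:coord_iff_approx}) is not the coordinate algebra of any nonempty algebraic set over $\Acal$, while lying in $\qvar(\Acal)$ by (iii); so the equivalence in Theorem~\ref{th:unify_approx} fails for $\Acal$.

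The main obstacle is item (iii), and it is exactly where the finitary nature of $\rho$ is used. To check $\Bcal\in\qvar(\Acal)$ I take a quasi-identity $\forall\bar y\,(\bigwedge_j p_j=q_j\to p=q)$ true in $\Acal$ and elements $[\bar r]$ of $\Bcal$ satisfying all premises; each premise puts $(p_j(\bar r),q_j(\bar r))$ into some $\Rad_\Acal(\Ss^\pr_j)$, and the finite union $\Ss^{\pr\pr}=\bigcup_j\Ss^\pr_j$ gives $\V_\Acal(\Ss^{\pr\pr})\subseteq\V_\Acal(p_j(\bar r)=q_j(\bar r))$ for all $j$; evaluating the quasi-identity at each point of $\V_\Acal(\Ss^{\pr\pr})$ then forces $\V_\Acal(\Ss^{\pr\pr})\subseteq\V_\Acal(p(\bar r)=q(\bar r))$, i.e. $(p(\bar r),q(\bar r))\in\Rad_\Acal(\Ss^{\pr\pr})\subseteq\rho$, so the conclusion holds in $\Bcal$. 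For the discrimination version the analogous obstacle is more delicate, since the object built from finite disjunctions is no longer a single congruence; there one instead exhibits a finite tuple of terms that no homomorphism separates, using the witnessing disjunction $\bigvee_j(t_j=s_j)$ and Proposition~\ref{pr:ucl_aux}, and otherwise mirrors the arguments above with $\ucl$ and Theorems~\ref{th:coord_iff_discr},~\ref{th:coord_then_ucl},~\ref{th:ucl_then_coord} in place of their approximation counterparts.
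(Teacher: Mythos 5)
The paper does not actually prove this theorem: it is stated with a citation to \cite{uni_Th_III} and no argument is given, so there is no in-paper proof to compare against. Judged on its own terms, your treatment of the $\qq$-compact half is sound, at least for conditions (1)--(4) of Theorem~\ref{th:unify_approx}: in the ``if'' direction you correctly replace the Noetherian step of Theorem~\ref{th:qvar_then_coord} by the containment $\V_\Acal(R^\pr)\subseteq\V_\Acal(t=t^\pr)$ that $\qq$-compactness supplies (which is in fact cleaner than demanding $R^\pr\sim_\Acal R$), and in the ``only if'' direction the finitary radical $\rho=\bigcup\Rad_\Acal(\Ss^\pr)$ is a directed union of congruences, $[t]_\rho\neq[s]_\rho$, every homomorphism of $\Tcal_\LL(X)/\rho$ into $\Acal$ factors through a point of $\V_\Acal(\Ss)$, and your verification that $\Tcal_\LL(X)/\rho\in\qvar(\Acal)$ is exactly right. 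You do, however, silently discard conditions (5)--(7) of each Unifying theorem (pre-varieties, limit algebras, atomic types, ultrapowers, existential theories); since the statement quantifies over all seven conditions, that restriction should at least be made explicit.

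The genuine gap is in the $\uu$-compact half, precisely at the point you defer with ``otherwise mirrors the arguments above.'' Take $\Bcal=\Tcal_\LL(X)/\rho$ built from a witness $\V_\Acal(\Ss)\subseteq\bigcup_{i=1}^m\V_\Acal(t_i=s_i)$ to the failure of $\uu$-compactness. One can check that each $[t_i]_\rho\neq[s_i]_\rho$ (otherwise some finite subsystem would already land inside a single disjunct, hence inside the union) and that no homomorphism into $\Acal$ is injective on $\{[t_i]_\rho,[s_i]_\rho\}$, so $\Bcal$ is not discriminated. But your step (iii) does not transfer: for a universal formula $\forall\bar y\,(\bigwedge_j p_j=q_j\to\bigvee_l p^\pr_l=q^\pr_l)$ true in $\Acal$ whose premises hold at a tuple $[\bar r]$ of $\Bcal$, you only obtain a finite $\Ss^{\pr\pr}$ with $\V_\Acal(\Ss^{\pr\pr})\subseteq\bigcup_l\V_\Acal(p^\pr_l(\bar r)=q^\pr_l(\bar r))$, and a covering of $\V_\Acal(\Ss^{\pr\pr})$ by finitely many equation sets does not place any single equation $p^\pr_l(\bar r)=q^\pr_l(\bar r)$ into $\Rad_\Acal$ of a finite subsystem, hence yields no disjunct that is true in $\Bcal$. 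So $\Bcal\in\ucl(\Acal)$ is not established, and without it the constructed algebra does not witness the failure of Theorem~\ref{th:unify_discr}. Closing this is exactly where the argument must pass through the machinery you set aside (limit algebras, ultrapowers), and your sketch does not do so.
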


Using the definition below, one can obtain a necessary condition for an $\LL$-algebra to be $\qq$- or $\uu$-compact.

An $\LL$-system $\Ss$ over an $\LL$-algebra $\Acal$ is called an {\it $E_k$-system} if  $|\V_\Acal(\Ss)|=k$, but for any finite subsystem $\Ss^\pr\subseteq\Ss$ it holds $|\V_\Acal(\Ss)|=\infty$. In particular, an $E_0$-system is an inconsistent system with consistent finite subsystems.  

\begin{theorem}\textup{\cite{kotov}}
If $\Acal\in\Qbf$ ($\Acal\in\Ubf$) then there are not $E_k$-systems over $\Acal$ for all $k\in\{0,1\}$ (respectively, $k\in\N$). 
\label{th:kotov_theorem_E_k}
\end{theorem}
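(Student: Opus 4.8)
The plan is to prove both implications by the same device: duplicating the variables of a putative $E_k$-system into several disjoint copies, and then feeding the resulting ``coincidence'' containments into the compactness hypothesis. Throughout, let $\Ss$ be a system in variables $X=\{x_1,\ldots,x_n\}$, and recall (with the evident reading of the definition) that every finite subsystem of an $E_k$-system has an infinite solution set. For an integer $r\geq 1$ I introduce $r$ disjoint copies $X^{(0)},\ldots,X^{(r-1)}$ of $X$ and let $\Ss^{(j)}$ be the copy of $\Ss$ on $X^{(j)}$; since the copies share no variables, the combined system $\widehat\Ss=\bigcup_{j}\Ss^{(j)}$ satisfies $\V_\Acal(\widehat\Ss)=\V_\Acal(\Ss)^{r}$. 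The point of the construction is that a ``two of the copies agree'' condition on $\V_\Acal(\widehat\Ss)$ can be recorded by equations $x_\ell^{(a)}=x_\ell^{(b)}$ between individual variables, which are exactly the single equations the compactness definitions can digest.

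First I would treat the $\qq$-compact case, ruling out $E_0$- and $E_1$-systems. Here I take $r=2$ and write the second copy's variables as $y_1,\ldots,y_n$. If $|\V_\Acal(\Ss)|\leq 1$, then any two solutions of $\Ss$ coincide, so $\V_\Acal(\Ss)^2\subseteq\V_\Acal(x_i=y_i)$ for every coordinate $i$ (vacuously when $\V_\Acal(\Ss)=\emptyset$). Applying $\qq$-compactness coordinate by coordinate yields, for each $i$, a finite subsystem of $\widehat\Ss$ whose solution set lies in $\V_\Acal(x_i=y_i)$; collecting the finitely many equations of $\Ss$ involved gives one finite $\Ss^\pr\subseteq\Ss$ with $\V_\Acal(\Ss^\pr)^2\subseteq\bigcap_i\V_\Acal(x_i=y_i)$, i.e.\ contained in the diagonal. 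This forces any two points of $\V_\Acal(\Ss^\pr)$ to be equal, so $|\V_\Acal(\Ss^\pr)|\leq 1$, contradicting that every finite subsystem of an $E_k$-system has an infinite solution set.

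Next I would handle the $\uu$-compact case for arbitrary $k$. Since $\Ubf\subseteq\Qbf$, the cases $k\in\{0,1\}$ follow from the previous paragraph, so assume $k\geq 1$ and take $r=k+1$ copies, with $\V_\Acal(\Ss)=\{P^{(1)},\ldots,P^{(k)}\}$. Fix a coordinate $\ell$; the $\ell$-th coordinates of $P^{(1)},\ldots,P^{(k)}$ take at most $k$ distinct values, so in any solution tuple of $\widehat\Ss$ the $k+1$ blocks must, by the pigeonhole principle, have two blocks $a\neq b$ agreeing in coordinate $\ell$. Thus $\V_\Acal(\widehat\Ss)\subseteq\bigcup_{0\leq a<b\leq k}\V_\Acal(x_\ell^{(a)}=x_\ell^{(b)})$, a finite union of single equations, and $\uu$-compactness produces a finite $\Ss_\ell\subseteq\Ss$ with $\V_\Acal(\Ss_\ell)^{k+1}$ contained in the same union; this says precisely that the $\ell$-th coordinate takes at most $k$ values on $\V_\Acal(\Ss_\ell)$. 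Taking $\Ss^\pr=\bigcup_{\ell=1}^n\Ss_\ell$, every coordinate projection of $\V_\Acal(\Ss^\pr)$ has at most $k$ values, whence $|\V_\Acal(\Ss^\pr)|\leq k^n<\infty$, again contradicting the $E_k$-property.

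The main obstacle, and the one place where the argument must be engineered rather than computed, is that the natural condition produced by the pigeonhole step, namely ``two of the $k+1$ copies are equal as points'', is a disjunction of conjunctions of equations, whereas $\uu$-compactness only accepts a disjunction of single equations. The resolution is to avoid asking for full equality of copies and instead run the pigeonhole separately in each coordinate $\ell$, obtaining for each $\ell$ a genuine finite union $\bigcup_{a<b}\V_\Acal(x_\ell^{(a)}=x_\ell^{(b)})$ of single equations; the per-coordinate bounds are then reassembled into the product bound $k^n$. I would also verify the routine bookkeeping that passing from the finite subsystem of $\widehat\Ss$ returned by compactness to a finite subsystem of $\Ss$ preserves the relevant containment, which holds because enlarging a system only shrinks its solution set.
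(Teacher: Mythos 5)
Your proof is correct, and it is worth comparing with the one in the text, which only treats the $\qq$-compact case and defers the rest to the cited reference. Both arguments rest on the same device of duplicating the variables into disjoint copies so that the ``coincidence'' of solutions becomes expressible by single equations $x_i=y_i$. The difference is in how the contradiction is extracted. The paper first proves an auxiliary lemma (if $\Ss$ is an $E_k$-system, some variable $x_i$ takes infinitely many values on $\V_\Acal(\Ss^\pr)$ for \emph{every} finite $\Ss^\pr\subseteq\Ss$), fixes that one coordinate, and then argues that no finite subsystem of the doubled system can satisfy $\V_\Acal(\Ss^\pr_{00})\subseteq\V_\Acal(x_i=y_i)$ because the two copies of $x_i$ vary independently; this directly violates $\qq$-compactness. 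You instead apply $\qq$-compactness to \emph{all} coordinates, intersect the resulting finite subsystems, and conclude that the union is a finite subsystem with at most one solution, contradicting the definition of an $E_k$-system head-on. Your version dispenses with the auxiliary lemma and is, if anything, cleaner. More substantially, you also supply the $\uu$-compact half, which the paper omits: the passage to $k+1$ copies, the observation that ``two blocks coincide as points'' is \emph{not} a legal disjunction for $\uu$-compactness, and the per-coordinate pigeonhole that replaces it by the genuine finite union $\bigcup_{a<b}\V_\Acal(x_\ell^{(a)}=x_\ell^{(b)})$, yielding the bound $|\V_\Acal(\Ss^\pr)|\leq k^n$ --- all of this is the right resolution of the one real difficulty in the general case, and your bookkeeping (enlarging a finite subsystem of $\widehat\Ss$ to a symmetric one of the form $\Ss^\pr(X^{(0)})\cup\dots\cup\Ss^\pr(X^{(k)})$ only shrinks the solution set) is sound.
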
 
\begin{proof}

Let us prove the necessary condition of the $\qq$-compactness (one can see~\cite{kotov} for the whole proof of the theorem).
 
First of all we establish the following property of  $E_k$-systems. {\it If there exists an $E_k$-system $\Ss$ over an $\LL$-algebra $\Acal$ then there is a variable $x_i$ such that for any finite subsystem $\Ss^\pr\subseteq\Ss$ the variable $x_i$ has an infinite set of values in the set $\V_\Acal(\Ss^\pr)$}. (if there are not such variables there is a finite subsystem in $\Ss$ with a finite solution set that contradicts the definition of an $E_k$-system).

Let us prove there are not $E_0$- and $E_1$-systems over any $\qq$-compact $\LL$-algebra $\Acal$. Assume the converse: There exists an $E_0$-system $\Ss_0(X)$ (or $E_1$-system $\Ss_1(X)$) in variables  $X=\{x_1,x_2,\ldots,x_n\}$. By the definition, $\Ss_0(X)$ is inconsistent and $\Ss_1(X)$ has a unique solution $P=(p_1,p_2,\ldots,p_n)$.

Let $\Ss_i(Y)$ denote a system obtained from $\Ss_i(X)$ by the substitution $x_i\mapsto y_i$. Let $\Ss_{00}=\Ss_0(X)\cup\Ss_0(Y)$ (respectively, $\Ss_{11}=\Ss_1(X)\cup\Ss_1(Y)$). It is easy to see that the system $\Ss_{00}(X,Y)$ is inconsistent, and $\Ss_{11}(X,Y)$ has a unique solution $(P,P)$. By the properties of $E_k$-systems, there exists a pair $x_i,y_i$ such that $x_i,y_i$ has an infinite set of values in the solution set of any finite subsystem $\Ss_{00}^\pr(X,Y)\subseteq\Ss_{00}(X,Y)$ (respectively, $\Ss_{11}^\pr(X,Y)\subseteq\Ss_{11}(X,Y)$). 

Since $\V_\Acal(\Ss_{00}(X,Y))=\emptyset$ and $\V_\Acal(\Ss_{11}(X,Y))=(P,P)$ we have the set inclusions
\[
\V_\Acal(\Ss_{00}(X,Y))\subseteq\V_\Acal(x_i=y_i), \; \V_\Acal(\Ss_{11}(X,Y))\subseteq\V_\Acal(x_i=y_i).
\]
 
By the properties of the variables $x_i,y_i$, for any finite subsystem $\Ss^\pr_{00}(X,Y)\subseteq\Ss_{00}(X,Y)$, $\Ss^\pr_{11}(X,Y)\subseteq \Ss_{11}(X,Y)$ the inclusions
\[
\V_\Acal(\Ss_{00}^\pr(X,Y))\subseteq\V_\Acal(x_i=y_i), \; \V_\Acal(\Ss_{11}^\pr(X,Y))\subseteq\V_\Acal(x_i=y_i)
\]
do not hold. Thus, $\Acal$ is not $\qq$-compact.

\end{proof}

Let us give examples of $\qq$- and $\uu$-compact algebras.

\begin{enumerate}
\item Let $\LL_{s}(C)$ denote the semigroup language extended by a countable set of constant symbols $C=\{c_i\mid i\in\N\}$. Below we define three $C$-semilattices $\Acal,\Bcal,\Ccal$ such that $\Acal\notin\Qbf$, $\Bcal\in\Qbf\setminus\Ubf$, $\Ccal\in\Ubf\setminus\Nbf$.

Let us consider a linearly ordered $C$-semilattice $\Acal$ which consists of the elements $\{a_i\mid i\in\N\}$, $a_i<a_{i+1}$, $c_i^\Acal=a_i$. There exists an $E_0$-system $\Ss=\{x\geq c_i\mid i\in\N\}$  over $\Acal$ (obviously, $\V_\Acal(\Ss)=\emptyset$), therefore Theorem~\ref{th:kotov_theorem_E_k} states $\Acal\notin\Qbf$.

Let us add to $\Acal$ two non-constant elements $b_1,b_2$ with $b_1<b_2$, $a_i<b_j$ for all $i,j$ and denote the obtained linearly ordered $C$-semilattice by $\Bcal$.  The system $\Ss$ defined above is neither an $E_0$- nor $E_1$-system over $\Bcal$, since $\V_\Bcal(\Ss)=\{b_1,b_2\}$. In~\cite{shevl_at_service} it was proved that $\Bcal\in\Qbf$. However, $\Ss$ is an $E_2$-system, therefore Theorem~\ref{th:kotov_theorem_E_k} gives $\Bcal\notin\Ubf$. 

Let us add to $\Bcal$ a countable number of non-constant elements $b_3,b_4,\ldots,$ such that $b_i<b_{i+1}$ ($i\in\N$), $a_i<b_j$ for all $i,j$, and denote the obtained linearly ordered $C$-semilattice by $\Ccal$. Remark that the system $\Ss$ is not an $E_k$-system over $\Ccal$ for any $k\in\N$, since $\V_\Bcal(\Ss)=\{b_1,b_2,\ldots\}$. Actually, in~\cite{shevl_at_service} it was proved that $\Ccal\in\Ubf$.

\item There exists examples of $\qq$- and $\uu$-compact algebras in the languages with no constants. For instance, in~\cite{Plot3} it was proved that the ``Plotkin's monster'' (the direct product of {\it all} finitely generated group in the standard group language $\LL_g$)  is $\qq$-compact but not equationally Noetherian. The existence of a group from $\Ubf\setminus\Nbf$ follows from the following statement.

\begin{theorem}\textup{\cite{uni_Th_III}}
For any $\LL$-algebra $\Acal$ there exists an embedding of $\Acal$ into a $\uu$-compact $\LL$-algebra $\Bcal$.
\end{theorem}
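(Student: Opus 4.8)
The plan is to realize $\Bcal$ as a sufficiently saturated elementary extension of $\Acal$ and to read $\uu$-compactness off from the realization of certain parameter-free partial types. First I would reformulate the defining condition. Fix an $\LL$-system $\Ss$ in variables $X=\{x_1,\ldots,x_n\}$ together with finitely many equations $t_i(X)=s_i(X)$, $1\le i\le m$, and set $\Theta=\{t_i(X)\neq s_i(X)\mid 1\le i\le m\}$. The inclusion $\V_\Bcal(\Ss)\subseteq\bigcup_{i=1}^m\V_\Bcal(t_i=s_i)$ says exactly that the set of literals $p(X)=\Ss\cup\Theta$ has no solution in $\Bcal$, while the existence of a finite $\Ss^\pr\subseteq\Ss$ with $\V_\Bcal(\Ss^\pr)\subseteq\bigcup_i\V_\Bcal(t_i=s_i)$ says that some $\Ss^\pr\cup\Theta$ is already unsolvable. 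Since $\Theta$ is finite, every finite subset of $p(X)$ lies inside some $\Ss^\pr\cup\Theta$ with $\Ss^\pr$ finite. Hence $\uu$-compactness of $\Bcal$ is equivalent to the assertion that every finitely satisfiable $p(X)$ of this shape (arbitrarily many equations together with finitely many inequations, in finitely many variables) is satisfiable in $\Bcal$.

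Next I would produce $\Bcal$. By standard model theory every $\LL$-structure has an $\aleph_1$-saturated elementary extension; concretely one may take an ultrapower $\Bcal=\Acal^{I}/\mathcal{U}$ by a countably incomplete ultrafilter $\mathcal{U}$, which is $\aleph_1$-saturated, and the diagonal map $\Acal\to\Bcal$ is an elementary embedding, hence an injective $\LL$-homomorphism, i.e.\ an embedding in the sense of Section~\ref{sec:subalgebras_direct_powers_hom}. I would then verify the reformulated condition. Suppose $p(X)=\Ss\cup\Theta$ is finitely satisfiable in $\Bcal$. The formulas of $p$ contain no parameters from $\Bcal$ (equations are built only from variables and the symbols of $\LL$), so $p$ is a partial $n$-type over the empty set. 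A finitely satisfiable partial type over $\emptyset$ extends, by Zorn's lemma, to a complete type in $S_n(\emptyset)$, and an $\aleph_1$-saturated structure realizes every such type, since the empty parameter set has cardinality $0<\aleph_1$. Thus $p$ is realized by a point $P\in\Bcal^n$, which satisfies $\Ss$ while violating every $t_i=s_i$; this contradicts $\V_\Bcal(\Ss)\subseteq\bigcup_i\V_\Bcal(t_i=s_i)$. Consequently, whenever that inclusion holds, $p$ cannot be finitely satisfiable, which is precisely the existence of the required finite subsystem. Therefore $\Bcal$ is $\uu$-compact.

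The point deserving the most care — and the place where the model-theoretic input is genuinely used — is the observation that the relevant types carry no parameters, so that their possibly enormous size (up to $\max(|\LL|,\aleph_0)$ many equations) does \emph{not} force a correspondingly high degree of saturation: realization of a complete type is governed only by the size of the parameter set, which here is $0$, not by the number of formulas in the type. This is exactly what allows a single fixed saturated extension to work uniformly for all systems $\Ss$ simultaneously. The remaining ingredients, namely the existence of $\aleph_1$-saturated elementary extensions and the fact that elementary embeddings are injective homomorphisms preserving all of the $\LL$-structure, are standard and require no computation. Finally, I would note that the identical construction, with $\Theta$ replaced by a single inequation $\{t(X)\neq s(X)\}$, yields verbatim the analogous embedding statement into a $\qq$-compact $\LL$-algebra.
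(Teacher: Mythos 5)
The paper states this theorem without proof, citing~\cite{uni_Th_III}, so there is no internal argument to compare against; judged on its own, your proof is correct and follows the standard route (essentially the one used in the cited source): reformulate $\uu$-compactness as the realizability of every finitely satisfiable, parameter-free partial type consisting of a set of equations together with finitely many inequations, and then realize all such types in an $\aleph_1$-saturated elementary extension $\Bcal$ of $\Acal$. Your equivalence between the two formulations is verified correctly (the finiteness of $\Theta$ is what makes ``finitely satisfiable'' match ``every $\Ss^\pr\cup\Theta$ satisfiable''), the observation that these types carry no parameters — so a single fixed degree of saturation handles all systems uniformly, regardless of how many equations they contain — is exactly the point that makes the construction work, and an elementary embedding is in particular an injective homomorphism, hence an embedding in the paper's sense. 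One minor caveat: the concrete assertion that an ultrapower by a countably incomplete ultrafilter is $\aleph_1$-saturated is guaranteed only for countable languages; for uncountable $\LL$ (e.g.\ the extensions $\LL(C)$ with a large set of constants) one needs a good ultrafilter or the elementary-chain construction. Since your argument only uses the general existence of an $\aleph_1$-saturated (indeed, $\aleph_0$-saturated would suffice) elementary extension, this does not affect the correctness of the proof.
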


\end{enumerate}

\bigskip

An $\LL$-algebra $\Acal$ is called \textit{consistently Noetherian} if any \textit{consistent} $\LL$-system $\Ss$ is equivalent to its finite subsystem. The class of all consistently Noetherian $\LL$-algebras is denoted by $\Nbf_c$.

In general, $\Nbf_c$ does not coincide with $\Nbf$, since we have the following statement.

\begin{theorem}\textup{\cite{shevl_free_semilattice}}
Let $\Fcal$ be the free Diophantine $C$-semilattice of infinite rank (recall that $\Fcal$ is isomorphic to the class of all finite subsets of some infinite set relative to the operation of set union). Then  $\Fcal\in\Nbf_c\setminus\Nbf$ (in other words, any consistent system over $\Fcal$ is equivalent to its finite subsystem, but there exists an infinite inconsistent system $\Ss$ such that all finite subsystem of $\Ss$ are consistent.
\end{theorem}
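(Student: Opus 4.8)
The plan is to prove the two memberships separately: that $\Fcal\notin\Nbf$ by exhibiting an infinite inconsistent system with consistent finite subsystems (an $E_0$-system), and that $\Fcal\in\Nbf_c$ by a coordinatewise analysis of semilattice equations. For the first part I would reuse the idea of the $L_\infty$-example of Section~\ref{sec:noeth_const}. Identifying $\Fcal$ with the finite subsets of $\N$ under union and the constant $c_i$ with the singleton $\{i\}$, consider $\Ss=\{xc_i=x\mid i\in\N\}$. Since $xc_i=x$ forces $i\in x$, a solution would have to contain every $i\in\N$ and thus be infinite, so $\Ss$ is inconsistent over $\Fcal$; yet each finite subsystem $\{xc_i=x\mid i\le m\}$ is satisfied by $x=\{1,\dots,m\}$. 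Hence no finite subsystem is equivalent to $\Ss$, which gives $\Fcal\notin\Nbf$.

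For $\Fcal\in\Nbf_c$ the key device is the embedding of $\Fcal$ into a direct power of $L_2=\{0,1\}$, namely $S\mapsto(\chi_S(k))_{k\in\N}$ with finite support, under which union becomes coordinatewise disjunction and $c_i$ becomes the $i$-th unit vector. Using the normal form for semilattice terms, every equation of $\Ss$ is equivalent to one of the shape $\bigcup_{i\in I}x_i\cup a=\bigcup_{j\in J}x_j\cup b$ with $a,b\in\Fcal$. For a candidate point $P=(p_1,\dots,p_n)$ I would read off, for each coordinate $k\in\N$, the Boolean condition that such an equation imposes on the column $\xi_k=(\chi_{p_1}(k),\dots,\chi_{p_n}(k))\in\{0,1\}^n$: for $k\notin a\cup b$ it is the constant-free condition $\bigvee_{i\in I}\xi_i=\bigvee_{j\in J}\xi_j$, for $k$ in the symmetric difference $a\bigtriangleup b$ it is one of $\bigvee_{j\in J}\xi_j=1$ or $\bigvee_{i\in I}\xi_i=1$, and for $k\in a\cap b$ it is trivial. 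Intersecting these conditions over all equations of $\Ss$ produces, for every $k$, a set $G_k\subseteq\{0,1\}^n$ of admissible columns, and $P$ solves $\Ss$ exactly when $\xi_k\in G_k$ for all $k$ (the finite-support and nonempty-row requirements being intrinsic to $\Fcal^n$ and shared by every subsystem).

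Two finiteness observations then drive the reduction. First, the all-zero column lies in $G_k$ if and only if $k\notin D^*:=\bigcup_{t}(a_t\bigtriangleup b_t)$; since any solution has finite support, each $k\in D^*$ must receive a nonzero column, so consistency of $\Ss$ forces $D^*$ to be finite. Second, each $G_k$ is an intersection of subsets of the finite cube $\{0,1\}^n$, so finitely many equations already cut it out. I would therefore select a finite $T_0\subseteq\Ss$ realizing the global constant-free set $G$, and for each of the finitely many $k\in D^*$ a finite $T_k\subseteq\Ss$ realizing $G_k$ (necessarily containing some equation with $k\in a_t\bigtriangleup b_t$, because $0\notin G_k$). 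The finite subsystem $\Ss'=T_0\cup\bigcup_{k\in D^*}T_k$ reproduces $G$ and every $G_k$, and in particular $\bigcup_{t\in\Ss'}(a_t\bigtriangleup b_t)=D^*$; thus $\Ss'$ and $\Ss$ impose identical coordinatewise conditions, whence $\V_\Fcal(\Ss')=\V_\Fcal(\Ss)$.

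The main obstacle will be the bookkeeping in the last step: I must guarantee that $\Ss'$ forces \emph{exactly} the same coordinates to be nonzero as $\Ss$ does, i.e. that $D^*$ is not accidentally shrunk, since dropping a single constant-forced coordinate would enlarge the solution set and destroy the equivalence. Making the coordinatewise reformulation precise is where the care lies: fixing the correct normal form of equations with constants, handling degenerate cases such as an empty join $J=\emptyset$ (which makes a constraint $\bigvee_{j\in J}\xi_j=1$ unsatisfiable and hence signals inconsistency), and checking that the nonempty-row condition of $\Fcal$ is common to $\Ss$ and $\Ss'$ and therefore harmless. Once the per-coordinate sets $G_k$ are matched for every $k$, equality of the two solution sets is immediate, and since consistency was used only to bound $D^*$, the argument simultaneously explains why inconsistent systems such as the $E_0$-system above need not reduce.
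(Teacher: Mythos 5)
The paper itself gives no proof of this statement (it is quoted from \cite{shevl_free_semilattice}), so your argument has to stand on its own. The first half does: $\Ss=\{xc_i=x\mid i\in\N\}$ is an inconsistent system all of whose finite subsystems are consistent, so $\Fcal\notin\Nbf$; this is the same device as the $L_\infty$ example of Section~\ref{sec:noeth_const} (one could also just invoke Proposition~\ref{pr:noeth_criterion_for_semilattices_with_const}). Your coordinatewise reformulation for the second half is also correct: writing each equation $t$ as $\bigcup_{i\in I_t}x_i\cup a_t=\bigcup_{j\in J_t}x_j\cup b_t$, a point solves $t$ iff each column $\xi_k$ lies in a set $C_t(k)\subseteq\{0,1\}^n$ which is the constant-free condition for $k\notin a_t\cup b_t$, a forced-one condition for $k\in a_t\bigtriangleup b_t$, and all of $\{0,1\}^n$ for $k\in a_t\cap b_t$; and consistency does force $D^*=\bigcup_t(a_t\bigtriangleup b_t)$ to be finite.

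The gap is in the final selection step. The subsystem $\Ss'=T_0\cup\bigcup_{k\in D^*}T_k$ need not impose the same coordinatewise conditions as $\Ss$: at a coordinate $k\in a_t\cap b_t$ with $t\in T_0$ the equation $t$ contributes only the trivial condition, so $G'_k$ can be strictly larger than $G_k$ even though $k\notin D^*$. Concretely, take $\Ss=\{x_1c_m=x_2c_m\mid m\in\N\}$. Every $a_t\bigtriangleup b_t$ is empty, so $D^*=\emptyset$, and the global constant-free set $G=\{\xi:\xi_1=\xi_2\}$ is already realized by the single equation $T_0=\{x_1c_1=x_2c_1\}$; your recipe then outputs $\Ss'=\{x_1c_1=x_2c_1\}$, whose solution set $\{(p_1,p_2)\mid p_1\bigtriangleup p_2\subseteq\{1\}\}$ strictly contains $\V_\Fcal(\Ss)$, which is the diagonal. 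Note that the failure mode is not the one you flag: $D^*$ is not shrunk here; what is lost are the equality conditions at the finitely many coordinates lying in the supports of the constants of $T_0$. The repair is to group the equations of $\Ss$ by the value $D\subseteq\{0,1\}^n$ of their constant-free condition: for each of the finitely many groups $\Ss_D$ the set $K_D=\bigcap_{t\in\Ss_D}(a_t\cap b_t)$ is finite and is attained by finitely many representatives, and one checks that $G_k=\bigcap\{D:k\notin K_D\}$ for every $k\notin D^*$; choosing those representatives (two equations instead of one, in the example above), together with your $T_k$ for the finitely many $k\in D^*$, does give $G'_k=G_k$ for all $k$ and hence $\V_\Fcal(\Ss')=\V_\Fcal(\Ss)$. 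With that correction the proof is complete.
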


There are classes of $\LL$-algebras where the equality $\Nbf_c=\Nbf$ holds. For example, one can consider any class of $\LL$-algebras $\Cbf$ such that the empty set is not algebraic over any $\LL$-algebra $\Acal\in\Cbf$ (the class of all groups of the language $\LL_g$ satisfies this property). In this case all $\LL$-systems are consistent and the definition of the class $\Nbf_c$ becomes the definition of equationally Noetherian algebras. On the other hand, there exist classes of $\LL$-algebras with $\Nbf=\Nbf_c$, where the empty set is algebraic (see~\cite{shevl_boolean} where we study Boolean algebras in the language with constants).

\bigskip

An $\LL$-algebra $\Acal$ is \textit{weakly equationally Noetherian} if for any $\LL$-system $\Ss$ there exists a finite equivalent system $\Ss_0$ (we do not claim now for $\Ss_0$ to be a subsystem of $\Ss$). The class of all weakly equationally Noetherian $\LL$-algebras is denoted by $\Nbf^\pr$. Obviously, $\Nbf\subseteq\Nbf^\pr$.

\begin{proposition}
If the empty set is the solution set of some finite system over an $\LL$-algebra $\Acal$ then $\Acal\in\Nbf_c$ implies $\Acal\in\Nbf^\pr$. 
\end{proposition}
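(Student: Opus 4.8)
The plan is to prove the implication by a clean dichotomy on the consistency of the system we are handed. Fix an arbitrary $\LL$-system $\Ss$ in variables $X=\{x_1,\ldots,x_n\}$; to witness $\Acal\in\Nbf^\pr$ I must produce a finite system $\Ss_0$ with $\Ss_0\sim_\Acal\Ss$, where crucially $\Ss_0$ need \emph{not} be a subsystem of $\Ss$. I would first dispose of the case $\V_\Acal(\Ss)\neq\emptyset$: here $\Ss$ is consistent, so the hypothesis $\Acal\in\Nbf_c$ applies verbatim and yields a finite subsystem $\Ss_0\subseteq\Ss$ with $\Ss_0\sim_\Acal\Ss$. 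A finite subsystem is in particular a finite system, so this already delivers the required $\Ss_0$ (in fact more than $\Nbf^\pr$ demands).

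The remaining case is $\V_\Acal(\Ss)=\emptyset$, and here the definition of $\Nbf_c$ says nothing, which is precisely why the extra hypothesis on $\Acal$ enters. In this case it suffices to exhibit \emph{any} finite $\LL$-system in the variables $X$ whose solution set in $\Acal^n$ is empty, since such a system is automatically $\sim_\Acal$-equivalent to $\Ss$. The standing hypothesis furnishes a finite $\LL$-system $\Tcal$, in some variables $Y=\{y_1,\ldots,y_m\}$, with $\V_\Acal(\Tcal)=\emptyset$.

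I expect the only point requiring care — and hence the main (if minor) obstacle — to be the matching of variables: $\Tcal$ may be written over a variable set different from $X$. To repair this I would substitute $y_j\mapsto x_1$ for every $j$ in each equation of $\Tcal$, producing a finite system $\Ss_0$ in the single variable $x_1\in X$. An element $a$ satisfies $\Ss_0$ exactly when the diagonal point $(a,\ldots,a)$ lies in $\V_\Acal(\Tcal)=\emptyset$, so $\V_\Acal(\Ss_0)=\emptyset$ already in $\Acal^1$; viewing $\Ss_0$ as a system in all of $X$ (it mentions only $x_1$) keeps its solution set in $\Acal^n$ empty, whence $\Ss_0\sim_\Acal\Ss$. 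Combining the two cases shows that every $\LL$-system over $\Acal$ admits a finite equivalent, i.e. $\Acal\in\Nbf^\pr$, completing the argument. The conceptual content sits entirely in the consistent/inconsistent split; everything after that is routine bookkeeping with variables.
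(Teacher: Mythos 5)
Your proof is correct and takes essentially the same route as the paper's own two-line argument: split on whether $\V_\Acal(\Ss)$ is empty, use the $\Nbf_c$ hypothesis in the consistent case, and invoke the assumed finite inconsistent system in the empty case. The only thing you add is the explicit diagonal substitution $y_j\mapsto x_1$ to align variable sets, a bookkeeping detail the paper leaves implicit; no further changes are needed.
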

\begin{proof}
Suppose an $\LL$-algebra $\Acal$ is consistently Noetherian, and  $\Ss$ is an infinite $\LL$-system over $\Acal$. If $\Ss$ is consistent, $\Ss$ is equivalent to its finite subsystem. Otherwise ( $\V_\Acal(\Ss)=\emptyset$), the condition provides the existence of a finite inconsistent system $\Ss_0\sim_\Acal \Ss$.
\end{proof}

\begin{proposition}
For $\LL$-algebras we have
\[
\Qbf\cap\Nbf^\pr=\Nbf.
\]
\end{proposition}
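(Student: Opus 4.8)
The plan is to prove the two inclusions $\Nbf\subseteq\Qbf\cap\Nbf^\pr$ and $\Qbf\cap\Nbf^\pr\subseteq\Nbf$ separately; the first is immediate and the second is where all the work lies. For the easy inclusion I would simply recall that the definitions already yield $\Nbf\subseteq\Ubf\subseteq\Qbf$, and that $\Nbf\subseteq\Nbf^\pr$ because a finite equivalent \emph{subsystem} is in particular a finite equivalent system. Hence $\Nbf\subseteq\Qbf\cap\Nbf^\pr$, and it remains to show the reverse.

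For the converse, I would take an $\LL$-algebra $\Acal\in\Qbf\cap\Nbf^\pr$ together with an arbitrary (infinite) $\LL$-system $\Ss$, and aim to produce a finite subsystem $\Ss^\pr\subseteq\Ss$ with $\Ss^\pr\sim_\Acal\Ss$. Since $\Acal\in\Nbf^\pr$, there is a \emph{finite} system $\Ss_0=\{t_j(X)=s_j(X)\mid 1\leq j\leq k\}$ with $\V_\Acal(\Ss_0)=\V_\Acal(\Ss)$. The crux, and the main obstacle, is precisely that weak equational Noetherianity only guarantees a finite equivalent system, not a finite equivalent \emph{subsystem} of $\Ss$; bridging exactly this gap is what the $\qq$-compactness is for.

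To cross that gap I would argue equation by equation. For each $j$ we have $\V_\Acal(\Ss)=\V_\Acal(\Ss_0)\subseteq\V_\Acal(t_j(X)=s_j(X))$, so $\qq$-compactness applied to the system $\Ss$ and the equation $t_j(X)=s_j(X)$ yields a finite subsystem $\Ss^\pr_j\subseteq\Ss$ with $\V_\Acal(\Ss^\pr_j)\subseteq\V_\Acal(t_j(X)=s_j(X))$. Setting $\Ss^\pr=\bigcup_{j=1}^k\Ss^\pr_j$, which is again a finite subsystem of $\Ss$, the first statement of Proposition~5.1 gives
\[
\V_\Acal(\Ss^\pr)=\bigcap_{j=1}^k\V_\Acal(\Ss^\pr_j)\subseteq\bigcap_{j=1}^k\V_\Acal(t_j(X)=s_j(X))=\V_\Acal(\Ss_0)=\V_\Acal(\Ss).
\]
Conversely, $\Ss^\pr\subseteq\Ss$ forces $\V_\Acal(\Ss)\subseteq\V_\Acal(\Ss^\pr)$ by the same proposition. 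Therefore $\V_\Acal(\Ss^\pr)=\V_\Acal(\Ss)$, i.e. $\Ss^\pr\sim_\Acal\Ss$ with $\Ss^\pr$ a finite subsystem, so $\Acal\in\Nbf$. This establishes $\Qbf\cap\Nbf^\pr\subseteq\Nbf$ and completes the proof. I expect no routine computation to be delicate here; the entire content is the bookkeeping above, and the only conceptual point worth stressing is that $\qq$-compactness is exactly the tool that upgrades ``finite equivalent system'' to ``finite equivalent subsystem''.
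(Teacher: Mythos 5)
Your proof is correct and follows essentially the same route as the paper's: the easy inclusion from the definitions, then weak Noetherianity to get a finite equivalent system, $\qq$-compactness applied to each of its equations to extract finite subsystems of $\Ss$, and the union of these as the desired finite equivalent subsystem. You merely spell out the final equivalence check (which the paper leaves as ``easy to prove''), so there is nothing to add.
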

\begin{proof}
Since the definitions of the classes above immediately give $\Nbf\subseteq \Qbf$, $\Nbf\subseteq \Nbf^\pr$, then $\Nbf\subseteq\Qbf\cap\Nbf^\pr$. Let us prove the converse inclusion.  Let an $\LL$-algebra $\Acal$ is $\qq$-compact and weakly equationally Noetherian. Consider an infinite $\LL$-system $\Ss$ over $\Acal$. Since $\Acal\in\Nbf^\pr$, $\Ss$ is equivalent to a some finite system $\{t_i(X)=s_i(X)\mid 1\leq i\leq n\}$. Therefore, for each equation  $t_i(X)=s_i(X)$ it holds
\[
\V_\Acal(\Ss)\subseteq\V_\Acal(t_i(X)=s_i(X)).
\]
Since $\Acal$ is $\qq$-compact, for each equation $t_i(X)=s_i(X)$ there exists a finite subsystem $\Ss_i\subseteq\Ss$ with
\[
\V_\Acal(\Ss_i)\subseteq\V_\Acal(t_i(X)=s_i(X)).
\]
Let $\Ss^\pr=\bigcup_{i=1}^n\Ss_i$ be a finite subsystem of  $\Ss$. By the choice of each $\Ss_i$ it is easy to prove that $\Ss^\pr$ is equivalent to $\Ss$ over $\Acal$. Thus, $\Acal$ is equationally Noetherian.
\end{proof}

In conclusion we give a picture which shows the inclusions of the classes $\Nbf,\Nbf^\pr,\Ubf,\Qbf$.

\begin{center}
\begin{picture}(150,100)
\qbezier(50,0)(69,0)(85,15) \qbezier(85,15)(100,31)(100,50)
\qbezier(100,50)(100,69)(85,85) \qbezier(85,85)(69,100)(50,100)
\qbezier(50,100)(31,100)(15,85) \qbezier(15,85)(0,69)(0,50)
\qbezier(0,50)(0,31)(15,15) \qbezier(15,15)(31,0)(50,0)

\qbezier(73,93)(70,90)(65,85) 
\qbezier(65,85)(50,69)(50,50)
\qbezier(50,50)(50,31)(65,15) 
\qbezier(65,15)(71,6)(70,4)

\qbezier(100,5)(114,5)(125,15) \qbezier(125,15)(135,26)(135,40)
\qbezier(135,40)(135,54)(125,65) \qbezier(125,65)(114,75)(100,75)
\qbezier(100,75)(86,75)(75,65) \qbezier(75,65)(65,54)(65,40)
\qbezier(65,40)(65,26)(75,15) \qbezier(75,15)(86,5)(100,5)

\put(23,50){$\Qbf$} \put(53,50){$\Ubf$}\put(83,50){$\Nbf$}
\put(113,50){$\Nbf^\prime$}
\end{picture}

\nopagebreak
\textbf{Fig. 1.}
\end{center}

\subsection{Comments}

The concept of $\qq$-compact ($\uu$-compact) algebra was given in~\cite{uni_Th_III} as a maximal class where Unifying Theorem~\ref{th:unify_approx} (respectively, both Unifying Theorems~\ref{th:unify_approx},~\ref{th:unify_discr}) remains true. Remark that in~\cite{Plot3} it was used the equivalent to the $\qq$-compactness definition of ``logically Noetherian algebras''.

Let us refer to papers devoted to $\qq$- and $\uu$-compact algebras. In~\cite{Plot3} it was defined the ``Plotkin's monster'', i.e. a $\qq$-compact but not equationally Noetherian group of the language $\LL_g$. Thus, the classes $\Qbf$, $\Nbf$ are distinct in the variety of groups. The examples of algebras from the classes $\Qbf\setminus\Nbf$, $\Qbf\setminus\Ubf$, $\Ubf\setminus\Nbf$ can be found in~\cite{kotov,shevl_at_service}. In~\cite{kotov} the examples were given in the language of infinite number of unary functions, while in~\cite{shevl_at_service} we find examples in the class of $C$-semilattices of the language $\LL_s(C)$.

The paper~\cite{kotov} gives the necessary conditions of $\qq$- and $\uu$-compactness (see Theorem~\ref{th:kotov_theorem_E_k}), therefore there arises a question: for what classes of $\LL$-algebras such conditions are sufficient? This problem was solved positively for Boolean algebras of the language $\LL_{C-bool}=\{\vee,\wedge,\neg,0,1\}\cup\{c_i\mid i\in I\}$ extended by constants  (see~\cite{shevl_boolean}) and for linearly ordered semilattices of the language $\LL_{s}(C)$  (\cite{shevl_linear_ordered_semi}). 

Let us note the paper~\cite{Shahryari3} by M. Shahryari where the $\qq$-compactness of an $\LL$-algebra $\Acal$ was described as a property of the lattice of congruences over $\Acal$.

\section{Advances of algebraic geometry and further reading}
\label{sec:researches}

In this section we give a brief survey of papers devoted to equations over some algebras and recommend papers for the further reading. 

\subsection{Further reading}

As we mentioned in Introduction, our lectures notes are based on the series of papers~\cite{uni_Th_I}--\cite{uni_Th_V} by DMR. Therefore, we strongly recommend you to read these papers. Let us give the content of each paper~\cite{uni_Th_I}--\cite{uni_Th_V}.

\begin{enumerate}
\item The first paper~\cite{uni_Th_I} develops universal algebraic geometry. The main disadvantage of this paper is the fundamental division of algebraic geometry into two parts: ``equations without constants'' and ``equations with constants''. Thus, the paper becomes complicated. This disadvantage was eliminated in the second paper~\cite{uni_Th_II}.  

\item The second paper~\cite{uni_Th_II} of the series contains the main definitions of universal algebraic geometry (algebraic sets, coordinate algebras, equationally Noetherian property). It generalizes the results of the first paper~\cite{uni_Th_I}, since the authors use the apt approach in the study of algebraic geometry. We follow this paper in all sections of our lectures notes. Thus, the reader may omit the paper~\cite{uni_Th_I} and begin the study of universal algebraic geometry with~\cite{uni_Th_II}. 

\item The paper~\cite{uni_Th_III} contains the definitions of $\qq$-, $\uu$-compact and weakly equationally Noetherian algebras and studies the properties of such classes. We follow this paper in Section~\ref{sec:compactness_classes}.  

\item The paper~\cite{uni_Th_IV} contains the descriptions of equational domains and co-domains. We used some results in Sections~\ref{sec:co-domains},~\ref{sec:equational_domains}.

\item The paper~\cite{uni_Th_V} deals with predicate languages and develops their algebraic geometry. Our paper~\cite{shevl_alg_geom_with_neq} applies the ideas of~\cite{uni_Th_V} to algebraic structures equipped the relation $\neq$.

\end{enumerate}

Moreover, the papers by M.\,Shahryari, P.\,Modabberi, H.\,Khodabandeh~\cite{Shahryari1,Shahryari2,Shahryari3,Shahryari4,Shahryari5} continue the study of general problems of universal algebraic geometry.

\subsection{Groups}

Firstly, we recommend the survey~\cite{RomankovSurvey} by V. Romankov devoted to equational problems in groups.

{\bf Free and hyperbolic groups}.

The algebraic geometry over free groups was studied in many papers. We refer only the principal ones~\cite{Lyndon,Appel,bb,Bryant,Makanin,Razborov1, Razborov2,GrigorchukKurchanov,Sela1,Sela2, Sela3,MR1, MRS,Remesl,KM1,KM2,KM3,KM4}.

The papers~\cite{KM1,KM2,KM3} contain the description of coordinate groups over the free group $FG_n$ of rank $n$. Precisely, it was proved that a finitely generated group $G$ is the coordinate group of an irreducible algebraic set over $FG_n$ iff $FG_n$ is embedded into the Lindon`s group $F^{\Zbb[t]}$. On the other hand, in~\cite{Sela1,Sela2,Sela3} it was obtained the description of limit groups over $FG_n$. According to Unifying theorems, the results of~\cite{Sela1,Sela2,Sela3} gives the another description of irreducible coordinate groups over $FG_n$.

The class of hyperbolic groups is close the class of free groups. The algebraic geometry over hyperbolic groups was studied in~\cite{BMR3,Groves1,Groves1-2,Groves2,Groves2-1}.

{\bf Partially commutative groups.}

Algebraic geometries over partially commutative groups in several varieties were studied in~\cite{CasKaz1, CasKaz2, CasKaz3, GTim, GTim2, GTim3, Misch2, Misch3, MischTim, Tim, Tim2}.

{\bf Nilpotent, solvable and metabelian groups.} 

Interesting and nontrivial results about equations in free metabelian group were obtained in the following papers~\cite{Chapuis,
Rem2, Rem-Shtor2, Rem-Shtor1, Rem-Rom1, Rem-Rom2, Rem-Tim, Rom1}.
The papers~\cite{GRom, MRom, Rom2, Rom3, Rom4, Rom5, Rom6, Rom8} are devoted to the algebraic geometry over solvable groups.

\subsection{Semigroups}

The description of coordinate $C$-monoids over the Diophantine $C$-monoid of natural numbers was obtained in~\cite{shevl_over_N_qvar,shevl_over_N_irred}. The consistency of systems of equations over $\N$ was studied in~\cite{Kryvyi}, and there were found the necessary and sufficient condition for a system of equations over $\N$ to be consistent. In~\cite{shevl_N_Nullstellensatz} it was developed the algorithm that for a given system $\Ss$ and an equation $t(X)=s(X)$ determines whether $t(X)=s(X)$ belongs to the radical $\Rad_\N(\Ss)$ or not. Moreover, in~\cite{shevl_N_Nullstellensatz} we offer the algorithm that for a given system $\Ss$ over $\N$ finds the irreducible components of $\V_\N(\Ss)$.

The paper~\cite{Ulyashev} contains the description of coordinate semigroup in various classes of completely simple semigroups.

In~\cite{shevl_free_semilattice} the irreducible coordinate $C$-semilattices over the Diophantine free $C$-semilattice were described.

A left regular band is a semigroup which satisfies the following identities
\[
\forall x\; xx=x,\; \forall x\forall y\; xyx=xy.
\]
Let $FLR_n$ denote the free left regular semigroup of rank $n$. Actually, in~\cite{steinberg} it was proved that $FLR_n,FLR_m$ are geometrically equivalent to each other, and the list of axioms of $\qvar(FLR_n)$ was obtained. In~\cite{shevl_free_left_regular} we described all finite subsemigroups of $FLR_n$ (since the variety of left regular semigroups is locally finite, we actually describe irreducible coordinate semigroups over $FLR_n$).

\subsection{Algebras}

The papers~\cite{ChSh, DKR1, DKR2, Daniyarova2,
DKR3, Daniyarova3, PoroshTim, Rem-Shtor3, RomSh} are devoted to algebraic geometry over Lie algebras. Let us refer to the paper~\cite{Rem-Shtor3}, where it was proved that even simple equations over free Lie algebra have the complicated solution sets. Thus, it is impossible to obtain a nice description of all coordinate algebras of algebraic sets over free Lie algebras. However, in~\cite{Daniyarova3} it was  obtained the description of {\it bounded} algebraic sets over free Lie algebras.  

Surprisingly, the solution sets of linear equations in free anti-commutative algebra is simpler than the corresponding sets over free Lie algebra (see~\cite{DaniyarovaOnskul}).

\end{document}